\newcommand{\N}{\mathbb{N}}
\newcommand{\R}{\mathbb{R}}
\newcommand{\A}{\mathcal{A}}
\newcommand{\norm}[1]{\left\Vert#1\right\Vert}
\newcommand{\abs}[1]{\left\vert#1\right\vert}
\def\({\left(}
\def\){\right)}
\newcommand{\loc}{\operatorname{loc}}
\newcommand{\trace}{\operatorname{trace}}
\newcommand{\Dom}{\operatorname{Dom}}
\newcommand{\eps}{\varepsilon}
\newtheorem{thm}{Theorem}[section]
\newtheorem{prop}[thm]{Proposition}
\newtheorem{cor}[thm]{Corollary}
\newtheorem{lem}[thm]{Lemma}
\theoremstyle{definition}
\newtheorem{defn}[thm]{Definition}
\newtheorem{rem}[thm]{Remark}
\newtheorem{notation}[thm]{Notation}
\numberwithin{equation}{section}
\author[P. R. Stinga]{\href{https://pabloraulstinga.github.io/}{Pablo Ra\'ul Stinga}}
\address{Department of Mathematics\\
396 Carver Hall\\
 Ames, IA 50011\\ The United States of America}
\email{stinga@iastate.edu}
\author[M. Vaughan]{\href{https://maryvaughan.github.io/}{Mary Vaughan}}
\address{Department of Mathematics and Statistics\\
The University of Western Australia\\
35 Stirling Hwy\\ 
Crawley WA 6009\\ Australia}
\email{mary.vaughan@uwa.edu.au}
\keywords{Fractional nondivergence form elliptic equations, 
regularity estimates, 
Monge--Amp\`ere equations, viscosity solutions}
\subjclass[2010]{Primary: 35R11, 35B65, 35J96. Secondary: 35D40, 35J70, 35J75.
}
\begin{document}

\title[Schauder estimates for fractional elliptic equations]{Interior Schauder estimates for fractional elliptic equations in nondivergence form}

\begin{abstract}
We obtain sharp interior Schauder estimates for solutions to nonlocal Poisson problems driven by fractional powers of nondivergence form elliptic operators $(-a^{ij}(x) \partial_{ij})^s$, for $0<s<1$, in bounded domains under minimal regularity assumptions
on the coefficients $a^{ij}(x)$.  Solutions to the fractional problem are characterized by a local degenerate/singular extension problem.
We introduce a novel notion of viscosity solutions for the extension problem and implement Caffarelli's perturbation methodology in the corresponding degenerate/singular Monge--Amp\`ere geometry to prove Schauder estimates in the extension. This in turn implies interior Schauder estimates for solutions to the fractional nonlocal equation. 
Furthermore, we prove a new Hopf lemma, the interior Harnack inequality and H\"older regularity in the Monge--Amp\`ere geometry for viscosity solutions to the extension problem. 
\end{abstract}

\maketitle

\section{Introduction}

We prove interior Schauder estimates for solutions to nonlocal Poisson problems driven by fractional powers of nondivergence form elliptic operators
\begin{equation}\label{eq:Ls}
L^s = (-a^{ij}(x) \partial_{ij})^s \quad \hbox{in}~\Omega \quad \hbox{for}~0 < s < 1
\end{equation}
in bounded domains $\Omega \subset \R^n$, $n\geq1$, under minimal regularity assumptions on the coefficients $a^{ij}(x)$ and the domain. 

Equations involving fractional power operators as in \eqref{eq:Ls} in the minimal regularity regime arise naturally in probabilistic models 
of random jump processes in heterogeneous media and stochastic games with jumps \cite{Oksendal-Sulem},
finance \cite{Cont},  the theory of semipermeable membranes and the Signorini problem in elasticity \cite{Duvaut}, 
and in relation to the fractional Monge--Amp\`ere equation of Caffarelli--Charro \cite{Caffarelli-Charro,Jhaveri-Stinga}. 
See \cite{Stinga-Vaughan} for a detailed presentation of these applications. 

Despite the numerous applications, regularity of solutions remained an open question until the work initiated in \cite{Stinga-Vaughan},
where the Harnack inequality and H\"older regularity of solutions to the fractional nonlocal problem
\begin{equation}\label{eq:fractionalPDE}
\begin{cases}
(-a^{ij}(x) \partial_{ij})^s u = f & \hbox{in}~\Omega \\
u =  0 & \hbox{on}~\partial \Omega
\end{cases}
\end{equation}
were proved. In this paper, we continue the regularity analysis program by establishing interior Schauder 
estimates for solutions $u \in \Dom(L^s)$ to \eqref{eq:fractionalPDE}. 

Before presenting the results, let us briefly recall the definition of the fractional power operators \eqref{eq:Ls}. 
Assume that $\Omega$ satisfies a uniform exterior cone condition. 
The coefficients $a^{ij}(x):\Omega \to \R$ are symmetric $a^{ij}(x) = a^{ji}(x)$, $1\leq i,j \leq n$,
$a^{ij}(x) \in C(\Omega) \cap L^\infty(\Omega)$
and uniformly elliptic, meaning that there exist constants $0 < \lambda \leq \Lambda$ such that
\begin{equation}\label{eq:ellipticity}
\lambda \abs{\xi}^2 \leq a^{ij}(x) \xi_i \xi_j \leq \Lambda \abs{\xi}^2 \quad \hbox{for all}~\xi \in \R^n~\hbox{and}~x \in \Omega. 
\end{equation}
In this setting, we consider the nondivergence form elliptic operator 
\begin{equation}\label{eq:L-intro}
L = -a^{ij}(x)\partial_{ij} \equiv - \sum_{i,j=1}^n a^{ij}(x) \partial_{x_ix_j}, \quad x \in \Omega. 
\end{equation}
Now, it is not immediately obvious how to define fractional powers of $L$. 
Indeed, the Fourier transform method of defining fractional power operators is not the most adequate tool in our setting, particularly in bounded domains. 
On the other hand, the spectral method (like the the one used to define fractional powers of divergence form operators $(-\partial_i(a^{ij}(x) \partial_{ij}))^s$, see \cite{Caffarelli-Stinga,Stinga-Zhang}) is unsuitable since $L$ has no natural Hilbert space structure and, moreover, cannot be written in divergence form. 
Instead, we use the method of semigroups to define $L^s$ by
\begin{equation}\label{eq:semi-defn}
L^su =  \lim_{\eps \to 0}\frac{1}{\Gamma(-s)} \int_\eps^\infty (e^{-tL} u - u) \frac{dt}{t^{1+s}}
\end{equation}
where $0 < s < 1$, $\Gamma$ denotes the Gamma function, and $\{e^{-tL}\}_{t \geq 0}$ is the uniformly bounded $C_0$-semigroup generated by \eqref{eq:L-intro}.  
It is known that 
\begin{equation}\label{eq:dom}
u \in \Dom(L^s) \quad \hbox{if and only if} \quad \hbox{the limit in \eqref{eq:semi-defn} exists}
\end{equation}
and, in this case, the resulting limit is precisely $L^su$, see \cite{Berens}. 
Precise definitions and details are given in Section \ref{sec:fractional-powers}. 

We now present our main result regarding regularity of solutions to \eqref{eq:fractionalPDE}. 

\begin{thm}[Schauder estimates]\label{thm:schauder-intro}
Assume that $\Omega \subset \R^n$ is a bounded domain satisfying the uniform exterior cone condition, 
$a^{ij}(x) \in C(\Omega) \cap L^\infty(\Omega)$ are symmetric and satisfy \eqref{eq:ellipticity}, and $f \in C_0(\Omega) \cap C^{0,\alpha}(\Omega)$ for some $0 < \alpha < 1$. 
Let $u \in \Dom(L^s)$ be a solution to  \eqref{eq:fractionalPDE}. 
\begin{enumerate}[$(1)$]
\item If $0 < \alpha +2s < 1$, then $u \in C_{\loc}^{0,\alpha+2s}(\Omega)$ and, for any subdomain $\Omega' \subset \subset \Omega$, 
\[
\|u\|_{C^{0,\alpha+2s}(\Omega')} \leq C(\|u\|_{L^{\infty}(\Omega)} +\|f\|_{C^{0,\alpha}(\Omega)}). 
\]
\item If $1 < \alpha +2s < 2$, then $u \in C_{\loc}^{1,\alpha+2s-1}(\Omega)$ and, for  any subdomain $\Omega' \subset \subset \Omega$, 
\[
\|u\|_{C^{1,\alpha+2s-1}(\Omega')} \leq C(\|u\|_{L^{\infty}(\Omega)} +\|f\|_{C^{0,\alpha}(\Omega)}). 
\]
\item If $2 < \alpha +2s < 3$ and $a^{ij}(x) \in C^{0,\alpha+2s-2}(\Omega)$, then $u \in C_{\loc}^{2,\alpha+2s-2}(\Omega)$ and, for  any subdomain $\Omega' \subset \subset \Omega$, 
\[
\|u\|_{C^{2,\alpha+2s-2}(\Omega')} \leq C(\|u\|_{L^{\infty}(\Omega)} + \|f\|_{C^{0,\alpha}(\Omega)}). 
\]
\end{enumerate}
The constants $C$ above depend only on $n$, $s$, $\lambda$, $\Lambda$, $\alpha$,  the modulus of continuity of $a^{ij}$, and the distance between $\Omega'$ and $\partial \Omega$. 
\end{thm}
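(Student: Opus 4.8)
The plan is to transfer the problem to the degenerate/singular extension and run Caffarelli's perturbation method there. Recall from the extension characterization (cf.\ \cite{Stinga-Vaughan}) that $u\in\Dom(L^s)$ solves \eqref{eq:fractionalPDE} if and only if its extension $U=U(x,y)$ on $\Omega\times(0,\infty)$ is a viscosity solution of
\[
a^{ij}(x)\,\partial_{ij}U + \frac{1-2s}{y}\,\partial_y U + \partial_{yy}U = 0 \quad\hbox{in}~\Omega\times(0,\infty),
\]
subject to $U(x,0)=u(x)$ and the weighted conormal condition $-\lim_{y\to0^+}y^{1-2s}\,\partial_y U(x,y)=c_s\,f(x)$. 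Since the estimates are interior, we localize: fix $\Omega'\subset\subset\Omega''\subset\subset\Omega$ and work in $\Omega''\times(0,1)$; away from $\{y=0\}$ the extension operator is uniformly elliptic and classical Schauder theory applies, so all the new work is near the slice $\{y=0\}$. There, regularity is measured in the intrinsic family of sections $S_r(x_0,0)$ adapted to the $y^{1-2s}$-degeneracy, which play the role of balls in the Monge--Amp\`ere-type geometry set up earlier; interior $C^{k,\beta}$ bounds for $u$ are exactly the boundary $C^{k,\beta}$ bounds for $U$ restricted to $\{y=0\}$.

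\textbf{Caffarelli's perturbation scheme.} Fix $x_0\in\Omega'$ and proceed in three steps. \emph{(i) Frozen model.} Replacing $a^{ij}(x)$ by $a^{ij}(x_0)$ and diagonalizing via a linear change of variables turns the extension equation into the classical extension problem for a fractional Laplacian; its solutions are smooth in the tangential variables and admit the conormal expansion $U=P(x)+y^{2s}Q(x)+\cdots$, which yields the a priori estimates for the model on sections. \emph{(ii) Approximation.} Using the comparison principle for the extension operator together with the Hopf lemma and interior Harnack inequality proved earlier, we show that on a section $S_r(x_0,0)$ the solution $U$ is $L^\infty$-close to a solution of the frozen equation, with the closeness quantified by $\operatorname{osc}_{S_r}a^{ij}$ (hence by the modulus of continuity of $a^{ij}$) and by $\|f\|_{C^{0,\alpha}}$. \emph{(iii) Iteration.} Subtract from $U$ the optimal polynomial-conormal profile $P_{x_0}$ at $x_0$, of tangential degree $0$, $1$, or $2$ according to the case; combining (i) and (ii) gives, at dyadic scales $r_k=2^{-k}$, the bound $\|U-P_{x_0}\|_{L^\infty(S_{r_k}(x_0,0))}\le C\,r_k^{\alpha+2s}$ with $\deg P_{x_0}=\lfloor\alpha+2s\rfloor$, and summing the resulting series produces the pointwise Taylor/H\"older estimate. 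The $C^{0,\beta}$ bound for $u$ from \cite{Stinga-Vaughan} provides the base case that starts the bootstrap. Restricting to $\{y=0\}$ and using uniformity of the constants in $x_0\in\Omega'$ yields assertions $(1)$, $(2)$, $(3)$ with the stated norms and dependence of $C$.

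\textbf{The three regimes.} In $(1)$, $0<\alpha+2s<1$: only constants (plus the $y^{2s}$-conormal term driven by $f(x_0)$) are subtracted, and continuity of $a^{ij}$ suffices in step (ii). In $(2)$, $1<\alpha+2s<2$: subtract the tangent plane of $u$ at $x_0$ and differentiate the extension equation once in the tangential variables; again only continuity of $a^{ij}$ is needed. In $(3)$, $2<\alpha+2s<3$: subtract the second-order Taylor polynomial and differentiate twice in $x$; this is the only place the hypothesis $a^{ij}\in C^{0,\alpha+2s-2}(\Omega)$ enters, since the twice-differentiated equation carries terms of the form $\partial_{ij}a^{kl}\,\partial_{kl}U$ that must be H\"older of exactly that order --- the familiar reason $C^{2,\gamma}$ estimates for $-a^{ij}\partial_{ij}$ require $C^{0,\gamma}$ coefficients.

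\textbf{Main obstacle.} The crux is that the extension is nonvariational and degenerate/singular at $\{y=0\}$, so neither energy methods nor $A_2$-weighted Sobolev theory are available: the approximation lemma of step (ii) and all the barriers must be built purely with viscosity solutions and the comparison principle, in the anisotropic sections near $\{y=0\}$, with the closeness controlled only by the modulus of continuity of $a^{ij}$ and uniformly in the degeneracy parameter $1-2s$. A secondary difficulty is producing, for the frozen model, the exact polynomial-plus-conormal expansions the iteration subtracts and verifying that the noninteger exponents $2s$ and $\alpha+2s$ never resonate in a way that destroys the geometric decay --- precisely the phenomenon that forces the exclusion of $\alpha+2s\in\{1,2\}$.
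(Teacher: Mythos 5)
Your roadmap --- pass to the degenerate extension, localize near the slice, run Caffarelli's perturbation scheme in the intrinsic (Monge--Amp\`ere) geometry, and split into three regimes according to $\lfloor\alpha+2s\rfloor$ --- is indeed the strategy the paper follows, and you correctly identify where the hypothesis $a^{ij}\in C^{0,\alpha+2s-2}$ is needed. However, there are two substantive gaps between your sketch and an actual proof.

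First, you write the extension as a drift equation $a^{ij}\partial_{ij}U + \tfrac{1-2s}{y}\partial_yU + \partial_{yy}U = 0$ and then appeal freely to ``viscosity solutions'' and ``the comparison principle for the extension operator'' as if standard theory applied. It does not: after the change of variables $z = (y/2s)^{2s}$ the equation becomes $a^{ij}\partial_{ij}U + z^{2-1/s}\partial_{zz}U = 0$, which is degenerate at $z=0$ when $s>1/2$, and there the usual notion of $C^2$ test functions at $\{z=0\}$ fails --- uniqueness breaks down (this is already flagged in \cite[Remark 4.3]{Caffarelli-Silvestre}). The paper must introduce a new test class $C_s$ (functions whose weighted second derivative $z^{2-1/s}\partial_{zz}\phi$ is continuous up to $\{z=0\}$) and rebuild the viscosity machinery from scratch: stability under uniform limits, the relationship to classical solutions, and a new Hopf lemma via explicit barriers in the Monge--Amp\`ere geometry, so that harmonic $C_s$-viscosity solutions can be shown to be classical. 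Your proposal treats this foundational step as given, but it is precisely what the comparison and approximation arguments in step (ii) rest on.

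Second, you say that the ``$C^{0,\beta}$ bound for $u$ from \cite{Stinga-Vaughan} provides the base case that starts the bootstrap.'' That result applies to classical solutions and requires $a^{ij}\in C^{0,\alpha}$; it is unavailable here, where $u$ is only assumed to be in $\Dom(L^s)$ and the coefficients are merely continuous and bounded. The paper instead proves a new interior Harnack inequality and H\"older estimate for $C_s$-viscosity solutions with bounded measurable coefficients (Theorem~\ref{thm:F-harnack}), adapting Savin's sliding-paraboloid method and, in particular, adapting $\inf$/$\sup$-convolutions to the non-translation-invariant degeneracy in $z$. This Harnack inequality is what furnishes the equicontinuity needed for the compactness step of the approximation lemma. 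Without it your step (ii) has no uniform modulus to pass to a limit, and the iteration never gets started.
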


The description of $\Dom(L^s)$ in \eqref{eq:dom} is rather obscure and not very useful for our scope. 
Even so, Theorem \ref{thm:schauder-intro} is sharp in that we only assume $u \in \Dom(L^s)$. 
Furthermore, we prove the sharp interior Harnack inequality and H\"older regularity for solutions $u \in \Dom(L^s)$, see Remark \ref{rem:nonlocal-harnack}.

Our proof of Theorem \ref{thm:schauder-intro} is based on the extension problem characterization of fractional power operators in general Banach spaces \cite{Gale}, see also \cite{Biswas-Stinga}. In particular,
we consider the solution $U  = U(x,z): \overline{\Omega} \times [0,\infty) \to \R$ to the following local equation in nondivergence form and in one additional dimension:
\begin{equation*}\label{eq:fullequationforU}
\begin{cases}
a^{ij}(x) \partial_{ij}U + z^{2-\frac{1}{s}} \partial_{zz}U = 0 & \hbox{in}~\Omega \times \{z>0\}\\
U = u & \hbox{on}~\Omega \times \{z=0\}\\
U= 0 & \hbox{on}~\partial \Omega \times \{z \geq 0\}. 
\end{cases}
\end{equation*}
It was recently established in \cite{Biswas-Stinga} that $u \in \Dom(L^s)$ if and only if 
\[
\lim_{z \to 0} \frac{U(x,z) - U(x,0)}{z}  \equiv \partial_{z} U(x,0)  = -d_s (-a^{ij}(x)\partial_{ij})^su(x)
\]
where the constant $d_s>0$ is explicit and depends only on $0 < s < 1$. 
See Theorem \ref{thm:extension} for the precise statement. 
Therefore, to prove Theorem \ref{thm:schauder-intro}, we show that solutions $U$ to 
\begin{equation}\label{eq:extension-intro}
\begin{cases}
a^{ij}(x) \partial_{ij}U + z^{2-\frac{1}{s}} \partial_{zz}U = 0 & \hbox{in}~\Omega \times \{z>0\}\\
\partial_{z}U(x,0) = f(x)& \hbox{on}~\Omega \times \{z=0\}
\end{cases}
\end{equation}
are $C^{\alpha+2s}$ on the set $\{z=0\}$. 
The corresponding result then holds for the solution $u(x) = U(x,0)$ to \eqref{eq:fractionalPDE}.

While \eqref{eq:extension-intro} is now a local PDE problem, 
there are still many difficulties to overcome. 
For instance, the equation is not translation invariant in the $z$-variable, and the coefficient $z^{2-\frac{1}{s}}$ is singular when $0 < s < \frac{1}{2}$ and degenerate when $\frac{1}{2}<s<1$ as $z \to 0$. 
We also have to deal with the Neumann condition. 
Even in the case $s=\frac{1}{2}$, the problem \eqref{eq:extension-intro} can formally be written as a single equation
 in $\Omega \times [0,\infty)$ with a right hand side that is a singular measure with density $f(x)$ supported on $\{z=0\}$.

An essential observation for the study of \eqref{eq:extension-intro} is that the PDE can be recast as an equation comparable to a linearized Monge--Amp\`ere equation. 
To see this, consider the even reflection of $U$ in the variable $z$ given by $\tilde{U}(x,z) = U(x,|z|)$ for $x \in \Omega$, $z \in \R$. We continue to use $U$ instead of $\tilde{U}$ for ease and notice that it satisfies
\begin{equation}\label{eq:extensionPDE}
a^{ij}(x) \partial_{ij}U + |z|^{2-\frac{1}{s}} \partial_{zz}U = 0 \quad \hbox{in}~\Omega \times \{z\not=0\}. 
\end{equation}
Next, define the convex function $\Phi = \Phi(x,z)$ by
\[
\Phi(x,z) = \frac{1}{2}|x|^2 + \frac{s^2}{1-s} |z|^{\frac{1}{s}}, \quad (x,z) \in \R^{n+1}. 
\]
Since the Hessian of $\Phi$ is
\[
D^2\Phi(x,z) = \begin{pmatrix} I & 0 \\ 0 & |z|^{\frac{1}{s}-2}\end{pmatrix},
\]
where $I$ is the identity matrix acting on $\R^n$, 
the linearized Monge--Amp\`ere equation associated to $\Phi$ with zero right hand side is 
\begin{equation}\label{eq:linMA}
\trace((D^2\Phi)^{-1}D^2U) = \Delta_xU + |z|^{2-\frac{1}{s}} \partial_{zz}U = 0 \quad \hbox{for}~z\not= 0.
\end{equation}
Being that the coefficients $a^{ij}(x)$ satisfy \eqref{eq:ellipticity}, it follows that the coefficients in the nondivergence form equation \eqref{eq:extensionPDE} are comparable to the coefficients in the linearized Monge--Amp\`ere equation \eqref{eq:linMA}. 

There is an intrinsic geometry associated to linearized Monge--Amp\`ere equations, as discovered by Caffarelli--Guti\'errez \cite{Caffarelli-Gutierrez}. 
We showed in \cite{Stinga-Vaughan} that the geometry for the degenerate/singular equation \eqref{eq:extension-intro} is the linearized Monge--Amp\`ere geometry associated to $\Phi$, see also \cite{Maldonado-Stinga} for a study of the fractional nonlocal linearized Monge--Amp\`ere equation.  
Specifically, there is a quasi-metric measure space associated with $\Phi$, and all our results regarding solutions to \eqref{eq:extension-intro} are in this setting. 
See Section \ref{sec:MA} for definitions and details. 

Regularity estimates for linearized Monge--Amp\`ere equations associated to smooth, convex functions $\psi$ were first studied by Caffarelli--Guti\'errez \cite{Caffarelli-Gutierrez} who proved the Harnack inequality and, later on, by Guti\'errez--Nguyen \cite{Gutierrez--Nguyen} who considered Schauder estimates. They worked
under the assumption that $\det D^2\psi$ is continuous and  bounded away from zero and infinity. 
For our function $\Phi$, we have that $D^2\Phi$ either degenerates or blows up at $\{z=0\}$ when $s\not=\frac{1}{2}$, so our problem does not fit into their setting. 
On the other hand, in their studies of Monge--Amp\`ere equations, Daskalopoulos--Savin \cite{Daskalopoulos-Savin}  and  Le--Savin \cite{Le-Savin} prove Schauder estimates for singular equations and degenerate equations, respectively, like \eqref{eq:linMA}. 
Maldonado has also studied regularity of solutions to degenerate elliptic equations associated to $\psi(x) = |x|^p$, $p \geq 2$, see \cite{MaldonadoI,MaldonadoIII}. 
However, our results are not contained in and do not follow from any of the aforementioned works. 
Not only are our techniques different than in \cite{Daskalopoulos-Savin, Le-Savin,MaldonadoI,MaldonadoIII}, 
their results are for Dirichlet problems and do not include the Neumann condition on the boundary $\{z=0\}$.  

Another significant difference with respect to the existing literature is that we consider viscosity solutions rather than strong solutions. 
Indeed, previous regularity estimates for linearized Monge--Amp\`ere equations are for classical solutions or $W^{2,n}_{\loc}$ solutions.  
As suggested by Caffarelli--Silvestre in \cite{Caffarelli-Silvestre}, one might try to use the $L^p$-viscosity theory. 
Instead, we introduce a new notion of continuous viscosity solution that is adapted to the degeneracy of \eqref{eq:extension-intro}. 
We feel that this might give a clue on how to build a viscosity solutions theory for linearized Monge--Amp\`ere equations in the
classical Caffarelli--Guti\'errez setting. 

As first observed in \cite[Remark 4.3] {Caffarelli-Silvestre}, 
the usual choice of $C^2$ test functions at the boundary $\{z=0\}$ is insufficient in the degenerate case $\frac{1}{2}<s<1$. For example, uniqueness does not hold.   
We define a new class of test functions to be 
the set of $\phi \in C^2_x \cap C^1_z$ whose weighted second derivative $z^{2- \frac{1}{s}} \partial_{zz} \phi$ is continuous up to the boundary $\{z=0\}$. 
We denote this set of test functions by $C_s$ and show that it is the correct class for dealing with the degeneracy and Neumann condition in \eqref{eq:extension-intro}. 
Definitions and preliminary results are given in Section \ref{sec:viscosity}. 
 
We prove that if $a^{ij}, f \in C^{0,\alpha}$, then viscosity solutions to \eqref{eq:extension-intro} are $(\alpha+2s)$-H\"older continuous with respect to the quasi-distance $\delta_\Phi$ associated to $\Phi$ at points on the boundary $\{z=0\}$. 
More specifically, if $\Omega'\subset \subset \Omega$ and $x_0 \in \Omega'$, we show that there is a  
Monge--Amp\`ere polynomial (namely, a polynomial associated to $\Phi$) such that
\[
\|U - P\|_{L^{\infty}(S_{r^2}(x_0,0)^+)} \leq Cr^{\alpha+2s} \quad \hbox{for $r$ small}.
\]
See Theorem \ref{thm:schauder-ext} for the precise statement. 
We will see that the scaling is different when $2<\alpha+2s<3$, since in this case $\frac{1}{2}<s<1$ and the equation is degenerate. 

For the proof, we implement a nontrivial adaptation of Caffarelli's perturbation argument of \cite{Caffarelli-Annals} for uniformly elliptic equations. In this regard, we need to study viscosity solutions $H= H(x,z)$ to 
\begin{equation}\label{eq:harmonic}
\begin{cases}
\Delta_xH + z^{2-\frac{1}{s}} \partial_{zz}H = 0 & \hbox{in}~S_1 \cap \{z>0\}\\
\partial_{z}H(x,0) = 0 & \hbox{on}~S_1 \cap \{z=0\}.
\end{cases}
\end{equation}
Throughout the paper, we will say that $H$ is \textbf{harmonic} if it satisfies \eqref{eq:harmonic}. 
We will show that viscosity solutions to \eqref{eq:harmonic} are in fact classical up to the boundary. 
Toward this end, we prove a new Hopf lemma for viscosity solutions by constructing new explicit barriers in the Monge--Amp\`ere geometry that can handle both the Neumann condition and the degeneracy of the equation.  

Furthermore, we need a Harnack inequality for viscosity solutions to \eqref{eq:extension-intro}. 
Recall that the a priori estimates in \cite[Theorem 1.3]{Stinga-Vaughan} are for classical solutions and thus are not sufficient. 
Our next result is the Harnack inequality and H\"older regularity for viscosity solutions to the extension equation with an extra nonzero right hand side $F$. 
For notation, see Section \ref{sec:MA}. 

\begin{thm}\label{thm:F-harnack}
Let $\Omega \subset \R^n$ be a bounded domain, 
$a^{ij}(x): \Omega \to \R$ be bounded, measurable and satisfy \eqref{eq:ellipticity}.
There exist positive constants $C_H = C_H(n,\lambda,\Lambda,s)>1$ and $\kappa = \kappa(n,s) < 1$
such that for every section $S_R = S_R(\tilde{x}, \tilde{z})\subset \subset \Omega \times \R$, 
 every $f \in L^{\infty}(S_{R}\cap \{z=0\})$, $F \in L^{\infty}(S_{R})$,
  and every nonnegative $C_s$-viscosity solution $U$, symmetric across $\{z=0\}$, to 
 \begin{equation}\label{eq:F-extension}
 \begin{cases}
 a^{ij}(x) \partial_{ij}U + |z|^{2-\frac{1}{s}} \partial_{zz}U = F & \hbox{in}~S_{R} \cap \{z\not=0\} \\
 \partial_{z} U(x,0) = f & \hbox{on}~S_{R}  \cap \{z=0\},
 \end{cases}
 \end{equation}
 we have that
 \begin{equation*}\label{eq:harnack-inf}
 \sup_{S_{\kappa R} }U \leq C_H \left(  \inf_{S_{\kappa R} }U
 	+ \|f\|_{L^{\infty}(S_{R} \cap \{z=0\})} R^s 
	+\|F\|_{L^{\infty}(S_{R})} R \right).
 \end{equation*}
Consequently, there exist constants $0 < \alpha_1  = \alpha_1(n,\lambda,\Lambda,s)< 1$ and $\hat{C} = \hat{C}(n,\lambda,\Lambda,s)>1$ such that, 
for every $C_s$-viscosity solution  $U$, symmetric across $\{z=0\}$, to \eqref{eq:F-extension}
it holds that
\begin{align*}
|U(\tilde{x},&\tilde{z}) - U(x,z)|\\
	&\leq \frac{\hat{C}}{R^{\frac{\alpha_1}{2}}} [\delta_\Phi((\tilde{x},\tilde{z}),(x,z))]^{\frac{\alpha_1}{2}}
	\left(  \sup_{S_{ R} }|U| +  \|f\|_{L^{\infty}(S_{R} \cap \{z=0\})} R^s 
	+\|F\|_{L^{\infty}(S_{R})} R \right)
\end{align*}
for every $(x,z) \in S_{R}$. 
\end{thm}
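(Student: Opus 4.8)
The plan is to follow the classical De Giorgi--Nash--Moser/Krylov--Safonov scheme adapted to the Monge--Amp\`ere geometry of $\Phi$, treating the nonzero right-hand side $F$ and the Neumann data $f$ as perturbations. First I would reduce the statement with inhomogeneous data to the homogeneous Harnack inequality by a standard normalization: replacing $U$ by $U + \|f\|_{L^\infty(S_R\cap\{z=0\})}R^s + \|F\|_{L^\infty(S_R)}R$ and rescaling the section $S_R$ to the unit section via the affine maps that are compatible with $D^2\Phi$ (the parabolic-type dilations $(x,z)\mapsto(R^{1/2}x, R^{s}z)$ that preserve the structure $\Delta_x + |z|^{2-1/s}\partial_{zz}$), so that it suffices to prove: a nonnegative $C_s$-viscosity solution on $S_1$ with controlled $f$, $F$ satisfies $\sup_{S_\kappa}U\le C_H(\inf_{S_\kappa}U + C)$ for a universal constant. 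The engine for this is a measure-theoretic decay estimate: one proves both a weak Harnack inequality (an $L^{p_0}$-to-$\inf$ bound for supersolutions) and a local maximum principle (a $\sup$-to-$L^{p_0}$ bound for subsolutions), whose combination yields the Harnack inequality. The key geometric inputs are that the sections $S_r(\tilde x,\tilde z)$ of $\Phi$ form a Christ-type family satisfying the engulfing property and that the associated measure $|z|^{1/s-2}\,dx\,dz$ (equivalently, the Monge--Amp\`ere measure $\det D^2\Phi$) is doubling with respect to this family; these were established in \cite{Stinga-Vaughan} and I would invoke them directly.

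The heart of the argument is the \emph{critical density / barrier lemma} in the viscosity setting: if a nonnegative supersolution $U$ to \eqref{eq:F-extension} satisfies $\inf_{S_{\kappa}} U \le 1$, then the set $\{U \le M\}\cap S_{1}$ occupies a definite proportion of $S_{1/2}$ for some universal $M$. To prove this with only viscosity regularity and with the Neumann condition on $\{z=0\}$, I would touch the graph of $U$ from below by the explicit Monge--Amp\`ere subsolution barrier built from $\Phi$ — a function of the form $c_1 - c_2(\Phi(x,z) - \ell(x,z))$ truncated appropriately, whose weighted Hessian is controlled and which is in the test class $C_s$ so that it is a legitimate competitor at $\{z=0\}$. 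Here the Hopf lemma and the explicit barriers announced in the excerpt for problem \eqref{eq:harmonic} are exactly the tools needed to make the sliding-barrier argument respect the Neumann boundary: when the contact point lies on $\{z=0\}$, the condition $\partial_z U(x,0)=f$ together with $z^{2-1/s}\partial_{zz}\phi$ being continuous up to the boundary lets one still extract the required pointwise inequality for the weighted operator. Iterating the critical density estimate along the engulfing chain of sections — i.e. a covering/Calder\'on--Zygmund decomposition adapted to the family $\{S_r\}$, exactly as in Caffarelli--Guti\'errez \cite{Caffarelli-Gutierrez} — upgrades "positive measure of the low set" into the exponential decay $|\{U>t\}\cap S_{1/2}|\lesssim t^{-p_0}$, which is the weak Harnack inequality. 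The local maximum principle is proved symmetrically, applying the critical density lemma to $(M-U)^+$ for subsolutions, or can be deduced from the decay estimate by the usual interpolation argument of Caffarelli.

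Once the Harnack inequality is in hand, the H\"older estimate is the standard oscillation-decay corollary: applying the Harnack inequality to $\sup_{S_r}U - U$ and to $U - \inf_{S_r}U$ on dyadic sections $S_{\kappa^k R}$ gives $\operatorname{osc}_{S_{\kappa^{k+1}R}}U \le \theta\,\operatorname{osc}_{S_{\kappa^k R}}U + C\kappa^{ks}(\cdots)$ for some $\theta<1$, and summing the geometric series produces the decay $\operatorname{osc}_{S_\rho}U \lesssim (\rho/R)^{\alpha_1/2}(\sup_{S_R}|U| + \|f\|R^s + \|F\|R)$, which translated through $\delta_\Phi$ is precisely the claimed estimate with $\alpha_1/2$ appearing because of the $r\leftrightarrow r^2$ relation between the quasi-distance and the section radius. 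The main obstacle I anticipate is making the critical density/barrier step rigorous for \emph{viscosity} solutions in the degenerate regime $1/2<s<1$: one must verify that the explicit barriers genuinely belong to the test class $C_s$, that they can be slid to a first contact point even when that point sits on the degenerate hyperplane $\{z=0\}$, and that the Neumann condition is correctly picked up there — this is exactly where the new Hopf lemma and the choice of $C_s$ test functions are essential, and where the argument departs most from the classical uniformly elliptic or strong-solution treatments.
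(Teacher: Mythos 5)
Your proposal takes a genuinely different route from the paper — the Krylov--Safonov / Caffarelli--Guti\'errez scheme (weak Harnack via a critical density lemma plus Calder\'on--Zygmund decay, local maximum principle for the sup bound, then oscillation iteration) — whereas the paper implements Savin's method of sliding paraboloids in the Monge--Amp\`ere geometry, whose centerpiece is a point-to-measure estimate (Theorem \ref{thm:point-to-measure}): if one slides Monge--Amp\`ere paraboloids of a fixed opening with vertices in a closed set $B$ until they first touch the graph of $U$ from below, the contact set has $\mu_\Phi$-measure comparable to $\mu_\Phi(B)$. The two routes are morally related, and your high-level picture (explicit $\Phi$-barriers to handle the Neumann condition, engulfing and doubling of the section family, oscillation decay yielding the exponent $\alpha_1/2$) is sound. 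But there is a real gap in the step you identify as "the heart of the argument," and it sits precisely where the paper spends its technical effort.

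To extract a critical density estimate (or, in the paper's formulation, the point-to-measure estimate) for solutions that are merely continuous $C_s$-viscosity solutions, one needs an ABP-type bound obtained by applying the area formula to the map sending a contact point to the vertex of the touching paraboloid. This requires the solution to be twice differentiable almost everywhere on the contact set, which in the viscosity setting one obtains by regularizing with $\inf$- and $\sup$-convolutions. The obstruction — which the paper flags explicitly as the main novelty of the proof — is that the coefficient $|z|^{2-1/s}$ makes the operator not translation invariant in $z$, so the standard Euclidean $\inf$-convolution does not transport the supersolution property. Touching $U$ by a single explicit barrier of the form $c_1 - c_2(\Phi-\ell)$ (and invoking the Hopf lemma at the boundary) does not by itself circumvent this: at a first-contact point in the interior you still need to pass second-order information from $U$ to the barrier on a set of positive $\mu_\Phi$-measure, and for that you need $U$ (or a semiconcave approximant) to admit second-order expansions a.e. The paper resolves this in Section \ref{sec:harnack}: Lemma \ref{lem:inf-conv-properties} shows the $\inf$-convolution $U_\eps$ satisfies a \emph{viscosity property} in which the weight $|z_0|^{2-1/s}$ at a contact point $(x_0,z_0)$ is replaced by $|z_0^*|^{2-1/s}$ at the nearby point $(x_0^*,z_0^*)$ realizing the infimum, together with the quantitative bound $|h''(z_0^*)-h''(z_0)|\le d_\eps\to 0$; Lemma \ref{lem:measure-estimate-Ue} then carries out the area-formula argument for $U_\eps$ with an error controlled by $d_\eps$, and Theorem \ref{thm:point-to-measure} passes to the limit $\eps\to 0$. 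Your proposal does not address how to regularize at all, and as written the critical density step would fail for genuinely non-smooth viscosity solutions, which is exactly the case the theorem is about. A second, smaller remark: your normalization "replace $U$ by $U+\|f\|R^s+\|F\|R$" changes neither the equation nor the Neumann datum; the paper's normalization (Theorem \ref{thm:F-harnack3}) instead rescales and divides $U$ by the data. And the paper's passage from the point-to-measure estimate to the Harnack inequality does not go through a separate local maximum principle and weak Harnack, but rather through a detachment lemma, a localization lemma, and a covering lemma in the $\mu_\Phi$-measure, following the structure already set up for classical solutions in \cite{Stinga-Vaughan}.
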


\begin{rem}[Harnack inequality and H\"older regularity for the fractional problem]\label{rem:nonlocal-harnack}
We recall that the Harnack inequality and H\"older regularity results in \cite[Theorem 1.1]{Stinga-Vaughan} for the nonlocal problem \eqref{eq:fractionalPDE} were established for solutions $u \in \Dom(L)$ 
under the extra assumption that $a^{ij}(x) \in C^{0,\alpha}(\Omega)$ for some $0 <\alpha < 1$. 
With Theorem \ref{thm:F-harnack} and Theorem \ref{thm:extension} now in hand, 
\cite[Theorem 1.1]{Stinga-Vaughan} holds under the sharp assumption that $u \in \Dom(L^s)$ and without the additional hypothesis that $a^{ij}(x)$ are H\"older continuous, but only continuous and bounded.
\end{rem}

For the proof of Theorem \ref{thm:F-harnack}, we implement Savin's method of sliding paraboloids that was first used in the uniformly elliptic setting in \cite{Savin} (see also \cite[Chapter 10]{Stinga-book} for a presentation for nondivergence form elliptic equations). 
In \cite{Stinga-Vaughan}, we developed the method of sliding paraboloids in the Monge--Amp\`ere geometry for classical solutions. 
Our main novelty here is the proof for viscosity solutions. 
Since the equation in \eqref{eq:extension-intro} is not translation invariant in the $z$-variable, it is not clear how to regularize with $\inf$/$\sup$-convolutions. 
Indeed, one might be tempted to use the Monge--Amp\`ere quasi-distance or regularize only in the horizontal direction like in \cite{DeSilva-Ferrari-Salsa}. 
However, we successfully adapt $\inf$/$\sup$-convolutions for the extension equation by carefully analyzing the degeneracy of the equation, see Section \ref{sec:harnack}. 

The rest of the paper is organized as follows. 
First, in Section \ref{sec:fractional-powers}, we precisely define the fractional operators $(-a^{ij}(x)\partial_{ij})^s$ and state the extension characterization. 
Then, in Section \ref{sec:MA}, we provide the necessary background on the Monge--Amp\`ere geometry associated to $\Phi$. 
We define $C_s$-viscosity solutions and prove preliminary results in Section \ref{sec:viscosity}. 
Section \ref{sec:harmonic} is devoted to proving a new Hopf lemma and establishing regularity of $C_s$-viscosity solutions to the harmonic equation \eqref{eq:harmonic}. 
We prove Theorem \ref{thm:F-harnack} in Section \ref{sec:harnack}.
In Section \ref{sec:approx}, we show an approximation lemma.
Finally, in Section \ref{sec:schauder}, we prove Schauder estimates for the extension equation on the set $\{z=0\}$ and obtain Theorem \ref{thm:schauder-intro}. 

\section{Fractional power operators and extension problem}\label{sec:fractional-powers}

In this section, we give the definition of the fractional power operator $L^s = (-a^{ij}(x) \partial_{ij})^s$ in \eqref{eq:fractionalPDE} and state the extension problem characterization. 
For this, we first present some general definitions and results regarding fractional powers and the method of semigroups. 

A family of bounded, linear operators $\{T_t\}_{t \geq 0}$ on a Banach space $X$ is a semigroup on $X$ if $T_0=I$ (the identity operator on $X$) and $T_{t_1} \circ T_{t_2} = T_{t_1+t_2}$ for every $t_1,t_2 \geq 0$. 
If, in addition, $T_tu \to u$ as $t \to 0$ for all $u \in X$, then $\{T_t\}_{t \geq 0}$ is a $C_0$-semigroup. 
A semigroup  $\{T_t\}_{t\geq0}$ is a uniformly bounded if there is $M \geq 1$ such that $\|T_t\|\leq M$ for all $t \geq 0$. 

The infinitesimal generator $A$ of a semigroup $\{T_t\}_{t \geq 0}$ is the closed linear operator 
\[
-Au := \lim_{t \to 0} \frac{T_tu - u}{t}
\]
in the domain $\Dom(A) = \{u \in X : -Au~\hbox{exists}\}$. 
In this case, we write $T_t = e^{-tA}$. 
On the other hand, a linear operator $(A, \Dom(A))$ on $X$ is said to generate a semigroup if there is a semigroup $\{T_t\}_{t\geq0}$ for which $A$ is its infinitesimal generator, that is, $T_t = e^{-tA}$. 
See \cite{Pazy,Yosida} for more on the theory of semigroups.   

If $A$ is the generator of a uniformly bounded $C_0$-semigroup $\{e^{-tA}\}_{t\geq0}$ on $X$, 
then Berens--Butzer--Westphal proved in \cite{Berens}
 that $u \in \Dom(A^s)$, $0 < s < 1$, if and only if
\begin{equation}\label{eq:As-domain}
w := \lim_{\eps \to 0} \frac{1}{\Gamma(-s)} \int_\eps^\infty (e^{-tA}u-u) \frac{dt}{t^{1+s}} \quad \hbox{exists in}~X,
\end{equation}
and in this case, the fractional power operator is precisely $w = A^su$.

Now, in our setting, we assume that the bounded domain $\Omega$ satisfies the uniform exterior cone condition, namely, there is a right circular cone $\mathcal{C}$ such that for all $x \in \partial \Omega$, there is a cone $\mathcal{C}_x$ with vertex $x$ that is congruent to $\mathcal{C}$ and such that $\overline{\Omega} \cap \mathcal{C}_x = \{x\}$.  
We consider the Banach space
\[
C_0 (\Omega) = \{u \in C(\overline{\Omega}) : u=0~\hbox{on}~\partial \Omega \} 
\]
endowed with the $L^{\infty}(\Omega)$ norm. 
Let $L$ be the linear operator on $C_0(\Omega)$ given by 
\begin{equation*}\label{eq:L-defn}
L= -a^{ij}(x) \partial_{ij}, \quad \Dom(L) = \{u \in C_0(\Omega) \cap W_{\loc}^{2,n}(\Omega) : Lu \in C_0(\Omega)\}
\end{equation*}
where the coefficients $a^{ij}(x)\in C(\Omega) \cap L^\infty(\Omega)$ are symmetric and satisfy \eqref{eq:ellipticity}. 
Under these hypotheses, it was established in \cite[Proposition 4.7]{Arendt} that $L$ generates a uniformly bounded $C_0$-semigroup $\{e^{-tL}\}_{t \geq 0}$ on $C_0(\Omega)$.
Consequently, we can define the fractional power operator $L^s= (-a^{ij}(x)\partial_{ij})^s: \Dom(L^s) \to C_0(\Omega)$
as in \eqref{eq:As-domain} with $L$ in place of $A$.

See \cite{Stinga-Vaughan} for further remarks on pointwise formulas for $(-a^{ij}(x)\partial_{ij})^su(x)$
and the definition of the negative fractional powers $(-a^{ij}(x)\partial_{ij})^{-s}f(x)$. 

Fractional powers of infinitesimal generators of uniformly bounded $C_0$-semigroups can be characterized by extension problems. 
See \cite{Caffarelli-Silvestre} for the fractional Laplacian on $\R^n$,
\cite{Stinga-Torrea} for Hilbert spaces and \cite{Gale} for general Banach spaces. 
We will use the recent sharp results of \cite{Biswas-Stinga} as they provide a full characterization of $\Dom(L^s)$ in terms of the extension problem, which we find to be more practical than \eqref{eq:As-domain}. 
After a change of variables as in the proof of Proposition \ref{prop:H regularity},
we obtain the following particular case of  \cite[Theorem 1.1]{Biswas-Stinga}. 

\begin{thm}[Particular case of \cite{Biswas-Stinga}]\label{thm:extension}
Assume that the bounded domain $\Omega \subset \R^n$ satisfies the uniform exterior cone condition and $a^{ij}(x) \in C(\Omega) \cap L^\infty(\Omega)$ are symmetric and satisfy \eqref{eq:ellipticity}. 
If $u \in C_0(\Omega)$ then a solution $U \in C^{\infty}((0,\infty);\Dom(L)) \cap C([0,\infty);C_0(\Omega))$ to the extension problem
\[
\begin{cases}
a^{ij}(x) \partial_{ij} U + z^{2-\frac{1}{s}} \partial_{zz} U = 0 & \hbox{in}~\Omega \times \{z>0\}\\
U(x,0) = u(x) & \hbox{on}~\Omega \times \{z=0\}\\
U= 0 & \hbox{on}~\partial\Omega \times \{z \geq 0\}
\end{cases}
\]
is given by
\[
U(x,z) = \frac{s^{2s}z}{\Gamma(s)} \int_0^\infty e^{-s^2z^{1/s}/t} e^{-tL} u(x) \frac{dt}{t^{1+s}}
\]
and satisfies $\|U(\cdot, z)\|_{L^\infty(\Omega)} \leq M \|u\|_{L^{\infty}(\Omega)}$ for some $M >0$. 
Moreover, $u \in \Dom(L^s)$ if and only if
\[
\lim_{z \to 0} \frac{U(x,z) - U(x,0)}{z} \equiv \partial_{z}U(x,0) \quad \hbox{exists in}~C_0(\Omega)
\]
and, in this case,
\[
 \partial_{z}U(x,0)  = -d_s L^su
\]
where $d_s = \frac{s^{2s}\Gamma(1-s)}{\Gamma(1+s)}>0$ and, furthermore,  
 $U$ is the unique classical solution to the initial boundary value extension problem
\[
\begin{cases}
a^{ij}(x) \partial_{ij} U + z^{2-\frac{1}{s}} \partial_{zz} U = 0 & \hbox{in}~\Omega \times \{z>0\}\\
U(x,0) = u(x) & \hbox{on}~\Omega \times \{z=0\}\\
\partial_{z}U(x,0)  = -d_s L^su & \hbox{on}~\Omega \times \{z=0\}\\
U= 0 & \hbox{on}~\partial\Omega \times \{z \geq 0\}.
\end{cases}
\]
\end{thm}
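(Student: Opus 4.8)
The plan is to obtain Theorem~\ref{thm:extension} as a special case of the abstract extension characterization of \cite[Theorem 1.1]{Biswas-Stinga} (see also \cite{Gale}), applied with generator $A = L$ on the Banach space $X = C_0(\Omega)$, followed by an explicit change of variables. First I would record that $L = -a^{ij}(x)\partial_{ij}$, with the domain fixed in Section~\ref{sec:fractional-powers}, generates a uniformly bounded $C_0$-semigroup $\{e^{-tL}\}_{t\geq0}$ on $C_0(\Omega)$: this is \cite[Proposition 4.7]{Arendt} and uses the uniform exterior cone condition together with the continuity, boundedness and ellipticity \eqref{eq:ellipticity} of the coefficients. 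Thus $L$ is an admissible generator for the abstract theory, which produces a function $V=V(x,y)$, $y>0$, solving the canonical degenerate extension problem $V_{yy} + \tfrac{1-2s}{y}V_y = LV$ in $\Omega\times\{y>0\}$ with $V(x,0)=u(x)$ and $V=0$ on $\partial\Omega\times[0,\infty)$, given by a semigroup integral, characterizes $\Dom(L^s)$ through the existence of the weighted conormal derivative $-\lim_{y\to0}y^{1-2s}V_y$ in $C_0(\Omega)$, and furnishes the corresponding uniqueness statement.

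Next I would carry out the substitution $z = (y/(2s))^{2s}$ (the change of variables used in the proof of Proposition~\ref{prop:H regularity}), under which the canonical equation becomes $a^{ij}(x)\partial_{ij}U + z^{2-1/s}\partial_{zz}U = 0$ in $\Omega\times\{z>0\}$ for $U(x,z):=V(x,y)$ — the first-order term disappearing because the power $2s$ is tuned to the weight $1-2s$, and the constant being chosen so that the remaining second-order coefficient is exactly $z^{2-1/s}$ — and under which the weighted conormal derivative at $y=0$ turns into a plain normal derivative $\partial_z$ at $z=0$. A direct computation then shows that the semigroup integral for $V$ becomes
\[
U(x,z) = \frac{s^{2s}z}{\Gamma(s)}\int_0^\infty e^{-s^2 z^{1/s}/t}\,e^{-tL}u(x)\,\frac{dt}{t^{1+s}}.
\]
The bound $\|U(\cdot,z)\|_{L^\infty(\Omega)}\leq M\|u\|_{L^\infty(\Omega)}$ and the regularity $U\in C^\infty((0,\infty);\Dom(L))\cap C([0,\infty);C_0(\Omega))$ follow from $\|e^{-tL}\|\leq M$, from differentiating under the integral sign for $z$ in compact subsets of $(0,\infty)$, and from the identity $\frac{s^{2s}z}{\Gamma(s)}\int_0^\infty e^{-s^2 z^{1/s}/t}\,\frac{dt}{t^{1+s}} = 1$ (a one-line substitution $r = s^2 z^{1/s}/t$ together with $\int_0^\infty e^{-r}r^{s-1}\,dr = \Gamma(s)$), which exhibits $t\mapsto \frac{s^{2s}z}{\Gamma(s)}e^{-s^2 z^{1/s}/t}t^{-1-s}$ as a probability density for each fixed $z>0$.

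Using this normalization I would write, for $z>0$,
\[
\frac{U(x,z)-u(x)}{z} = \frac{s^{2s}}{\Gamma(s)}\int_0^\infty e^{-s^2 z^{1/s}/t}\big(e^{-tL}u(x)-u(x)\big)\,\frac{dt}{t^{1+s}}
\]
and let $z\to0$, so that $e^{-s^2 z^{1/s}/t}\uparrow 1$. The equivalence between the existence of $\lim_{z\to0}(U(x,z)-u(x))/z$ in $C_0(\Omega)$ and the Berens--Butzer--Westphal criterion \eqref{eq:As-domain}--\eqref{eq:dom} — an Abelian/Tauberian statement comparing the smooth exponential cut-off $e^{-s^2 z^{1/s}/t}$ with the sharp cut-off $\mathbf 1_{\{t>\eps\}}$ of \eqref{eq:semi-defn}, and not a mere dominated-convergence argument since $t\mapsto e^{-tL}u(x)-u(x)$ need not be absolutely integrable against $t^{-1-s}\,dt$ — is precisely what \cite[Theorem 1.1]{Biswas-Stinga} supplies. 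When it holds, the limit equals $\frac{s^{2s}\Gamma(-s)}{\Gamma(s)}L^s u$, which is $-d_s L^s u$ with $d_s = \frac{s^{2s}\Gamma(1-s)}{\Gamma(1+s)}$ after using $\Gamma(-s)=-\Gamma(1-s)/s$ and $\Gamma(s)=\Gamma(1+s)/s$. Finally, uniqueness of $U$ as a classical solution of the full initial boundary value problem, with both Cauchy data $U(x,0)=u$ and $\partial_z U(x,0)=-d_s L^s u$ prescribed on $\{z=0\}$ and $U=0$ on $\partial\Omega\times[0,\infty)$, is the transcription through the same change of variables of the uniqueness assertion of the abstract theorem; alternatively one obtains it from a maximum principle for the degenerate operator $a^{ij}\partial_{ij}+z^{2-1/s}\partial_{zz}$ once both Cauchy data are fixed.

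The main point requiring care is purely bookkeeping around the change of variables $z = (y/(2s))^{2s}$: one must check that it carries the class $C^\infty((0,\infty);\Dom(L))\cap C([0,\infty);C_0(\Omega))$ correctly — the map is singular at $y=z=0$, so ``$C^1$ in $z$ up to $\{z=0\}$'' is strictly weaker than ``$C^1$ in $y$'' — that it reproduces exactly the exponent $2-1/s$ in the weight, the factor $s^2 z^{1/s}$ in the exponential, and the constant $d_s$, and that the limit defining $\partial_z U(x,0)$ is attained in the norm of $C_0(\Omega)$ rather than merely pointwise; this uniform convergence is the content that genuinely relies on the full strength of \cite[Theorem 1.1]{Biswas-Stinga} and is what makes \eqref{eq:dom} a true ``if and only if''.
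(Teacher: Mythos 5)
Your proposal is correct and follows exactly the route the paper indicates: apply the abstract extension characterization of \cite[Theorem 1.1]{Biswas-Stinga} to the generator $L$ on $C_0(\Omega)$ (made admissible by \cite[Proposition 4.7]{Arendt}) and carry out the change of variables $z=(y/(2s))^{2s}$, which is the one invoked from the proof of Proposition~\ref{prop:H regularity}. The paper gives no further argument beyond this citation and the pointer to the change of variables, so your fleshed-out version — including the verification of the normalization identity for the Poisson-type kernel, the computation $d_s = s^{2s}\Gamma(1-s)/\Gamma(1+s)$ via $\Gamma(-s)=-\Gamma(1-s)/s$ and $\Gamma(s)=\Gamma(1+s)/s$, and the caveat that the substitution is singular at $z=0$ so that $C^1$-in-$z$ regularity at the boundary is genuinely weaker than $C^1$-in-$y$ — matches the intended argument.
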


\section{Monge--Amp\`ere setting}\label{sec:MA}

In this section, we present background and preliminaries on the Monge--Amp\`ere geometry associated to $\Phi$ 
and set notation for the rest of the article. 
We refer the reader to \cite{Forzani,Gutierrez} for more details on the Monge--Amp\`ere geometry associated to general convex functions.

\subsection{Monge--Amp\`ere geometry}

For $0 < s < 1$, define the functions $\varphi:\R^n \to \R$ and $h:\R \to \R$ by 
\begin{equation}\label{eq:phi-h}
\varphi(x) = \frac{1}{2} |x|^2 \quad \hbox{and} \quad h(z) = \frac{s^2}{1-s} |z|^{\frac{1}{s}}. 
\end{equation}
Observe that $\varphi \in C^{\infty}(\R^n)$ and $h \in C^1(\R) \cap C^2(\R \setminus \{0\})$ are strictly convex functions. 
Define next the strictly convex function $\Phi:\R^{n+1} \to \R$ by
\begin{equation}\label{eq:Phi}
\Phi(x,z) = \varphi(x) + h(z).
\end{equation}

The Monge--Amp\`ere measure associated to a strictly convex function $\psi \in C^1(\R^n)$ is the Borel measure given by
\[
\mu_\psi(E) = |\nabla \psi(E)| \quad \hbox{for every Borel set}~E \subset \R^n,
\]
where $|A|$ denotes the Lebesgue measure of a measurable set $A \subset \R^n$. For Borel sets $I \subset \R$, $A \subset \R^n$, and $E \subset \R^{n+1}$, we have that
\[
\mu_h(I) = \int_I h''(z) \, dz, \quad \mu_\varphi(A) = |A|, \quad \hbox{and} \quad \mu_\Phi(E)= \int_E h''(z) \, dz \, dx,
\]
see \cite[Lemma 4.1]{Stinga-Vaughan}. 

The Monge--Amp\`ere quasi-distance associated to a strictly convex function $\psi \in C^1(\R^n)$ is
\[
\delta_{\psi}(x_0,x) = \psi(x) - \psi(x_0) - \langle \nabla \psi(x_0), x-x_0 \rangle. 
\]
By convexity, $\delta_{\psi} \geq 0$ and $\delta_{\psi}(x_0,x) = 0$ if and only if $x=x_0$. 
We use the term quasi-distance when there is a constant $K \geq 1$ such that
\[
\delta_{\psi}(x_1,x_2) \leq K (\min\{\delta_{\psi}(x_1,x_3),\delta_{\psi}(x_3,x_1)\} + \min\{\delta_{\psi}(x_2,x_3),\delta_{\psi}(x_3,x_2)\})
\]
for any $x_1,x_2,x_3 \in \R^n$. 
In the particular case of $\phi$, $h$, and $\Phi$ given above,
we note that
\begin{equation}\label{eq:distances}
\begin{aligned}
\delta_{\varphi}(x_0,x) &= \tfrac{1}{2} |x-x_0|^2\\
\delta_{h}(z_0,z) &= h(z) - h(z_0) - h'(z_0)(z-z_0) \\[.25em]
 \delta_{\Phi}((x_0,z_0),(x,z)) &= \delta_{\varphi}(x_0,x) + \delta_{h}(z_0,z).
\end{aligned}
\end{equation}
By \cite[Corollary 4.7]{Stinga-Vaughan}, $\delta_\varphi$, $\delta_h$, and $\delta_\Phi$ are indeed quasi-distances with constant $K$ depending only on $n$ (for $\delta_\varphi$ and $\delta_\Phi$) and $s$ (for $\delta_h$ and $\delta_\Phi$). 

The Monge--Amp\`ere section of radius $R>0$, centered at $x_0 \in \R^n$, associated to a strictly convex function $\psi \in C^1(\R^n)$ is given by
\[
S_\psi(x_0,R) = \{x \in \R^n : \delta_\psi(x_0,x)<R\}. 
\]
Since we are concerned specifically with $\varphi$, $h$, and $\Phi$, we adopt the following notation. 

\begin{notation}
Unless otherwise stated, we always use the following notation. 
\begin{itemize}
\item $x = (x_1,x_2,\dots, x_n) \in \R^n$, $z \in \R$.
\item $S_R(x) \subset \R^n$ is a section of radius $R>0$ associated to $\varphi$, centered at $x$. 
\item $S_R(z) \subset \R$ is a section of radius $R>0$ associated to $h$, centered at $z$.
\item $S_R(x,z) \subset \R^{n+1}$ is a section of radius $R>0$ associated to $\Phi$, centered at $(x,z)$. 
\end{itemize}
\end{notation}

Sections of radius $R>0$ associated to $\varphi$ are equivalent to Euclidean balls of radius $\sqrt{R}$ in the following way:
\begin{equation}\label{eq:x-MA-euclidean}
S_R(x_0) 
	= \{x \in \R^n : \tfrac{1}{2} |x- x_0|^2 < R \} 
	= B_{\sqrt{2R}}(x_0). 
\end{equation}

Sections of radius $R>0$ associated to $h$ are intervals in $\R$. 
Since $h''(z) = |z|^{\frac{1}{s}-2}$ is singular/degenerate near $z=0$ when $s \not= \frac{1}{2}$, in general, we cannot provide a precise relationship between the radius/center of the section in the Monge--Amp\`ere geometry and the radius/center of the interval in the Euclidean geometry. 
Nevertheless, we make note of two special cases.  
First, when the section is centered at the origin $z_0=0$, it is an interval of radius comparable to $R^s$:
\begin{equation*}\label{eq:z-MA-euclidean}
S_R(0) = \{ z \in \R : h(z) < R\} = \{ z \in \R : |z| < q_s R^s \} = B_{q_sR^s}(0), \quad q_s := \left( \frac{1-s}{s^2}\right)^s. 
\end{equation*} 
On the other hand, when separated from the set $\{z=0\}$, sections are comparable to intervals of radius $\sqrt{R}$: 

\begin{lem}\label{lem:balls-to-sections-away-from-0} 
Let $R>0$ and $z_0 \in \R \setminus \{z=0\}$. 
If $B_R(z_0) \subset \subset \{z\not=0\}$, then
\begin{equation}\label{eq:sec-awayfrom0}
B_R(z_0) \subset S_{\frac{\sigma}{2} R^2}(z_0) \quad \hbox{where}~\sigma := \sup_{B_R(z_0)} h''.  
\end{equation}
If $S_R(z_0) \subset \subset \{z\not=0\}$, then
\[
S_R(z_0) \subset \subset B_{\sqrt{2R/\tilde{\sigma}}}(z_0) \quad \hbox{where}~\tilde{\sigma} := \inf_{S_R(z_0)} h''.
\]
\end{lem}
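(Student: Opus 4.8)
The plan is to deduce both inclusions from the second-order Taylor expansion of $h$ with Lagrange remainder. Since $h \in C^2(\R \setminus \{0\})$, and both $B_R(z_0)$ and $S_R(z_0)$ are intervals lying in $\{z \neq 0\}$ (the section is an interval because $\delta_h(z_0,\cdot)$ is convex and vanishes at $z_0$), for each $z$ in the relevant interval there is a point $\zeta$ strictly between $z_0$ and $z$, hence in the same interval, with
\[
\delta_h(z_0,z) = h(z) - h(z_0) - h'(z_0)(z - z_0) = \tfrac{1}{2}\, h''(\zeta)\,(z - z_0)^2 .
\]
The rest of the argument uses only that $h''(z) = |z|^{\frac{1}{s}-2}$ is continuous and strictly positive on any compact subset of $\{z \neq 0\}$, so that $\sigma = \sup_{B_R(z_0)} h'' < \infty$ and $\tilde{\sigma} = \inf_{S_R(z_0)} h'' > 0$.

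For the first inclusion, fix $z \in B_R(z_0)$; then $\zeta \in B_R(z_0)$, so $h''(\zeta) \le \sigma$ and $\delta_h(z_0,z) \le \tfrac{\sigma}{2}|z-z_0|^2 < \tfrac{\sigma}{2}R^2$, that is, $z \in S_{\frac{\sigma}{2}R^2}(z_0)$, which is precisely \eqref{eq:sec-awayfrom0}. For the second inclusion, fix $z \in S_R(z_0)$, so $\delta_h(z_0,z) < R$; since $z_0$ and $z$ both lie in the interval $S_R(z_0)$, so does $\zeta$, whence $h''(\zeta) \ge \tilde{\sigma}$ and $R > \tfrac{\tilde{\sigma}}{2}(z-z_0)^2$, giving $|z - z_0| < \sqrt{2R/\tilde{\sigma}}$, i.e.\ $z \in B_{\sqrt{2R/\tilde{\sigma}}}(z_0)$.

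To upgrade this to the compact containment $S_R(z_0) \subset\subset B_{\sqrt{2R/\tilde{\sigma}}}(z_0)$, I would run the same estimate at the endpoints of $\overline{S_R(z_0)}$, which is a closed bounded interval inside $\{z\neq0\}$ by hypothesis: at an endpoint $z^\ast$ one has $\delta_h(z_0,z^\ast) = R$ while the associated Lagrange point still lies in the open interval $S_R(z_0)$, where $h''(z) = |z|^{\frac1s-2}$ is strictly larger than $\tilde{\sigma}$ when $s \neq \tfrac12$ by strict monotonicity in $|z|$; this forces $|z^\ast - z_0| < \sqrt{2R/\tilde{\sigma}}$, and the case $s=\tfrac12$ (where $h''\equiv1$) is elementary. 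The only genuinely delicate point in the whole proof is this bookkeeping — ensuring the Lagrange intermediate point always lands in the set over which the $\sup$ or $\inf$ of $h''$ is taken, and invoking strict monotonicity of $h''$ to close the compact containment — since the Taylor estimates themselves are routine.
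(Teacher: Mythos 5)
Your approach is essentially the same as the paper's: Taylor's theorem with Lagrange remainder, bounding $h''$ at the intermediate point above by $\sigma$ (for the first inclusion) and below by $\tilde{\sigma}$ (for the second). The paper's own proof, like the main body of yours, establishes only $S_R(z_0) \subset B_{\sqrt{2R/\tilde{\sigma}}}(z_0)$ and does not address the stated compact containment; you noticed this and tried to close the gap. Your strict-monotonicity argument for $s \neq \tfrac12$ is correct (the Lagrange point is interior to the open interval $S_R(z_0)$, and $\tilde{\sigma}$ is approached only at an endpoint, so $h''(\xi) > \tilde{\sigma}$ strictly, pushing the endpoints of $\overline{S_R(z_0)}$ into the open ball). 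But your dismissal of $s = \tfrac12$ as ``elementary'' is wrong: there $h'' \equiv 1$, hence $\tilde{\sigma} = 1$ and $S_R(z_0) = B_{\sqrt{2R}}(z_0) = B_{\sqrt{2R/\tilde{\sigma}}}(z_0)$ exactly, so the compact containment fails. This shows the ``$\subset\subset$'' in the lemma statement is an overstatement (literally false at $s = \tfrac12$); the correct conclusion, and the one both proofs actually deliver, is $S_R(z_0) \subset B_{\sqrt{2R/\tilde{\sigma}}}(z_0)$.
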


\begin{rem}
For $\frac{1}{2}<s<1$, the function $h''$ is singular at the origin, so 
if $0 \in\overline{B_R(z_0)}$, then $\sigma = +\infty$. 
For $0 < s<\frac{1}{2}$, the function $h''$ is instead degenerate at the origin, so if $0 \in\overline{S_R(z_0)}$, then $\tilde{\sigma} = 0$. 
In both of these cases, Lemma \ref{lem:balls-to-sections-away-from-0} is ineffectual. 
Of course, when $s = \frac{1}{2}$, sections are equivalent to Euclidean balls since $h(z) = \frac{1}{2}|z|^2$. 
\end{rem}

\begin{proof}[Proof of Lemma \ref{lem:balls-to-sections-away-from-0}]
Suppose first that $B_R(z_0) \subset \subset \{z\not=0\}$.
If $z \in B_R(z_0)$, then by Taylor's theorem, 
\[
\delta_h(z_0,z)
	= h(z) - h(z_0) - h'(z_0)(z-z_0)
	\leq  \frac{1}{2} \|h''\|_{L^{\infty}(B_R(z_0))}(z-z_0)^2 \leq \frac{\sigma}{2}R^2
\]
which shows that $z \in S_{\frac{\sigma}{2} R^2}(z_0)$.

Now suppose that $S_R(z_0) \subset \subset \{z\not=0\}$. 
If $z \in S_R(z_0)$, then by Taylor's theorem, there is some $\xi$ between $z$ and $z_0$ such that
\[
R > \delta_h(z_0,z)
	= h(z) - h(z_0) - h'(z_0)(z-z_0)
	= \frac{1}{2} h''(\xi)(z-z_0)^2 \geq \frac{\tilde{\sigma}}{2} (z-z_0)^2.
\]
It follows that $z \in B_{\sqrt{2R/\tilde{\sigma}}}(z_0)$. 
\end{proof}

\begin{rem}
From the proof of Lemma \ref{lem:balls-to-sections-away-from-0}, we see that the Monge--Amp\`ere distance $\delta_h$ is comparable to the Euclidean distance away from $\{z=0\}$. 
\end{rem}

There are often times  
when it is necessary to use cubes or cylinders instead of Euclidean balls, or in our case, Monge--Amp\`ere sections. 
To this end, we define a Monge--Amp\`ere cube of radius $R>0$ centered at $x \in \R^n$ associated to $\varphi$ by
\[
Q_R(x) = S_{\varphi_1}(x_1,R) \times \dots \times S_{\varphi_n}(x_n,R)
\]
where  $x=(x_1,\dots, x_n)$ and $\varphi_i:\R \to \R$ is defined by $\varphi_i(x) = \frac{1}{2} |x_i|^2$ for $i=1,\dots,n$. 
A Monge--Amp\`ere cube of radius $R>0$, centered at $(x,z) \in \R^{n+1}$ associated to $\Phi$ is given by
\[
Q_R(x,z) :=Q_R(x) \times S_R(z). 
\]
With this, we adopt the following notation for Monge--Amp\`ere cubes, cylinders, and rectangles, and other set related notation
that will be used throughout the rest of the paper. 

\begin{notation}\label{not:sets2}
Unless otherwise stated, we always use the following notation. 
\begin{itemize}
\item $Q_R(x) \subset \R^n$ is a Monge--Amp\`ere cube of radius $R>0$, centered at $x$.  
\item $Q_R(x,z) \subset \R^{n+1}$ is a Monge--Amp\`ere cube of radius $R>0$, centered at $(x,z)$.
\item $Q_R(x) \times S_r(z) \subset \R^{n}\times \R$ is a Monge--Amp\`ere rectangle of radius $R>0$, height $r>0$, centered at $(x,z)$.
\item $S_R(x) \times S_r(z) \subset \R^{n}\times \R$ is a Monge--Amp\`ere cylinder of radius $R>0$, height $r>0$, centered at $(x,z)$. 
\item If no center is specified, the center is the origin,~e.g.~$S_R \times S_R = S_R(0) \times S_R(0) \subset \R^{n} \times \R$. 
\item $T_R := S_R \times \{z=0\}$. 
\item $E^+ := E \cap \{z>0\}$ for a set $E \subset \R^{n+1}$ or $E \subset \R$.
\item $E^- := E \cap \{z<0\}$ for a set $E \subset \R^{n+1}$ or $E \subset \R$. 
\end{itemize}
\end{notation}

Note that sections, cylinders, and cubes are related in the following way
\begin{equation}\label{eq:section-cube}
S_R(x,z) \subset S_R(x) \times S_R(z) \subset Q_R(x) \times S_R(z) = Q_R(x,z),
\end{equation}
and similarly for cylinders and rectangles, see for example \cite[Lemma 10]{Forzani}.

We refer the interested reader to \cite[Section 4]{Stinga-Vaughan} for more foundational properties of the Monge--Amp\`ere geometry associated to $\varphi$, $h$, and $\Phi$ (especially Corollary 4.7 there). 
Here, we just recall two properties for sections associated to $h$ needed for our analysis and another on Monge--Amp\`ere cubes. 

First, since $h''(z) = |z|^{\frac{1}{s}-2}$ is a Muchenhoupt $A_\infty(\R)$ weight, we have the following. See \cite[Section 9.3]{Grafakos} for definitions and properties of the class $A_\infty(\R)$. 

\begin{lem}
\label{lem:A-infty}
Given $0 < \varepsilon<1$, there is $0 < \varepsilon_0<1$, depending only on $\varepsilon$ and $0 < s < 1$, such that for any section $S_R(z)$ and any measurable set $E \subset S_R(z)$,
\[
\frac{\abs{E}}{\abs{S_R(z)}} < \varepsilon_0 \quad \hbox{implies} \quad \frac{\mu_h(E)}{\mu_h(S_R(z))} < \varepsilon.
\]
\end{lem}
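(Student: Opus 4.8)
The plan is to reduce the assertion to the standard $A_\infty$ property of the weight $w(z) := h''(z) = |z|^{1/s - 2}$ on $\mathbb{R}$, by showing that every section $S_R(z_0)$ associated to $h$ is an interval, that $\mu_h$ is exactly the measure $w(z)\,dz$ restricted to such intervals (this is already recorded in the excerpt: $\mu_h(I) = \int_I h''(z)\,dz$), and that the $A_\infty$ constant is uniform over the family of all sections. Recall the classical characterization (see \cite[Section 9.3]{Grafakos}): $w \in A_\infty(\mathbb{R})$ if and only if for every $\varepsilon > 0$ there is $\varepsilon_0 > 0$ such that for every interval $I$ and every measurable $E \subset I$, $|E|/|I| < \varepsilon_0$ implies $w(E)/w(I) < \varepsilon$, where $w(E) = \int_E w$. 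So once we know $w = h'' \in A_\infty(\mathbb{R})$ and that sections $S_R(z_0)$ are intervals, the conclusion is immediate with $\varepsilon_0$ the constant furnished by the $A_\infty$ characterization of $w$.

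First I would verify that $w(z) = |z|^{1/s - 2}$ is an $A_\infty(\mathbb{R})$ weight. Since $0 < s < 1$, the exponent $\beta := 1/s - 2$ satisfies $\beta > -1$, so $w$ is a power weight $|z|^\beta$ with $\beta > -1$; it is classical that such weights belong to $A_p(\mathbb{R})$ for $p$ large enough (precisely $p > \beta + 1$ when $\beta \geq 0$, and $w \in A_1$ when $-1 < \beta \leq 0$), hence $w \in A_\infty(\mathbb{R}) = \bigcup_{1 \le p < \infty} A_p(\mathbb{R})$. I would state this and give the one-line reason, or simply cite \cite[Example 9.1.7]{Grafakos} or the analogous statement. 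Next, I would note that $S_R(z_0) = \{ z : \delta_h(z_0,z) < R\}$ is a bounded open interval: the function $z \mapsto \delta_h(z_0,z) = h(z) - h(z_0) - h'(z_0)(z - z_0)$ is strictly convex (as $h$ is strictly convex), vanishes at $z = z_0$, and tends to $+\infty$ as $|z| \to \infty$, so its sublevel set at height $R$ is a nonempty bounded interval. Finally, combining these with $\mu_h(E) = \int_E h''(z)\,dz = w(E)$ for every Borel $E$ (and in particular $\mu_h(S_R(z_0)) = w(S_R(z_0))$), the desired implication is exactly the $A_\infty$ characterization applied to the interval $I = S_R(z_0)$: given $\varepsilon$, pick $\varepsilon_0$ from the $A_\infty$ property of $w$; then $|E|/|S_R(z_0)| < \varepsilon_0$ forces $\mu_h(E)/\mu_h(S_R(z_0)) = w(E)/w(I) < \varepsilon$. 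The dependence of $\varepsilon_0$ only on $\varepsilon$ and $s$ follows because the $A_\infty$ constant of $w = |z|^{1/s-2}$ depends only on $\beta = 1/s - 2$, hence only on $s$.

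There is essentially no serious obstacle here: the only point requiring a modicum of care is confirming that the $A_\infty$ characterization of \cite{Grafakos} applies to the \emph{full} family of bounded intervals (which it does, since the characterization quantifies over all intervals $I \subset \mathbb{R}$), so that no section is excluded, and that the constant is genuinely uniform — this is automatic from the definition of the $A_\infty$ class. If one wishes to avoid invoking the characterization as a black box, the slightly more hands-on route is to show directly that power weights $|z|^\beta$ with $\beta > -1$ satisfy the reverse Hölder inequality on every interval with constant depending only on $\beta$, and then derive the $|E|/|I|$–vs–$w(E)/w(I)$ implication from reverse Hölder by Hölder's inequality; but citing \cite[Section 9.3]{Grafakos} for the equivalence of the various formulations of $A_\infty$ is cleaner and is what the excerpt already signals.
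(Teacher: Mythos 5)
Your proposal is correct and follows exactly the route the paper signals: the paper gives no separate proof of this lemma beyond the one-line observation that $h''(z) = |z|^{1/s-2}$ is a Muckenhoupt $A_\infty(\R)$ weight (with a pointer to \cite[Section 9.3]{Grafakos}), which you have simply unpacked by checking $\beta = 1/s - 2 > -1$, noting that sections $S_R(z_0)$ are bounded intervals by strict convexity of $\delta_h(z_0,\cdot)$, and invoking the standard measure-ratio characterization of $A_\infty$ on intervals with a uniform constant depending only on $\beta$, hence only on $s$.
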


The next result is a consequence of \cite[Theorem 5]{Forzani} (see \cite[Corollary 4.7]{Stinga-Vaughan}).

\begin{lem}
\label{lem:doubling}
There exist constants constants $C,c>0$, depending only on $s$, such that
\[
cR \leq |S_R(z)|\mu_h(S_R(z)) \leq CR
\]
for all sections $S_R(z)$.
\end{lem}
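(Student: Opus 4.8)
The plan is to reduce the claim to the two special cases in which the Euclidean description of sections associated to $h$ is explicit, namely sections centered at the origin and sections well-separated from $\{z=0\}$, and to interpolate between them by a scaling argument. Recall that $\mu_h(I) = \int_I |z|^{1/s-2}\,dz$ and that $h(z) = \tfrac{s^2}{1-s}|z|^{1/s}$, so the problem has a natural parabolic-type scaling: if $z_0 = 0$, then $S_R(0) = B_{q_s R^s}(0)$ with $q_s = ((1-s)/s^2)^s$, hence $|S_R(0)| = 2 q_s R^s$ and $\mu_h(S_R(0)) = \int_{-q_sR^s}^{q_sR^s}|z|^{1/s-2}\,dz = \tfrac{2s}{1-s}(q_sR^s)^{1/s-1} = \tfrac{2s}{1-s}q_s^{1/s-1}R^{1-s}$; multiplying gives a constant multiple of $R$, with the constant depending only on $s$. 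This disposes of the case $z_0=0$ directly, and by monotonicity of $|S_R(z_0)|$ and $\mu_h(S_R(z_0))$ in the position of the center (the worst case being $z_0=0$, where $h''$ concentrates or vanishes most strongly) one expects the origin-centered estimate to control nearby centers as well.

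First I would handle a general center $z_0$. Write $S_R(z_0) = (z_0 - \ell^-, z_0 + \ell^+)$ for the endpoints determined by $\delta_h(z_0, z) = R$; by strict convexity of $h$ these are unique and depend continuously on $(z_0,R)$. The key structural input is the $A_\infty$ character of $h''$ recorded in Lemma \ref{lem:A-infty}, together with the doubling property of the quasi-distance $\delta_h$ from \cite[Corollary 4.7]{Stinga-Vaughan}: these guarantee that $\mu_h$ is doubling on sections, i.e.\ $\mu_h(S_{2R}(z_0)) \le C \mu_h(S_R(z_0))$, and that sections are nested and engulfing in the standard Monge--Amp\`ere sense. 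Then I would exploit the one-dimensional identity that ties $|S_R(z)|$ and $\mu_h(S_R(z))$ together through $R$ itself: differentiating $R \mapsto \mu_h(S_R(z_0))$ and using $h''>0$ away from $0$, one finds that the product $|S_R(z_0)|\,\mu_h(S_R(z_0))$ changes by bounded factors as $R$ doubles, which combined with the explicit computation at a single scale (for instance the scale at which the section first touches $\{z=0\}$, where the origin-centered computation above applies after translating by $z_0$) pins the product between $cR$ and $CR$.

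The cleanest route, and the one I would actually carry out, is a direct two-case split mirroring Lemma \ref{lem:balls-to-sections-away-from-0}. Case 1: $0 \in \overline{S_R(z_0)}$. Then $|z_0|^{1/s} \le \tfrac{1-s}{s^2}R$ up to a dimensional constant, so $S_R(z_0) \subset S_{C R}(0)$ and conversely $S_{cR}(0) \subset S_R(z_0)$ for constants depending only on $s$, by the engulfing property; applying the origin-centered computation to both inclusions sandwiches both $|S_R(z_0)|$ and $\mu_h(S_R(z_0))$ between constant multiples of $R^s$ and $R^{1-s}$ respectively, and the product estimate follows. Case 2: $S_R(z_0) \subset\subset \{z \ne 0\}$. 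Here Lemma \ref{lem:balls-to-sections-away-from-0} gives $S_R(z_0) \subset B_{\sqrt{2R/\tilde\sigma}}(z_0)$ and, symmetrically, $B_{c\sqrt R}(z_0) \subset S_R(z_0)$ with the relevant suprema/infima of $h''$ over the section all comparable to $h''(z_0) = |z_0|^{1/s-2}$ (since $h''$ varies by a bounded factor on a section not meeting the origin — this is precisely where the $A_\infty$ bound of Lemma \ref{lem:A-infty} is used). Consequently $|S_R(z_0)| \asymp \sqrt{R}\,|z_0|^{1-1/s}$ and $\mu_h(S_R(z_0)) \asymp \sqrt R \,|z_0|^{1/s-1}$, whose product is $\asymp R$. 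I expect the main obstacle to be the bookkeeping in Case 1 when $s < \tfrac12$, where $h''$ degenerates at the origin: one must check that the lower bound $\mu_h(S_R(z_0)) \ge cR^{1-s}$ genuinely survives — it does, because even a degenerate $A_\infty$ weight has positive mass on an interval containing the origin — but making the constants uniform in $z_0$ across the transition between the two cases requires care, and is exactly the point where invoking the $A_\infty$ property of $h''$ (Lemma \ref{lem:A-infty}) rather than naive pointwise bounds on $h''$ becomes essential.
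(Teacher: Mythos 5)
The paper does not prove this lemma: it simply records it as a consequence of \cite[Theorem 5]{Forzani} via \cite[Corollary 4.7]{Stinga-Vaughan}, so your proposal is the only detailed argument on the table, and it contains a genuine gap in the case split you lean on most.

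The problem is Case 2. You assert that when $S_R(z_0)\subset\subset\{z\ne 0\}$ ``the relevant suprema/infima of $h''$ over the section [are] all comparable to $h''(z_0)$\ldots since $h''$ varies by a bounded factor on a section not meeting the origin --- this is precisely where the $A_\infty$ bound of Lemma \ref{lem:A-infty} is used.'' This is false, and the use of $A_\infty$ here is a misattribution. Lemma \ref{lem:A-infty} compares the $\mu_h$-measure and the Lebesgue measure of subsets of a section; it says nothing about pointwise comparability of the weight $h''$ across a section. Moreover the pointwise claim is simply not true: for $s<\tfrac12$ (say $h''(z)=|z|^p$ with $p=\tfrac1s-2>0$), fix $z_0=1$ and let $R\nearrow \delta_h(1,0)$, so that $S_R(z_0)=(\eps,b)$ with $\eps\to 0^+$; then $S_R(z_0)\subset\subset\{z\ne 0\}$ for every $\eps>0$ but $h''(\eps)/h''(b)=(\eps/b)^p\to 0$. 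The same phenomenon occurs with the blow-up when $s>\tfrac12$. The conclusion $|S_R(z_0)|\,\mu_h(S_R(z_0))\asymp R$ does remain true in this borderline regime --- one can check it by direct computation of $\int_a^b |z|^p\,dz$ against $(b-a)$, noting that the endpoints are forced into a bounded configuration by the constraint $\delta_h(z_0,0)\approx R$ --- but the route you took does not establish it. A correct two-case argument must widen Case 1 to a quantitatively ``nearby'' regime (e.g. $\delta_h(z_0,0)\le KR$ for a fixed constant $K=K(s)$, so that the origin-centered computation controls everything by a direct endpoint comparison), and restrict Case 2 to the genuinely well-separated regime $\delta_h(z_0,0)>KR$, where one can then show by elementary calculus (not by $A_\infty$) that the endpoints of $S_R(z_0)$ satisfy $b/a\le C(s,K)$, whence $h''$ really is comparable across the section. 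A smaller but related flaw is in Case 1: the inclusion $S_{cR}(0)\subset S_R(z_0)$ cannot hold when $0$ is an endpoint of $S_R(z_0)$, since $S_{cR}(0)$ is an interval symmetric about $0$; you need a one-sided containment argument there instead.
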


Lastly, we have the following version of \cite[Theorem 3.3.10]{Gutierrez} adapted
to our setting (see also \cite[Corollary 4.7]{Stinga-Vaughan}). 

\begin{lem}
\label{lem:Guti} \mbox{}
\begin{enumerate}[$(1)$]
\item Let $x_0 \in \R^n$. 
There exist constants $C_0>0$, $p_0 \geq 1$, depending on $n$, such that for $0 < r_1< r_2 \leq 1$, $t>0$ and $x_1 \in Q_{r_1t}(x_0)$, we have that
\[
Q_{C_0(r_2-r_1)^{p_0}t}(x_1) \subset Q_{r_2t}(x_0).
\]
\item Let $z_0 \in \R$. 
There exist constants $C_1>0$, $p_1 \geq 1$, depending on $s$, such that for $0 < r_1< r_2 \leq 1$, $t>0$ and $z_1 \in S_{r_1t}(z_0)$, we have that
\[
S_{C_1(r_2-r_1)^{p_1}t}(z_1) \subset S_{r_2t}(z_0).
\]
\end{enumerate}
\end{lem}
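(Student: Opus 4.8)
The plan is to handle the two statements separately, reducing each to a one-dimensional containment for sections of a strictly convex $C^1$ function whose Monge--Amp\`ere measure is doubling. For part~$(1)$ this one-dimensional statement concerns genuine Euclidean intervals and is elementary, while for part~$(2)$ the singular/degenerate behaviour of $h''$ at $z=0$ rules out a direct comparison with the Euclidean metric and forces us to rely on the uniform doubling of $\mu_h$ instead.

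For part~$(1)$, the computation is direct: since $S_{\varphi_i}(a,R) = (a - \sqrt{2R},\, a + \sqrt{2R})$, the cube $Q_R(x_0)$ is the Euclidean cube centered at $x_0$ with half-side $\sqrt{2R}$. If $x_1 \in Q_{r_1 t}(x_0)$ then $|x_{1,i} - x_{0,i}| < \sqrt{2 r_1 t}$ for every $i$, so for $y \in Q_\rho(x_1)$ the triangle inequality gives $|y_i - x_{0,i}| < \sqrt{2\rho} + \sqrt{2 r_1 t}$, whence $Q_\rho(x_1) \subset Q_{r_2 t}(x_0)$ as soon as $\sqrt{\rho} \leq \sqrt{t}\,(\sqrt{r_2} - \sqrt{r_1})$. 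Since $\sqrt{r_2} - \sqrt{r_1} = (r_2-r_1)/(\sqrt{r_2} + \sqrt{r_1}) \geq (r_2-r_1)/2$ for $0 < r_1 < r_2 \leq 1$, the choice $p_0 = 2$, $C_0 = \tfrac14$ works (and the constants may in fact be taken independent of $n$).

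For part~$(2)$, I would first reduce to $t = 1$: the homogeneity $h(\lambda z) = \lambda^{1/s} h(z)$, $\lambda > 0$, yields $\delta_h(\lambda z_0, \lambda z) = \lambda^{1/s}\delta_h(z_0,z)$, hence $\lambda S_R(z_0) = S_{\lambda^{1/s} R}(\lambda z_0)$; choosing $\lambda = t^{-s}$ reduces the asserted inclusion to the case $t = 1$, $0 < r_1 < r_2 \leq 1$, $z_0 \in \R$ arbitrary. Then I would invoke the engulfing property of sections of a strictly convex $C^1$ function on $\R$ with doubling Monge--Amp\`ere measure, that is, \cite[Theorem 3.3.10]{Gutierrez} (see also \cite[Corollary 4.7]{Stinga-Vaughan}): the needed doubling of $\mu_h$ holds with a constant uniform in both the center and the radius by Lemma~\ref{lem:doubling} (equivalently, since $h''(z) = |z|^{1/s-2} \in A_\infty(\R)$, cf.\ Lemma~\ref{lem:A-infty}), and this produces constants $C_1 > 0$, $p_1 \geq 1$ depending only on that doubling constant, hence only on $s$.

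The main obstacle sits entirely at $\{z=0\}$ in part~$(2)$. Away from the origin one could simply rerun the argument of part~$(1)$ via Lemma~\ref{lem:balls-to-sections-away-from-0}, because there $\delta_h$ is comparable to the Euclidean distance; but near $z = 0$ the weight $h'' = |z|^{1/s-2}$ is singular for $\tfrac12 < s < 1$ and degenerate for $0 < s < \tfrac12$, so sections become badly eccentric relative to Euclidean intervals and no metric comparison survives. What does survive, uniformly in the section, is the doubling of $\mu_h$, and that is exactly the hypothesis under which the geometric engulfing argument of \cite{Gutierrez} proceeds with no appeal to the Euclidean structure; this is the role of the citation.
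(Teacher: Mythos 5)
Your proposal is correct, and for part $(2)$ it follows the same route as the paper, which simply records the lemma as ``the following version of \cite[Theorem 3.3.10]{Gutierrez} adapted to our setting (see also \cite[Corollary 4.7]{Stinga-Vaughan})'' without giving an argument; you supply the missing verification that the hypotheses of that engulfing theorem are met by noting the uniform doubling of $\mu_h$ from Lemma~\ref{lem:doubling} (equivalently the $A_\infty$ property of $h''$ from Lemma~\ref{lem:A-infty}), and the homogeneity $\delta_h(\lambda z_0,\lambda z)=\lambda^{1/s}\delta_h(z_0,z)$ is a valid (though, since Guti\'errez's constants are already uniform in $t$, not strictly necessary) normalization.

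Where you genuinely diverge from the paper is part $(1)$: the paper folds it into the same citation, whereas you exploit that $Q_R(x_0)$ is exactly the Euclidean $\ell^\infty$-ball of half-side $\sqrt{2R}$ and run the coordinatewise triangle inequality directly, landing on the explicit constants $p_0=2$, $C_0=\tfrac14$, independent of $n$. This is cleaner than passing through the general Monge--Amp\`ere engulfing machinery (which, applied to $\varphi$, would naturally yield the statement for sections, i.e.\ Euclidean balls, and then require a ball-to-cube comparison that introduces an $n$-dependence). The computation $\sqrt{\rho}+\sqrt{r_1 t}\le\sqrt{r_2 t}$ together with $\sqrt{r_2}-\sqrt{r_1}=(r_2-r_1)/(\sqrt{r_1}+\sqrt{r_2})\ge (r_2-r_1)/2$ for $r_1<r_2\le 1$ is correct, and your observation that no analogous elementary argument is available near $\{z=0\}$ in part $(2)$ -- because $h''=|z|^{1/s-2}$ destroys the comparison of $\delta_h$ with the Euclidean metric -- accurately pins down why the doubling/engulfing theorem is needed there and not in part $(1)$.
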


\subsection{Monge--Amp\`ere H\"older spaces}\label{sec:Holder}

Now, we introduce H\"older spaces in the Monge--Amp\`ere geometry associated to $\varphi$ and $\Phi$ given in \eqref{eq:phi-h} and \eqref{eq:Phi}, respectively. 

Fix $0 < \alpha <1$. For a strictly convex function $\psi \in C^1(\R^n)$, we say that a function $u:\R^n \to \R$ is $\alpha$-H\"older continuous with respect to $\psi$ in a set $A \subset \R^n$ if
\[
|u(x) - u(x_0)| \leq C [ \delta_{\psi}(x_0,x)]^{\frac{\alpha}{2}} \quad \hbox{for all}~x,x_0 \in A. 
\]
where $\delta_{\psi}$ is the Monge--Amp\`ere quasi-distance associated to $\psi$. 
In this case, we write $u \in C_{\psi}^{\alpha}(A)$ and define the seminorm
\[
[u]_{C_{\psi}^{\alpha}(A)} := \sup_{\substack{x,x_0 \in A\\ x\not=x_0}} \frac{|u(x) - u(x_0)|}{[ \delta_{\psi}(x_0,x)]^{\frac{\alpha}{2}}}.
\]

Recalling \eqref{eq:distances}, the class $C_{\varphi}^{\alpha}(A)$ is the usual class of H\"older continuous functions, so we drop the $\varphi$ notation and simply write 
\[
C^{\alpha}(A) := C_{\varphi}^{\alpha}(A).
\] 
For $k \in \N \cup \{0\}$, the space $C^{k,\alpha}(A)$ is the H\"older space endowed with the norm
\[
\|u\|_{C^{k,\alpha}(A)} :=\|u\|_{C^k(A)} + \max_{|\beta|=k} [D^\beta u]_{C^{\alpha}(A)}. 
\]
We say that $u \in C^{k,\alpha}(x_0)$ for a point $x_0 \in A$ if there is a polynomial $P_{x_0}$ of degree $k$ such that, in the domain of $u$, 
\[
u(x) = P_{x_0}(x) + O(|x-x_0|^{k+\alpha}). 
\]

From the definition above, we have that $U \in C^{\alpha}_{\Phi}(E)$ for $E\subset \R^{n+1}$ if  
\[
|U(x,z) - U(x_0,z_0)| \leq C [ \delta_{\Phi}((x_0,z_0),(x,z))]^{\frac{\alpha}{2}} \quad \hbox{for all}~(x,z),(x_0,z_0) \in E.  
\]
\begin{rem}
As a consequence of Theorem \ref{thm:F-harnack}, we have that $C_s$-viscosity solutions, symmetric across $\{z=0\}$, to \eqref{eq:extension-intro} are in the class $C_{\Phi}^{\alpha_1}(E)$ for any subdomain $E \subset \subset \Omega \times \R$. 
\end{rem}

\begin{defn} \label{defn:MA-poly} 
We define Monge--Amp\`ere polynomials $P=P(x,z)$ with respect to $\Phi$ of order $k=0,1, 2$ in the following way. 
\begin{enumerate}
\item If $k=0$, then $P(x,z)$ is constant.
\item If $k=1$, then $P(x,z)$ is an affine function of $(x,z)$.
\item If $k=2$, then 
\[
P(x,z) = \frac{1}{2} \langle \mathcal{A}x,x \rangle  + \langle b,x \rangle z+ d h(z) + \ell(x,z)
\]
for some $n \times n$ matrix $\mathcal{A}$, vector $b \in \R^n$, constant $d \in \R$ and affine function $\ell(x,z)$. 
\end{enumerate}
\end{defn}

For $k=0,1,2$, we say that $U \in C^{k,\alpha}_\Phi(x_0,z_0)$ at a point $(x_0,z_0) \in E$
if there is a Monge--Amp\`ere polynomial $P_{(x_0,z_0)}$ with respect to $\Phi$ of order $k$ such that, in the domain of $U$,
\[
U(x,z) = P_{(x_0,z_0)}(x,z) + O(\delta_{\Phi}((x_0,z_0),(x,z))^{\frac{k+\alpha}{2}}). 
\]

\subsection{Scaling in the Monge--Amp\`ere geometry}\label{sec:scaling}

Lastly, we highlight how  Monge--Amp\`ere cylinders and the extension equation \eqref{eq:extensionPDE} scale. This is an important point for the proof of Schauder estimates. 

\begin{lem}\label{lem:scaling sections}
For any $(x_0,z_0) \in \R^{n+1}$ and any $R, r, \rho >0$, 
\[
(x,z) \in S_R(x_0) \times S_r(z_0) \quad \hbox{if and only if}\quad (\rho x, \rho^{2s} z) \in S_{\rho^2R}(\rho x_0) \times S_{\rho^2 r} (\rho^{2s}z_0),
\]
and similarly for Monge--Amp\`ere sections, cubes, and rectangles. 
Consequently, for Monge--Amp\`ere cylinders centered at the origin, namely $(x_0,z_0) = (0,0)$, 
\begin{equation}\label{eq:scaling0}
(x,z) \in S_{R}\times S_{r}  \quad \hbox{if and only if} \quad (\rho x, \rho^{2s} z) \in S_{\rho^2R} \times S_{\rho^2r}. 
\end{equation}
\end{lem}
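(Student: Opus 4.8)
The plan is to reduce everything to the explicit homogeneity of the building blocks $\varphi$ and $h$ under the anisotropic dilation $D_\rho(x,z) := (\rho x, \rho^{2s}z)$, and then assemble the statement for cylinders, sections, cubes, and rectangles using \eqref{eq:distances}.

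First I would treat the horizontal direction. Since $\varphi(x) = \frac{1}{2}|x|^2$ is homogeneous of degree $2$, we have $\delta_\varphi(\rho x_0, \rho x) = \frac{1}{2}|\rho x - \rho x_0|^2 = \rho^2 \delta_\varphi(x_0,x)$, so $x \in S_R(x_0)$ if and only if $\rho x \in S_{\rho^2 R}(\rho x_0)$. The same computation applied coordinatewise to each $\varphi_i(x) = \frac{1}{2}|x_i|^2$ gives the analogous statement for Monge--Amp\`ere cubes: $x \in Q_R(x_0)$ if and only if $\rho x \in Q_{\rho^2 R}(\rho x_0)$.

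Next I would handle the vertical direction, which is the only place where the anisotropy $z \mapsto \rho^{2s}z$ enters. The key claim is $\delta_h(\rho^{2s}z_0, \rho^{2s}z) = \rho^2 \delta_h(z_0, z)$ for all $z_0, z \in \R$. Indeed, $h(\rho^{2s}w) = \frac{s^2}{1-s}|\rho^{2s}w|^{1/s} = \rho^2 h(w)$ and $h'(\rho^{2s}w) = \rho^{2-2s}h'(w)$ (valid including $w = 0$, since $h \in C^1(\R)$ and $h'(0)=0$); therefore
\[
\delta_h(\rho^{2s}z_0,\rho^{2s}z) = h(\rho^{2s}z) - h(\rho^{2s}z_0) - h'(\rho^{2s}z_0)(\rho^{2s}z - \rho^{2s}z_0) = \rho^2\bigl(h(z) - h(z_0) - h'(z_0)(z-z_0)\bigr),
\]
where the exponents match because $(2-2s) + 2s = 2$. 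Hence $z \in S_r(z_0)$ if and only if $\delta_h(z_0,z) < r$, if and only if $\delta_h(\rho^{2s}z_0, \rho^{2s}z) < \rho^2 r$, if and only if $\rho^{2s}z \in S_{\rho^2 r}(\rho^{2s}z_0)$.

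Finally I would combine these facts: the product statement for cylinders $S_R(x_0)\times S_r(z_0)$ and rectangles $Q_R(x_0)\times S_r(z_0)$ follows at once from the two previous paragraphs applied to each factor. For sections associated to $\Phi$, I use $\delta_\Phi((x_0,z_0),(x,z)) = \delta_\varphi(x_0,x) + \delta_h(z_0,z)$ from \eqref{eq:distances}; both summands scale by $\rho^2$, so $\delta_\Phi(D_\rho(x_0,z_0), D_\rho(x,z)) = \rho^2 \delta_\Phi((x_0,z_0),(x,z))$, which gives $(x,z)\in S_R(x_0,z_0)$ if and only if $D_\rho(x,z)\in S_{\rho^2 R}(D_\rho(x_0,z_0))$, and likewise for Monge--Amp\`ere cubes in $\R^{n+1}$. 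The centered case \eqref{eq:scaling0} is the special case $(x_0,z_0)=(0,0)$. There is no serious obstacle here; the only point requiring care is the bookkeeping of the exponents $2s$ and $2-2s$ in the scaling of $\delta_h$, in particular verifying that the linear term $h'(z_0)(z-z_0)$ transforms with the same weight $\rho^2$ as the other two terms.
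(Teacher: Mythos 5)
Your proof is correct and follows essentially the same route as the paper: the identities $h(\rho^{2s}w)=\rho^2 h(w)$ and $h'(\rho^{2s}w)=\rho^{2-2s}h'(w)$ are exactly what the paper verifies, and multiplying the defining inequalities by $\rho^2$ is the same as your observation that $\delta_\varphi$ and $\delta_h$ are homogeneous of degree $2$ under $D_\rho$. You merely spell out the cube, rectangle, and $\Phi$-section variants that the paper summarizes with ``and similarly.''
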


\begin{proof}
Observe that $(x,z) \in S_R(x_0) \times S_r(z_0)$ if and only if
\begin{equation}\label{eq:Phi-sec}
\frac{1}{2} \abs{x-x_0}^2 <R \quad \hbox{and} \quad h(z) - h(z_0) - h'(z_0)(z-z_0) < r.
\end{equation}
It is a simple computation to check that $\rho^2h(z) = h(\rho^{2s}z)$ and 
$\rho^{2-2s}h'(z) =h'(\rho^{2s}z)$. 
With this, we multiply \eqref{eq:Phi-sec} on both sides by $\rho^2$ to equivalently write
\[
\frac{1}{2} \abs{\rho x-\rho x_0}^2 <\rho^2 R \quad \hbox{and} \quad h(\rho^{2s} z) - h(\rho^{2s} z_0) - h'(\rho^{2s} z_0)(\rho^{2s} z-\rho^{2s} z_0) < \rho^2 r,
\]
which means that $(\rho x, \rho^{2s} z) \in S_{\rho^2R} (\rho x_0) \times S_{\rho^2 r}(\rho^{2s} z_0)$. 
\end{proof}

Consequently, the equation scales as follows. 

\begin{lem} \label{lem:scaling equation}
Let $R, r,\rho>0$. 
A function $U =U(x,z)$ is a solution to 
\[
\begin{cases}
a^{ij}(x)\partial_{ij}U + z^{2-\frac{1}{s}} \partial_{zz} U= 0 & \hbox{in}~S_{\rho^2R} \times S_{\rho^2r}^+\\
\partial_{z}U(x,0) = f(x) & \hbox{on}~T_{\rho^2R}
\end{cases}
\]
if and only if $V(x,z)  =U(\rho x, \rho^{2s} z)$ solves
\[
\begin{cases}
a^{ij}(\rho x)\partial_{ij}V + z^{2-\frac{1}{s}} \partial_{zz} V= 0 & \hbox{in}~S_{R} \times S_{r}^+\\
\partial_{z}V(x,0) = \rho^{2s}f(\rho x) & \hbox{on}~T_{R}.
\end{cases}
\]
\end{lem}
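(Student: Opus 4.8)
The plan is to verify Lemma \ref{lem:scaling equation} by a direct change of variables, deferring all the domain bookkeeping to Lemma \ref{lem:scaling sections}. Write $(y,w) = (\rho x, \rho^{2s} z)$, so that $V(x,z) = U(y,w)$. By the chain rule,
\[
\partial_{x_i x_j} V(x,z) = \rho^2 (\partial_{y_i y_j} U)(y,w), \qquad \partial_{zz} V(x,z) = \rho^{4s} (\partial_{ww} U)(y,w),
\]
and the one identity that makes the scaling work is the exponent arithmetic in the singular/degenerate coefficient: since $w^{2-\frac{1}{s}} = (\rho^{2s} z)^{2-\frac{1}{s}} = \rho^{4s-2} z^{2-\frac{1}{s}}$, one has $z^{2-\frac{1}{s}} \partial_{zz} V(x,z) = \rho^{4s} z^{2-\frac{1}{s}} (\partial_{ww} U)(y,w) = \rho^{2} w^{2-\frac{1}{s}} (\partial_{ww} U)(y,w)$.

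Combining this with $a^{ij}(\rho x)\partial_{ij} V(x,z) = \rho^2 a^{ij}(y)(\partial_{ij} U)(y,w)$ gives
\[
a^{ij}(\rho x) \partial_{ij} V(x,z) + z^{2-\frac{1}{s}} \partial_{zz} V(x,z) = \rho^2 \Big( a^{ij}(y) \partial_{ij} U + w^{2-\frac{1}{s}} \partial_{ww} U \Big)\Big|_{(y,w) = (\rho x, \rho^{2s} z)}.
\]
Since $\rho^2 > 0$, the left side vanishes at $(x,z)$ precisely when the bracket on the right vanishes at $(\rho x, \rho^{2s} z)$; and by Lemma \ref{lem:scaling sections}, $(x,z)$ ranges over $S_R \times S_r^+$ exactly as $(\rho x, \rho^{2s} z)$ ranges over $S_{\rho^2 R} \times S_{\rho^2 r}^+$. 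This gives the equivalence of the interior equations, the converse direction being the same computation with $\rho$ replaced by $1/\rho$.

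For the Neumann condition I would interpret $\partial_z(\cdot)(x,0)$ as the one-sided limit $\lim_{z \to 0^+} (\,\cdot\,(x,z) - \,\cdot\,(x,0))/z$; the substitution $w = \rho^{2s} z$ then yields $\partial_z V(x,0) = \rho^{2s} (\partial_w U)(\rho x, 0)$, so that $\partial_w U(y,0) = f(y)$ on $T_{\rho^2 R}$ is equivalent to $\partial_z V(x,0) = \rho^{2s} f(\rho x)$ on $T_R$, again using Lemma \ref{lem:scaling sections} for the $x$-sections. There is essentially no obstacle here: the one point requiring care is precisely the identity $\rho^2 (\rho^{2s})^{2-\frac{1}{s}} = \rho^{4s}$, which is the reason the anisotropic dilation $(x,z) \mapsto (\rho x, \rho^{2s} z)$, rather than an isotropic one, is the correct scaling for \eqref{eq:extensionPDE}. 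If one wants the statement at the level of $C_s$-viscosity solutions rather than classical ones, I would additionally note that this dilation maps the test class $C_s$ bijectively onto itself with the corresponding rescaling of $z^{2-\frac{1}{s}}\partial_{zz}\phi$, so the computation transfers verbatim to the viscosity formulation.
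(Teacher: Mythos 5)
Your computation is correct and is exactly the chain-rule verification the paper implicitly has in mind: the paper states this lemma without proof as an immediate "Consequently" after Lemma \ref{lem:scaling sections}, and you have filled in the exponent arithmetic $\rho^{4s} z^{2-\frac{1}{s}} = \rho^2 (\rho^{2s}z)^{2-\frac{1}{s}}$ and the one-sided Neumann limit correctly. The closing remark about the dilation preserving the class $C_s$ is a worthwhile addition, since the lemma is invoked later for $C_s$-viscosity solutions.
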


\section{Viscosity solutions to the extension problem}\label{sec:viscosity}

In this section, we define the correct notion of viscosity solutions to the degenerate/singular extension problem
\eqref{eq:extension} and present some fundamental properties. 

For simplicity, we present the notions and results of this section only in $S_1^+ \cup T_1$ where we recall from Notation \ref{not:sets2} that $S_1^+ = S_1(0,0)^+$ and $T_1 = S_1(0,0) \cap \{z=0\}$. 
Nevertheless, we remark that the everything holds in more general subdomains of $\R^{n+1}$, such as Monge--Amp\`ere sections, cylinders, cubes, and rectangles, that may intersect $\{z=0\}$. 

\subsection{Definitions and preliminary results}

We say that a continuous function $\phi$ touches $U$ from above (below) at a point $(x_0,z_0) \in S_1^+$ if there is an open convex set $E \subset S_1^+$ such that $(x_0,z_0) \in E$, 
\begin{equation}\label{eq:touches}
\phi(x_0,z_0) = U(x_0,z_0) \quad \hbox{and} \quad \phi\geq U\quad (\phi \leq U) \quad \hbox{in}~E. 
\end{equation}
Similarly, we say that $\phi$ touches $U$ from above (below) at a point $(x_0,0) \in T_1$ if there is an open convex set $E\subset S_1^+ \cup T_1$ such that $(x_0,0) \in E$ and \eqref{eq:touches} holds.

\begin{defn}[Class $C_s$]
We define the class $C_s$ by
\[
C_s = \{ \phi \in C^2(S_1^+) \cap C_x^2(\overline{S_1^+}) \cap C_z^1(\overline{S_1^+}): z^{2-\frac{1}{s}} \partial_{zz}\phi \in C(\overline{S_1^+})\}.
\]
\end{defn}

For example, the Monge--Amp\`ere polynomials of Definition \ref{defn:MA-poly} are in the class $C_s$. 

\begin{defn}[$C_s$-viscosity solutions]\label{defn:viscosity-sub}
Let $a^{ij}(x)$ be bounded, measurable functions satisfying \eqref{eq:ellipticity} and let $f \in C(\overline{T_1})$, $F \in C(\overline{S_1^+})$. 
We say that $U \in C(\overline{S_1^+})$ is a $C_s$-viscosity subsolution (supersolution) to
\begin{equation}\label{eq:sub}
\begin{cases}
a^{ij}(x) \partial_{ij} U + z^{2 -\frac{1}{s}} \partial_{zz} U = F & \hbox{in}~S_1^+\\
\partial_{z} U = f & \hbox{on}~T_1
\end{cases}
\end{equation}
if the following conditions hold.
\begin{enumerate}[start=1,label={(\roman*)}]
	\item \label{item:visc-i}
	If $(x_0,z_0) \in S_1^+$ and $\phi \in C^2(S_1^+)$ touches $U$ from above (below) at $(x_0,z_0)$, then
	\[
	a^{ij}(x) \partial_{ij} \phi(x_0,z_0)+ |z_0|^{2 -\frac{1}{s}} \partial_{zz} \phi(x_0,z_0)  \geq F(x_0,z_0)\quad (\leq F(x_0,z_0)).
	\]
	\item \label{item:visc-ii}
	If $(x_0,0) \in T_1$ and $\phi \in C_s$ touches $U$ from above (below) at $(x_0,0)$, then
	\[
	\partial_z \phi(x_0,0) \geq f(x_0) \quad (\leq f(x_0)).
	\]
\end{enumerate}
We say that $U$ is a $C_s$-viscosity solution if it is both a $C_s$-viscosity subsolution and a $C_s$-viscosity supersolution. 
\end{defn}

We now describe some basic properties of the class $C_s$, beginning with the regularity in $z$.

\begin{lem}\label{lem:expansion}
If $\phi \in C_s$, then $\partial_z\phi \in C^{\eta}(\overline{S_1^+})$ for $\eta = \min(1, \frac{1}{s}-1)$. In particular, 
\[
\partial_z\phi \in \begin{cases}
C^1_z(\overline{S_1^+}) & \hbox{if}~0 < s \leq 1/2\\
 C^{\frac{1}{s}-1}_z(\overline{S_1^+})& \hbox{if}~1/2 < s < 1.
 \end{cases}
 \]
Moreover, for $(x_0,0) \in T_1$, we have
\[
\phi(x,z) \leq \phi(x_0,0) +  A \cdot (x-x_0)+\partial_z\phi(x_0,0)z + B |x-x_0|^2 + Cz^{1+\eta}
\]
where $\|\nabla_x \phi \|_{L^{\infty}} \leq |A|$, $\|D^2_x\phi \|_{L^{\infty}} \leq 2B$, and $C= C(\phi,s)>0$. 
\end{lem}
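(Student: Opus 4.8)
\textbf{Proof proposal for Lemma \ref{lem:expansion}.}

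The plan is to isolate the one-variable regularity statement first, then bootstrap it to the multivariable Taylor-type expansion. Fix $x$ and set $g(z) = \partial_z \phi(x,z)$ for $z \in [0, z_*)$, where $z_*$ is chosen so that the relevant slice sits inside $\overline{S_1^+}$. By the definition of $C_s$, $g \in C^1$ on $(0,z_*)$ with $g'(z) = \partial_{zz}\phi(x,z) = z^{\frac{1}{s}-2} \bigl(z^{2-\frac{1}{s}} \partial_{zz}\phi(x,z)\bigr)$, and the bracketed factor extends continuously to $z=0$; call its sup bound $K$. Thus $|g'(z)| \leq K z^{\frac{1}{s}-2}$ for $z>0$. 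For $0 < z_1 < z_2 < z_*$ integrate: $|g(z_2) - g(z_1)| \leq K \int_{z_1}^{z_2} t^{\frac{1}{s}-2}\,dt$. When $\frac{1}{2} < s < 1$ the exponent $\frac{1}{s}-2 \in (-1,0)$, so the integral equals $\frac{s}{1-s}(z_2^{\frac{1}{s}-1} - z_1^{\frac{1}{s}-1})$, which is finite as $z_1 \to 0$ and, by concavity of $t \mapsto t^{\frac{1}{s}-1}$, is bounded by $\frac{s}{1-s}(z_2-z_1)^{\frac{1}{s}-1}$; this gives $g \in C^{\frac{1}{s}-1}$ in $z$ up to the boundary, and in particular $g$ extends continuously to $z=0$ (consistent with $\phi \in C^1_z(\overline{S_1^+})$). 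When $0 < s \leq \frac{1}{2}$ the exponent $\frac{1}{s}-2 \geq 0$, so $g'$ is actually bounded near $z=0$ and $g \in C^1_z$ up to the boundary, i.e. $\eta = 1$. In both regimes $\eta = \min(1, \frac{1}{s}-1)$ and the $x$-dependence is harmless because the constant $K$ can be taken uniform over $\overline{S_1^+}$ by continuity of $z^{2-\frac{1}{s}}\partial_{zz}\phi$ on the compact closure.

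For the expansion, write $\phi(x,z) - \phi(x_0,0)$ as the sum of the horizontal increment $\phi(x,z) - \phi(x_0,z)$ and the vertical increment $\phi(x_0,z) - \phi(x_0,0)$. For the horizontal part, $\phi(\cdot, z) \in C^2_x(\overline{S_1^+})$ uniformly in $z$, so Taylor's theorem gives $\phi(x,z) - \phi(x_0,z) = \nabla_x\phi(x_0,z)\cdot(x-x_0) + O(\|D^2_x\phi\|_{L^\infty}|x-x_0|^2)$; replacing $\nabla_x\phi(x_0,z)$ by $\nabla_x\phi(x_0,0)$ costs at most $\|D^2_{xz}\phi\|_{L^\infty}\,|z|\,|x-x_0|$ — wait, this requires control on the mixed derivative, which $C_s$ does not directly give, so instead I would absorb the $z$-dependence of $\nabla_x\phi$ differently: since $\nabla_x\phi$ is continuous on the compact $\overline{S_1^+}$ and we only need an $O(\cdot)$ inequality, I bound $\nabla_x\phi(x_0,z)\cdot(x-x_0) \leq |A||x-x_0|$ with $|A| = \|\nabla_x\phi\|_{L^\infty}$ and fold any slack into the $B|x-x_0|^2 + Cz^{1+\eta}$ remainder using Young's inequality where needed. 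For the vertical part, integrate twice: $\phi(x_0,z) - \phi(x_0,0) = \partial_z\phi(x_0,0)z + \int_0^z (g(t) - g(0))\,dt$, and by the Hölder bound just established $|g(t) - g(0)| \leq C t^\eta$, so the integral is bounded by $\frac{C}{1+\eta} z^{1+\eta}$. Collecting terms yields exactly the claimed inequality with $\|\nabla_x\phi\|_{L^\infty} \leq |A|$, $\|D^2_x\phi\|_{L^\infty} \leq 2B$, and $C = C(\phi,s)$.

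The main obstacle is the borderline integrability in the degenerate regime $\frac{1}{2} < s < 1$: one must verify that $\int_0^z t^{\frac{1}{s}-2}\,dt$ converges and produces precisely the Hölder exponent $\frac{1}{s}-1$, and that this is sharp — the naive guess $C^1_z$ fails here, which is exactly the phenomenon flagged in the introduction (Caffarelli--Silvestre's Remark 4.3, non-uniqueness of $C^2$ test functions). A secondary subtlety is keeping all constants uniform in $x$ and tracking the dependence so that the stated bounds $|A|, 2B$ involve genuine $L^\infty$ norms of derivatives of $\phi$ rather than local quantities; this follows from compactness of $\overline{S_1^+}$ together with the continuity assumptions built into the class $C_s$.
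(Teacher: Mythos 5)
Your argument follows the paper's proof almost verbatim in its two-step structure: first, integrate the bound $|\partial_{zz}\phi(x,z)|\le C z^{\frac{1}{s}-2}$ from $0$ to $z$ (the paper writes $C\,h''(z)$ and integrates to $C\,h'(z)=Cz^{\frac{1}{s}-1}$, which is identical); second, expand $\phi$ to second order in $x$ around $x_0$ and to first order in $z$ around $0$ and add the two. On the one-variable step you are in fact a bit more thorough than the paper: the paper only records the modulus at the boundary point, $|\partial_z\phi(x,z)-\partial_z\phi(x,0)|\le C z^{\frac{1}{s}-1}$, while you additionally verify genuine H\"older continuity between interior points using the subadditivity $z_2^{p}-z_1^{p}\le (z_2-z_1)^{p}$ for $0<p<1$. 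That extra argument is correct (and harmless), though not needed for the lemma's later use, which only requires the one-point modulus at $z=0$.

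The place where you stumble is precisely the one you flag yourself: the coefficient $\nabla_x\phi(x_0,z)$ in the Taylor expansion depends on $z$, and the class $C_s$ supplies no modulus for $\nabla_x\phi(x_0,z)-\nabla_x\phi(x_0,0)$. Your proposed patch — replace $\nabla_x\phi(x_0,z)\cdot(x-x_0)$ by $|A|\,|x-x_0|$ and apply Young's inequality — does not produce the claimed inequality. Young gives $|A|\,|x-x_0|\le \varepsilon|x-x_0|^2+|A|^2/(4\varepsilon)$, and the constant $|A|^2/(4\varepsilon)$ cannot be folded into $B|x-x_0|^2+Cz^{1+\eta}$, since both of those vanish at $(x_0,0)$ where the claimed inequality is an equality; the inequality you derive would be strict even at the base point, so you would lose the touching property that the lemma is used for. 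The resolution is simply that the vector $A$ is $A=\nabla_x\phi(x_0,z)$ coming directly from the Taylor expansion — a vector whose norm satisfies $|A|\le\|\nabla_x\phi\|_{L^\infty}$ (note the displayed inequality $\|\nabla_x\phi\|_{L^\infty}\le|A|$ in the statement has the wrong direction, almost certainly a typo, cf.\ the analogous lower bound $2B\ge\|D^2_x\phi\|_{L^\infty}$ for the quadratic coefficient). The paper's own write-up of this step is terse (``the result follows by combining''), so your instinct to scrutinize it was correct; only your chosen repair fails.
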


\begin{proof}
Since $z^{2 - \frac{1}{s}} \partial_{zz}\phi$ is a continuous function in $\overline{S_1^+}$, we have that
\[
|\partial_{zz}\phi(x,z)| \leq \frac{C}{z^{2-\frac{1}{s}}} = C h''(z).
\]
Consequently,
\[
|\partial_z\phi(x,z)- \partial_z\phi(x,0)| \leq C \int_0^z h''(\xi) \, d\xi = C h'(z) = C z^{\frac{1}{s}-1}.
\]
This shows that $\partial_z\phi \in C^{\eta}_z(\overline{S_1^+})$ for $\eta = \min(1, \frac{1}{s}-1)$. 

By Taylor expanding $\phi(x,z)$ in $x$ around $x_0$, we write
\begin{equation}\label{eq:taylorx}
\phi(x,z) = \phi(x_0,z) + \nabla_x \phi(x_0,z) \cdot (x-x_0) + \frac{1}{2} D^2_x \phi(\xi,z)(x-x_0) \cdot (x-x_0)
\end{equation}
for some $\xi$ between $x$ and $x_0$. On the other hand, since $\phi \in C^{1,\eta}_z(\overline{S_1^+})$, 
\begin{equation}\label{eq:holderz}
\phi(x_0,z)
	= \phi(x_0,0) + \partial_z\phi(x_0,0)z + O(z^{1+\eta}).
\end{equation}
The result follows by combining \eqref{eq:taylorx} and \eqref{eq:holderz}. 
\end{proof}

Next, we prove two useful characterizations of \ref{item:visc-ii} in Definition \ref{defn:viscosity-sub}. 

\begin{lem}[Characterization 1]\label{lem:either-or}
Condition \ref{item:visc-ii} is equivalent to the following.
\begin{enumerate}[start=2,label={(\roman*)'}]
	\item \label{item:visc-ii'}
	 If $(x_0,0) \in T_1$ and $\phi \in C_s$ touches $U$ from above at $(x_0,0)$, then either
	\[
	(a^{ij}(x_0) \partial_{ij} \phi+ z^{2 -\frac{1}{s}} \partial_{zz} \phi )\big|_{(x_0,0)} \geq F(x_0,0)
	\quad \hbox{or}\quad
	\partial_z \phi(x_0,0) \geq f(x_0). 
	\]
\end{enumerate}
\end{lem}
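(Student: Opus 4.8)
The plan is to show that conditions \ref{item:visc-ii} and \ref{item:visc-ii'} are equivalent by proving each implication. The direction \ref{item:visc-ii} $\Rightarrow$ \ref{item:visc-ii'} is trivial: if $\partial_z\phi(x_0,0) \geq f(x_0)$ always holds under the hypotheses of \ref{item:visc-ii}, then in particular the alternative ``or'' statement in \ref{item:visc-ii'} holds. So the content is in the reverse implication \ref{item:visc-ii'} $\Rightarrow$ \ref{item:visc-ii}, which I would prove by contradiction: suppose $\phi \in C_s$ touches $U$ from above at $(x_0,0) \in T_1$ but $\partial_z\phi(x_0,0) < f(x_0)$; the goal is to construct a new test function $\tilde\phi \in C_s$ that still touches $U$ from above at $(x_0,0)$, still has $\partial_z\tilde\phi(x_0,0) < f(x_0)$, but now violates the first inequality, i.e.\ $(a^{ij}(x_0)\partial_{ij}\tilde\phi + z^{2-\frac1s}\partial_{zz}\tilde\phi)|_{(x_0,0)} < F(x_0,0)$, contradicting \ref{item:visc-ii'}.

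The key step is the construction of $\tilde\phi$. I would look for a perturbation of the form
\[
\tilde\phi(x,z) = \phi(x,z) - \gamma z^{\frac1s} + \text{(small correction)},
\]
or more flexibly $\tilde\phi(x,z) = \phi(x,z) + \psi(x,z)$ where $\psi$ is chosen to be a small Monge--Amp\`ere-type polynomial. Note that adding $-\gamma h(z) = -\gamma \frac{s^2}{1-s}z^{1/s}$ with $\gamma > 0$ does \emph{not} change $\partial_z$ at $z=0$ (since $h'(0)=0$), keeps $\tilde\phi \leq \phi$ hence $\tilde\phi$ still touches $U$ from above, keeps $\tilde\phi \in C_s$ (since $z^{2-\frac1s}\partial_{zz}(z^{1/s})$ is a constant), and decreases $z^{2-\frac1s}\partial_{zz}\tilde\phi$ at $z=0$ by the constant $\gamma\,h''(z)\cdot z^{2-1/s}\big|_{z=0}$-type amount, i.e.\ by $\gamma$. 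Thus taking $\gamma$ large enough forces the first inequality to fail strictly at $(x_0,0)$ while leaving everything else intact (the $x$-derivatives of $\phi$ are unchanged, and $\partial_z\tilde\phi(x_0,0) = \partial_z\phi(x_0,0) < f(x_0)$ is unaffected). One must double check that the ``or'' in \ref{item:visc-ii'} then fails on \emph{both} alternatives, giving the contradiction; a harmless further perturbation (e.g.\ subtracting $\eps|x-x_0|^2 + \eps z^{1/s}$) can be used if a strict versus non-strict touching issue arises near the boundary of the convex set $E$.

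I would also verify the companion statement for supersolutions: $\phi$ touches $U$ from below at $(x_0,0)$, $\partial_z\phi(x_0,0) > f(x_0)$, and now add $+\gamma h(z)$ to raise $z^{2-\frac1s}\partial_{zz}\phi$ beyond $F(x_0,0)$ while keeping $\tilde\phi \geq U$ near $(x_0,0)$, contradicting the supersolution analogue of \ref{item:visc-ii'}. (Presumably the paper's statement of Lemma \ref{lem:either-or} is phrased for subsolutions with the symmetric supersolution version understood, so I would at least remark on it.)

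The main obstacle I anticipate is the bookkeeping around \emph{strict} touching and the constant $C_s$-regularity of the perturbed test function: one needs $\tilde\phi$ to genuinely touch $U$ from above on an open convex neighborhood $E$ (not merely weakly at a single point), and one must confirm that $z^{2-\frac1s}\partial_{zz}\tilde\phi$ extends continuously to $\{z=0\}$ with the \emph{expected limiting value} so that the ``or'' alternative in \ref{item:visc-ii'} can be evaluated at $(x_0,0)$. Using $-\gamma h(z)$ is what makes this clean, because $h \in C^1(\R) \cap C^2(\R\setminus\{0\})$ with $z^{2-\frac1s}h''(z) = \frac{s^2}{1-s}\cdot\frac1s(\frac1s-1) = s$ identically, so the class $C_s$ is manifestly preserved and the correction to the degenerate second-order term at the boundary is an explicit constant multiple of $\gamma$. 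Beyond that the argument is soft.
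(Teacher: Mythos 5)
The direction \ref{item:visc-ii} $\Rightarrow$ \ref{item:visc-ii'} and the overall strategy (perturb $\phi$ by a multiple of $h(z)$ to make the degenerate second--order term at $(x_0,0)$ drop below $F(x_0,0)$) are both in the spirit of the paper's proof. However, there is a genuine gap in the construction of the test function $\tilde\phi$. You claim that because $\tilde\phi = \phi - \gamma h(z) \leq \phi$, the function $\tilde\phi$ ``still touches $U$ from above.'' This is the wrong direction: touching $U$ from above requires $\tilde\phi \geq U$ in a neighborhood, and since you only know $\phi \geq U$, lowering $\phi$ to $\tilde\phi \leq \phi$ destroys exactly the inequality you need. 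For $z>0$ the bare perturbation $\phi - \gamma h(z)$ may well dip below $U$, so $\tilde\phi$ is not an admissible test function, and the contradiction with \ref{item:visc-ii'} never gets off the ground. The ``small correction'' you mention (subtracting $\eps|x-x_0|^2 + \eps z^{1/s}$) points the wrong way too, since it only lowers $\tilde\phi$ further.

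The fix, which is exactly the paper's construction, is to add a linear term in $z$ before subtracting $Ch(z)$: set $\psi = \phi + \eta z - Ch(z)$. Since $h(z) = o(z)$ as $z \to 0^+$, the correction $\eta z - Ch(z)$ is \emph{positive} for $z>0$ in a small section $S_\tau(x_0,0)^+$ (for any fixed $\eta>0$ and $C>0$, shrink $\tau$), hence $\psi \geq \phi \geq U$ there, while $\psi(x_0,0) = \phi(x_0,0) = U(x_0,0)$; so $\psi$ genuinely touches $U$ from above and lies in $C_s$. The reason one is allowed to add $\eta z$ at all is that the contradiction hypothesis $\partial_z\phi(x_0,0) < f(x_0)$ is \emph{strict}, so a small enough $\eta$ keeps $\partial_z\psi(x_0,0) = \partial_z\phi(x_0,0) + \eta < f(x_0)$. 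Taking $C$ large then forces $(a^{ij}\partial_{ij}\psi + z^{2-\frac1s}\partial_{zz}\psi)|_{(x_0,0)} = (a^{ij}\partial_{ij}\phi + z^{2-\frac1s}\partial_{zz}\phi)|_{(x_0,0)} - C < F(x_0,0)$, contradicting \ref{item:visc-ii'}. The $\eta z$ term is not bookkeeping; it is the essential device that turns a subtraction into a legitimate upper test function, and its availability is tied to the strictness of the hypothesis you are contradicting.
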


\begin{proof}
It is clear that \ref{item:visc-ii} implies \ref{item:visc-ii'}. 
Conversely, assume that \ref{item:visc-ii'} holds. 
Suppose $\phi \in C_s$ touches $U$ from above at $(x_0,0) \in T_1$. 
Assume, by way of contradiction, that
\[
\partial_z\phi(x_0,0) < f(x_0). 
\]
By \ref{item:visc-ii'}, it must be that
\[
(a^{ij}(x) \partial_{ij}  \phi+ z^{2 -\frac{1}{s}} \partial_{zz} \phi )\big|_{(x_0,0)} \geq F(x_0,0).
\]
Define the function $\psi = \psi(x,z)$ by
\[
\psi(x,z) = \phi(x,z) + \eta z - C h(z) \quad \hbox{in}~\overline{S_\tau(x_0,0)^+}
\]
for $\eta, \tau >0$ small and $C>0$ large, to be determined. Notice that, for $z>0$, 
\[
\eta z - C h(z)  > 0 
\quad \hbox{if and only if} \quad
0 < z < \left( \frac{\eta(1-s)}{Cs^2}\right)^{s/(1-s)}. 
\]
Take $\tau>0$ such that $\{z: 0 < h(z) < \tau \} \subset (0,(\eta(1-s)/(Cs^2)))^{s/(1-s)})$. 
We have that $\psi$ touches $\phi$ from above at $(x_0,0)$ in $\overline{S_\tau(x_0,0)^+}$. 
Since $\phi \in C_s$ and $\eta z -Ch(z) \in C_s$, it follows that $\psi \in C_s$. 
By \ref{item:visc-ii'}, either
	\[
	(a^{ij}(x) \partial_{ij}  \psi+ z^{2 -\frac{1}{s}} \partial_{zz} \psi )\big|_{(x_0,0)} \geq F(x_0,0)
	\quad \hbox{or}\quad
	\partial_z \psi(x_0,0) \geq f(x_0). 
	\]
Since $\partial_z\phi(x_0,0) < f(x_0)$, we can find $\eta>0$ sufficiently small to guarantee that 
\[
\partial_z\psi(x,0) = \partial_z\phi(x,0) + \eta < f(x_0).
\]
Therefore, it must be that
\[
(a^{ij}(x) \partial_{ij}  \psi+ z^{2 -\frac{1}{s}} \partial_{zz} \psi )\big|_{(x_0,0)} \geq F(x_0,0).
\]
However, if we take $C$ large enough to guarantee that
\[
(a^{ij}(x) \partial_{ij}  \phi+ z^{2 -\frac{1}{s}} \partial_{zz} \phi )\big|_{(x_0,0)} <C + F(x_0,0),
\]
then
\begin{align*}
(a^{ij}(x) \partial_{ij}  \psi+z|^{2 -\frac{1}{s}} \partial_{zz} \psi )\big|_{(x_0,0)}
	&= (a^{ij}(x) \partial_{ij}  \phi+ z^{2 -\frac{1}{s}} \partial_{zz} \phi )\big|_{(x_0,0)} -C <F(x_0,0),
\end{align*}
which is a contradiction. Thus, it must be that $\partial_z \phi(x_0,0) \geq  f(x_0)$, so that \ref{item:visc-ii} holds. 
\end{proof}

\begin{lem}[Characterization 2]\label{lem:characterization2}
Condition \ref{item:visc-ii}
is equivalent to the following.
\begin{enumerate}[start=2,label={(\roman*)''}]
	\item \label{item:visc-ii''} If $(x_0,0) \in T_1$ and $\phi(x,z) = P(x) + az$ touches $U$ from above at $(x_0,0)$
	where $P$ is a polynomial of degree $2$ in $x$ and $a \in \R$, then 	
	\[
	\partial_z \phi(x_0,0) \geq f(x_0). 
	\]
\end{enumerate}
\end{lem}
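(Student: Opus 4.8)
The plan is to prove the two implications separately; the substance lies entirely in \ref{item:visc-ii''}$\Rightarrow$\ref{item:visc-ii}. The implication \ref{item:visc-ii}$\Rightarrow$\ref{item:visc-ii''} is immediate: any $\phi(x,z)=P(x)+az$ with $P$ a polynomial of degree $2$ in $x$ and $a\in\R$ belongs to $C_s$, because $\partial_{zz}\phi\equiv 0$ makes $z^{2-\frac{1}{s}}\partial_{zz}\phi\equiv 0$ continuous on $\overline{S_1^+}$, while $\phi$ is manifestly $C^2$ in $x$ and $C^1$ in $z$. Thus the hypothesis of \ref{item:visc-ii''} is a special case of the hypothesis of \ref{item:visc-ii}, and the desired conclusion $\partial_z\phi(x_0,0)=a\ge f(x_0)$ follows at once.

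For the converse, I would start from an arbitrary $\phi\in C_s$ touching $U$ from above at a point $(x_0,0)\in T_1$ in an open convex set $E\subset S_1^+\cup T_1$, and replace it by an affine-in-$z$ competitor. The key tool is Lemma \ref{lem:expansion}: with $\eta:=\min(1,\frac{1}{s}-1)>0$, it supplies a vector $A\in\R^n$ and constants $B,C>0$ with
\[
\phi(x,z)\le \phi(x_0,0)+A\cdot(x-x_0)+\partial_z\phi(x_0,0)\,z+B|x-x_0|^2+Cz^{1+\eta}\qquad\text{on }\overline{S_1^+}.
\]
Since $1+\eta>1$, the error $Cz^{1+\eta}$ is dominated by $\varepsilon z$ on the slab $\{0\le z\le(\varepsilon/C)^{1/\eta}\}$, for any $\varepsilon>0$. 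Hence the function
\[
\phi_\varepsilon(x,z):=\big(\phi(x_0,0)+A\cdot(x-x_0)+B|x-x_0|^2\big)+\big(\partial_z\phi(x_0,0)+\varepsilon\big)z,
\]
which has the form allowed in \ref{item:visc-ii''}, satisfies $\phi_\varepsilon\ge\phi\ge U$ on the open convex set $E_\varepsilon:=E\cap\{z<(\varepsilon/C)^{1/\eta}\}\subset S_1^+\cup T_1$ and coincides with $U$ at $(x_0,0)$, so it touches $U$ from above there. Applying \ref{item:visc-ii''} gives $\partial_z\phi(x_0,0)+\varepsilon=\partial_z\phi_\varepsilon(x_0,0)\ge f(x_0)$, and letting $\varepsilon\to 0^+$ yields $\partial_z\phi(x_0,0)\ge f(x_0)$. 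The case of $\phi$ touching $U$ from below is handled identically after replacing $\phi$ by $-\phi$ (still in $C_s$), i.e.\ by running the argument with all inequalities reversed, which produces $\partial_z\phi(x_0,0)\le f(x_0)$.

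I do not anticipate a genuine obstacle. The only substantive point is conceptual, and it is already packaged inside Lemma \ref{lem:expansion}: the defining constraint of $C_s$, namely continuity of $z^{2-\frac{1}{s}}\partial_{zz}\phi$ up to $\{z=0\}$, forces $\partial_z\phi$ to be H\"older in $z$ of the strictly positive order $\eta$ near $T_1$, so the second-order behavior of $\phi$ in $z$ is strictly subordinate to its first-order behavior; this is exactly what permits shrinking the boundary test class from all of $C_s$ down to functions affine in $z$. The rest is the routine $\varepsilon$-perturbation above, where the only verification needed is that $E_\varepsilon$ is still an admissible touching neighborhood — open, convex, containing $(x_0,0)$, and contained in $S_1^+\cup T_1$ — which is clear because it is the intersection of $E$ with a half-space.
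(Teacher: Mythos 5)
Your proof is correct and follows essentially the same route as the paper's: both directions rest on noting that affine-in-$z$ test functions lie in $C_s$, and the nontrivial implication is established exactly by invoking Lemma~\ref{lem:expansion}, dominating the $Cz^{1+\eta}$ error by $\varepsilon z$ on a small slab, applying \ref{item:visc-ii''} to the resulting competitor $P(x)+(\partial_z\phi(x_0,0)+\varepsilon)z$, and letting $\varepsilon\to 0$. The only addition over the paper's argument is your remark about the touching-from-below case, which is superfluous since the lemma as stated concerns only the subsolution condition.
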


\begin{proof}
It is clear that \ref{item:visc-ii}  implies \ref{item:visc-ii''} in the $C_s$-class. Conversely, assume that \ref{item:visc-ii''} holds.
Let $\phi \in C_s$ touch $U$ from above at $(x_0,0)$. 
By Lemma \ref{lem:expansion}, 
\begin{equation}\label{eq:eta-expand}
\psi(x,z) =  \phi(x_0,0) +  A \cdot (x-x_0)+\partial_z\phi(x_0,0)z + B |x-x_0|^2 + C z^{1+\eta}
\end{equation}
touches $\phi$, and hence $U$, from above at $(x_0,0)$ in $\overline{S_\tau^+(x_0,0)}$ for $\tau>0$ small. 
For any $\eps>0$, 
\begin{equation}\label{eq:bound-by-linear}
\varepsilon z - C z^{1+\eta} > 0 \quad \hbox{as long as}~0 < z < \left(\frac{\eps}{C}\right)^{1/\eta}.
\end{equation}
Taking $\tau$ smaller if necessary, it follows that $\{z : 0 < h(z) < \tau\} \subset (0, (\eps/C)^{1/\eta})$. Then, in $\overline{S_\tau^+(x_0,0)}$, we have
\[
\psi(x,z) \leq P(x) + a z
\]
where
\[
P(x) =  \phi(x_0,0) +  A \cdot (x-x_0)+B |x-x_0|^2  \quad \hbox{and} \quad a = \partial_z\phi(x_0,0) + \varepsilon.
\]
Since $P(x) +az$ touches $\psi$, and hence $U$, from above at $(x_0,0)$ and \ref{item:visc-ii''} holds, we have that
\[
 \partial_z\phi(x_0,0) + \varepsilon = a=\partial_z(P(x) +az)\big|_{(x_0,0)} \geq f(x_0).
\]
Taking $\varepsilon \to 0$ gives $\partial_z\phi(x_0,0) \geq f(x_0)$, so that \ref{item:visc-ii} holds. 
\end{proof}

As a consequence of the proof of Lemma \ref{lem:characterization2}, we have the following Corollary. 

\begin{cor}\label{lem:if C2 then Cs}
Assume that $\phi \in C^2(\overline{S_1^+})$. 
Given $\varepsilon>0$, there is $\psi \in C_s$ and $\tau>0$ such that 
$\psi$ touches $\phi$ from above at $(x_0,0)$ in $\overline{S_\tau(x_0,0)^+}$ and satisfies
\begin{equation}\label{eq:psi-phi-e}
\partial_z\psi(x_0,0)=\partial_z\phi(x_0,0) + \varepsilon.
\end{equation}
\end{cor}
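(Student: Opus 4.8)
The plan is to mimic the construction in the proof of Lemma \ref{lem:characterization2}, but starting from the ordinary second-order Taylor expansion of $\phi$ at $(x_0,0)$ rather than from Lemma \ref{lem:expansion}. This detour is necessary because $\phi\in C^2(\overline{S_1^+})$ does \emph{not} imply $\phi\in C_s$ when $0<s<\tfrac12$ (then $z^{2-\frac1s}\partial_{zz}\phi$ blows up at $z=0$), so one cannot simply invoke Lemma \ref{lem:characterization2} with $\phi$ in place of $U$.

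First I would record that, since $\phi\in C^2(\overline{S_1^+})$ and $\overline{S_1^+}$ is convex (it is the closure of the intersection of the sublevel set $\{\Phi<1\}=S_1(0,0)$ with $\{z>0\}$), Taylor's theorem along the segment from $(x,z)\in\overline{S_1^+}$ to $(x_0,0)$ gives a constant $C_0=C_0(\phi)=\tfrac12\|D^2\phi\|_{L^\infty(\overline{S_1^+})}$ with
\[
\phi(x,z)\le\phi(x_0,0)+\nabla_x\phi(x_0,0)\cdot(x-x_0)+\partial_z\phi(x_0,0)\,z+C_0\bigl(|x-x_0|^2+z^2\bigr)
\]
for all $(x,z)\in\overline{S_1^+}$. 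Next I would absorb the $z^2$ term into $\varepsilon z$: since $\overline{S_\tau(x_0,0)^+}\subset\{0\le z\le q_s\tau^s\}$ by \eqref{eq:section-cube} and the description of sections centered on $\{z=0\}$, choosing $\tau>0$ so small that $C_0\,q_s\tau^s\le\varepsilon$ forces $C_0z^2\le\varepsilon z$ on $\overline{S_\tau(x_0,0)^+}$. I would then set
\[
\psi(x,z):=\phi(x_0,0)+\nabla_x\phi(x_0,0)\cdot(x-x_0)+C_0|x-x_0|^2+\bigl(\partial_z\phi(x_0,0)+\varepsilon\bigr)z,
\]
which is of the form $\psi(x,z)=P(x)+az$ with $P$ a polynomial of degree $2$ in $x$ and $a=\partial_z\phi(x_0,0)+\varepsilon\in\R$.

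It then remains to check three points. One: $\psi(x_0,0)=\phi(x_0,0)$, which is clear. Two: $\psi\ge\phi$ on $\overline{S_\tau(x_0,0)^+}$, which is immediate from the displayed estimate and the choice of $\tau$; here the convex set $E$ of \eqref{eq:touches} may be taken to be $S_\tau(x_0,0)\cap\{z\ge0\}$, which is convex since $S_\tau(x_0,0)$ is a sublevel set of the convex function $\Phi$. Three: $\psi\in C_s$, which is the crux but is again immediate: $\psi$ is affine in $z$, so $\partial_{zz}\psi\equiv0$ and hence $z^{2-\frac1s}\partial_{zz}\psi\equiv0\in C(\overline{S_1^+})$ regardless of the sign of $2-\tfrac1s$, while the remaining conditions defining $C_s$ hold because $\psi$ is a polynomial. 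Finally $\partial_z\psi(x_0,0)=\partial_z\phi(x_0,0)+\varepsilon$, which is exactly \eqref{eq:psi-phi-e}. I do not expect any genuine obstacle: the only thing needing care is the choice of $\tau$ so that the quadratic-in-$z$ remainder is dominated by $\varepsilon z$ throughout $\overline{S_\tau(x_0,0)^+}$, together with the observation that functions affine in $z$ automatically satisfy the defining condition of $C_s$.
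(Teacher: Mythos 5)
Your proposal is correct and follows essentially the same approach as the paper: Taylor-expand $\phi$ at $(x_0,0)$ to second order, absorb the $O(z^2)$ remainder into $\varepsilon z$ by shrinking $\tau$ (using that $z\lesssim\tau^s$ on $\overline{S_\tau(x_0,0)^+}$), and observe that the resulting $\psi$ is affine in $z$, hence automatically in $C_s$. The only cosmetic difference is that the paper invokes its displayed expansion \eqref{eq:eta-expand} with $\eta=1$ and the bound \eqref{eq:bound-by-linear}, while you rederive the same estimates directly; the substance is identical.
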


\begin{proof}
Since $\phi \in C^2(\overline{S_1^+})$, we use the expansion \eqref{eq:eta-expand} with $\eta = 1$ to instead write
\[
\phi(x,z) \leq \phi(x_0,0) +  A \cdot (x-x_0)+\partial_z\phi(x_0,0)z + B |x-x_0|^2 + C z^{2}.
\]
Given $\varepsilon>0$, we apply \eqref{eq:bound-by-linear} with $\eta =1$ to find $\tau>0$ small enough so that in $\overline{S_\tau(x_0,0)^+}$, 
\[
\phi(x,z)
	\leq \phi(x_0,0) + \partial_z\phi(x_0,0) z + A \cdot (x-x_0) + B|x-x_0|^2 +  \varepsilon z =: \psi(x,z). 
\]
Notice that $\psi$ touches $\phi$ from above at $(x_0,0)$  in $S_\tau(x_0,0)^+$ and satisfies \eqref{eq:psi-phi-e}.
\end{proof}

The next lemma validates the expected relationship between classical solutions and $C_s$-viscosity solutions. 

\begin{lem}[Classical solutions and viscosity solutions] \label{lem:classical to visc}
If $U \in C^2(S_1^+) \cap C^1(\overline{S_1^+})$ is a classical subsolution (supersolution) to \eqref{eq:sub}, that is,
$$\begin{cases}
a^{ij}(x) \partial_{ij} U + z^{2 -\frac{1}{s}} \partial_{zz} U \geq (\leq) F & \hbox{in}~S_1^+\\
\partial_{z} U \geq(\leq) f & \hbox{on}~T_1
\end{cases}$$
then $U$ is a $C_s$-viscosity subsolution  (supersolution).

Conversely, if $U \in C^2(S_1^+) \cap C^1(\overline{S_1^+})$ is a $C_s$-viscosity subsolution (supersolution) to \eqref{eq:sub}, then $U$ is a classical subsolution  (supersolution). 
\end{lem}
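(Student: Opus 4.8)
The plan is to prove the two implications of Lemma~\ref{lem:classical to visc} by unwinding the definition of $C_s$-viscosity solution and using the two characterizations already established (Lemma~\ref{lem:either-or} and Lemma~\ref{lem:characterization2}), together with Corollary~\ref{lem:if C2 then Cs}. I will argue the subsolution case; the supersolution case is symmetric.

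\emph{Classical implies viscosity.} Suppose $U \in C^2(S_1^+)\cap C^1(\overline{S_1^+})$ is a classical subsolution. For condition~\ref{item:visc-i}: if $\phi \in C^2(S_1^+)$ touches $U$ from above at an interior point $(x_0,z_0)\in S_1^+$, then $\phi-U$ has a local minimum at $(x_0,z_0)$, so $\nabla(\phi-U)(x_0,z_0)=0$ and $D^2(\phi-U)(x_0,z_0)\geq 0$. Since $(a^{ij}(x_0))$ and the diagonal coefficient $|z_0|^{2-\frac1s}$ on the $zz$-entry form a symmetric positive matrix (by \eqref{eq:ellipticity} and $z_0\neq 0$), taking the trace against this matrix preserves the inequality, giving
\[
a^{ij}(x_0)\partial_{ij}\phi(x_0,z_0)+|z_0|^{2-\frac1s}\partial_{zz}\phi(x_0,z_0)\geq a^{ij}(x_0)\partial_{ij}U(x_0,z_0)+|z_0|^{2-\frac1s}\partial_{zz}U(x_0,z_0)\geq F(x_0,z_0),
\]
as required. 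For condition~\ref{item:visc-ii}: if $\phi \in C_s$ touches $U$ from above at $(x_0,0)\in T_1$, then again $\phi-U$ has a local minimum at $(x_0,0)$ relative to $\overline{S_\tau(x_0,0)^+}$; restricting to the curve $z\mapsto(x_0,z)$ for $z\geq 0$ yields $\partial_z(\phi-U)(x_0,0)\geq 0$ (a one-sided derivative inequality at the boundary of the half-space), hence $\partial_z\phi(x_0,0)\geq\partial_zU(x_0,0)=f(x_0)$.

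\emph{Viscosity implies classical.} Now suppose $U\in C^2(S_1^+)\cap C^1(\overline{S_1^+})$ is a $C_s$-viscosity subsolution. At an interior point $(x_0,z_0)\in S_1^+$, the function $\phi(x,z)=U(x,z)+\eps(|x-x_0|^2+(z-z_0)^2)$ lies in $C^2(S_1^+)$ and touches $U$ from above at $(x_0,z_0)$; applying \ref{item:visc-i} and letting $\eps\to 0$ gives $a^{ij}(x_0)\partial_{ij}U(x_0,z_0)+|z_0|^{2-\frac1s}\partial_{zz}U(x_0,z_0)\geq F(x_0,z_0)$, so the PDE holds classically in $S_1^+$. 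For the boundary condition, fix $(x_0,0)\in T_1$ and let $\eps>0$. Since $U\in C^2(\overline{S_1^+})$, by Corollary~\ref{lem:if C2 then Cs} there is $\psi\in C_s$ and $\tau>0$ such that $\psi$ touches $U$ from above at $(x_0,0)$ in $\overline{S_\tau(x_0,0)^+}$ with $\partial_z\psi(x_0,0)=\partial_zU(x_0,0)+\eps$. Applying \ref{item:visc-ii} to $\psi$ gives $\partial_zU(x_0,0)+\eps=\partial_z\psi(x_0,0)\geq f(x_0)$, and letting $\eps\to 0$ yields $\partial_zU(x_0,0)\geq f(x_0)$. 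Combined with the analogous supersolution argument (which produces the reverse inequality), the Neumann condition holds with equality, so $U$ is a classical subsolution.

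\emph{Main obstacle.} The delicate point is entirely at the boundary $\{z=0\}$: one cannot simply perturb $U$ by a quadratic bump and stay in the class $C_s$, because an honest interior touching function need not control the weighted second derivative $z^{2-\frac1s}\partial_{zz}$ up to $\{z=0\}$, and a linear-in-$z$ correction alone cannot dominate the $z^2$ remainder near the degeneracy. This is precisely what Corollary~\ref{lem:if C2 then Cs} was set up to handle --- it converts a $C^2$ function into an admissible $C_s$ test function at the cost of an $\eps$ in the normal derivative --- so the real content of the proof is recognizing that this corollary, rather than a direct construction, is the right tool. Everything else is the routine max-principle / second-order-condition bookkeeping sketched above.
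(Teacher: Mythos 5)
Your handling of the two interior implications and of classical-implies-viscosity at the boundary $T_1$ is sound. The gap is in the final step, viscosity implies classical at the boundary. You invoke Corollary \ref{lem:if C2 then Cs} on the grounds that $U\in C^2(\overline{S_1^+})$, but this is not among the hypotheses: the lemma assumes only $U\in C^2(S_1^+)\cap C^1(\overline{S_1^+})$, so $U$ need not be $C^2$ up to $\{z=0\}$, and the corollary does not apply. That corollary is calibrated to absorb the $z^2$ remainder of a second-order Taylor expansion in $z$; with only $C^1$ regularity in $z$ up to the boundary no such quadratic bound is available.

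The repair, and what the paper actually does, is to exploit first-order Taylor regularity in $z$ directly. Since $U\in C^1_z(\overline{S_1^+})$, one has $U(x_0,z)=U(x_0,0)+\partial_z U(x_0,0)z+o(z)$ as $z\to 0^+$, so for each $\eps>0$ there is $\tau>0$ with $U(x_0,z)\le U(x_0,0)+\partial_z U(x_0,0)z+\eps z$ whenever $0<h(z)<\tau$. Combined with the Taylor expansion in $x$ from \eqref{eq:taylorx}, the explicit test function
\[
\phi(x,z)=U(x_0,0)+A\cdot(x-x_0)+\partial_z U(x_0,0)z+B|x-x_0|^2+\eps z,
\]
being linear in $z$, belongs trivially to $C_s$ and touches $U$ from above at $(x_0,0)$ in $\overline{S_\tau(x_0,0)^+}$ (shrinking $\tau$ if necessary). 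Condition \ref{item:visc-ii} then gives $\partial_z\phi(x_0,0)=\partial_z U(x_0,0)+\eps\ge f(x_0)$, and sending $\eps\to 0$ concludes. Your ``main obstacle'' diagnosis is right---one cannot use a quadratic-in-$z$ bump and stay in $C_s$---but the resolution is this direct linear-in-$z$ construction, which needs only $C^1_z$, not the $C^2$ hypothesis of Corollary \ref{lem:if C2 then Cs}.
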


\begin{proof}
We only present the proof for subsoutions. 
Since the equation is uniformly elliptic in any $S_r(x_0,z_0)\subset \subset  S_1^+$, the result holds in $S_1^+$. 
We only check the Neumann condition. 

It is easy to see that if $U$ is a classical subsolution on $T_1$, then $U$ is a $C_s$-viscosity subsolution on $T_1$. 
Conversely, suppose that $U$ is a smooth, $C_s$-viscosity subsolution on $T_1$.  
Let $(x_0,0) \in T_1$ and $\eps>0$. Since $U \in C^1_z(\overline{S_1^+})$, there is $\tau>0$ such that
\[
U(x_0,z) \leq U(x_0,0) + \partial_zU(x_0,0)z + \eps z \quad \hbox{whenever}~0 < h(z) < \tau.
\]
With this and expanding $U$ as in \eqref{eq:taylorx}, we have that 
\begin{equation*}\label{eq:classical-expansion}
\phi(x,z) = U(x_0,0) +  A \cdot (x-x_0)+\partial_zU(x_0,0)z + B |x-x_0|^2 +\varepsilon z  \in C_s,
\end{equation*}
with $
\|\nabla_x U \|_{L^{\infty}} \leq |A|$ and $\|D^2_xU \|_{L^{\infty}} \leq 2B$, touches $U$ from above at $(x_0,0)$ 
in $\overline{S_\tau(x_0,0)^+}$, for $\tau$ perhaps smaller. 
Using Definition \ref{defn:viscosity-sub}\ref{item:visc-ii} and sending $\eps \to 0$, we get
$\partial_zU(x_0,0) \geq f(x_0)$.  
\end{proof}

The following result is easy to verify.

\begin{lem}\label{lem:comparison}
If $U$ is a $C_s$-viscosity solution to \eqref{eq:sub}  and $V\in C^2(S_1^+) \cap C^1(\overline{S_1^+})$ is a classical solution, then $W=U-V$ is a $C_s$-viscosity solution to \[
\begin{cases}
a^{ij}(x) \partial_{ij} W + z^{2 -\frac{1}{s}} \partial_{zz} W = 0 & \hbox{in}~S_1^+\\
\partial_{z} W = 0 & \hbox{on}~T_1.
\end{cases}
\]
\end{lem}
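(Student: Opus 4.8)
The plan is to verify the two conditions in Definition \ref{defn:viscosity-sub} directly for $W = U - V$, exploiting the fact that $V$ is a \emph{smooth} classical solution, so that whenever a test function $\phi$ touches $W$, the function $\phi + V$ is an admissible test function touching $U$ of the same regularity class. First I would handle the interior condition \ref{item:visc-i}: suppose $(x_0,z_0) \in S_1^+$ and $\phi \in C^2(S_1^+)$ touches $W$ from above at $(x_0,z_0)$ in an open convex set $E$. Then $\phi + V \in C^2(S_1^+)$ touches $U = W + V$ from above at $(x_0,z_0)$ in $E$, since $V$ is smooth. Applying Definition \ref{defn:viscosity-sub}\ref{item:visc-i} for $U$ (with right-hand side $F$) gives
\[
a^{ij}(x_0)\partial_{ij}(\phi+V)(x_0,z_0) + |z_0|^{2-\frac{1}{s}}\partial_{zz}(\phi+V)(x_0,z_0) \geq F(x_0,z_0).
\]
Since $V$ solves the equation classically at $(x_0,z_0)$ (note $z_0 \neq 0$), we have $a^{ij}(x_0)\partial_{ij}V + |z_0|^{2-\frac{1}{s}}\partial_{zz}V = F(x_0,z_0)$ there, and subtracting yields $a^{ij}(x_0)\partial_{ij}\phi(x_0,z_0) + |z_0|^{2-\frac{1}{s}}\partial_{zz}\phi(x_0,z_0) \geq 0$, which is exactly \ref{item:visc-i} for $W$ with right-hand side $0$. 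The supersolution inequality is identical with the inequalities reversed.

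Next I would treat the Neumann condition \ref{item:visc-ii} on $T_1$. Suppose $(x_0,0) \in T_1$ and $\phi \in C_s$ touches $W$ from above at $(x_0,0)$. The key point is that $\phi + V \in C_s$: indeed $V \in C^2(S_1^+)\cap C^1(\overline{S_1^+})$ — in fact $V$ being a classical solution with continuous data is smooth enough that $z^{2-\frac{1}{s}}\partial_{zz}V$ extends continuously to $\overline{S_1^+}$ (one can also invoke Corollary \ref{lem:if C2 then Cs} to replace $V$ near $(x_0,0)$ by a $C_s$ touching function up to an $\varepsilon$, then send $\varepsilon \to 0$, if one prefers to avoid asserting $V \in C_s$ outright). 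Then $\phi + V$ touches $U$ from above at $(x_0,0)$, so Definition \ref{defn:viscosity-sub}\ref{item:visc-ii} for $U$ gives $\partial_z(\phi+V)(x_0,0) \geq f(x_0)$. Since $V$ satisfies the Neumann condition classically, $\partial_z V(x_0,0) = f(x_0)$, whence $\partial_z\phi(x_0,0) \geq 0$, which is \ref{item:visc-ii} for $W$ with boundary data $0$. Again the supersolution case is symmetric, and combining both shows $W$ is a $C_s$-viscosity solution to the stated homogeneous problem.

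The only subtlety — and the reason this is not completely immediate — is confirming that adding the smooth classical solution $V$ preserves membership in the test-function class $C_s$ near the degenerate boundary $\{z=0\}$, i.e. that $z^{2-\frac{1}{s}}\partial_{zz}(\phi+V)$ remains continuous up to $T_1$. For $\phi$ this holds by hypothesis; for $V$ it holds because $V$ is a classical solution of $a^{ij}\partial_{ij}V = -z^{2-\frac{1}{s}}\partial_{zz}V$ with $a^{ij}\partial_{ij}V$ bounded (continuous) up to $T_1$, forcing $z^{2-\frac{1}{s}}\partial_{zz}V$ to be continuous up to $T_1$ as well. This is the one place where the "classical solution" hypothesis on $V$ is genuinely used beyond pointwise satisfaction of the equations; everything else is bookkeeping with the definitions.
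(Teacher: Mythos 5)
The paper gives no proof for this lemma---it is stated with the remark that it is ``easy to verify''---so there is no official argument to compare against. Your approach (add $V$ back to the test function, test $U$, subtract the classical equation for $V$) is precisely the one the authors left implicit, and your treatment of the interior case via Definition~\ref{defn:viscosity-sub}\ref{item:visc-i} is clean and complete.

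You correctly put your finger on the real subtlety: whether $\phi + V$ is an admissible $C_s$ test function at $T_1$. Your justification here has a gap, though. Membership in $C_s$ requires both $C^2_x(\overline{S_1^+})$ regularity and continuity of $z^{2-\frac{1}{s}}\partial_{zz}(\cdot)$ up to $\{z=0\}$. The stated hypothesis $V \in C^2(S_1^+)\cap C^1(\overline{S_1^+})$ gives neither: it does not say $D^2_x V$ extends continuously to $T_1$, and your argument that $z^{2-\frac{1}{s}}\partial_{zz}V = F - a^{ij}\partial_{ij}V$ is continuous up to $T_1$ presupposes exactly that continuity of $a^{ij}\partial_{ij}V$, which is not available from $C^1(\overline{S_1^+})$ alone. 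Your proposed fallback via Corollary~\ref{lem:if C2 then Cs} fares no better, since it requires $V \in C^2(\overline{S_1^+})$, again stronger than what is assumed. In fairness, the lemma's hypothesis on $V$ is almost certainly written loosely---in every application the classical solution $V$ carries the estimates of Proposition~\ref{prop:H regularity}, which do give $V\in C_s$---but an honest proof should either strengthen the hypothesis to $V \in C_s$ (or $V\in C^2_x(\overline{S_1^+})\cap C^1_z(\overline{S_1^+})$ with $z^{2-\frac1s}\partial_{zz}V$ continuous up to $T_1$), or else run the boundary argument through Lemma~\ref{lem:characterization2}, expanding $V$ in $x$ to second order and in $z$ via its $C^1_z$ regularity with a slack $\varepsilon z$ term, exactly as in the proof of Lemma~\ref{lem:classical to visc}. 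Either route closes the gap; asserting $V\in C_s$ outright from the stated hypotheses does not.
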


Lastly, for a positive definite symmetric matrix $M$, recall that the Pucci extremal operators with ellipticity
constants $0<\lambda\leq\Lambda$ are given by
\[
\mathcal{P}^-(M) = \lambda \sum_{e_i>0} e_i + \Lambda \sum_{e_i<0} e_i
\quad \hbox{and} \quad
\mathcal{P}^+(M) = \Lambda \sum_{e_i>0} e_i + \lambda \sum_{e_i<0} e_i
\]
where $e_i$ are the eigenvalues of $M$. 

\begin{rem}\label{rem:pucci}
If $\|F\|_{L^{\infty}(S_1^+)} \leq a$ and $U$ is a $C_s$-viscosity subsolution (supersolution)  to \eqref{eq:sub}, then $U$ is a $C_s$-viscosity subsolution (supersolution) to
\[
\begin{cases}
\mathcal{P}^+(D^2_xU) + z^{2 -\frac{1}{s}} \partial_{zz} U \geq -a \qquad (\mathcal{P}^-(D^2_xU) + z^{2 -\frac{1}{s}} \partial_{zz} U \leq a) & \hbox{in}~S_1^+\\
\partial_{z} U \geq  f  \hspace{1.54in} (\partial_{z} U\leq f)
& \hbox{on}~T_1.
\end{cases}
\]
\end{rem}

\subsection{A stability result}

We now prove that $C_s$-viscosity solutions are closed under uniform limits. Here is one instance in which we use that $z^{2-\frac{1}{s}} \partial_{zz}\phi$ is continuous up to $\{z=0\}$ for $\phi \in C_s$ to overcome the degeneracy of the equation. 

\begin{lem}\label{lem:stability}
Consider sequences $a^{ij}_k:T_1\to \R$ of continuous functions satisfying \eqref{eq:ellipticity}, $f_k \in C(T_1) \cap L^\infty(T_1)$, and $F_k \in C(S_1^+ \cup T_1) \cap L^\infty (S_1^+ \cup T_1) $. 
Let $U_k \in C(\overline{S_1^+})$ be a sequence of $C_s$-viscosity (sub/super)solutions to 
\[
\begin{cases}
a^{ij}_k(x) \partial_{ij}U_k + z^{2- \frac{1}{s}} \partial_{zz}U_k = F_k & \hbox{in}~S_1^+ \\
\partial_z U_k (x,0) = f_k(x) & \hbox{on}~T_1.
\end{cases}
\]
Assume that there are $a^{ij}:T_1 \to \R$ satisfying \eqref{eq:ellipticity}, $f \in C(T_1) \cap L^\infty(T_1)$, $F \in C(S_1^+ \cup T_1) \cap L^\infty (S_1^+ \cup T_1) $ and $U \in C(\overline{ S_1^+})$ such that, as $k \to \infty$,
\begin{itemize}
\item $a^{ij}_k \to a^{ij}$ uniformly on $T_1$ 
\item $f_k \to f$ uniformly on $T_1$ 
\item $F_k \to F$ uniformly on $S_1^+\cup T_1$
\item $U_k \to U$ uniformly on compact subsets $ S_1^+ \cup T_1$.
\end{itemize}
Then, $U$ is a $C_s$-viscosity (sub/super)solution to 
\[
\begin{cases}
a^{ij}(x)\partial_{ij}U + z^{2- \frac{1}{s}} \partial_{zz}U = F & \hbox{in}~S_1^+ \\
\partial_z U (x,0) = f(x) & \hbox{on}~T_1.
\end{cases}
\]
\end{lem}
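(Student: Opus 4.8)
The plan is to argue by contradiction using the standard viscosity-solutions stability machinery, but with two technical adaptations: the Neumann condition on $T_1$ must be handled via the characterizations of Lemma \ref{lem:either-or} and Lemma \ref{lem:characterization2}, and when a test function touches at a boundary point $(x_0,0) \in T_1$ we must exploit that $\phi \in C_s$ (so that $z^{2-\frac{1}{s}}\partial_{zz}\phi$ is continuous up to $\{z=0\}$) to pass to the limit in the equation there. I would only write the argument for subsolutions, since the supersolution case is identical after reversing inequalities and the solution case follows.

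\emph{Interior points.} First I would treat condition \ref{item:visc-i}. Suppose $\phi \in C^2(S_1^+)$ touches $U$ strictly from above at an interior point $(x_0,z_0) \in S_1^+$; a routine reduction (replacing $\phi$ by $\phi + \eps(\delta_\Phi((x_0,z_0),\cdot) )$ or simply adding a small quadratic bump) lets us assume the touching is strict on some small section $S_r(x_0,z_0) \subset\subset S_1^+$. By uniform convergence $U_k \to U$, for large $k$ the function $\phi$ plus a small vertical shift $c_k \to 0$ is touched from above by $U_k$ at some interior point $(x_k,z_k) \to (x_0,z_0)$ inside $S_r(x_0,z_0)$. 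Applying \ref{item:visc-i} for $U_k$ gives $a^{ij}_k(x_k)\partial_{ij}\phi(x_k,z_k) + |z_k|^{2-\frac1s}\partial_{zz}\phi(x_k,z_k) \geq F_k(x_k,z_k)$, and since $z_0 \neq 0$ the coefficient $|z|^{2-\frac1s}$ is continuous near $z_0$, so letting $k \to \infty$ using $a^{ij}_k \to a^{ij}$, $F_k \to F$ and continuity of $D^2\phi$ yields the desired inequality at $(x_0,z_0)$.

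\emph{Boundary points.} For condition \ref{item:visc-ii} I would use Characterization 2 (Lemma \ref{lem:characterization2}): it suffices to check the Neumann inequality for test functions of the special form $\phi(x,z) = P(x) + az$ with $P$ a quadratic in $x$. Suppose such a $\phi$ touches $U$ from above at $(x_0,0) \in T_1$ on an open convex $E \subset S_1^+ \cup T_1$; after perturbing $P$ by a strict quadratic we may assume strict touching, so for large $k$, $U_k$ is touched from above (on a slightly smaller neighborhood) by $\phi + c_k$ at a point $(x_k,z_k) \to (x_0,0)$ with $(x_k,z_k) \in \overline{S_1^+}$. There are two cases. If $z_k = 0$ infinitely often, then $\phi + c_k$ restricted near $(x_k,0)$ is a valid $C_s$ test function (it lies in $C_s$ since it is smooth with $z^{2-\frac1s}\partial_{zz}(\phi + c_k) = P$-term $\equiv 0 \cdot z^{2-\frac1s}$, wait — $\partial_{zz}(P(x)+az) = 0$, so the weighted second derivative vanishes identically and is trivially continuous up to the boundary), so \ref{item:visc-ii} for $U_k$ gives $a = \partial_z\phi(x_k,0) \geq f_k(x_k) \to f(x_0)$, done. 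If instead $z_k > 0$ for all large $k$, then $(x_k,z_k)$ is an interior touching point, and \ref{item:visc-i} for $U_k$ gives $a^{ij}_k(x_k)\partial_{ij}\phi(x_k,z_k) + |z_k|^{2-\frac1s}\partial_{zz}\phi(x_k,z_k) \geq F_k(x_k,z_k)$; but $\partial_{zz}\phi = 0$, so this reads $a^{ij}_k(x_k)\partial_{ij}P(x_k) \geq F_k(x_k,z_k)$, and passing to the limit gives $a^{ij}(x_0)\partial_{ij}\phi(x_0,0) \geq F(x_0,0)$, i.e. the first alternative in Characterization 1 \ref{item:visc-ii'} holds. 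By Lemma \ref{lem:either-or}, \ref{item:visc-ii'} is equivalent to \ref{item:visc-ii}, so in either case the required boundary condition for $U$ is verified, and therefore $U$ is a $C_s$-viscosity subsolution.

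\emph{Main obstacle.} The delicate point — and the reason the class $C_s$ was introduced — is the boundary touching analysis: one cannot use arbitrary $C^2$ (or $C_s$) test functions directly in the limit, because the touching point $(x_k,z_k)$ for $U_k$ may drift off $T_1$ into the interior where, a priori, a general $\phi \in C_s$ has $|z_k|^{2-\frac1s}\partial_{zz}\phi(x_k,z_k)$ a nonzero quantity whose limit as $(x_k,z_k) \to (x_0,0)$ must be controlled. Reducing via Lemma \ref{lem:characterization2} to affine-in-$z$ plus quadratic-in-$x$ test functions kills $\partial_{zz}\phi$ entirely and sidesteps this difficulty; the continuity of $z^{2-\frac1s}\partial_{zz}\phi$ up to $\{z=0\}$ is exactly what makes that reduction legitimate and is implicitly what is being used. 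The remaining care is the standard bookkeeping of making touchings strict and extracting the converging sequence of contact points, which is routine.
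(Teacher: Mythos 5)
Your treatment of the interior points is fine and matches the paper's. The boundary argument, however, has a genuine logical gap, located exactly where you combine Lemma \ref{lem:characterization2} with Lemma \ref{lem:either-or}.

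By invoking Characterization 2 you commit yourself to proving \ref{item:visc-ii''}: for every test function of the form $\phi(x,z)=P(x)+az$ touching $U$ from above at $(x_0,0)$, the conclusion must be $a\ge f(x_0)$. Your Case 1 (contact points with $z_k=0$ along a subsequence) indeed produces $a\ge f(x_0)$. But your Case 2 (all $z_k>0$) produces only $a^{ij}(x_0)\partial_{ij}P(x_0)\ge F(x_0,0)$, which says nothing about $a$; you have not established \ref{item:visc-ii''} for this $\phi$. You then assert that the first alternative of \ref{item:visc-ii'} holds and that ``by Lemma \ref{lem:either-or} the required boundary condition is verified.'' This is a non sequitur: Lemma \ref{lem:either-or} asserts the equivalence of \ref{item:visc-ii} with the universally quantified either/or over the \emph{entire} class $C_s$, where the first alternative involves the full quantity $a^{ij}\partial_{ij}\phi + z^{2-\frac{1}{s}}\partial_{zz}\phi$. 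What you have obtained is an either/or restricted to the subclass $\{P(x)+az\}$ with $\partial_{zz}\phi\equiv 0$, and that restricted statement is strictly weaker than \ref{item:visc-ii'} and does \emph{not} imply \ref{item:visc-ii}. Indeed, the direction \ref{item:visc-ii'} $\Rightarrow$ \ref{item:visc-ii} in the proof of Lemma \ref{lem:either-or} hinges on sliding in the perturbation $\eta z - Ch(z)$ for large $C$, which satisfies $z^{2-\frac{1}{s}}\partial_{zz}(\eta z - Ch(z))\equiv -C$ and is thus a genuinely $C_s$ object outside $\{P(x)+az\}$; without it there is no mechanism to ``push down'' the equation alternative and force the Neumann alternative. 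So the step ``either/or for $P(x)+az$ test functions $\Rightarrow$ \ref{item:visc-ii}'' is unsupported by any lemma in the paper.

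The fix dissolves your stated ``main obstacle.'' You should \emph{not} reduce to $P(x)+az$ to kill $\partial_{zz}\phi$: the whole point of the class $C_s$ is that for $\phi\in C_s$ the function $z^{2-\frac{1}{s}}\partial_{zz}\phi$ extends continuously to $\{z=0\}$, so the troublesome limit you worried about \emph{does} exist. Run your extraction on a general $\phi\in C_s$. In Case 2 the interior inequality $a^{ij}_k(x_k)\partial_{ij}\phi(x_k,z_k)+z_k^{2-\frac{1}{s}}\partial_{zz}\phi(x_k,z_k)\ge F_k(x_k,z_k)$ passes to the limit because of that continuity and gives the full first alternative of \ref{item:visc-ii'}; Case 1 gives the second. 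Since $\phi\in C_s$ was arbitrary, \ref{item:visc-ii'} holds, and now Lemma \ref{lem:either-or} legitimately delivers \ref{item:visc-ii}. Up to running the argument directly rather than by contradiction, this is exactly the paper's proof; Characterization 2 is not used for this lemma.
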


\begin{proof}
We present only the proof that $U$ is a $C_s$-viscosity subsolution. We only need to check the Neumann condition. 

Suppose, by way of contradiction, that $\partial_z U (x,0) \geq f(x)$ does not hold on $T_1$ in the viscosity sense. 
By Lemma \ref{lem:either-or}, there is a point $(x_0,0) \in T_1$ and a test function $\phi \in C_s$ that touches $U$ from above at $(x_0,0)$ and both
\begin{equation}\label{eq:bwoc}
\partial_z\phi(x_0,0) < f(x_0) \quad \hbox{and} \quad
(a^{ij}(x) \partial_{ij} \phi + z^{2-\frac{1}{s}}\partial_{zz}\phi) \big|_{(x_0,0)} < F(x_0,0)
\end{equation}
hold. We may assume that $\phi$ touches $U$ strictly from above at $(x_0,0)$.
Otherwise, we replace $\phi$ with $\tilde{\phi} = \phi + \varepsilon(|x-x_0|^2 + h(z))$ for $\varepsilon$ small. 

Since $U_k \to U$ uniformly on compact subsets of $S_1^+ \cup T_1$, for $r>0$ small and $k$ sufficiently large, let $\varepsilon_k$ be such that
\[
\overline{S_r(x_0,0)^+} \subset S_1^+ \cup T_1 \quad \hbox{and} \quad
\varepsilon_k := \|U_k - U\|_{L^{\infty}(S_r(x_0,0)^+)}.
\] 
Note that $\varepsilon_k \to 0$ as $k \to \infty$ and that
$U_k \leq \phi + \varepsilon_k$ in $\overline{S_r(x_0,0)^+}$. 
Let $0 < r_k < r$ with $r_k \searrow 0$ and define
\[
d_k = \inf_{S_{r_k}(x_0,0)^+} (\phi + \varepsilon_k - U_k) \geq 0.
\]
Let $(x_k, z_k) \in \overline{S_{r_k}(x_0,0)^+}$ be a point where the previous infimum is attained and note that $(x_k,z_k) \to (x_0,0)$ as $k \to \infty$. 
Set $c_k=\varepsilon_k - d_k$, so that $c_k \to 0$ as $k \to \infty$. 
Since
\[
U_k(x_k,z_k) = \phi(x_k,z_k) + c_k \quad \hbox{and} \quad U_k \leq \phi + c_k \quad \hbox{in}~\overline{S_{r_k}(x_0,0)^+},
\]
we have that $\phi + c_k \in C_s$ touches $U_k$ from above at $(x_k,z_k)$. 
We now use that $U_k$ is a $C_s$-viscosity subsolution to arrive at a contradiction. 
Indeed, if $z_k >0$ for all $k$, we have that
\[
a^{ij}_k(x_k) \partial_{ij}\phi(x_k,z_k) + z_k^{2-\frac{1}{s}} \partial_{zz}\phi(x_k,z_k) \geq F(x_k,z_k).
\]
Sending $k \to \infty$ and using that $\phi \in C_s$, 
\[
(a^{ij}(x) \partial_{ij}\phi+ z^{2-\frac{1}{s}} \partial_{zz}\phi)\big|_{(x_0,0)} \geq F(x_0,0),
\]
contradicting \eqref{eq:bwoc}.
If instead for all $k_0 \in \N$, there is a $k \geq k_0$ such that $z_k = 0$, then, at such points, 
\[
\partial_z\phi(x_k,0) \geq f_k(x_k).
\]
Passing to the limit also contradicts \eqref{eq:bwoc}. 
Therefore, $U$ is a $C_s$-viscosity solution. 
\end{proof}

\section{Analysis of harmonic functions}\label{sec:harmonic}

Here, we show that $C_s$-viscosity solutions to the harmonic equation \eqref{eq:harmonic} are classical. 

\begin{prop}\label{lem:H-classical}
If $H \in C(\overline{S_1 \times S_1^+})$ is a $C_s$-viscosity solution to
\begin{equation}\label{eq:harmonic-2}
\begin{cases}
\Delta_x H + z^{2- \frac{1}{s}} \partial_{zz}H = 0 & \hbox{in}~S_1 \times S_1^+ \\
\partial_z H(x,0) = 0 & \hbox{on}~T_1,
\end{cases}
\end{equation}
then $H$ is a classical solution that satisfies the following estimates. 
\begin{enumerate}[$(1)$]
\item For each integer $k  \geq 0$ and each $S_r(x_0)   \subset S_1 \subset \R^n$, 
\begin{equation}\label{eq:Hx-estimates}
\sup_{S_{r/4}(x_0) \times (S_{c_sr/4}^+ \cup \{0\})} |D_x^k H| \leq \frac{C}{r^{k/2}} \underset{S_r(x_0) \times (S_{c_sr}^+ \cup \{0\})}{\operatorname{osc}} H
\end{equation}
where $C = C(n,k,s)>0$ and $c_s = 1/[2(1-s)]$. 
\item For $z \in S_1^+ $, it holds that
\begin{equation}\label{eq:Hz-decay}
\sup_{x \in S_{1/4}(0)} \abs{\partial_zH(x,z)} \leq C z^{\frac{1}{s}-1}\underset{S_1(0) \times (S_{c_s}^+ \cup \{0\})}{\operatorname{osc}} H
\end{equation}
where $C = C(n,s)>0$. 
\end{enumerate}
If, in addition, we prescribe $H= g$ on $\partial (S_1 \times S_1^+) \cap \{z>0\}$ for a given $g \in C(\overline{S_1 \times S_1^+})$, then
the solution $H$ to \eqref{eq:harmonic-2} is unique.
\end{prop}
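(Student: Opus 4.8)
The plan is to split the statement into four tasks: interior smoothness in $\{z>0\}$ (essentially classical), the behaviour up to $\{z=0\}$ (the genuinely new part, where the Hopf lemma and the Monge--Amp\`ere barriers are needed), the decay estimate \eqref{eq:Hz-decay} (a soft consequence), and uniqueness (which rests on a maximum principle built from the previous steps). For the interior part, on any section $S_\rho(x_0,z_0)\subset\subset S_1\times S_1^+$ the equation is uniformly elliptic with smooth, $x$-independent coefficients, so the standard interior $C^{2,\alpha}$-estimates for viscosity solutions together with Schauder bootstrapping give $H\in C^\infty(S_1\times S_1^+)$, show that $H$ is a classical solution there, and establish \eqref{eq:Hx-estimates} on compact subsets of $\{z>0\}$ by the usual argument (one may subtract a constant, since constants are harmonic). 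The content is therefore to carry the $x$-derivative bounds uniformly down to $\{z=0\}$ and to make sense of the Neumann condition classically.

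Near $\{z=0\}$ I would first prove a comparison/maximum principle for $C_s$-viscosity solutions with the Neumann condition on $T_1$. Away from $\{z=0\}$ this is the standard comparison principle; the delicate point is ruling out an extremum on $T_1$, where a constant touches from above/below and is compatible with $\partial_z(\cdot)=0$. This is exactly where the new Hopf lemma enters: one constructs an explicit barrier $\underline w\in C_s$ in a half-section, built from $h(z)=\frac{s^2}{1-s}|z|^{1/s}$ and $\varphi(x)=\frac12|x|^2$ together with a genuinely linear-in-$z$ term, which is a subsolution of the PDE, vanishes at the candidate boundary minimum, lies below a nonnegative supersolution, and has strictly positive normal derivative there, contradicting the viscosity supersolution inequality $\partial_z\underline w\le f=0$; since $\partial_z h(z)|_{z=0}=0$, the naive radial barriers $\psi(\delta_\Phi)$ have vanishing normal derivative and are useless, which is the crux of the difficulty. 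Once the maximum principle (and the interior strong maximum principle) are available, $H$ is forced to attain the zero Neumann datum strongly enough to run comparison, and Theorem \ref{thm:F-harnack} yields a quantitative H\"older modulus for $H$ up to $T_1$. To upgrade to $C^\infty$ in $x$ and to the quantitative bounds \eqref{eq:Hx-estimates}, I would even-reflect $H(x,z)\mapsto H(x,|z|)$ --- which is $C^1$ across $\{z=0\}$ because $\partial_z H(x,0)=0$ --- and change variables $\zeta=2s\,|z|^{1/(2s)}\operatorname{sgn}(z)$, under which the equation becomes the Caffarelli--Silvestre extension $\Delta_x H+\partial_{\zeta\zeta}H+\frac{1-2s}{\zeta}\partial_\zeta H=0$, i.e. $\operatorname{div}_{(x,\zeta)}(|\zeta|^{1-2s}\nabla H)=0$ across $\{\zeta=0\}$ with $|\zeta|^{1-2s}\in A_2$, the Neumann condition becoming $\lim_{\zeta\to0}\zeta^{1-2s}\partial_\zeta H=0$ and the class $C_s$ going over to the standard $C^2$ test-function class for this weighted equation; the known regularity theory for such degenerate equations then gives smoothness in $x$ up to the boundary, and the Caccioppoli/rescaling estimates translate into \eqref{eq:Hx-estimates}. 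Alternatively, staying entirely within the viscosity framework, by linearity and $x$-translation invariance the difference quotients in $x$ are again $C_s$-viscosity solutions with zero Neumann data --- using the stability Lemma \ref{lem:stability} to pass to the limit --- and iterating the interior H\"older estimate of Theorem \ref{thm:F-harnack} upgrades $H$ to $C^\infty$ in $x$.

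The estimate \eqref{eq:Hz-decay} is then soft: at fixed $x$ the equation reads $\partial_{zz}H(x,z)=-z^{1/s-2}\Delta_x H(x,z)$, so $|\partial_{zz}H(x,z)|\le z^{1/s-2}\sup|\Delta_x H|$; integrating in $z$ and using $\partial_z H(x,0)=0$ gives $|\partial_z H(x,z)|\le \frac{s}{1-s}\,z^{1/s-1}\sup|\Delta_x H|$, and \eqref{eq:Hx-estimates} with $k=2$ bounds $\sup|\Delta_x H|$ by a constant times the oscillation of $H$. For uniqueness, if $H_1,H_2$ are $C_s$-viscosity solutions of \eqref{eq:harmonic-2} agreeing with $g$ on $\partial(S_1\times S_1^+)\cap\{z>0\}$, then by the previous steps both are classical, so Lemma \ref{lem:comparison} makes $W=H_1-H_2$ a $C_s$-viscosity solution of the harmonic equation with $W=0$ on the lateral boundary and $\partial_z W=0$ on $T_1$, whence $W\equiv0$ by the maximum principle of the previous step.

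The main obstacle is the barrier construction: one must produce an explicit sub/supersolution in the Monge--Amp\`ere geometry that is simultaneously adapted to the Neumann condition on $\{z=0\}$ and to the singular/degenerate coefficient $z^{2-1/s}$ there, and --- crucially --- has a \emph{non-vanishing} normal derivative at $\{z=0\}$; the standard radial barriers fail precisely on this last point. Everything else --- the interior theory, the reflection and change of variables, the frozen-$x$ ODE for \eqref{eq:Hz-decay}, and the passage to uniqueness --- is comparatively routine once this barrier is in hand.
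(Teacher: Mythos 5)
Your overall strategy is in the right spirit---Hopf lemma barriers, even reflection, the change of variables to the Caffarelli--Silvestre extension, and a final comparison argument---but the step where you upgrade the $C_s$-viscosity solution $H$ to a classical one has a genuine gap, and it is precisely the step the paper handles differently.

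Both of the regularization routes you offer near $\{z=0\}$ are problematic. First, applying ``known regularity theory for such degenerate equations'' after the change of variables presumes you already know $H$ is a weak (energy) solution of the $A_2$-weighted divergence-form equation, but a priori $H$ is only a $C_s$-viscosity solution, and there is no off-the-shelf bridge from that class to the Fabes--Kenig--Serapioni/Caffarelli--Stinga theory; that bridge is exactly what is being proved. Second, the alternative of taking $x$-difference quotients and feeding them into Theorem~\ref{thm:F-harnack} does not go through as stated: even for a linear PDE, the difference of two viscosity solutions is not automatically a viscosity solution (the touching-test definition does not commute with linear combinations), so you cannot conclude that $[H(\cdot+\tau e_i,\cdot)-H]/|\tau|$ is again a $C_s$-viscosity solution of the same problem. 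Iterating would then not be justified. (Your description of the Hopf barrier is also slightly off: the paper's barrier is not a ``linear-in-$z$'' perturbation at the boundary point but a radially decaying function $e^{-\alpha[\delta_\Phi((x_0,z_0),\cdot)-h_\varepsilon]}$ centered at an \emph{interior} point $(x_0,z_0)$ with $z_0>0$, so the nonvanishing normal derivative comes from $h'(z_0)>0$, not from adding a linear term at $(x_0,0)$.)

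The paper sidesteps these issues with a cleaner trick that you essentially have in hand but do not invoke for the regularity step: even-reflect $H$, then use the existence/uniqueness result for the reflected degenerate equation (\cite[Lemma 4.2]{Caffarelli-Silvestre}) to produce a \emph{classical} solution $V$ of \eqref{eq:harmonic-2} with $V=H$ on the lateral boundary. The regularity estimates \eqref{eq:Hx-estimates} and \eqref{eq:Hz-decay} are then derived for $V$ directly from Proposition~\ref{prop:H regularity} (i.e.\ the change of variables and \cite[Proposition 3.5]{Caffarelli-Stinga} applied to a classical, not a viscosity, solution). Finally, $W=H-V$ is a $C_s$-viscosity solution with zero Neumann datum and zero lateral boundary values by Lemma~\ref{lem:comparison} (subtracting the classical $V$ from the viscosity $H$ \emph{is} allowed), and the Hopf Lemma~\ref{lem:Hopf} forces $W\equiv 0$, so $H=V$. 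This ``construct a classical reference solution and identify'' step is the missing ingredient in your proposal; once you add it, your deductions for \eqref{eq:Hz-decay} (integrating the frozen-$x$ ODE) and for uniqueness become correct.
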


The proof is at the end of this section and relies on a new Hopf lemma and regularity estimates for classical solutions to \eqref{eq:harmonic-2}. 

\subsection{Explicit barriers}

For the proof of the Hopf lemma, we first construct explicit barriers in the Monge--Amp\`ere geometry to handle the degeneracy of \eqref{eq:harmonic-2}.

\begin{lem}\label{lem:barrier}
Fix $(x_0,0) \in T_1$. 
Let $z_0$ and $R$ be such that $S_R(x_0,z_0) \subset S_1 \times S_1^+$ and $R=\delta_{\Phi}((x_0,z_0),(x_0,0))$. 
Fix $0 < \rho < R$. 
Then there is a function 
\begin{equation*}\label{eq:phi-class}
\phi \in \begin{cases}
C^2(\overline{S_R(x_0,z_0)}) & \hbox{when}~0 < s \leq 1/2\\
C_s & \hbox{when}~1/2 < s <1
\end{cases}
\end{equation*}
satisfying 
\[
\begin{cases}
\Delta_x \phi + z^{2-\frac{1}{s}} \partial_{zz}\phi >0 & \hbox{in}~S_R(x_0,z_0)  \setminus S_\rho (x_0,z_0) \\
\partial_z \phi(x_0,0) >0 &\\
 \phi(x_0,0) =0. &
\end{cases}
\]
Moreover, $\phi \leq 0$ on $\partial S_R(x_0,z_0)$ and $c\leq \phi \leq C$ on $\partial S_{\rho}(x_0,z_0)$ for some $C,c>0$.  
\end{lem}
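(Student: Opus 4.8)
\textbf{Proof proposal for Lemma \ref{lem:barrier}.}

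The plan is to construct $\phi$ explicitly as a function of the Monge--Amp\`ere quasi-distance to $(x_0,z_0)$. Since the operator $\Delta_x + z^{2-1/s}\partial_{zz} = \trace((D^2\Phi)^{-1}D^2(\cdot))$ is precisely the linearized Monge--Amp\`ere operator associated to $\Phi$, and $\delta_\Phi((x_0,z_0),\cdot)$ is a natural ``radial'' coordinate for this geometry, I would look for a barrier of the form
\[
\phi(x,z) = \beta\bigl(\delta_\Phi((x_0,z_0),(x,z))\bigr) - \gamma\bigl(\delta_\Phi((x_0,z_0),(x_0,0))\bigr) = \beta(\delta) - \beta(R)
\]
for a suitable one-variable profile $\beta$, or more precisely a combination of a power of $\delta_\Phi$ and a correction term that activates the Neumann condition at $(x_0,0)$. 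A first natural candidate is $\phi = A(R^{-p} - \delta_\Phi^{-p})$ for $p>0$ large (so that $\phi>0$ inside $S_R$, vanishes on $\partial S_R$, and is bounded below by a positive constant on $\partial S_\rho$), together with an additive term linear in the vertical direction that makes $\partial_z\phi(x_0,0)>0$. I would compute $\Delta_x\delta_\Phi = \Delta_x\delta_\varphi = n$ and, crucially, $z^{2-1/s}\partial_{zz}\delta_h(z_0,z) = z^{2-1/s} h''(z) = 1$ for $z>0$, so $\Delta_x\delta_\Phi + z^{2-1/s}\partial_{zz}\delta_\Phi = n+1$ identically away from $\{z=0\}$, while the gradient contributes $(\nabla_x\delta_\varphi,\partial_z\delta_h)$ with $|\nabla_x\delta_\varphi|^2 = 2\delta_\varphi$ and $z^{2-1/s}(\partial_z\delta_h)^2 = z^{2-1/s}(h'(z)-h'(z_0))^2$; combining these via the chain rule gives $\L\phi = \beta''(\delta)\bigl(2\delta_\varphi + z^{2-1/s}(\partial_z\delta_h)^2\bigr) + \beta'(\delta)(n+1)$, and one checks that $2\delta_\varphi + z^{2-1/s}(\partial_z\delta_h)^2$ is comparable to $\delta_\Phi$ away from $\{z=0\}$ (this uses that $\delta_h$ behaves like a quadratic in its first argument, cf. the engine behind Lemma \ref{lem:balls-to-sections-away-from-0}). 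Hence with $\beta(\delta) = -\delta^{-p}$ one gets $\L\phi \approx p(p+1)\delta^{-p-2}\cdot c\,\delta - p\,\delta^{-p-1}(n+1) = p\,\delta^{-p-1}\bigl(c(p+1) - (n+1)\bigr) > 0$ on $S_R\setminus S_\rho$ once $p$ is chosen large, which is the standard annular barrier computation.

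The genuinely new feature is the Neumann condition at $(x_0,0)$ and the degeneracy. Near $\{z=0\}$ the function $\delta_\Phi^{-p}$ is $C^\infty$ in $x$ but only $C^{1,\eta}_z$ (with $\eta = \min(1,1/s-1)$) because $\delta_h(z_0,\cdot)$ inherits the regularity of $h$, and $z^{2-1/s}\partial_{zz}\delta_\Phi^{-p}$ extends continuously to $\{z=0\}$ — so $\phi$ lands in the class $C_s$ (and in $C^2(\overline{S_R})$ exactly when $0<s\le 1/2$, where $h\in C^2$). For the sign of $\partial_z\phi(x_0,0)$: since $z_0 \neq 0$ we have $\partial_z\delta_h(z_0,z)|_{z=0} = -h'(z_0) \neq 0$, and choosing $z_0>0$ (recall $R = \delta_\Phi((x_0,z_0),(x_0,0)) = \delta_h(z_0,0)$ pins down $z_0$) gives $\partial_z[\delta_\Phi^{-p}]$ at $(x_0,0)$ with a definite sign; multiplying $\beta$ by the right positive constant (and if necessary adding a small multiple of $z$ or of $h'(z_0)z - $ constant which is harmless for the interior inequality) forces $\partial_z\phi(x_0,0)>0$ while keeping $\L\phi>0$. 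Finally I would normalize so that $\phi(x_0,0) = \beta(R) - \beta(R) = 0$ after subtracting the constant, verify $\phi = 0$ on $\partial S_R(x_0,z_0)$ (immediate since $\delta_\Phi = R$ there) and $\phi \ge c > 0$ on $\partial S_\rho$ with $\phi \le C$ there (immediate since $\delta_\Phi = \rho$ there and $\beta$ is monotone and finite), using the doubling/comparability estimates of Lemma \ref{lem:doubling} and \eqref{eq:section-cube} to make the constants depend only on $n,s$.

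The main obstacle I anticipate is controlling the interplay between the gradient term $2\delta_\varphi + z^{2-1/s}(\partial_z\delta_h)^2$ and $\delta_\Phi = \delta_\varphi + \delta_h$ uniformly up to $\{z=0\}$ when $1/2<s<1$: there $\delta_h(z_0,z)$ is only $C^{1,1/s-1}$ and $h''$ blows up at the origin, so the naive bound ``$z^{2-1/s}(h'(z)-h'(z_0))^2 \ge c\,\delta_h$'' must be checked carefully in the regime where $z$ is small and $z_0$ is of order $R^s$; this is where the precise structure of $h(z) = \frac{s^2}{1-s}|z|^{1/s}$ (rather than a generic convex function) is essential, and where I would most likely need a separate elementary lemma comparing $\delta_h$, $(h'(z)-h'(z_0))^2/h''(z)$ and $(z-z_0)^2 h''(\text{intermediate point})$. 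A secondary, more bookkeeping obstacle is confirming membership in $C_s$ near $\{z=0\}$ rather than merely $C^2$ away from it — i.e. that the singular coefficient $z^{2-1/s}$ exactly cancels the blow-up of $\partial_{zz}\phi$ — which again reduces to the exact form of $h$.
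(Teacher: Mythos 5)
Your construction essentially reproduces the paper's Case 1 ($0<s\le 1/2$): you write a barrier as a one-variable profile in $\delta := \delta_\Phi((x_0,z_0),\cdot)$, compute
\[
\Delta_x\phi + z^{2-1/s}\partial_{zz}\phi = \beta''(\delta)\Bigl(2\delta_\varphi + \tfrac{(h'(z)-h'(z_0))^2}{h''(z)}\Bigr) + (n+1)\beta'(\delta),
\]
and then use the lower bound $\mathcal{Q}(z) := (h'(z)-h'(z_0))^2/(h''(z)\delta_h(z_0,z))\ge 1$ to dominate $(n+1)\beta'$ by choosing the profile suitably steep. Whether one takes $\beta(\delta)=\delta^{-p}$ (your choice, modulo a sign slip in how you wrote $\phi = A(R^{-p}-\delta^{-p})$, which is negative inside $S_R$) or $\beta(\delta)=e^{-\alpha\delta}$ (the paper's choice) is immaterial; both are standard annular barriers, and the Neumann sign at $(x_0,0)$, and $\phi\le 0$ on $\partial S_R$, $c\le\phi\le C$ on $\partial S_\rho$, come out the same way.

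The genuine gap is in what you label the ``main obstacle''. You correctly notice that when $1/2<s<1$ the inequality $\mathcal{Q}(z)\ge 1$ becomes problematic near $z=0$, but you propose to repair it by ``a separate elementary lemma comparing $\delta_h$, $(h'(z)-h'(z_0))^2/h''(z)$ and $(z-z_0)^2 h''(\cdot)$''. No such lemma can exist: for $1/2<s<1$ one has $h''(z)=z^{1/s-2}\to\infty$ as $z\to 0^+$, while $(h'(z)-h'(z_0))^2\to h'(z_0)^2$ and $\delta_h(z_0,z)\to R$, so $\mathcal{Q}(0)=0$. Thus the gradient term $2\delta_\varphi + \mathcal{Q}\,\delta_h$ is \emph{not} bounded below by a positive multiple of $\delta_\Phi$ near $T_1$, and no choice of $p$ (or $\alpha$) in a pure profile $\beta(\delta_\Phi)$ can make $\Delta_x\phi+z^{2-1/s}\partial_{zz}\phi>0$ up to $\{z=0\}$. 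The paper's resolution is structurally different: it modifies the barrier to $e^{-\alpha[\delta_\Phi - h_\varepsilon(z)]}$, where $h_\varepsilon$ is a strictly convex function built by solving $h_\varepsilon''=2(n+1)\psi_\varepsilon h''$ with $\psi_\varepsilon$ a cutoff supported where $z^{2-1/s}$ is small. That modification creates the extra term $2(n+1)\psi_\varepsilon(z)$ in the zeroth-order part of the equation, which saves positivity precisely in the bad region; the $A_\infty$ property of $h''$ (Lemma \ref{lem:A-infty}) and the bounds on $h_\varepsilon$, $h_\varepsilon'$ then control both the boundary values of $\phi$ and the sign of $\partial_z\phi(x_0,0)$ (your suggestion to simply ``add a small multiple of $z$'' is not harmless here, since it competes with the $h_\varepsilon$ perturbation). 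So your proposal covers only $0<s\le 1/2$; for $1/2<s<1$ the key new idea, the Calabi-type smoothing $h_\varepsilon$, is missing, and the obstacle you flag is genuinely fatal to the pure radial ansatz.
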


\begin{proof}
First note that $z_0>0$ and $R = \delta_h(z_0,0)$. For ease in notation, we let
\[
A:= S_R(x_0,z_0)  \setminus S_\rho(x_0,z_0).
\]
We split into cases based on whether $0 < s\leq 1/2$ or $1/2<s<1$.

\medskip

\noindent
\underline{\bf Case 1}.  Assume that $0 < s \leq 1/2$. 

\smallskip

Begin by considering the function 
\[
\tilde{\phi}(x,z) = e^{-\alpha \delta_{\Phi}((x_0,z_0),(x,z))} \quad \hbox{for}~(x,z) \in A,
\]
where $\alpha>0$ is to be determined. 
For $(x,z) \in A$, we have
\begin{align*}
\Delta_x \tilde{\phi}(x,z) + &z^{2-\frac{1}{s}}\partial_{zz} \tilde{\phi}(x,z)\\
	&= \alpha e^{-\alpha \delta_{\Phi}((x_0,z_0),(x,z))}
		\bigg[
		\alpha \bigg(2 \delta_{\varphi}(x_0,x) + \frac{(h'(z) - h'(z_0))^2}{h''(z) \delta_h(z_0,z)} \delta_h(z_0,z) \bigg)
		- (n+1)
		\bigg].
\end{align*}
It can be checked (see \cite[Lemma 8.1]{Stinga-Vaughan}) that
\begin{equation}\label{eq:quotient}
\mathcal{Q}(z):= 
\frac{(h'(z) - h'(z_0))^2}{h''(z) \delta_h(z_0,z)} \geq 1 \quad \hbox{for}~z>0.
\end{equation}
Therefore, for $(x,z) \in A$, 
\begin{align*}
\Delta_x \tilde{\phi}(x,z) + z^{2-\frac{1}{s}}\partial_{zz} \tilde{\phi}(x,z)
	&\geq \alpha e^{-\alpha \delta_{\Phi}((x_0,z_0),(x,z))}
		\big[\alpha \big( \delta_{\varphi}(x_0,x) +\delta_h(z_0,z) \big)
			- (n+1)\big]\\
	&\geq \alpha e^{-\alpha \delta_{\Phi}((x_0,z_0),(x,z))}
		\big[\alpha \rho- (n+1)\big]
	>0
\end{align*}
by choosing $\alpha = \alpha(\rho,n)>0$ such that $\alpha > (n+1)/\rho$. 
Note also that
\[
\tilde{\phi}(x_0,0) = e^{-\alpha R} \quad \hbox{and} \quad
\partial_z\tilde{\phi}(x_0,0)
	= \alpha h'(z_0) e^{-\alpha R} >0.
\]

The lemma holds with $\phi$ given by 
\[
\phi(x,z) :=\tilde{\phi}(x,z) -\tilde{\phi}(x_0,0)
	=  e^{-\alpha \delta_{\Phi}((x_0,z_0),(x,z))}  -  e^{-\alpha R}.
\]

\noindent
\underline{\bf Case 2}. Assume that $1/2<s<1$. 

\medskip

In this case the function $\mathcal{Q}$ in \eqref{eq:quotient} satisfies $\mathcal{Q}(0) = 0$, so we cannot control the equation for $\tilde{\phi}$ defined in Case 1. 
Nevertheless,  $\mathcal{Q}'(z)>0$ for $z>0$, so we only need to bypass the points near $\{z=0\}$. 
For this, 
let $\varepsilon = \varepsilon (n,s,z_0,R)>0$ small, to be determined,  and 
let $0 < \varepsilon_0<1$ be as in Lemma \ref{lem:A-infty}. 
Define the set 
\[
H_\varepsilon = \bigg\{ z \in S_R(z_0) : 0 < z^{2- \frac{1}{s}} \leq \varepsilon_0 \frac{|S_R(z_0)|}{\mu_h(S_R(z_0))} \bigg\}. 
\]
Since
\[
|H_\eps| = \int_{H_\eps} \, dz \leq \int_{H_\eps} \eps_0 \frac{|S_R(z_0)|}{\mu_h(S_R(z_0))} z^{\frac{1}{s}-2} \, dz \leq \eps_0 |S_{R}(z_0)|,
\]
we can apply Lemma \ref{lem:A-infty} to get
$\mu_h(H_\varepsilon) \leq \varepsilon \mu_h(S_R(z_0))$. 
Let $\tilde{H}_\varepsilon$ be an open interval satisfying
\[
H_\varepsilon \subset \tilde{H}_\varepsilon \subset S_R(z_0), \quad \mu_h(\tilde{H}_\varepsilon \setminus H_\varepsilon) \leq \varepsilon \mu_h(S_R(z_0))
\]
and $\psi_\varepsilon(z)$ be a smooth function satisfying
\[
\psi_\varepsilon = 1~\hbox{in}~H_\varepsilon, \quad
	\psi_\varepsilon = \varepsilon~\hbox{in}~S_R(z_0) \setminus \tilde{H}_\varepsilon, \quad
	\varepsilon \leq \psi_\varepsilon \leq 1~\hbox{in}~S_R(z_0).
\]
One can check, as in the proof of \cite[Lemma 8.2]{Stinga-Vaughan}, that
\begin{equation}\label{eq:psi-integral}
\int_{S_R(z_0)} \psi_{\varepsilon} \, d\mu_h
	\leq 3 \varepsilon \mu_h(S_R(z_0)). 
\end{equation}

Let $h_{\varepsilon}(z)$ be the strictly convex solution to 
\[
\begin{cases}
h_{\varepsilon}'' = 2(n+1) \psi_\varepsilon h'' & \hbox{in}~S_R(z_0) \\
h_{\varepsilon} = 0 & \hbox{on}~\partial S_R(z_0). 
\end{cases}
\]
We remark that $h_{\varepsilon}\in C^{\infty}(S_R(z_0))$, and since $h \in C^1(\R)$, we have $h_{\varepsilon} \in C^1(\overline{S_R(z_0)})$.
Since $h_{\varepsilon}$ is strictly convex  in $S_R(z_0)$ and zero at the endpoints, $h_\eps<0$ in $S_R(z_0)$ and $h_{\varepsilon}$ attains a unique minimum at some $z_m \in S_R(z_0)$. In particular, $h_\eps'(z_m) = 0$. 

For any $z \in S_R(z_0)$, we use the equation for $h_\eps$ and \eqref{eq:psi-integral} to estimate
\begin{align*}
|h_{\varepsilon}'(z)|
	&= \bigg| \int_{z_m}^z h_{\varepsilon}''(w) \, dw \bigg|\\
	&= 2(n+1)\bigg| \int_{z_m}^z  \psi_\varepsilon(w) h''(w)\, dw \bigg|\\
	&\leq 2(n+1) \int_{S_R(z_0)} \psi_\varepsilon \, d\mu_h\\
	&\leq 6(n+1)\varepsilon \mu_h(S_R(z_0)) = C_1 \varepsilon  \mu_h(S_R(z_0)).
\end{align*}
Since $h_{\varepsilon} \in C^1(\overline{S_R(z_0)})$, we can further deduce that
\begin{equation}\label{eq:he-deriv-0}
- h_{\varepsilon}'(0) = |h_{\varepsilon}'(0)|
	= \lim_{z \to 0} |h_{\varepsilon}'(z)| \leq 
	C_1 \varepsilon  \mu_h(S_R(z_0)).
\end{equation}
With this, we can show, as in the proof of \cite[Lemma 8.2]{Stinga-Vaughan}, that there is $C_2 = C_2(n,s)>0$ such that, for any $z \in S_R(z_0)$,
\begin{equation}\label{eq:he-estimate}
- h_\eps(z) = |h_{\varepsilon}(z)| \leq C_2 \varepsilon R. 
\end{equation}
Using the estimates on $h_\eps$ and $h_\eps'$ given above, we can follow the steps in the proof of \cite[Lemma 8.2]{Stinga-Vaughan} to show that, for small $\varepsilon = \varepsilon(\rho, n, s)>0$, there is $C_4 = C_4(\rho,R,n,s)>0$ such that
\begin{equation}\label{eq:below}
(h'(z) - h'(z_0) - h_\varepsilon'(z))^2 \geq C_4[\mu_h(S_R(z_0))]^2 \quad \hbox{when}~\rho/2 \leq \delta_h(z_0,z) < R
\end{equation}
and
\[
\rho \leq \delta_{\Phi}((x_0,z_0), (x,z)) - h_\varepsilon(z) < (1+C_2\varepsilon) R \quad \hbox{when}~(x,z) \in A. 
\]

We are now ready to proceed with the construction of the barrier. 
Define $\tilde{\phi}(x,z)$ by
\[
\tilde{\phi}(x,z)
	= e^{-\alpha [ \delta_\Phi((x_0,z_0),(x,z))- h_\varepsilon(z) ]}.
\]
For $(x,z) \in A$, we have
\begin{align*}
\Delta_x &\tilde{\phi}(x,z) + z^{2-\frac{1}{s}}\partial_{zz} \tilde{\phi}(x,z)\\
	&=\alpha e^{-\alpha [ \delta_\Phi((x_0,z_0),(x,z))- h_\varepsilon(z) ]}\\
	&\quad \times\bigg[
	\alpha \big(2\delta_\varphi(x_0,x) + z^{2-\frac{1}{s}} (h'(z) - h'(z_0) - h_\varepsilon'(z))^2 \big)
	- (n+1)\big(1-2\psi_\varepsilon(z) \big)
	\bigg]
\end{align*}
where we have used the equation for $h_\varepsilon$. 

Suppose now that $z \in H_{\varepsilon}$. Using that $\psi_\varepsilon(z) = 1$, we have
\begin{align*}
\Delta_x &\tilde{\phi}(x,z) + z^{2-\frac{1}{s}}\partial_{zz} \tilde{\phi}(x,z)\\
	&= \alpha e^{-\alpha [ \delta_\Phi((x_0,z_0),(x,z))- h_\varepsilon(z) ]} \bigg[
	\alpha \big(2\delta_\varphi(x_0,x) + z^{2-\frac{1}{s}} (h'(z) - h'(z_0) - h_\varepsilon'(z))^2 \big)
	+ (n+1) \big)\bigg]\\
	&\geq  \alpha e^{-\alpha [ \delta_\Phi((x_0,z_0),(x,z))- h_\varepsilon(z) ]} (n+1)>0. 
\end{align*}
On the other hand, suppose that $z \notin H_\varepsilon$, so that $z^{2-\frac{1}{s}} >\varepsilon_0 |S_R(z_0)| / \mu_h(S_R(z_0))$. Since $\psi_\varepsilon(z) >0$, we have
\begin{align*}
\Delta_x &\tilde{\phi}(x,z) + z^{2-\frac{1}{s}}\partial_{zz} \tilde{\phi}(x,z)\\
	&\geq \alpha e^{-\alpha [ \delta_\Phi((x_0,z_0),(x,z))- h_\varepsilon(z) ]}\\
	&\quad \times\bigg[
	\alpha \bigg(2\delta_\varphi(x_0,x) + \varepsilon_0 \frac{|S_R(z_0)|}{\mu_h(S_R(z_0))} (h'(z) - h'(z_0) - h_\varepsilon'(z))^2 \bigg)
	- (n+1) \big)
	\bigg].
\end{align*}
Since $\delta_\Phi((x_0,z_0),(x,z)) \geq \rho$, it must be that either $\delta_\varphi(x_0,x) \geq \rho/2$ or $\delta_h(z_0,z) \geq \rho/2$. 
Suppose first that $\delta_\varphi(x_0,x) \geq \rho/2$. Then
\[
2\delta_\varphi(x_0,x) + \varepsilon_0 \frac{|S_R(z_0)|}{\mu_h(S_R(z_0))} (h'(z) - h'(z_0) - h_\varepsilon'(z))^2 
	\geq 2\delta_\varphi(x_0,x) \geq \rho.
\]
Choosing $\alpha = \alpha(\rho,n)$ such that $\alpha > (n+1)/\rho$
gives
\begin{align*}
\Delta_x \tilde{\phi}(x,z) + z^{2-\frac{1}{s}}\partial_{zz} \tilde{\phi}(x,z)
	\geq \alpha e^{-\alpha [ \delta_\Phi((x_0,z_0),(x,z))- h_\varepsilon(z) ]} \big[
	\alpha \rho
	- (n+1) \big)
	\big] >0.
\end{align*}
Now suppose that $\delta_h(z_0,z) \geq \rho/2$. Then, by \eqref{eq:below} and Lemma \ref{lem:doubling}, we have
\begin{align*}
2\delta_\varphi(x_0,x) + \varepsilon_0 \frac{|S_R(z_0)|}{\mu_h(S_R(z_0))} (h'(z) - h'(z_0) - h_\varepsilon'(z))^2 
	&\geq \varepsilon_0 \frac{|S_R(z_0)|}{\mu_h(S_R(z_0))} (h'(z) - h'(z_0) - h_\varepsilon'(z))^2 \\
	&\geq  C_4 \varepsilon_0 \frac{|S_R(z_0)|}{\mu_h(S_R(z_0))}   [\mu_h(S_R(z_0))]^2\\
	&= C_4 \varepsilon_0 |S_R(z_0)|  \mu_h(S_R(z_0)) \\
	&\geq C_5 \varepsilon_0 R
\end{align*}
for $C_5 = C_5(\rho, R, n , s)>0$. 
Choose $\alpha = \alpha (n,s, z_0,\rho, R)>0$ larger to guarantee that $\alpha C_5 \varepsilon_0 R \geq n+1$. 
Then
\begin{align*}
\Delta_x  \tilde{\phi}(x,z) + z^{2-\frac{1}{s}}\partial_{zz} \tilde{\phi}(x,z)
	&\geq \alpha e^{-\alpha [ \delta_\Phi((x_0,z_0),(x,z))- h_\varepsilon(z) ]} \big[
	\alpha C_5 \varepsilon_0 R	
	- (n+1) \big)
	\big]>0.
\end{align*}
In summary,
\[
\Delta_x  \tilde{\phi}(x,z) + z^{2-\frac{1}{s}}\partial_{zz} \tilde{\phi}(x,z)>0 \quad \hbox{for all}~(x,z) \in A. 
\]

We claim that  the lemma holds with $\phi(x,z)$ given by 
\begin{align*}
\phi(x,z) 
	&:=\tilde{\phi}(x,z) -\tilde{\phi}(x_0,0)
	= e^{-\alpha [\delta_{\Phi}((x_0,z_0),(x,z))- h_\varepsilon(z)]}  -  e^{-\alpha R}. 
\end{align*}
Indeed, since $\tilde{\phi}$ satisfies the equation in $A$, so does $\phi$. 
At $(x_0,0)$, we have
\[
\phi(x_0,0) = \tilde{\phi}(x_0,0) -\tilde{\phi}(x_0,0) = 0,
\]
and, by \eqref{eq:he-deriv-0},
\begin{align*}
\partial_z\phi(x_0,0)
	&=  \alpha e^{-\alpha [R- h_\varepsilon(0)]} \big[h'(z_0) + h_\varepsilon'(0)\big] \\
	&\geq  \alpha e^{-\alpha R} \big[h'(z_0) -C_1 \varepsilon  \mu_h(S_R(z_0)) \big]
	>0
\end{align*}
for $\varepsilon = \varepsilon (n,s, z_0, R)>0$ sufficiently small. 
On $\partial S_R(x_0,z_0)$, we use that $h_\eps \leq 0$ to get
\[
\phi(x,z)
	= e^{-\alpha [R- h_\varepsilon(z)]}  -  e^{-\alpha R} 
	= e^{-\alpha R} \big[ e^{\alpha h_\varepsilon(z)} -1 \big] \leq 0.
\]
On $\partial S_\rho (x_0,z_0)$, 
we again use that $h_\eps \leq 0$ to obtain
\[
\phi(x,z) \leq  e^{-\alpha \rho}  -  e^{-\alpha R} =: C,
\]
and then apply \eqref{eq:he-estimate} to find 
\begin{align*}
\phi(x,z)
	&= e^{-\alpha [\rho - h_\varepsilon(z)]}  -  e^{-\alpha R}
	\geq e^{-\alpha [\rho +  C_2\varepsilon R]}  -  e^{-\alpha R} =: c>0
\end{align*}
when $\eps>0$ is small enough to guarantee that $\rho +  C_2\varepsilon R < R$.
We conclude that $0 < c \leq \phi \leq C$ on $\partial S_\rho(x_0,z_0)$.
Lastly, since
\begin{align*}
z^{2-\frac{1}{s}}& \partial_{zz} \phi(x,z)\\
	&= \alpha e^{-\alpha [ \delta_\Phi((x_0,z_0),(x,z))- h_\varepsilon(z) ]} 
	\big[\alpha z^{2-\frac{1}{s}} (h'(z) - h'(z_0) - h_\varepsilon'(z))^2 
	- (1  - 2(n+1)) \psi_\varepsilon(z)\big]
\end{align*}
is continuous in $\overline{S_R(x_0,z_0)}$, we have that $\phi \in C_s$. 
\end{proof}

\begin{rem}\label{rem:barrier-aij}
By using ellipticity, the proof of Lemma \ref{lem:barrier} can be readily modified to prove the existence of barriers $\phi$
satisfying $a^{ij}(x) \partial_{ij} \phi + z^{2 - \frac{1}{s}} \partial_{zz} \phi >0$ for bounded,
measurable coefficients $a^{ij}(x)$ satisfying \eqref{eq:ellipticity}. 
\end{rem}

\subsection{A Hopf lemma}

Our following Hopf lemma states that harmonic functions attain their extrema on the curved boundary. 

\begin{lem}\label{lem:Hopf}
If $H \in C(\overline{S_1 \times S_1^+})$ is a $C_s$-viscosity solution to
\[
\begin{cases}
\Delta_x H + z^{2- \frac{1}{s}} \partial_{zz}H = 0 & \hbox{in}~S_1 \times S_1^+ \\
\partial_z H (x,0) = 0 & \hbox{on}~T_1,
\end{cases}
\] 
then $H$ attains its maximum and minimum on $\partial (S_1 \times S_1^+) \cap \{z>0\}$. 
\end{lem}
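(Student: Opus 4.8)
The plan is to prove that $H$ attains its maximum on $\Gamma:=\partial(S_1\times S_1^+)\cap\{z>0\}$; the statement for the minimum then follows by applying this to $-H$, which is again a $C_s$-viscosity solution of the same problem because the equation and the Neumann condition in \eqref{eq:harmonic-2} are linear and homogeneous. So set $M:=\max_{\overline{S_1\times S_1^+}}H$ and argue by contradiction, assuming $H<M$ on $\Gamma$. On the open set $S_1\times S_1^+$ the equation $\Delta_xH+z^{2-\frac1s}\partial_{zz}H=0$ is uniformly elliptic with smooth coefficients (the weight $z^{2-\frac1s}$ is bounded between positive constants on compact subsets of $\{z>0\}$), and there the $C_s$-viscosity notion coincides with the usual viscosity notion, so the classical strong maximum principle applies: either $H\equiv M$, contradicting $H<M$ on $\Gamma$, or $H<M$ throughout $S_1\times S_1^+$ and the maximum is attained at some $(x_0,0)\in\overline{T_1}$. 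Points with $x_0\in\partial S_1$ lie in $\overline{\Gamma}$ and can be dispatched by a routine continuity argument, so I would assume henceforth that $(x_0,0)$ lies in the relative interior $T_1$.

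Next I would set up a barrier adapted to the Monge--Amp\`ere geometry. Since $\delta_\Phi((x_0,z_0),(x,z))=\delta_\varphi(x_0,x)+\delta_h(z_0,z)$ and, with $R:=\delta_h(z_0,0)$, the section $S_R(x_0,z_0)$ collapses to $\{(x_0,0)\}$ as $z_0\to0^+$, I can choose $z_0>0$ so small that $\overline{S_R(x_0,z_0)}\subset(S_1\times S_1^+)\cup\{(x_0,0)\}$ and $\overline{S_R(x_0,z_0)}\cap\{z=0\}=\{(x_0,0)\}$. Fixing $0<\rho<R$, Lemma \ref{lem:barrier} gives a function $\phi$, lying in $C^2(\overline{S_R(x_0,z_0)})$ when $0<s\le\frac12$ and in $C_s$ when $\frac12<s<1$, with $\Delta_x\phi+z^{2-\frac1s}\partial_{zz}\phi>0$ on $A:=S_R(x_0,z_0)\setminus S_\rho(x_0,z_0)$, $\phi(x_0,0)=0$, $\partial_z\phi(x_0,0)>0$, $\phi\le0$ on $\partial S_R(x_0,z_0)$, and $0<c\le\phi\le C$ on $\partial S_\rho(x_0,z_0)$.

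Then I would run the comparison step. As $\overline{S_\rho(x_0,z_0)}$ is a compact subset of $S_1\times S_1^+$ on which $H<M$, there is $\eta>0$ with $H\le M-\eta$ on $\partial S_\rho(x_0,z_0)$; taking $\varepsilon\in(0,\eta/C)$ one has $M-\varepsilon\phi\ge H$ on $\partial A=\partial S_R(x_0,z_0)\cup\partial S_\rho(x_0,z_0)$, with equality at $(x_0,0)$. Since $M-\varepsilon\phi$ is a classical---hence $C_s$-viscosity---supersolution on $A$, where the equation is uniformly elliptic and which meets $\{z=0\}$ only at the single boundary point $(x_0,0)$, the maximum principle yields $H\le M-\varepsilon\phi$ on $\overline{A}$. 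The delicate point is that near $(x_0,0)$ the set $\overline{S_R(x_0,z_0)}$ is cusp-like and is therefore \emph{not} a neighborhood of $(x_0,0)$ in $S_1^+\cup T_1$, so $M-\varepsilon\phi$ is not usable as a viscosity test function. To remedy this I would combine: (a) $H\le M-\varepsilon\phi$ on $\overline{A}$; (b) the global bound $H\le M$; (c) the explicit form of $\phi$, which together with $\partial_z\phi(x_0,0)>0$ gives $\phi(x,z)\ge c_1z-c_2|x-x_0|^2$ near $(x_0,0)$ inside $\overline{A}$; and (d) the elementary consequence of the section inequality $\tfrac12|x-x_0|^2+\delta_h(z_0,z)<R$ that $|x-x_0|^2\lesssim z$ on $\overline{A}$ near $(x_0,0)$ while $|x-x_0|^2\gtrsim z$ on its complement within $\{z\ge0\}$. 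For $K$ large this produces a half-ball $N=B_\delta(x_0,0)\cap\{z\ge0\}\subset S_1^+\cup T_1$ with
\[
H(x,z)\le\phi^*(x,z):=M-\varepsilon c_1 z+K|x-x_0|^2\quad\text{on }N,\qquad\phi^*(x_0,0)=M=H(x_0,0).
\]

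Finally, $\phi^*$ is a polynomial with $\partial_{zz}\phi^*\equiv0$, so $\phi^*\in C_s$, and by construction it touches $H$ from above at $(x_0,0)$ in the convex set $N$, while $\partial_z\phi^*(x_0,0)=-\varepsilon c_1<0$. This contradicts the Neumann condition $\partial_zH(x,0)=0$ read in the sense of condition \ref{item:visc-ii} of Definition \ref{defn:viscosity-sub} for the $C_s$-viscosity subsolution $H$, and the proof is done. I expect the main obstacle to be precisely this reconciliation: the degenerate/singular $h$-geometry makes the natural touching section a cusp rather than a half-ball, so one cannot work directly with $M-\varepsilon\phi$; this is where the explicit barrier of Lemma \ref{lem:barrier} and the precise shape of $h$-sections near $\{z=0\}$ are genuinely used. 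Once the polynomial $\phi^*$ is in hand, the contradiction with the viscosity Neumann condition is immediate.
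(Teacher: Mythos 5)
Your proof is correct and follows essentially the same route as the paper: reduce to the maximum (by linearity and symmetry of the equation the same applies to $-H$), use the interior strong maximum principle to push the contradiction to $T_1$, invoke the barrier $\phi$ of Lemma \ref{lem:barrier} and compare $H$ with $M-\varepsilon\phi$ on the Monge--Amp\`ere annulus $S_R(x_0,z_0)\setminus S_\rho(x_0,z_0)$, and then read off a contradiction to the $C_s$-viscosity Neumann condition.

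You correctly identified that the step ``$\psi:=M-\varepsilon\phi\geq H$ on the annulus, hence $\psi$ touches $H$ from above at $(x_0,0)$'' is too terse: by Definition \ref{defn:viscosity-sub} a touching at a boundary point requires the inequality $\psi\geq H$ on a (relatively) open convex set containing $(x_0,0)$, i.e.\ on a full half-ball, whereas the annulus meets $\{z=0\}$ only at the single point $(x_0,0)$. Your remedy -- building the polynomial $\phi^*=M-\varepsilon c_1z+K|x-x_0|^2$ from the bounds $\phi\geq c_1z-c_2|x-x_0|^2$ inside $\overline{A}$, $H\leq M$ globally, and $|x-x_0|^2\gtrsim z$ off $\overline{S_R(x_0,z_0)}$ -- is correct and gives a bona fide $C_s$ test function touching from above on a half-ball with $\partial_z\phi^*(x_0,0)<0$. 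It is worth noting, though, that there is a shorter repair that stays closer to the paper's line: the barrier is given by the explicit formula $\phi=e^{-\alpha[\delta_\Phi((x_0,z_0),\cdot)-h_\varepsilon]}-e^{-\alpha R}$ (with $h_\varepsilon\equiv 0$ when $0<s\leq\tfrac12$), which makes sense on a full half-ball around $(x_0,0)$ and satisfies $\phi\leq 0$ outside $\overline{S_R(x_0,z_0)}$ because $\delta_\Phi>R$ there and $-h_\varepsilon\geq 0$. Hence $\psi=M-\varepsilon\phi\geq M\geq H$ outside $\overline{S_R(x_0,z_0)}$; combined with $\psi\geq H$ on the annulus, and since $S_\rho(x_0,z_0)$ is bounded away from $(x_0,0)$, one gets $\psi\geq H$ on a genuine relative half-ball, and $\psi$ itself (which is $C_s$ when $\tfrac12<s<1$, and is handled via Corollary \ref{lem:if C2 then Cs} when $0<s\leq\tfrac12$) is the test function. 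Either fix closes the gap; yours replaces the test function by a second-order polynomial, which is slightly more work but arguably more transparent.
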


\begin{proof}
We present only the proof for the maximum. Assume that $H$ is not constant, otherwise there is nothing to show.
By interior Schauder estimates, $H$ is a classical solution in the interior of $S_1 \times S_1^+$. 
Moreover, by the weak maximum principle \cite[Theorem 3.1]{GT}, $H$ attains its maximum on the boundary $\partial (S_1 \times S_1^+)$. 
Suppose, by way of contradiction, that $H$ attains its maximum at a point $(x_0,0) \in T_1$. 
We may assume that 
\[
H(x_0,0) >  H(x,z) \quad \hbox{for all}~(x,z) \in S_1 \times S_1^+. 
\]
Indeed, set $M = H(x_0,0)$ and suppose that there is a point  $(x_1,z_1) \in S_1 \times S_1^+$ with $H(x_1,z_1) = M$. 
By the strong maximum principle, $H \equiv M$ in $(S_1 \times S_1^+) \cap \{z>\frac{z_1}{2}\}$. 
In particular, $H(x_1, \frac{z_1}{2}) = M$. Iterating this process, we find that $H \equiv M$ in $(S_1 \times S_1^+) \cap \{z> \frac{z_1}{2^k}\}$ for all $k \in \N$. Consequently, $H \equiv M$ in $S_1 \times S_1^+$, a contradiction
to the assumption that $H$ is not constant.

Let $z_0$ and $R$ be such that $S_R(x_0,z_0) \subset S_1 \times S_1^+$ and $R=\delta_{\Phi}((x_0,z_0),(x_0,0))$. 
Fix $0 < \rho < R$. 
In this setting, consider the barrier $\phi$ constructed in Lemma \ref{lem:barrier}. 

For $\varepsilon>0$, to be determined, define
\[
\psi(x,z) =  H(x_0,0) -\varepsilon \phi(x,z) \quad \hbox{on}~S_R(x_0,z_0) \setminus  S_\rho(x_0,z_0).
\]
By Lemma \ref{lem:barrier}, 
\[
\begin{cases}
\Delta_x \psi + z^{2-\frac{1}{s}} \partial_{zz} \psi <0 & \hbox{in}~S_R(x_0,z_0) \setminus  S_\rho(x_0,z_0)\\
\partial_z \psi(x_0,0) < 0 \\
\psi(x_0,0) = H(x_0,0). 
\end{cases}
\]
On $\partial S_R(x_0,z_0) \setminus \{(x_0,0)\}$, we have
\[
\psi \geq H(x_0,0)  - \varepsilon \cdot 0  > H.
\]
On $\partial S_\rho(x_0,z_0)$, we use that $H< H(x_0,z_0)$ and that $c \leq \phi \leq C$ to find $\varepsilon>0$ such that
\[
H - \psi  = H - H(x_0,0) + \varepsilon \phi \leq 0. 
\]
Consequently, we have that $\psi \geq H$ on $\partial [S_R(x_0,z_0) \setminus S_\rho(x_0,z_0)]$. 
By the weak maximum principle, $\psi \geq H$ in $S_R(x_0,z_0) \setminus S_\rho(x_0,z_0)$. 
Since $\psi(x_0,0) = H(x_0,0)$, we have that $\psi$ touches $H$ from above at $(x_0,0)$. 
Moreover, by Lemma \ref{lem:barrier},
\begin{equation}\label{eq:neumann-frombarrier}
\partial_z \psi (x_0,0)  = -\varepsilon \partial_z\phi(x_0,0)<0.
\end{equation}

If $1/2 < s < 1$, we know that $\psi \in C_s$. Since $H$ is a $C_s$-viscosity subsolution,
\[
\partial_z \psi (x_0,0) \geq 0,
\]
contradicting \eqref{eq:neumann-frombarrier}. If $0 < s \leq 1/2$, we have $\psi \in C^2$. 
Nevertheless, by Corollary \ref{lem:if C2 then Cs},
for all $\delta>0$, there is a function $\psi_{\delta} \in C_s$ that touches $\psi$, and hence $H$, from above at $(x_0,0)$, and satisfies
\[
\partial_z \psi_{\delta}(x_0,0) = -\varepsilon \partial_z \phi (x_0,0)+ \delta < 0
\]
for $\delta>0$ sufficiently small. 
However, since $H$ is a $C_s$-viscosity subsolution, we have that
\[
\partial_z \psi_\delta (x_0,0) \geq 0,
\]
a contradiction. Hence, $U$ attains its maximum on $\partial (S_1 \times S_1^+) \cap \{z>0\}$. 
\end{proof}

\begin{rem}
In view of Remark \ref{rem:barrier-aij}, we also have that the conclusion of Lemma \ref{lem:Hopf}
remains valid for $C_s$-viscosity solutions to $a^{ij}(x) \partial_{ij} U + z^{2- \frac{1}{s}} \partial_{zz}U=0$ where $a^{ij}(x)$ are bounded, measurable coefficients satisfying \eqref{eq:ellipticity}. 
\end{rem}

\subsection{Regularity estimates for classical solutions}

We now rewrite the known regularity estimates for classical solutions in our setting.

\begin{prop}[Proposition 3.5 in \cite{Caffarelli-Stinga}]\label{prop:H regularity}
If $H \in C^2(S_1 \times S_1^+) \cap C^1(S_1 \times \overline{S_1^+})$ is a classical solution to 
\begin{equation*}\label{eq:harmonic-eqn}
\begin{cases}
\Delta_xH + z^{2-\frac{1}{s}} \partial_{zz} H = 0 & \hbox{in}~S_1 \times S_1^+\\
\partial_{z}H(x,0) = 0 & \hbox{on}~T_1,
\end{cases}
\end{equation*}
then $H$ satisfies \eqref{eq:Hx-estimates} and \eqref{eq:Hz-decay}. 
\end{prop}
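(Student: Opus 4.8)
The plan is to reduce \eqref{eq:Hx-estimates}--\eqref{eq:Hz-decay} to the classical regularity theory for Caffarelli--Silvestre type extension problems \cite{Caffarelli-Stinga} via an explicit change of variables in $z$ that straightens out the degeneracy. Introduce the new vertical variable $y = 2s\,z^{1/(2s)}$, equivalently $z = (y/(2s))^{2s}$, and set $H(x,z) = V(x,y)$. Since $\tfrac{dy}{dz} = z^{1/(2s)-1}$, one finds $\partial_z H = z^{1/(2s)-1}\partial_y V = (2s)^{2s-1}y^{1-2s}\partial_y V$ and, using $z^{-1/(2s)} = 2s/y$,
\[
z^{2-\frac{1}{s}}\partial_{zz}H = z^{2-\frac{1}{s}}\big(z^{\frac{1}{s}-2}\partial_{yy}V + \big(\tfrac{1}{2s}-1\big)z^{\frac{1}{2s}-2}\partial_y V\big) = \partial_{yy}V + \frac{1-2s}{y}\partial_y V .
\]
Therefore $V$ is a classical solution on a region $\{x\in S_1,\ 0<y<\beta\}$, $\beta=\beta(s)>0$, of the degenerate/singular extension equation $\Delta_x V + \frac{1-2s}{y}\partial_y V + \partial_{yy}V = 0$ with weight exponent $a = 1-2s\in(-1,1)$, the hypothesis $H\in C^2(S_1\times S_1^+)\cap C^1(S_1\times\overline{S_1^+})$ translating exactly into the classical regularity needed of $V$; moreover the homogeneous Neumann condition $\partial_z H(x,0) = 0$ becomes the weighted Neumann condition $\lim_{y\to0^+}y^{1-2s}\partial_y V = 0$.

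With $V$ at hand I would invoke \cite[Proposition 3.5]{Caffarelli-Stinga} --- or, equivalently, reflect evenly by $\tilde V(x,y) = V(x,|y|)$, which solves $\operatorname{div}(|y|^{1-2s}\nabla\tilde V) = 0$ across $\{y=0\}$ precisely because the weighted Neumann data vanishes, and apply the De Giorgi--Nash--Moser and Schauder theory for the Muckenhoupt $A_2$-weight $|y|^{1-2s}$. This yields that $V$ is $C^\infty$ in $x$ up to $\{y=0\}$, with, for every Euclidean half-cylinder $B_R(x_0)\times[0,R)$ in the domain,
\[
\sup_{B_{R/2}(x_0)\times[0,R/2)}|D_x^k V| \le \frac{C}{R^k}\operatorname*{osc}_{B_R(x_0)\times[0,R)}V \quad\text{and}\quad \sup_{x\in B_{R/2}(x_0)}|\partial_y V(x,y)| \le C\,y\operatorname*{osc}_{B_R(x_0)\times[0,R)}V ,
\]
with $C = C(n,k,s)$; the oscillation replaces the sup-norm in the cited estimates since adding a constant to $V$ leaves the equation and the Neumann condition unchanged.

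Finally I would transport these bounds back through $(x,z)\mapsto(x,y)$. The horizontal variable is unchanged, so $D_x^k H(x,z) = D_x^k V(x,y)$; by \eqref{eq:x-MA-euclidean} we have $S_r(x_0) = B_{\sqrt{2r}}(x_0)$, while for $z>0$ the $h$-section $S_{c_sr}^+(0)$ is carried onto $\big(0,2\sqrt{(1-s)c_sr}\,\big) = \big(0,\sqrt{2r}\,\big)$ --- the constant $c_s = 1/[2(1-s)]$ being chosen exactly so that this endpoint coincides with the horizontal radius $\sqrt{2r}$. Hence $S_r(x_0)\times(S_{c_sr}^+\cup\{0\})$ corresponds to the half-cylinder $B_{\sqrt{2r}}(x_0)\times[0,\sqrt{2r})$, and $S_{r/4}(x_0)\times(S_{c_sr/4}^+\cup\{0\})$ to $B_{\sqrt{r/2}}(x_0)\times[0,\sqrt{r/2})$, which is exactly the half-size cylinder since $\sqrt{r/2} = \tfrac{1}{2}\sqrt{2r}$. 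Applying the first estimate with $R = \sqrt{2r}$ gives $R^{-k}\sim r^{-k/2}$, which is \eqref{eq:Hx-estimates}. For \eqref{eq:Hz-decay}, since $\partial_z H(x,z) = z^{1/(2s)-1}\partial_y V(x,y)$ with $y = 2sz^{1/(2s)}$, the second estimate gives $|\partial_z H(x,z)| \le C z^{1/(2s)-1}\,y\operatorname*{osc}H = C z^{1/s-1}\operatorname*{osc}H$ on the corresponding cylinder, as claimed.

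I expect the only genuine work to be the coordinate bookkeeping of this last step: checking that the Monge--Amp\`ere sections and cylinders (and their nestings) appearing in \eqref{eq:Hx-estimates}--\eqref{eq:Hz-decay} are carried by $(x,z)\mapsto(x,2sz^{1/(2s)})$ onto the Euclidean half-cylinders required by \cite[Proposition 3.5]{Caffarelli-Stinga}, with precisely the stated $c_s$, and that its $L^\infty$ bounds upgrade to oscillation bounds. All the analytic content --- smoothness in $x$ and linear-in-$y$ decay of the normal derivative at the weighted Neumann face --- is already contained in the cited proposition and its underlying weighted elliptic theory, so no new PDE estimates are needed.
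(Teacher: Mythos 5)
Your proposal is correct and follows essentially the same route as the paper: the substitution $y = 2s\,z^{1/(2s)}$ converting the equation to $\operatorname{div}_{x,y}(y^{1-2s}\nabla W)=0$ with vanishing weighted Neumann data, invocation of Proposition~3.5(1) and~3.5(3) of \cite{Caffarelli-Stinga}, and the verification that $c_s = 1/[2(1-s)]$ makes the $h$-section $S_{c_sr}^+$ correspond to the Euclidean interval $(0,\sqrt{2r})$. The only small difference is that you keep track of the harmless factor $(2s)^{2s-1}$ in $\partial_zH = (2s)^{2s-1}y^{1-2s}\partial_yV$, which the paper absorbs into the constant without comment.
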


\begin{proof}
Consider the change of variables $z \mapsto (y/2s)^{2s}$. 
Notice that
\[
h(z) 
	= \frac{s^2}{1-s} \abs{z}^{\frac{1}{s}} 
	= \frac{s^2}{1-s}\(\frac{y}{2s}\)^2
	=\frac{c_s}{2} y^2 \quad \hbox{where}~c_s = \frac{1}{2(1-s)}. 
\]
Therefore, for any $r>0$, 
\begin{equation}\label{eq:sections-zy}
z \in S_{c_sr}^+ \quad \hbox{if and only if} \quad y \in B_{\sqrt{2r}}^+ = (0, \sqrt{2r}). 
\end{equation} 
In $x$, recall from \eqref{eq:x-MA-euclidean} that $S_1 = B_{\sqrt{2}} \subset \R^n$. 

Define the function $W(x,y) := H(x, (y/2s)^{2s})$. 
One can check that $W$ is a classical solution to the divergence form equation
\[
\begin{cases}
\Delta_xW + \frac{1-2s}{y} \partial_yW + \partial_{yy}W = \operatorname{div}_{x,y}(y^{1-2s} \nabla W) = 0 & \hbox{in}~B_{\sqrt{2}} \times B_{\sqrt{2/c_s}}^+\\
y^{1-2s} \partial_{y}W(x,y)\big|_{y=0} = 0 & \hbox{on}~B_{\sqrt{2}} \times \{y=0\}.
\end{cases}
\]
Consider a section $S_r(x_0) \subset S_1$. Recalling \eqref{eq:x-MA-euclidean}, we apply
 \cite[Proposition 3.5(1)]{Caffarelli-Stinga}, for each $k \geq 0$, to obtain
\begin{align*}
\sup_{S_{r/4}(x_0) \times [0, \sqrt{2r}/2)} |D^k_xW|
	&= \sup_{B_{\sqrt{2r}/2}(x_0) \times [0, \sqrt{2r}/2)} |D^k_xW| \\
	&\leq \frac{C}{r^{k/2}} \underset{B_{\sqrt{2r}}(x_0) \times [0, \sqrt{2r})}{\operatorname{osc}}  W
	= \frac{C}{r^{k/2}} \underset{S_r(x_0) \times [0, \sqrt{2r}) }{\operatorname{osc}}  W.
\end{align*}
Since $D^k_xH(x,z) = D^k_x W(x, y)$, we use \eqref{eq:sections-zy} and change variables to find
\begin{align*}
\sup_{S_{r/4}(x_0) \times (
S_{c_sr/4} \cup \{0\})} \abs{D^k_xH(x,z)}
	&=  \sup_{S_{r/4}(x_0) \times [0, \sqrt{2r}/2)} \abs{D^k_xW(x,y)} \\
	&\leq \frac{C}{r^{k/2}} \underset{S_r(x_0) \times[0, \sqrt{2r}) }{\operatorname{osc}} W(x,y)
	= \frac{C}{r^{k/2}}\underset{S_r(x_0) \times (S_{c_sr} \cup \{0\})}{\operatorname{osc}} H(x,z),
\end{align*}
which proves \eqref{eq:Hx-estimates}.

Similarly, by \cite[Proposition 3.5(3)]{Caffarelli-Stinga}, if $y \in [0, \sqrt{2})$, then
\[
\sup_{B_{\sqrt{2}/2}(0)} \abs{W_y(x,y)} \leq C y\underset{B_{\sqrt{2}}(0) \times [0,\sqrt{2})}{\operatorname{osc}} W.
\]
Let $z \in S_{c_s}^+ \cup \{0\}$. 
Since $\partial_zH(x,z)=y^{1-2s}\partial_yW(x,y)$, we use \eqref{eq:x-MA-euclidean} and \eqref{eq:sections-zy} to change variables and obtain
\begin{align*}
\sup_{x \in S_{1/4}(0)} \abs{\partial_zH(x,z)}
	&=\sup_{x \in B_{\sqrt{2}/2}(0)}y^{1-2s}  \abs{\partial_yW(x,y)}\\
	&\leq Cy^{2(1-s)}\underset{B_{\sqrt{2}}(x_0) \times [0,\sqrt{2})}{\operatorname{osc}} W
	= C z^{\frac{1}{s}-1} \underset{S_1(0) \times (S_{c_s}^+ \cup \{0\})}{\operatorname{osc}} H,
\end{align*}
which proves \eqref{eq:Hz-decay}. 
\end{proof}

\subsection{Proof of Proposition \ref{lem:H-classical}}

Let $\tilde{H}(x,z) = H(x,|z|)$ denote the even reflection of $H$ across $T_1$ and continue to denote $\tilde{H}$ by $H$. 
The reflected function $H$ is a $C_s$-viscosity solution to 
\begin{equation}\label{eq:reflected}
\begin{cases}
\Delta_x H + |z|^{2- \frac{1}{s}} \partial_{zz}H = 0 & \hbox{in}~(S_1 \times S_1) \setminus \{z=0\}\\
\partial_{z} H (x,0) = 0 & \hbox{on}~T_1.
\end{cases}
\end{equation}
By \cite[Lemma 4.2]{Caffarelli-Silvestre}, 
there is a unique classical solution $V$ to \eqref{eq:reflected} satisfying $V = H$ on $\partial (S_1 \times S_1)$. 
By Proposition \ref{prop:H regularity}, $V$ satisfies the regularity estimates \eqref{eq:Hx-estimates} and \eqref{eq:Hz-decay}. 
To prove the statement, it is enough to show that $H=V$. 
By Lemma \ref{lem:comparison}, $W = H-V$ is a $C_s$-viscosity solution to
\[
\begin{cases}
\Delta_x W + z^{2- \frac{1}{s}} \partial_{zz}W = 0 & \hbox{in}~S_1 \times S_1^+\\
\partial_{z} W (x,0) = 0 & \hbox{on}~T_1\\
W = 0 &\hbox{on}~\partial (S_1 \times S_1^+) \cap \{z>0\}. 
\end{cases}
\]
By Lemma \ref{lem:Hopf}, $W$ attains both its maximum and minimum on $\partial (S_1 \times S_1^+) \cap \{z>0\}$. 
Consequently, $W \equiv 0$, and we have that $H=V$, as desired.

If, in addition, one prescribes $H= g$ on $\partial (S_1 \times S_1^+) \cap \{z>0\}$, then the solution $H$ is unique by \cite[Lemma 4.2]{Caffarelli-Silvestre}. \qed

\section{Harnack inequality and H\"older regularity for viscosity solutions \\ to the extension problem}\label{sec:harnack}

In this section we prove Theorem \ref{thm:F-harnack}. 
By rescaling, it is enough to prove the following normalized version
(see, for example, \cite[Section 5]{Stinga-Vaughan}, where the main difference here is the additional normalization $\|F\|_{L^\infty}\leq a$ for the right hand side $F$). 

\begin{thm}\label{thm:F-harnack3}
Fix $a>0$. 
Let $\Omega \subset \R^n$ be a bounded domain, 
$a^{ij}(x): \Omega \to \R$ be bounded, measurable and satisfy \eqref{eq:ellipticity}.
There exist positive constants $C_H = C_H(n,\lambda,\Lambda,s)>1$, $\kappa = \kappa(n,s) < 1$,  and $K_0 = K_0(n,s)$ such that, for every cube $Q_R = Q_R(\tilde{x}, \tilde{z})$ such that $Q_{R}  \subset\subset \Omega \times \R$,
 every nonpositive $f \in L^{\infty}(Q_{R}\cap \{z=0\})$, every $F \in L^{\infty}(Q_{R})$,
  and every nonnegative $C_s$-viscosity solution $U$, symmetric across $\{z=0\}$, to 
 \[
 \begin{cases}
 a^{ij}(x) \partial_{ij}U + |z|^{2-\frac{1}{s}} \partial_{zz}U = F & \hbox{in}~Q_{R}  \cap \{z\not=0\} \\
\partial_{z} U(x,0) = f & \hbox{on}~Q_{R}  \cap \{z=0\},
 \end{cases}
 \]
 if
 \[
 U(\tilde{x},\tilde{z}) \leq \frac{aR}{2K_0},
\quad 
\|f\|_{L^{\infty}(Q_{R}(\tilde{x},\tilde{z}) \cap \{z=0\})}  \leq a \mu_h(S_{R}(\tilde{z})), \quad 
\|F\|_{L^{\infty}(Q_{R})} \leq a, 
 \]
then
 \[
U \leq C_H aR \quad \hbox{in}~Q_{\kappa R}(x_0,z_0). 
 \]
\end{thm}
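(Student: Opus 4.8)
The plan is to run Savin's method of sliding paraboloids in the Monge--Amp\`ere geometry of $\Phi$, following the scheme of \cite[Section 5]{Stinga-Vaughan} but with two new features: the bulk term $F$, which only enters through the arithmetic of the ABP-type estimate via the normalization $\|F\|_{L^{\infty}(Q_R)}\le a$, and --- the substantive point --- the passage from classical to $C_s$-viscosity solutions. First I would reduce, via Remark \ref{rem:pucci}, to the Pucci extremal inequalities, so that all quantitative constants depend only on $n,s,\lambda,\Lambda$. I would also use the symmetry of $U$ across $\{z=0\}$ together with $f\le0$: after even reflection the Neumann datum produces a \emph{concave} kink along $\{z=0\}$, so a paraboloid touching $U$ from below is never obstructed at the boundary, and the $\Phi$-geometry treats $\{z=0\}$ as a genuine interior slice. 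Throughout I will freely interchange cubes, cylinders and sections by \eqref{eq:section-cube}.

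Next I would regularize. For small $\eps>0$ introduce the $\inf$-convolution adapted to $\Phi$,
\[
U_\eps(x,z)=\inf_{(y,w)}\Big(U(y,w)+\tfrac1\eps\,\delta_{\Phi}\big((y,w),(x,z)\big)\Big),
\]
the infimum over $(y,w)$ in a slightly enlarged cube. Since $U_\eps-\tfrac1\eps\Phi$ is an infimum of affine functions it is concave, so $U_\eps$ is twice differentiable Lebesgue-a.e.\ off $\{z=0\}$ by Alexandrov's theorem, with $D^2_{(x,z)}U_\eps\preceq \tfrac{C}{\eps}D^2\Phi$ wherever the second differential exists, and $U_\eps\nearrow U$ locally uniformly. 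The essential claim --- and the main obstacle --- is that $U_\eps$ is an \emph{approximate} $C_s$-viscosity supersolution of the same equation up to $\{z=0\}$, i.e.\ $\mathcal{P}^-(D^2_xU_\eps)+|z|^{2-\frac1s}\partial_{zz}U_\eps\le a+o_\eps(1)$ in $Q_R\cap\{z\ne0\}$ with $\partial_zU_\eps(x,0)\le f+o_\eps(1)\le o_\eps(1)$ on $Q_R\cap\{z=0\}$. The difficulty is that the equation is not translation invariant in $z$ and the weight $|z|^{2-\frac1s}$ is singular ($0<s<\tfrac12$) or degenerate ($\tfrac12<s<1$) at $\{z=0\}$, so the usual device ``shift the touching test function to the optimal point'' fails. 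I would resolve this by passing to the variable $y$ in which $h$ becomes a genuine quadratic, as in the proof of Proposition \ref{prop:H regularity}: there the principal part of the equation is $\Delta_{x}+\partial_{yy}$, hence translation invariant, so the shift argument applies, and the induced singular first-order term near $\{z=0\}$ is controlled against the Neumann condition using the barriers constructed in Section \ref{sec:harmonic} to absorb the boundary layer.

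With $U_\eps$ in hand I would carry out the measure estimate. For vertices $(y,w)$ ranging over a suitable section $S_{c}(\tilde x,\tilde z)$, slide the Monge--Amp\`ere paraboloids $P_{(y,w)}(x,z)=c_{(y,w)}-M_0a\,\delta_{\Phi}\big((y,w),(x,z)\big)$, $M_0=M_0(n,s)$, upward from $-\infty$ until first contact with $U_\eps$; $M_0$ is chosen so that each vertex does produce a contact point inside $Q_R$. At a first-contact point $(x_*,z_*)$ one has $D^2_{(x,z)}U_\eps\succeq -M_0a\,D^2\Phi$, and combined with the Pucci inequality --- using that the degenerate weight cancels, $|z|^{2-\frac1s}h''(z)\equiv1$, against the structure $D^2\Phi=\operatorname{diag}(I,h''(z))$ --- this upgrades to a two-sided bound $\lvert(D^2\Phi)^{-1}D^2_{(x,z)}U_\eps\rvert\le C(n,s,\lambda,\Lambda)M_0a$ at a.e.\ such point; contact points landing on $\{z=0\}$ are excluded, or shown to satisfy the same bound, using $f\le0$ and the Hopf lemma, Lemma \ref{lem:Hopf}. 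Hence the $\Phi$-contact map $(x,z)\mapsto(x,z)$ translated by $\tfrac{1}{M_0a}(D^2\Phi)^{-1}\nabla U_\eps$ has Jacobian bounded above and below, and the area formula gives that the contact set carries a definite fraction of $\mu_{\Phi}(S_c(\tilde x,\tilde z))$ on which $U_\eps\le C_0aR$ (here $U_\eps(x_*,z_*)\le U_\eps(\tilde x,\tilde z)+CM_0aR\le C_0aR$ by the three normalizations, and $K_0=K_0(n,s)$ is exactly the geometric constant quantifying how small $U(\tilde x,\tilde z)$ must be for this to close). Letting $\eps\to0$ and invoking the stability Lemma \ref{lem:stability} transfers the estimate to $U$ itself:
\[
\mu_{\Phi}\big(\{U\le C_0aR\}\cap S_{c}(\tilde x,\tilde z)\big)\ \ge\ \mu\,\mu_{\Phi}\big(S_{c}(\tilde x,\tilde z)\big).
\]

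Finally I would upgrade this measure estimate to $U\le C_HaR$ on $Q_{\kappa R}$. Using the doubling property of $\mu_{\Phi}$ (Lemma \ref{lem:doubling}), the $A_\infty$ character of $h''$ (Lemma \ref{lem:A-infty}), and the engulfing/inclusion properties of Monge--Amp\`ere cubes and sections (Lemma \ref{lem:Guti} and \eqref{eq:section-cube}), iterate the measure estimate along a Calder\'on--Zygmund covering by sections to get power decay $\mu_{\Phi}(\{U>t\}\cap Q_{cR})\le C(aR/t)^{\nu}\mu_{\Phi}(Q_{cR})$ for some $\nu=\nu(n,s,\lambda,\Lambda)>0$; combining this with the barrier of Lemma \ref{lem:barrier} centred at $(\tilde x,\tilde z)$, where $U$ is small by hypothesis, produces the sup bound on the fixed sub-cube $Q_{\kappa R}$, with $\kappa=\kappa(n,s)$ dictated by the geometry and $C_H=C_H(n,\lambda,\Lambda,s)$. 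The only step I expect to require genuinely new work is the regularization: once the $\inf$-convolution is shown to be an approximate $C_s$-supersolution up to $\{z=0\}$, the sliding-paraboloid measure estimate and the covering upgrade follow the template of \cite{Savin}, \cite[Section 5]{Stinga-Vaughan}, and \cite[Chapter 10]{Stinga-book}, modulo routine bookkeeping with $\mu_{\Phi}$ and the extra term $F$.
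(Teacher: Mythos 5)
Your overall architecture (sliding Monge--Amp\`ere paraboloids, a point-to-measure estimate, a Calder\'on--Zygmund covering to upgrade to the sup bound) matches the paper's, and the claims about $F$ entering only through the normalization and about the covering step being routine are correct. The genuine novelty in the proof --- and the one you flag --- is the regularization of the $C_s$-viscosity supersolution, and it is precisely there that your sketch has a gap.

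You propose the $\Phi$-adapted $\inf$-convolution $U_\eps(x,z)=\inf_{(y,w)}\bigl(U(y,w)+\tfrac1\eps\,\delta_{\Phi}((y,w),(x,z))\bigr)$, observe correctly that $U_\eps-\tfrac1\eps\Phi$ is concave, and then assert that $U_\eps$ is an approximate $C_s$-supersolution \emph{up to $\{z=0\}$}, justifying this by changing variables to $y=(2s)\,z^{1/(2s)}$ where $h$ becomes quadratic. But this change of variables (see the proof of Proposition \ref{prop:H regularity}) turns the equation into $\Delta_xW+\frac{1-2s}{y}\partial_yW+\partial_{yy}W=0$: the second-order part is translation invariant, but the drift $\frac{1-2s}{y}\partial_y$ is singular at $y=0$ and manifestly not translation invariant. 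The ``shift the test function to the optimal point'' argument produces the drift coefficient evaluated at $y^*$, not at $y_0$, and near $\{y=0\}$ this is uncontrolled --- in particular the shifted equation need not even make sense if $y^*=0$ and $y_0>0$. Your remedy (``absorb the boundary layer using the barriers of Section \ref{sec:harmonic}'') is not an argument: those barriers serve the Hopf lemma and do not control a singular drift in a shifted viscosity inequality. The paper explicitly considers and rejects the $\Phi$-based $\inf$-convolution for exactly this reason (see the remarks before Section \ref{sec:harnack}).

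The paper's actual route is structurally different and avoids the problem rather than solving it head-on. It uses the \emph{Euclidean} $\inf$-convolution, but only on rectangles $R_\rho=Q_{\rho R}(\tilde x)\times S_{\rho\bar R}(\bar z)$ compactly contained in $\{z>0\}$. There $h''$ is Lipschitz, so the shifted viscosity inequality holds with the coefficient $|z_0^*|^{2-1/s}$ at the optimal point, and the error $|h''(z_0^*)-h''(z_0)|\le d_\eps\to0$ (Lemma \ref{lem:inf-conv-properties}(3)); crucially, the paper does \emph{not} claim $U_\eps$ solves an equation, only this ``viscosity property.'' The boundary $\{z=0\}$ is handled at the level of the vertex set: $B$ is split as $B_0\cup B_1^+\cup B_1^-$ with $B_0=B\cap\{|h'(z)|\le\|f^+\|/a\}$, the piece $B_0$ is assumed (or, when $f\le0$, automatically) of small $\mu_\Phi$-measure, and the paraboloids with vertices in $B_1^\pm$ produce contact points that can be controlled in the half-spaces $\{z>0\}$ and $\{z<0\}$ separately, after which one lets $\eps\to0$, $r\to\rho$, $\rho\to1$ (Theorem \ref{thm:point-to-measure} and Lemma \ref{lem:measure-estimate-Ue}). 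So you should replace the $\Phi$-$\inf$-convolution and the $y$-variable shift by the rectangle-restricted Euclidean $\inf$-convolution together with the vertex-set decomposition; as written, the key regularization step does not close.
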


The proof of Theorem \ref{thm:F-harnack3} is at the end of the section. First, we review the notion of Monge--Amp\`ere paraboloids and prove the point-to-measure estimate for $C_s$-viscosity solutions. 

\subsection{Paraboloids associated to $\Phi$}\label{sec:polynomials}

Let us briefly review the definition and present some basic properties of Monge--Amp\`ere paraboloids associated to $\Phi$. 

A Monge--Amp\`ere paraboloid $P$ of opening $a>0$ (associated to $\Phi$) in $\R^{n+1}$ is a function of the form
\[
P(x,z) = -a\Phi(x,z) + \ell(x,z), \quad (x,z) \in \R^{n+1}
\]
where $\ell(x,z)$ is an affine function in $(x,z)$. If $(x_v,z_v)$ is the unique solution to $\nabla P(x_v,z_v) = 0$, we say that $(x_v,z_v)$ is the vertex of $P$, and we can write $P$ as 
\[
P(x,z) = -a \delta_{\Phi}((x_v,z_v),(x,z)) + c 
\]
for some constant $c \in \R$. 
Moreover, if $P$ coincides with a continuous function $U: \R^{n+1} \to \R$ at a point $(x_0,z_0)$, then we can further write
\[
P(x,z) = -a \delta_{\Phi}((x_v,z_v),(x,z)) +a \delta_{\Phi}((x_v,z_v),(x_0,z_0)) + U(x_0,z_0).  
\]
See \cite[Section 6]{Stinga-Vaughan} for these and more properties. 

Our next result relates touching Monge--Amp\`ere paraboloids to classical ones when $z_0\not=0$. 

\begin{lem}\label{eq:MA-classical}
Let $U:\R^{n+1} \to \R$ be a continuous function and let $(x_0,z_0) \in \R^{n+1}$ with $z_0 \not=0$. 
If there is a Monge--Amp\`ere paraboloid $P$ opening $a>0$ that touches $U$ from below at $(x_0,z_0)$, 
then $U$ can be touched from below by a classical paraboloid $P_c$ at $(x_0,z_0)$ such that $D^2P_c(x_0,z_0) = D^2P(x_0,z_0)$. 
\end{lem}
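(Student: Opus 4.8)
The plan is to exploit the fact that, away from $\{z=0\}$, the Monge--Amp\`ere quasi-distance $\delta_\Phi$ is comparable to the Euclidean distance, as noted in the remark following Lemma~\ref{lem:balls-to-sections-away-from-0}. Since $z_0\neq 0$, there is a Euclidean ball $B_{r_0}(x_0,z_0)$ with $r_0>0$ small enough that $B_{r_0}(x_0,z_0)\subset\subset\{z\neq 0\}$. First I would write the touching paraboloid explicitly as
\[
P(x,z) = -a\,\delta_\Phi((x_v,z_v),(x,z)) + a\,\delta_\Phi((x_v,z_v),(x_0,z_0)) + U(x_0,z_0),
\]
so that $P(x_0,z_0)=U(x_0,z_0)$ and $P\le U$ in a convex neighborhood of $(x_0,z_0)$. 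Recalling $\Phi(x,z)=\tfrac12|x|^2 + h(z)$ with $h''(z)=|z|^{1/s-2}$, the function $P$ is $C^2$ in the region $\{z\neq 0\}$, hence $C^2$ on $B_{r_0}(x_0,z_0)$. In particular its Hessian at $(x_0,z_0)$ is
\[
D^2P(x_0,z_0) = -a\begin{pmatrix} I & 0 \\ 0 & h''(z_0)\end{pmatrix} = -a\begin{pmatrix} I & 0 \\ 0 & |z_0|^{1/s-2}\end{pmatrix},
\]
a well-defined symmetric matrix precisely because $z_0\neq 0$.

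Next I would perform a second-order Taylor expansion of $P$ at $(x_0,z_0)$:
\[
P(x,z) = P(x_0,z_0) + \nabla P(x_0,z_0)\cdot(x-x_0,z-z_0) + \tfrac12 \langle D^2P(x_0,z_0)(x-x_0,z-z_0),(x-x_0,z-z_0)\rangle + o(|(x-x_0,z-z_0)|^2).
\]
Define the classical paraboloid $P_c$ to be exactly this quadratic part minus a small correction, i.e.
\[
P_c(x,z) := P(x_0,z_0) + \nabla P(x_0,z_0)\cdot(x-x_0,z-z_0) + \tfrac12\langle D^2P(x_0,z_0)(x-x_0,z-z_0),(x-x_0,z-z_0)\rangle - C_0|(x-x_0,z-z_0)|^3
\]
for a constant $C_0>0$ controlling the third-order remainder of $P$ on the ball $B_{r_0}(x_0,z_0)$ (here I use that $P$ is smooth away from $\{z=0\}$, so $|P(x,z)-[\text{quadratic part}]|\le C_0|(x-x_0,z-z_0)|^3$ on a possibly smaller ball). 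Then $P_c(x_0,z_0)=P(x_0,z_0)=U(x_0,z_0)$, $P_c\le P\le U$ near $(x_0,z_0)$ (the cubic term being negligible against the third-order control), and $D^2P_c(x_0,z_0)=D^2P(x_0,z_0)$ since the cubic term vanishes to second order. A classical paraboloid is usually taken to be a pure quadratic; if one insists on $P_c$ being an exact quadratic, one simply absorbs the cubic perturbation by shifting to $\tilde P_c(x,z) = P_c(x_0,z_0)+\nabla P(x_0,z_0)\cdot(\cdot) + \tfrac12\langle (D^2P(x_0,z_0)-\varepsilon I)(\cdot),(\cdot)\rangle$ for arbitrarily small $\varepsilon>0$, which still touches from below on a smaller ball and has Hessian $D^2P(x_0,z_0)-\varepsilon I$; taking $\varepsilon\to 0$ gives Hessians converging to $D^2P(x_0,z_0)$, and for the applications one only needs this up to an arbitrarily small error (alternatively, allow $P_c$ to carry the harmless cubic tail).

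The main obstacle I anticipate is bookkeeping the two competing "smallness" requirements: the neighborhood on which $P$ touches $U$ from below is fixed, but the ball on which $P$ is genuinely $C^{2,1}$ (with quantitative third-order control) shrinks as $z_0\to 0$, since $h'''$ blows up like $|z|^{1/s-3}$. One must therefore choose $r_0 = r_0(z_0)$ small relative to $|z_0|$ so that $B_{r_0}(x_0,z_0)$ stays uniformly away from $\{z=0\}$, and only then extract the cubic bound. This is purely local and causes no difficulty for a fixed $(x_0,z_0)$ with $z_0\neq 0$, which is exactly the hypothesis; the point of the lemma is that the degenerate/singular coefficient is irrelevant once we are off the hyperplane $\{z=0\}$, and the argument is essentially the standard passage from a smooth touching function to a touching paraboloid with matching Hessian.
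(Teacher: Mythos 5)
Your proposal is correct and takes essentially the same approach as the paper: since $z_0\neq 0$, the Monge--Amp\`ere paraboloid $P(x,z)=-a\Phi(x,z)+\ell(x,z)$ is smooth near $(x_0,z_0)$, and a second-order Taylor expansion with a cubic tail absorbing the third-order remainder produces a ``classical paraboloid'' $P_c\leq P\leq U$ with $D^2P_c(x_0,z_0)=D^2P(x_0,z_0)$. The paper is slightly more economical: because the $x$-part of $P$ is already exactly quadratic, it only Taylor-expands $\delta_h(z_v,\cdot)$ via the mean-value form of Taylor's theorem and carries the cubic correction $\frac16 C(z-z_0)^3$ in the $z$-variable alone, whereas you use the isotropic bound $C_0\abs{(x-x_0,z-z_0)}^3$ --- harmless but a bit blunter. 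One caveat worth flagging: your fallback construction $\tilde P_c$ with Hessian $D^2P(x_0,z_0)-\varepsilon I$ does not satisfy the stated \emph{equality} of Hessians for any fixed $\varepsilon>0$; taking $\varepsilon\to 0$ only gives convergence, so that alternative proves a strictly weaker statement. The paper, like your main construction, simply accepts the cubic tail as part of $P_c$, and that is the route you should keep.
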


\begin{proof}
Assume, without loss of generality, that $z_0>0$. 
Begin by writing $P$ as
\begin{align*}
P(x,z) 
	&= -a \delta_{\Phi}((x_v,z_v),(x,z)) + \delta_{\Phi}((x_v,z_v),(x_0,z_0)) + U(x_0,z_0)\\
	&= -\frac{a}{2} \left( |x-x_v|^2- |x_0-x_v|^2\right)
		-a \left( \delta_h(z_v,z) - \delta_h(z_v,z_0)\right)
		+U(x_0,z_0).
\end{align*}
Expanding the $z$-component, we find that
\begin{align*}
\delta_h(z_v,z) &- \delta_h(z_v,z_0)\\
	&= \big(h(z) - h(z_v) - h'(z_v)(z-z_v)\big)-\big(h(z_0) - h(z_v) - h'(z_v)(z_0-z_v)\big)\\
	&= \big[h(z)  -h(z_0) \big]- h'(z_v)(z-z_0)\\
	&= \big[h'(z_0)(z-z_0) + \frac{1}{2} h''(z_0)(z-z_0)^2 + \frac{1}{6} h'''(\xi)(z-z_0)^3\big]- h'(z_v)(z-z_0)
\end{align*}
for some $\xi$ between $z$ and $z_0$. Since $z_0>0$, there is a neighborhood $S_\tau(z_0) \subset \subset \R^+$ in which we can bound $|h'''(\xi)| \leq C$ uniformly in $S_\tau(z_0)$. Consequently, 
\begin{align*}
P_c(x,z)
	&= -\frac{a}{2} \left( |x-x_v|^2- |x_0-x_v|^2\right)\\
	&\quad-a \left((h'(z_0) - h'(z_v))(z-z_0) +\frac{1}{2} h''(z_0)(z-z_0)^2 + \frac{1}{6} C(z-z_0)^3\right)
		+U(x_0,z_0)
\end{align*}
 is a classical paraboloid that touches $P$, and hence $U$, from below at $(x_0,z_0)$. It is clear that $D^2P_c(x_0,z_0) = D^2P(x_0,z_0)$.  
\end{proof}

Recalling Definition \ref{defn:MA-poly}, note that Monge--Amp\`ere paraboloids are second-order Monge--Amp\`ere polynomials, but not vice versa. 
We will need the following result on polynomials. 

\begin{lem}\label{lem:polynomial-approx}
Let $U:\R^{n+1} \to \R$ be a continuous function and let $(x_0,z_0) \in \R^{n+1}$ with $z_0 \not=0$. 
Suppose that $U$ can be approximated by a classical second-order polynomial
\begin{align*}
U(x,z) = P_c(x,z) + o(|(x,z) -(x_0,z_0)|^2) \quad \hbox{near}~(x_0,z_0)
\end{align*}
where
\[
P_c(x,z) 
	= \frac{1}{2} \langle M((x,z) - (x_0,z_0)), (x,z) - (x_0,z_0) \rangle + \langle p ,(x,z) - (x_0,z_0) \rangle + U(x_0,z_0),
\]
$M$ is a symmetric matrix of size $(n+1)\times(n+1)$ and $p\in\R^{n+1}$.
Set $M_n^{ij} = M^{ij}$, $m = M^{n+1,n+1}$, and $b^i = (M^{i,n+1} + M^{n+1,i})/2$ for $1 \leq i,j \leq n$.
Then $U$ can be approximated by a second-order Monge--Amp\`ere polynomial
\begin{align*}
U(x,z) = P(x,z) + o(\delta_{\Phi}((x_0,z_0),(x,z))) \quad \hbox{near}~(x_0,z_0)
\end{align*}
where
\begin{align*}
P(x,z) 
	&= \frac{1}{2} \langle M_n(x-x_0), (x-x_0) \rangle  + m |z_0|^{2-\frac{1}{s}} \delta_h(z_0,z)  \\
	&\quad + \langle b,(x-x_0) \rangle (z-z_0)+  \langle p, (x,z) -(x_0,z_0) \rangle + U(x_0,z_0).
\end{align*}
Consequently, for all $\eps>0$, the second-order Monge--Amp\`ere polynomial
\begin{align*}
P_\eps(x,z) 
= P(x,z) - \eps \delta_{\Phi}((x_0,z_0)
\end{align*}
touches $U$ from below at $(x_0,z_0)$ in a neighborhood of $(x_0,z_0)$.
\end{lem}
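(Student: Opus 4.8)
The plan is to change variables near $(x_0, z_0)$ so that the singular coefficient $|z|^{2-\frac{1}{s}}$ becomes a smooth, positive coefficient, which reduces the statement to a routine comparison of classical Taylor expansions with Monge--Amp\`ere expansions.

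First I would observe that, since $z_0 \neq 0$, we may assume $z_0 > 0$ and work in a neighborhood $S_\tau(z_0) \subset \subset \R^+$ on which $h \in C^\infty$ and $h'', h'''$ are bounded with $h''$ bounded away from zero. The key elementary identity is the Taylor expansion of $\delta_h$ about $z_0$: expanding $h(z) = h(z_0) + h'(z_0)(z-z_0) + \frac{1}{2}h''(z_0)(z-z_0)^2 + O(|z-z_0|^3)$ gives
\[
\delta_h(z_0,z) = \tfrac{1}{2} h''(z_0)(z-z_0)^2 + O(|z-z_0|^3) = \tfrac12 |z_0|^{\frac1s - 2}(z-z_0)^2 + O(|z-z_0|^3),
\]
using $h''(z_0) = |z_0|^{\frac{1}{s}-2} = |z_0|^{-(2-\frac1s)}$. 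Hence $|z_0|^{2-\frac1s}\delta_h(z_0,z) = \frac12 (z-z_0)^2 + O(|z-z_0|^3)$, so replacing the pure $(z-z_0)^2$ term in $P_c$ by $m |z_0|^{2-\frac1s}\delta_h(z_0,z)$ changes $P_c$ only by an $O(|z-z_0|^3)$ error, which is absorbed into the $o$-term. The same applies to the cross terms $b^i(x_i - x_{0,i})(z - z_0)$, which are already of the required form. This shows $U(x,z) = P(x,z) + o(|(x,z)-(x_0,z_0)|^2)$ with $P$ the claimed Monge--Amp\`ere polynomial.

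Next I would convert the Euclidean error $o(|(x,z)-(x_0,z_0)|^2)$ into the Monge--Amp\`ere error $o(\delta_\Phi((x_0,z_0),(x,z)))$. This follows from the Remark after Lemma~\ref{lem:balls-to-sections-away-from-0}: away from $\{z=0\}$, $\delta_h$ is comparable to the Euclidean distance squared (with constants depending on $\tilde\sigma = \inf h''$ and $\sigma = \sup h''$ over $S_\tau(z_0)$, both finite and positive there), and $\delta_\varphi(x_0,x) = \frac12|x-x_0|^2$ exactly, so $\delta_\Phi((x_0,z_0),(x,z)) = \delta_\varphi(x_0,x) + \delta_h(z_0,z)$ is comparable to $|(x,z)-(x_0,z_0)|^2$ in a neighborhood of $(x_0,z_0)$. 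Therefore $o(|(x,z)-(x_0,z_0)|^2) = o(\delta_\Phi((x_0,z_0),(x,z)))$, giving the second displayed expansion.

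Finally, the ``consequently'' statement is immediate: if $U(x,z) = P(x,z) + o(\delta_\Phi((x_0,z_0),(x,z)))$, then for any $\eps > 0$ the function $P_\eps(x,z) = P(x,z) - \eps\,\delta_\Phi((x_0,z_0),(x,z))$ satisfies $U(x,z) - P_\eps(x,z) = \eps\,\delta_\Phi((x_0,z_0),(x,z)) + o(\delta_\Phi((x_0,z_0),(x,z))) \geq 0$ for $(x,z)$ sufficiently close to $(x_0,z_0)$, with equality at $(x_0,z_0)$ since $\delta_\Phi$ and the $o$-term both vanish there; thus $P_\eps$ touches $U$ from below at $(x_0,z_0)$. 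I expect the main (though still modest) obstacle to be bookkeeping the constants in the comparability of $\delta_\Phi$ with the Euclidean metric and making sure the cross-term and the induced error terms are genuinely $o$ of the Monge--Amp\`ere quantity rather than merely $O$; everything else is a direct Taylor computation.
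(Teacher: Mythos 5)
Your proof is correct and takes essentially the same route as the paper's: expand $h$ to second order at $z_0$ to trade $\tfrac{m}{2}(z-z_0)^2$ for $m|z_0|^{2-\frac{1}{s}}\delta_h(z_0,z)$, then use the local comparability of $\delta_\Phi$ with the Euclidean quadratic (valid since $z_0\neq 0$) to upgrade the error from $o(|(x,z)-(x_0,z_0)|^2)$ to $o(\delta_\Phi)$. The only cosmetic difference is that you invoke a $C^3$ Taylor remainder and Lemma~\ref{lem:balls-to-sections-away-from-0}, while the paper uses a Peano $o(|z-z_0|^2)$ remainder and computes $\lim_{z\to z_0}\delta_h(z_0,z)/|z-z_0|^2 = h''(z_0)/2$ directly.
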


\begin{proof}
Begin by writing
\[
M = \begin{pmatrix}
M_{n} & b_1 \\
b_2^T & m
\end{pmatrix} 
\]
for $m \in \R$, $b_1,b_2 \in \R^n$, and $M_{n} \in \R^n \times \R^n$. 
Letting $b = (b_1+b_2)/2$, we note that
\[
\langle M((x,z) - (x_0,z_0)),(x,z) - (x_0,z_0) \rangle
	= \langle M_{n}(x-x_0), (x-x_0) \rangle + 2\langle b,(x-x_0) \rangle (z-z_0) + m |z-z_0|^2. 
\]
Consider the quadratic term in $z$. We expand $h$ around $z_0$ to obtain
\[
h(z) = h(z_0) + h'(z_0)(z-z_0) + \frac{1}{2} h''(z_0)(z-z_0)^2 + o(|z-z_0|^2)
\]
which gives
\[
 \frac{1}{2} (z-z_0)^2  = |z_0|^{2-\frac{1}{s}} \delta_h(z_0,z) + o(|z-z_0|^2).
\]
With this, we write $P_c$ as
\begin{align*}
P_c(x,z)
	&=  \frac{1}{2} \langle M_{n}(x-x_0), (x-x_0) \rangle  + m |z_0|^{2-\frac{1}{s}} \delta_h(z_0,z)  \\
	&\quad + \langle b,(x-x_0) \rangle (z-z_0)+  \langle p, (x,z) -(x_0,z_0) \rangle + U(x_0,z_0) + o(|z-z_0|^2). 
\end{align*}

Since,
\[
\lim_{z \to z_0} \frac{\delta_h(z_0,z)}{|z-z_0|^2} = \lim_{z \to z_0} \frac{h'(z) - h'(z_0)}{2(z-z_0)} = \lim_{z \to z_0} \frac{h''(z)}{2} = \frac{h''(z_0)}{2},
\]
we have that
\[
o(|z-z_0|^2) = o(\delta_h(z_0,z)) \quad \hbox{as}~z \to z_0. 
\]
Therefore,
\[
\frac{U(x,z) - P(x,z)}{\delta_{\Phi}((x_0,z_0), (x,z))} \to 0 \quad \hbox{as}~(x,z) \to(x_0,z_0).
\]
In particular, given $\eps>0$, there is $\delta>0$ such that 
if $0 < |(x,z)-(x_0,z_0)|< \delta$, then
\[
P_\eps(x,z) := P(x,z) - \eps \delta_{\Phi}((x_0,z_0), (x,z)) < U(x,z)
\]
Therefore, $P_\eps$ touches $U$ from below at $(x_0,z_0)$. 
\end{proof}

\subsection{Point-to-measure estimate}

We prove a point-to-measure estimate for $C_s$-viscosity supersolutions which, in a sense, plays the role of the Alexandroff--Bakelman--Pucci estimate for fully nonlinear equations. 
We show that if we slide Monge--Amp\`ere paraboloids of fixed opening $a>0$ with vertices in a closed, bounded set from below until they touch the graph of $U$ for the first time, then the Monge--Amp\`ere measure of the contact points is a universal portion of the Monge--Amp\`ere measure of the set of vertices. 

We use the notation $f^+ = \max\{f,0\}$. 

\begin{thm}\label{thm:point-to-measure}
Assume that $\Omega$ is a bounded domain and that $a^{ij}(x):\Omega \to \R$ are bounded, measurable functions that satisfy \eqref{eq:ellipticity}. 
Let $Q_R = Q_R(\tilde{x},\tilde{z}) \subset \subset \Omega \times \R$, 
$f \in L^\infty(Q_R\cap \{z=0\})$, and $F \in  L^\infty(Q_R)$.
Suppose that $U\in C(\overline{Q_R})$, symmetric across $\{z=0\}$, is a $C_s$-viscosity supersolution to
\begin{equation}\label{eq:U-with-neumann}
 \begin{cases}
 a^{ij}(x) \partial_{ij}U + |z|^{2-\frac{1}{s}} \partial_{zz}U \leq F & \hbox{in}~Q_{R}  \cap \{z\not=0\} \\
\partial_{z} U(x,0) \leq f & \hbox{on}~Q_{R}  \cap \{z=0\}.
 \end{cases}
 \end{equation}
Let $B \subset \overline{Q}_R$ be a closed set. 
Fix $a>0$ and assume that 
\[
\|F\|_{L^{\infty}(Q_R)} \leq a.
\]
 For each $(x_v,z_v) \in B$, we slide Monge--Amp\`ere paraboloids of opening $a>0$ and vertex $(x_v,z_v)$ from below  until they touch the graph of $U$ for the first time. 
 Let $A$ denote the set of contact points and assume that $A \subset Q_R$. 
 Then $A$ is compact and if
 \[
 \mu_{\Phi}\left(B \cap \{(x,z) : |h'(z)| \leq \|f^+\|_{L^{\infty}(Q_R \cap \{z=0\})}/a \right)
 	\leq (1-\varepsilon_0)\mu_\Phi(B)
 \]
 then there is a constant $0 < c = c(n,\lambda,\Lambda) <1$ such that
 \[
 \mu_\Phi(A) \geq \varepsilon_0 c \mu_\Phi(B).
 \]
\end{thm}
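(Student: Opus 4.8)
The plan is to adapt the classical Caffarelli ABP-type sliding argument (as developed for classical solutions in \cite[Section 6]{Stinga-Vaughan}) to the $C_s$-viscosity setting, with the Neumann boundary $\{z=0\}$ handled separately. First I would establish the compactness of $A$: since $B$ is closed and bounded, $U$ is continuous on $\overline{Q_R}$, and the assumption $A\subset Q_R$ places the contact set in the interior; a standard argument (the map sending a vertex to its contact point is well-behaved, or rather: $A$ is the image of $B$ under the touching procedure and is closed because uniform limits of contact configurations remain contact configurations) gives that $A$ is compact.

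Next comes the heart of the matter, the measure inequality. The strategy is to show that the map $\mathcal{T}$ sending a contact point $(x_0,z_0)\in A$ to the set of vertices $(x_v,z_v)\in B$ whose paraboloid touches $U$ at $(x_0,z_0)$ essentially covers $B$ (up to the bad set where $|h'(z)|\le \|f^+\|_\infty/a$), and to control the Jacobian of this correspondence by the ellipticity constants. Here one must split into two cases according to whether the contact point lies on $\{z=0\}$ or in $\{z\neq 0\}$. \emph{Case $z_0\neq 0$:} the equation is locally uniformly elliptic. A Monge--Amp\`ere paraboloid $P$ of opening $a$ touching $U$ from below at $(x_0,z_0)$ is, by Lemma \ref{eq:MA-classical}, also touched by a classical paraboloid $P_c$ with the same Hessian at $(x_0,z_0)$. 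Since $U$ is a $C_s$-viscosity supersolution, testing with $P_c$ gives $a^{ij}(x_0)\partial_{ij}P_c(x_0,z_0) + |z_0|^{2-\frac{1}{s}}\partial_{zz}P_c(x_0,z_0)\le F(x_0,z_0)\le a$. Because $P = -a\Phi + \ell$, we have $D^2P = -a\,D^2\Phi = -a\,\mathrm{diag}(I, |z_0|^{\frac1s-2})$, so this inequality reads $-a\,\mathrm{trace}((a^{ij}(x_0))) - a |z_0|^{2-\frac1s}|z_0|^{\frac1s-2} \le a$, i.e. $-a(\mathrm{trace}(a^{ij}) + 1)\le a$ — wait, this needs the full contact-point analysis: at a contact point $U$ lies above $P$ nearby, so one gets a \emph{reverse} inequality on the second-order jet, and combined with the supersolution property one controls how much the vertex can vary, yielding a change-of-variables bound $|\det D(\text{vertex map})|\ge c(n,\lambda,\Lambda)$ on the good part of $A$. \emph{Case $z_0=0$:} here the paraboloid $P(x,0) = -a\varphi(x) + (\text{affine in }x)$ restricted to $\{z=0\}$ touches $U(\cdot,0)$ from below, and $\partial_z P$ at the vertex level is essentially $-a\,h'(z_v) + \text{const}$. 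Using Lemma \ref{lem:expansion} / Lemma \ref{lem:characterization2}, the viscosity Neumann condition $\partial_z U(x_0,0)\le f(x_0)$ forces $-a h'(z_v)+\cdots \le f(x_0)\le \|f^+\|_\infty$, i.e. such vertices have $|h'(z_v)|\lesssim \|f^+\|_\infty/a$ — precisely the excluded bad set. This is why the hypothesis $\mu_\Phi(B\cap\{|h'(z)|\le\|f^+\|_\infty/a\})\le(1-\varepsilon_0)\mu_\Phi(B)$ appears: it guarantees that a fraction $\varepsilon_0$ of the Monge--Amp\`ere mass of $B$ corresponds to vertices whose contact points fall into $\{z\neq 0\}$, where the elliptic Jacobian bound applies.

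The computation then concludes as in the classical case:
\[
\mu_\Phi(B)\cdot\varepsilon_0 \le \mu_\Phi\bigl(\{(x_v,z_v)\in B : \text{contact point in }\{z\neq0\}\}\bigr) \le \frac1c\,\mu_\Phi(A),
\]
where the second inequality uses the area formula with the Jacobian bound $\ge c$ from the $z_0\neq 0$ case (the vertex of a touching paraboloid is determined by the first-order jet of $U$ at the contact point, and the second-order touching condition forces the relevant Jacobian to be bounded below). Rearranging gives $\mu_\Phi(A)\ge\varepsilon_0 c\,\mu_\Phi(B)$, as claimed.

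The main obstacle I anticipate is making the measure-theoretic change of variables rigorous for merely continuous $U$ without a pointwise second derivative: one cannot directly differentiate $U$, so the Jacobian estimate must be extracted purely from the paraboloid-touching geometry (every contact point comes with a one-sided second-order bound $U(x,z)\ge P(x,z) = $ a paraboloid of opening $a$, which plays the role of $D^2U \le a D^2\Phi$ in the support sense), and then combined with the viscosity supersolution inequality to bound $D^2U$ \emph{below}, which pins the vertex-to-contact correspondence. Handling the non-translation-invariance in $z$ and the degeneracy/singularity of $|z|^{\frac1s-2}$ at the contact points near (but not on) $\{z=0\}$ — where the local ellipticity constant of the equation degenerates — will require care; the fix is that Lemma \ref{eq:MA-classical} already absorbs the $z$-behaviour into the Monge--Amp\`ere geometry, so the effective Jacobian bound $c$ depends only on $n,\lambda,\Lambda$ and not on $\mathrm{dist}$ to $\{z=0\}$, exactly as stated.
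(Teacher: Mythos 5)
Your high-level strategy matches the paper's: split $B$ into the bad set $B_0 = B\cap\{|h'(z)|\le\|f^+\|_\infty/a\}$ and the good remainder, use the viscosity Neumann condition $\partial_z U\le f$ on $\{z=0\}$ together with $\partial_z P(x_0,0)=a\,h'(z_v)$ to show that Monge--Amp\`ere paraboloids with vertices in the good set cannot touch $U$ on $\{z=0\}$, and then run an area-formula argument in $\{z\ne 0\}$ where the equation is uniformly elliptic. That identification is correct, and your computation that good vertices are excluded from contacting at $z_0=0$ is the right reason the hypothesis on $B_0$ appears.

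However, there is a genuine gap at precisely the point you flag as the ``main obstacle.'' You write that the Jacobian bound ``must be extracted purely from the paraboloid-touching geometry'' and that Lemma~\ref{eq:MA-classical} ``absorbs the $z$-behaviour,'' but this does not resolve the difficulty. Lemma~\ref{eq:MA-classical} only converts a Monge--Amp\`ere paraboloid into a classical paraboloid with the same Hessian at a single contact point; it does not give a.e.\ second differentiability of $U$, which is what the area formula $\mu_\Phi(B)=\int_{A}\big|\det\big(D^2\Phi+\tfrac1a D^2U\big)\big|$ requires. For a merely continuous viscosity supersolution, $D^2U$ has no pointwise meaning, and the touching condition from below only provides a one-sided (lower) second-order bound at contact points. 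The paper's resolution is the inf-convolution regularization $U_\eps$ on rectangles $R_\rho\subset\subset\{z>0\}$ (Lemma~\ref{lem:inf-conv-properties}): semiconcavity of $U_\eps$ yields a.e.\ twice differentiability and a Lipschitz gradient map, making the area formula legitimate for $U_\eps$. Because the equation is \emph{not} translation invariant in $z$, the inf-convolution satisfies only a modified viscosity property with the weight evaluated at a nearby point $z_0^*$ rather than $z_0$ (see \eqref{eq:Ue-visc-prop}), which is controlled by the error $d_\eps = |h''(z_0^*)-h''(z_0)|\to 0$ in \eqref{eq:de}. This produces the correction term $d_\eps|A_\eps|$ in Lemma~\ref{lem:measure-estimate-Ue}, which is then removed in the limit $\eps\to 0$, $r\to\rho$, $\rho\to 1$. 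This regularization argument---specifically, showing that the standard Euclidean inf-convolution can be made to work despite the $z$-degeneracy---is the main technical novelty of the proof and is absent from your proposal.

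Two smaller omissions: you must also split the good set into $B_1^+$ and $B_1^-$ by the sign of $z_v$, because the symmetry of $U$ across $\{z=0\}$ forces contact points of paraboloids with vertex in $\{z>0\}$ to lie in $\{z>0\}$ (and likewise for $\{z<0\}$), which is what lets you localize the inf-convolution entirely in one half-space. And the two-sided jet bound $-aD^2\Phi(x_0,z_0)\le D^2U_\eps(x_0,z_0)\le Ca\,D^2\Phi(x_0,z_0^*)$ from \eqref{eq:ueps-claim} is proved by testing the regularized supersolution with a carefully built second-order Monge--Amp\`ere polynomial (via Lemma~\ref{lem:polynomial-approx}) and arguing by contradiction with the Pucci operator, not by directly inverting the paraboloid geometry as your sketch suggests.
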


For the proof, we first describe the setting and definition of the $\inf$-convolutions used to regularize the solutions. 
Consider an arbitrary Monge--Amp\`ere cube $ Q_R(\tilde{x},\tilde{z}) \subset \R^{n+1}$ such that $Q_R(\tilde{x},\tilde{z})^+ \not= \varnothing$. 
Let $S_{\bar{R}}(\bar{z})$ be such that $S_{\bar{R}}(\bar{z}) = S_R(\tilde{z})^+$. 
Consider the rectangles
\[
R_\rho := Q_{\rho R}(\tilde{x}) \times S_{\rho\bar{R}}(\bar{z}), \quad 0 < \rho < 1, 
\]
so that $R_\rho \subset \subset Q_R(\tilde{x},\tilde{z})^+$. 
For a fixed $0 < \rho < 1$, the $\inf$-convolution of $U \in C(Q_R(\tilde{x},\tilde{z}))$ on $R_\rho$ is given by
\begin{equation}\label{eq:defn-infconv}
U_\eps(x,z):= \inf_{(y,w) \in \overline{R}_\rho} \left\{ U(y,w) + \frac{1}{\eps} |(x,z) - (y,w)|^2 \right\} \quad \hbox{for}~(x,z) \in R_\rho. 
\end{equation}
By taking $(y,w) = (x,z)$ and using the definition of infimum, we clearly have that $U_{\eps}(x,z) \leq U(x,z)$. 
Moreover, since $U \in C(Q_R(\tilde{x},\tilde{z}))$, for each $(x_0,z_0) \in R_\rho$, there exists a point $(x_0^*, z_0^*) \in \overline{R}_\rho$ such that the infimum is attained:
\begin{equation}\label{eq:inf-attained}
U_\eps(x,z)=U(x_0^*, z_0^*) + \frac{1}{\eps}|(x_0,z_0) - (x_0^*,z_0^*)|^2.
\end{equation}
We always use the $*$ notation for such a point. 
Consequently,
\[
|(x_0,z_0) - (x_0^*,z_0^*)|
	\leq \sqrt{\eps(U(x_0,z_0) - U(x_0^*, z_0^*))} \leq \sqrt{2\eps \eta}, 
	\quad \eta:=  \|U\|_{L^{\infty}(\overline{R}_\rho)},
\]
which shows that $(x_0^*, z_0^*) \in B_{\sqrt{2\eps \eta}}(x_0,z_0)$ and $(x_0^*, z_0^*) \to (x_0,z_0)$ as $\eps \to 0$. 

We summarize the additional properties of $U_\eps$ in the next lemma. 

\begin{lem}\label{lem:inf-conv-properties}
The function $U_\eps$ in \eqref{eq:defn-infconv} satisfies the following properties. 
\begin{enumerate}[$(1)$]
\item $U_\eps \in C^{1}(R_\rho)$ and $U_\eps \nearrow U$ uniformly in $R_\rho$ as $\eps \to 0$. 
\item $U_\eps$ is semiconcave in $R_\rho$, that is, for every $(x_0,z_0) \in R_\rho$, there exists an affine function $\ell(x,z)$ such that 
\[
U_\eps(x,z) \leq \frac{1}{\eps} |(x,z) - (x_0,z_0)|^2 + \ell(x,z)
\]
with equality at $(x_0,z_0)$.
\item If $U$ satisfies
	\begin{equation}\label{eq:pucci-visco}
	\mathcal{P}^-(D^2_xU) + |z|^{2-\frac{1}{s}} \partial_{zz}U \leq a \quad \hbox{in the viscosity sense in}~R_\rho,
	\end{equation}
	and $0 < r < \rho$, then there is $\eps_1>0$ 
	such that for every $0 < \eps < \eps_1$, the function $U_\eps$ satisfies the following \emph{viscosity property} in $R_r:=Q_{rR}(\tilde{x}) \times Q_{r\bar{R}}(\bar{z})$: 
	\begin{center}
	if $(x_0,z_0) \in R_r$ 
	and $\phi \in C^2$ touches $U_\eps$ from below at $(x_0,z_0)$, then
	\begin{equation}\label{eq:Ue-visc-prop}
	\mathcal{P}^-(D^2_x\phi(x_0,z_0))+ |z_0^*|^{2-\frac{1}{s}}\partial_{zz}\phi(x_0,z_0) \leq a
	\end{equation}
	\end{center}
	for any $(x_0^*, z_0^*)$ that attains the infimum in the definition of $U_\eps(x_0,z_0)$, see \eqref{eq:inf-attained}. 
	Moreover, $(x_0^*,z_0^*) \in S_{\eps \eta}(x_0,z_0)$ satisfies
	\begin{equation}\label{eq:de}
	|h''(z_0^*) - h''(z_0)| \leq d_\eps
	\end{equation}
	for a positive constant $d_\eps$, independent of $z_0^*$, satisfying $d_\eps \to 0$ as $\eps \to 0$. 
\end{enumerate}
\end{lem}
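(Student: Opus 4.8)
The plan is to get parts (1) and (2) from the classical theory of inf--convolutions and to spend the bulk of the work on the viscosity property (3). The one genuinely new point in (3) is that the weight $\abs{z}^{2-\frac{1}{s}}$ in the equation is \emph{not} translation invariant, so sliding a test function from $(x_0,z_0)$ to the touching point $(x_0^*,z_0^*)$ forces the weight to be read off at $(x_0^*,z_0^*)$; this is exactly what \eqref{eq:Ue-visc-prop} records, and \eqref{eq:de} is the quantitative statement that the discrepancy is negligible.

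First I would note that $R_\rho\subset\subset Q_R(\tilde x,\tilde z)^+$, so there are $0<z_{\min}\le z_{\max}<\infty$ with $z\in[z_{\min},z_{\max}]$ on $\overline R_\rho$; hence on $R_\rho$ the weight $\abs{z}^{2-\frac{1}{s}}=z^{2-\frac{1}{s}}$ and $h''(z)=z^{\frac{1}{s}-2}$ are bounded away from $0$ and $\infty$ and uniformly continuous. For (2), expanding the square in \eqref{eq:defn-infconv} writes $U_\eps(x,z)-\tfrac{1}{\eps}\abs{(x,z)}^2$ as an infimum of affine functions, hence as a concave function; equivalently $U_\eps$ is an infimum of paraboloids all of opening $\tfrac{2}{\eps}$, which is precisely the one--sided touching statement in (2), and in particular $U_\eps$ is semiconcave, locally Lipschitz, and (by Alexandrov) twice differentiable a.e. For (1), taking $(y,w)=(x,z)$ in \eqref{eq:defn-infconv} gives $U_\eps\le U$, monotonicity in $\eps$ is immediate, and combining $\abs{(x,z)-(x^*,z^*)}\le\sqrt{2\eps\eta}$ (shown in the lines preceding the lemma) with the uniform continuity of $U$ on the compact set $\overline R_\rho$ gives $U-\omega_U(\sqrt{2\eps\eta})\le U_\eps\le U$ on $R_\rho$, i.e.\ $U_\eps\nearrow U$ uniformly; the asserted $C^1$ regularity is the standard statement for inf--convolutions.

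For (3), fix $0<r<\rho$, let $(x_0,z_0)\in R_r$, and suppose $\phi\in C^2$ touches $U_\eps$ from below at $(x_0,z_0)$. Choose a minimizer $(x_0^*,z_0^*)\in\overline R_\rho$ realizing \eqref{eq:inf-attained}, set $c=\tfrac{1}{\eps}\abs{(x_0,z_0)-(x_0^*,z_0^*)}^2$, and put
\[
\psi(y,w):=\phi\bigl(y+x_0-x_0^*,\;w+z_0-z_0^*\bigr)-c .
\]
For $(x,z)$ near $(x_0,z_0)$, testing the infimum defining $U_\eps(x,z)$ against the competitor $(y,w)=(x,z)-(x_0,z_0)+(x_0^*,z_0^*)$ gives $U_\eps(x,z)\le U\bigl((x,z)-(x_0,z_0)+(x_0^*,z_0^*)\bigr)+c$; together with $\phi\le U_\eps$ near $(x_0,z_0)$, a change of variables shows that $\psi$ touches $U$ from below at $(x_0^*,z_0^*)$, is $C^2$ there, and satisfies $D^2_x\psi(x_0^*,z_0^*)=D^2_x\phi(x_0,z_0)$ and $\partial_{zz}\psi(x_0^*,z_0^*)=\partial_{zz}\phi(x_0,z_0)$. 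Now fix $\eps_1=\eps_1(n,s,r,\rho,R,\eta)$ so small that $\sqrt{2\eps\eta}<\operatorname{dist}(R_r,\partial R_\rho)$ for $\eps<\eps_1$; then $(x_0^*,z_0^*)\in B_{\sqrt{2\eps\eta}}(x_0,z_0)$ is an \emph{interior} point of $R_\rho$ with $z_0^*\ge z_{\min}>0$, so the competitor used above indeed lies in $\overline R_\rho$ and $(x_0^*,z_0^*)$ is a point at which the equation for $U$ is the $z\neq 0$ equation. By Remark \ref{rem:pucci}, $U$ satisfies \eqref{eq:pucci-visco} in the viscosity sense; applying this with the $C^2$ test function $\psi$ at the interior point $(x_0^*,z_0^*)$ yields $\mathcal{P}^-(D^2_x\psi(x_0^*,z_0^*))+\abs{z_0^*}^{2-\frac{1}{s}}\partial_{zz}\psi(x_0^*,z_0^*)\le a$, and translating the derivatives back is exactly \eqref{eq:Ue-visc-prop}. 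Finally, $\abs{(x_0^*,z_0^*)-(x_0,z_0)}\le\sqrt{2\eps\eta}$ together with the comparability of Monge--Amp\`ere sections and Euclidean balls away from $\{z=0\}$ (Lemma \ref{lem:balls-to-sections-away-from-0}) puts $(x_0^*,z_0^*)$ in $S_{\eps\eta}(x_0,z_0)$ (after absorbing an $(n,s)$--constant), while $\abs{z_0^*-z_0}\le\sqrt{2\eps\eta}$ and the uniform continuity of $h''$ on $[\tfrac12 z_{\min},2z_{\max}]$ give $\abs{h''(z_0^*)-h''(z_0)}\le d_\eps:=\omega_{h''}(\sqrt{2\eps\eta})$, which tends to $0$ and is independent of the particular minimizer.

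The routine part is the inf--convolution bookkeeping; the point that needs care, and the only place the degeneracy enters, is keeping the touching point $(x_0^*,z_0^*)$ strictly inside $R_\rho$ and uniformly away from $\{z=0\}$, so that only the \emph{interior}, uniformly elliptic part of the viscosity definition (ordinary $C^2$ test functions, not the class $C_s$) is invoked, while simultaneously retaining the quantitative control \eqref{eq:de}. This last estimate is precisely what will later allow the weight $\abs{z_0^*}^{2-\frac{1}{s}}$ appearing in \eqref{eq:Ue-visc-prop} to be absorbed as a vanishing perturbation of $\abs{z_0}^{2-\frac{1}{s}}$ in the point--to--measure estimate of Theorem \ref{thm:point-to-measure}.
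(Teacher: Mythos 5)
Your proof is correct and follows essentially the same route as the paper's: you translate the test function $\phi$ by the vector $(x_0,z_0)-(x_0^*,z_0^*)$ and subtract $c=\tfrac{1}{\eps}\abs{(x_0,z_0)-(x_0^*,z_0^*)}^2$ to obtain a $C^2$ function touching $U$ from below at the interior point $(x_0^*,z_0^*)$, then read off the supersolution inequality there with the weight evaluated at $z_0^*$; the paper's $\tilde\phi$ is identical to your $\psi$ after using the touching condition $\phi(x_0,z_0)=U_\eps(x_0,z_0)$ and \eqref{eq:inf-attained}. The only cosmetic differences are that you keep $(x_0^*,z_0^*)$ inside $R_\rho$ by a direct Euclidean-distance bound $\sqrt{2\eps\eta}<\operatorname{dist}(R_r,\partial R_\rho)$ while the paper passes through the Monge--Amp\`ere engulfing estimate of Lemma \ref{lem:Guti}, and you bound $\abs{h''(z_0^*)-h''(z_0)}$ via the modulus of continuity of $h''$ on a compact subinterval of $\{z>0\}$ while the paper invokes the mean value theorem and $\norm{h'''}_{L^\infty(S_{\rho\bar R}(\bar z))}$; both give the same $d_\eps\to 0$.
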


We remark that the viscosity property \eqref{eq:Ue-visc-prop} does not necessarily mean that $U_\eps$ is a viscosity solution to an equation since
the map $z_0 \mapsto |z_0^*|^{2-\frac{1}{s}}$ is not necessarily a well-defined function. 

\begin{proof}
Properties $(1)$ and $(2)$ are classical. We only check $(3)$. 

Consider $R_r$ for a fixed $0 < r < \rho$. 
Let $(x_0,z_0) \in R_r$ and suppose that $\phi \in C^2$ touches $U_\eps$ from below at $(x_0,z_0)$. 
The function $\tilde{\phi}$ given by
\[
\tilde{\phi}(x,z)
	= \phi((x,z) + (x_0,z_0) - (x_0^*,z_0^*)) + U(x_0^*,z_0^*) - \phi(x_0,z_0)
\]
touches $U$ from below at $(x_0^*,z_0^*)$. 
By 
\eqref{eq:x-MA-euclidean}, \eqref{eq:sec-awayfrom0},  and \eqref{eq:section-cube}, we note that
\begin{equation}\label{eq:E-to-MA}
\begin{aligned}
(x_0^*, z_0^*) \in B_{\sqrt{\eps \eta}}(x_0,z_0)
	&\subset B_{\sqrt{2\eps \eta}}(x_0) \times B_{\sqrt{2\eps \eta}}(z_0)\\
	&\subset S_{\eps \eta}(x_0) \times S_{\sigma \eps \eta}(z_0)
	\subset Q_{\eps \eta}(x_0) \times S_{\sigma \eps \eta}(z_0)
\end{aligned}
\end{equation}
where $\sigma = \|h''\|_{L^\infty(S_{r \bar{R}}(\bar{z}))}$. 
By Lemma \ref{lem:Guti} and for $\eps_1 = \eps_1(n,s,r,\rho,\eta,\sigma, R,\bar{R})>0$ sufficiently small, 
\begin{equation}\label{eq:usingGuti}
\begin{aligned}
(x_0^*, z_0^*) 
	\in Q_{\eps \eta}(x_0) \times S_{\sigma \eps \eta}(z_0)
	&\subset  Q_{C_0(\rho-r)^{p_0} R}(x_0) \times S_{C_1(\rho-r)^{p_1} \bar{R}}(z_0) \\
	&\subset Q_{\rho R}(\tilde{x}) \times S_{\rho \bar{R}}(\bar{z}) = R_\rho. 
\end{aligned}
\end{equation} 
Since $U$ is a viscosity supersolution in $R_\rho$, we have
\[
\mathcal{P}^- (D^2_x\tilde{\phi}(x,z)) + |z|^{2-\frac{1}{s}} \partial_{zz}\tilde{\phi}(x,z) \bigg|_{(x,z) = (x_0^*, z_0^*)} \leq a.
\]
In particular, the viscosity property holds:
\[
\mathcal{P}^- (D^2_x\phi(x_0,z_0)) + |z_0^*|^{2-\frac{1}{s}} \partial_{zz}\phi(x_0,z_0) \leq a.
\]

Lastly, by the mean value theorem
\[
h''(z_0^*) - h''(z_0)
	= h'''(\xi)(z_0^*- z_0)
\]
for some $\xi$ between $z_0^*$ and $z_0$. Using \eqref{eq:E-to-MA} and \eqref{eq:usingGuti}, we find that
\[
|h''(z_0^*) - h''(z_0)| 
	\leq \|h'''\|_{L^{\infty}(S_{\rho \bar{R}}(\bar{z}))} \sqrt{2\eps \eta} =:d_\eps.
\]
\end{proof}

We now prove the point-to-measure estimate for regularized functions $U_\eps$ with $\eps>0$ fixed. 

\begin{lem}\label{lem:measure-estimate-Ue}
Suppose that $U_\eps$ is as in \eqref{eq:defn-infconv} and assume that $U_\eps$ satisfies the viscosity property \eqref{eq:Ue-visc-prop} in $R_r$ with \eqref{eq:de}. 
Let $B \subset \overline{R}_r$ be a closed set and fix $a>0$.
 For each $(x_v,z_v) \in B$, we slide Monge--Amp\`ere paraboloids of opening $a>0$ and vertex $(x_v,z_v)$ from below  until they touch the graph of $U_\eps$ for the first time. 
 Let $A_\eps$ denote the set of contact points and assume that $A_\eps \subset R_r$. 
Then $A_\eps$ is compact and there is $C(n,\lambda, \Lambda)  >0$ such that
 \[
C\left( \mu_{\Phi}(A_\eps) + d_\eps |A_\eps| \right)  \geq  \mu_\Phi(B)
 \]
where $d_\eps$ is the constant in \eqref{eq:de}. 
 \end{lem}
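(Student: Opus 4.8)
The plan is to carry out an Alexandrov--Bakelman--Pucci type argument in the Monge--Amp\`ere geometry, exploiting that the inf-convolution $U_\eps$ in \eqref{eq:defn-infconv} is of class $C^1$, is semiconcave, and satisfies the viscosity property \eqref{eq:Ue-visc-prop} together with the comparison \eqref{eq:de}. Compactness of $A_\eps$ is routine: as $c$ increases, the Monge--Amp\`ere paraboloid $P(x,z)=-a\,\delta_\Phi((x_v,z_v),(x,z))+c$ first touches the graph of $U_\eps$ from below at a point of $\overline R_r$ depending continuously on $(x_v,z_v)$, and $B\subset\overline R_r$ is closed; since $A_\eps\subset R_r$ the touching is interior. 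The first key step is the normal-map identity: if $P$ with vertex $(x_v,z_v)$ touches $U_\eps$ from below at $(x_0,z_0)\in A_\eps$, then, since $U_\eps\in C^1$, matching first derivatives gives $\nabla\Phi(x_v,z_v)=\nabla\Phi(x_0,z_0)+\tfrac1a\nabla U_\eps(x_0,z_0)=:\mathcal T(x_0,z_0)$. Hence $\nabla\Phi(B)\subseteq\mathcal T(A_\eps)$, and therefore $\mu_\Phi(B)=|\nabla\Phi(B)|\le|\mathcal T(A_\eps)|$.

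Next I would estimate $|\mathcal T(A_\eps)|$ by the area formula. Since $A_\eps$ is a compact subset of $R_r$, which lies strictly away from $\{z=0\}$, the map $\nabla\Phi$ is smooth there, so $\mathcal T$ is the gradient of a semiconcave function near $A_\eps$, and the ABP change of variables for gradients of semiconcave functions yields
\[
|\mathcal T(A_\eps)|\le\int_{A_\eps}\Big|\det\big(D^2\Phi+\tfrac1a D^2U_\eps\big)\Big|\,dx\,dz,
\]
where $D^2U_\eps$ is the Alexandrov Hessian, defined a.e. By Lemma \ref{eq:MA-classical}, at each contact point $(x_0,z_0)$ (where $z_0\ne0$) the touching Monge--Amp\`ere paraboloid is replaced by a touching classical paraboloid with the same Hessian $-aD^2\Phi(x_0,z_0)$, so $D^2U_\eps\ge -aD^2\Phi$ at a.e.\ point of $A_\eps$; in particular $D^2\Phi+\tfrac1a D^2U_\eps\ge0$ there and the determinant is nonnegative. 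Conjugating by $(D^2\Phi)^{-1/2}$ and applying the arithmetic--geometric mean inequality to the eigenvalues gives
\[
\det\big(D^2\Phi+\tfrac1a D^2U_\eps\big)\le \det D^2\Phi\cdot\Big(1+\tfrac{1}{(n+1)a}\operatorname{tr}\big((D^2\Phi)^{-1}D^2U_\eps\big)\Big)^{n+1},
\]
with $\operatorname{tr}\big((D^2\Phi)^{-1}D^2U_\eps\big)=\Delta_xU_\eps+|z_0|^{2-\frac{1}{s}}\partial_{zz}U_\eps$ and $\det D^2\Phi=h''(z_0)$, so that $\int_{A_\eps}\det D^2\Phi\,dx\,dz=\mu_\Phi(A_\eps)$.

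It remains to bound $\Delta_xU_\eps+|z_0|^{2-\frac{1}{s}}\partial_{zz}U_\eps$ from above on $A_\eps$. Using the second-order Taylor polynomial of $U_\eps$ at $(x_0,z_0)$ as a $C^2$ test function (after the usual perturbation), the viscosity property \eqref{eq:Ue-visc-prop} gives $\mathcal P^-(D^2_xU_\eps)+|z_0^*|^{2-\frac{1}{s}}\partial_{zz}U_\eps\le a$. Since $D^2U_\eps\ge-aD^2\Phi$ forces $D^2_xU_\eps\ge-aI$, the negative eigenvalues of $D^2_xU_\eps$ sum to at least $-na$, so $\Delta_xU_\eps\le\sum_{e_i\ge0}e_i\le\tfrac1\lambda\big(\mathcal P^-(D^2_xU_\eps)+\Lambda na\big)$; combining this with the Pucci inequality and (taking $\lambda\le1$ without loss of generality) absorbing the leftover term via $|z_0^*|^{2-\frac{1}{s}}\partial_{zz}U_\eps\ge-Ca$ yields $\Delta_xU_\eps+|z_0^*|^{2-\frac{1}{s}}\partial_{zz}U_\eps\le C(n,\lambda,\Lambda)\,a$ a.e.\ on $A_\eps$. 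Finally I would restore $z_0$ in place of $z_0^*$ by writing
\[
\Delta_xU_\eps+|z_0|^{2-\frac{1}{s}}\partial_{zz}U_\eps=\big(\Delta_xU_\eps+|z_0^*|^{2-\frac{1}{s}}\partial_{zz}U_\eps\big)+\Big(\tfrac{1}{h''(z_0)}-\tfrac{1}{h''(z_0^*)}\Big)\partial_{zz}U_\eps,
\]
and estimating the last term by \eqref{eq:de} together with a uniform bound $|\partial_{zz}U_\eps|\le Ca$ on $A_\eps$ (the lower half being $\partial_{zz}U_\eps\ge-ah''(z_0)$ and the upper half coming from the Pucci inequality above), which contributes an error of size $\lesssim a\,d_\eps$. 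Plugging these estimates into the arithmetic--geometric mean bound and integrating over $A_\eps$ gives $\mu_\Phi(B)\le|\mathcal T(A_\eps)|\le C\big(\mu_\Phi(A_\eps)+d_\eps|A_\eps|\big)$, as claimed.

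The main obstacle is precisely the degeneracy-induced mismatch between $z_0$ and $z_0^*$: after the $\inf$-convolution, $U_\eps$ only obeys the equation with the singular/degenerate coefficient $|z|^{2-\frac{1}{s}}$ evaluated at the shifted contact point $z_0^*$, whereas both the Monge--Amp\`ere measure and the determinant carry $|z_0|^{2-\frac{1}{s}}=1/h''(z_0)$; reconciling the two is what forces the error term $d_\eps|A_\eps|$ and is the reason the comparison \eqref{eq:de} (and two-sided control of $\partial_{zz}U_\eps$ on the contact set) were built into Lemma \ref{lem:inf-conv-properties}. A secondary technical point is the validity of the area formula for the merely $C^1$, semiconcave map $\mathcal T$, handled by the classical ABP change of variables for gradients of semiconcave functions (or, alternatively, by a further mollification followed by a limit).
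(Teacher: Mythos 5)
Your overall scaffolding matches the paper: the normal-map identity for $\nabla(\Phi+\tfrac1a U_\eps)$, the area formula over the contact set, and a.e.\ twice-differentiability via semiconcavity and Lemma \ref{eq:MA-classical} are all the same. You diverge at the determinant estimate. You conjugate by $(D^2\Phi)^{-1/2}$ and apply the arithmetic--geometric mean to reduce $\det(D^2\Phi+\tfrac1a D^2U_\eps)$ to a bound on the trace $\Delta_x U_\eps+|z_0|^{2-\frac1s}\partial_{zz}U_\eps$ (the classical Caffarelli--Guti\'errez ABP route); the paper instead proves the two-sided \emph{matrix} inequality
$-aD^2\Phi(x_0,z_0)\le D^2U_\eps(x_0,z_0)\le C(n,\lambda,\Lambda)\,a\,D^2\Phi(x_0,z_0^*)$
by a contradiction against the Pucci viscosity property \eqref{eq:Ue-visc-prop} (testing with a Monge--Amp\`ere polynomial built around $z_0$ and $z_0^*$), and then computes the determinant of the block-diagonal upper bound directly. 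That matrix bound is what makes the $z_0$ vs.\ $z_0^*$ bookkeeping painless, which is exactly where your write-up runs into trouble.

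The gap is in the final error analysis. Your stated uniform bound $|\partial_{zz}U_\eps|\le Ca$ is not correct, and your own parenthetical $\partial_{zz}U_\eps\ge -a\,h''(z_0)$ contradicts it: at contact points the correct two-sided control is $-a\,h''(z_0)\le\partial_{zz}U_\eps\le C(n,\Lambda)\,a\,h''(z_0^*)$, so the pointwise error term $\bigl(\tfrac1{h''(z_0)}-\tfrac1{h''(z_0^*)}\bigr)\partial_{zz}U_\eps$ has size $\lesssim a\,d_\eps/h''(z_0)$ (for $\eps$ small), not $\lesssim a\,d_\eps$. This is not a cosmetic slip. The $1/h''(z_0)$ factor is exactly what makes the linear-in-$d_\eps$ contribution integrate against $\det D^2\Phi=h''(z_0)$ to produce the stated $d_\eps|A_\eps|$; with the error you wrote, the $k=1$ binomial term would instead integrate to $d_\eps\,\mu_\Phi(A_\eps)$, which is not what the lemma asserts. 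Moreover, the $(n+1)$-th power in the AM--GM inequality generates higher-order terms $\sim h''(z_0)\bigl(d_\eps/h''(z_0)\bigr)^k$ for $k\ge 2$, and absorbing these with a constant depending only on $(n,\lambda,\Lambda)$ requires $d_\eps\lesssim h''(z_0)$ on $A_\eps$, i.e.\ a smallness assumption on $\eps$ that the lemma statement does not carry. The paper's route sidesteps all of this: the one-sided part of \eqref{eq:de}, $h''(z_0^*)\le h''(z_0)+d_\eps$, is inserted into the block-diagonal matrix bound to give $\det\bigl(D^2\Phi+\tfrac1a D^2U_\eps\bigr)\le(1+C)^{n+1}\bigl(h''(z_0)+d_\eps\bigr)$ with no binomial remainder to control. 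Your approach can likely be repaired by carrying the $h''$-weighted bounds honestly and restricting to $\eps$ small, but as written it neither yields the correct form of the error term nor a universal constant.
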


\begin{proof}
The proof that $A_\eps$ is compact follows exactly as in \cite[Theorem 7.1]{Stinga-Vaughan}. 
 
Let $(x_0,z_0) \in A_\eps$. 
There exists a Monge--Amp\`ere paraboloid $P$ of opening $a>0$ and vertex $(x_v,z_v) \in B$ that touches $U$ from below at $(x_0,z_0)$. 
By Lemma \ref{eq:MA-classical}, $U_\eps$ can be touched from below by a classical paraboloid at $(x_0,z_0)$. 
Moreover, by Lemma \ref{lem:inf-conv-properties}, $U_\eps$ is semiconcave and can thus be touched from above by a classical paraboloid at $(x_0,z_0)$. 
Consequently,  $U_\eps$ is differentiable at $(x_0,z_0)$  and the vertex $(x_v,z_v)$ is determined uniquely by 
\[
(x_v, h'(z_v)) = (x_0, h'(z_0)) + \frac{1}{a} \nabla U_\eps(x_0,z_0).
\]
Equivalently, 
\[
\nabla \Phi(x_v,z_v) = \nabla \left( \Phi + \frac{1}{a} U_\eps\right)(x_0,z_0).
\]

Let $Z$ denote the set of points $(x_0,z_0) \in R_r$ for which $U_\eps$ can be approximated by a classical second-order polynomial near $(x_0,z_0)$.
That is,
\begin{equation}\label{eq:Ueps-approx}
\begin{aligned}
U_\eps(x,z)
	&= U_\eps(x_0,z_0) + \langle \nabla U_\eps(x_0,z_0), (x,z) -(x_0,z_0) \rangle\\
	& \quad + \frac{1}{2} \langle D^2 U_\eps(x_0,z_0)((x,z) - (x_0,z_0)), (x,z) - (x_0,z_0) \rangle \\
	&\quad + o(|(x,z) - (x_0,z_0)|^2).
\end{aligned}
\end{equation}
Since $U_\eps$ is semiconcave, we have that $|R_r \setminus Z| = 0$
and $[\nabla U_\eps]_{\text{Lip}} \leq C = C(a, \eps^{-1}, R_\rho)$. 

Consider the map $T:A_\eps \to T(A_\eps) = \nabla \Phi(B)$ given by
\[
T(x_0,z_0) = \nabla \left( \Phi + \frac{1}{a} U_\eps\right)(x_0,z_0). 
\]
Since $T$ is Lipschitz and injective on $A_\eps$,  the area formula for Lipschitz maps gives
\begin{equation}\label{eq:measure-setup}
\begin{aligned}
\mu_\Phi(B) = |\nabla \Phi(B)| 
	&= \int_{T(A_\eps)} dy \, dw
	= \int_{A_\eps}|\det(\nabla T(x,z))| \,dz \, dx \\
	&= \int_{A_\eps \cap Z}\left|\det\left(D^2\Phi(x,z) + \frac{1}{a} D^2U_\eps(x,z)\right)\right| \,dz \, dx.
\end{aligned}
\end{equation}

We claim that there is a constant $C = C(n,\lambda,\Lambda)>0$ such that for all $(x_0,z_0) \in A_\eps \cap Z$,
\begin{equation}\label{eq:ueps-claim}
-aD^2\Phi(x_0,z_0) \leq D^2U_\eps(x_0,z_0) \leq Ca D^2\Phi(x_0,z_0^*)
\end{equation}
for any $z_0^*$ such that $(x_0^*,z_0^*)$ attains the infimum in the definition of $U_\eps(x_0,z_0)$. 
The first inequality is clear since $P$ touches $U_\eps$ from below at $(x_0,z_0)$. 
For the second inequality, suppose by way of contradiction that
\begin{equation}\label{eq:bwoc-eps}
D^2U_\eps(x_0,z_0) >CaD^2\Phi(x_0,z_0^*) = Ca \begin{pmatrix} I & 0 \\ 0 &(z_0^*)^{\frac{1}{s}-2} \end{pmatrix} \quad \hbox{for all}~C>0. 
\end{equation}
From \eqref{eq:Ueps-approx} and by Lemma \ref{lem:polynomial-approx},
for all $\tau>0$, the second-order Monge--Amp\`ere polynomial 
\begin{align*}
\bar{P}(x,z)
	&:=\frac{1}{2} \langle D^2_xU_\eps(x_0,z_0)(x-x_0), (x-x_0) \rangle  + \partial_{zz}U_\eps(x_0,z_0) |z_0|^{2-\frac{1}{s}} \delta_h(z_0,z)  \\
	&\quad - \tau\left(\frac{1}{2} |x-x_0|^2 + \delta_h(z_0,z)\right)\\
	&\quad + \langle \nabla_x \partial_{z}U_\eps(x_0,z_0),(x-x_0) \rangle (z-z_0)+  \langle \nabla U_\eps(x_0,z_0), (x,z) -(x_0,z_0) \rangle + U_\eps(x_0,z_0)
\end{align*}
touches $U_\eps$ from below at $(x_0,z_0)$. Since $U_\eps$ satisfies the viscosity property \eqref{eq:Ue-visc-prop}, we have
\[
\mathcal{P}^-(D^2_x \bar{P}(x_0,z_0)) + (z_0^*)^{2-\frac{1}{s}} \partial_{zz}\bar{P}(x_0,z_0) \leq a,
\]
that is, 
\[
\mathcal{P}^-(D^2_xU_\eps(x_0,z_0)  - \tau I)
 	+ (z_0^*)^{2-\frac{1}{s}}  (\partial_{zz}U_\eps(x_0,z_0) - \tau h''(z_0))\leq a. 
\]
Sending $\tau \to 0$ gives
\begin{equation}\label{eq:using-visc}
\mathcal{P}^-(D^2_xU_\eps(x_0,z_0))
 	+(z_0^*)^{2-\frac{1}{s}} \partial_{zz}U_\eps(x_0,z_0) \leq a. 
\end{equation}
On the other hand, by \eqref{eq:bwoc-eps},
\[
D^2U_\eps(x_0,z_0) 
	> Ca \begin{pmatrix} e_k\otimes e_k & 0 \\ 0 & 0 \end{pmatrix}
	>  Ca \begin{pmatrix} e_k\otimes e_k & 0 \\ 0 & 0 \end{pmatrix}
		- a \begin{pmatrix} I& 0 \\ 0 & |z_0|^{\frac{1}{s}-2}\end{pmatrix}.
\]
where $e_k$, $k=1,\dots, n$, are the standard basis vectors in $\R^n$ and $ \otimes$ denotes the usual tensor product. 
Since $\mathcal{P}^-$ is monotone increasing, 
\begin{equation}\label{eq:x-part}
\mathcal{P}^-(D^2_xU_\eps(x_0,z_0)) 
	\geq \mathcal{P}^-(Ca(e_k\otimes e_k) - aI)
	= [\lambda (C-1) - \Lambda (n-1)]a. 
\end{equation}
Also by \eqref{eq:bwoc-eps}, we have
\[
D^2U_\eps(x_0,z_0) 
	> Ca \begin{pmatrix} 0 & 0 \\ 0 & (z_0^*)^{\frac{1}{s}-2}\end{pmatrix}.
\]
By definition of positive definite matrices,
$
\partial_{zz}U_\eps(x_0,z_0)
	> Ca (z_0^*)^{\frac{1}{s}-2}.
$
Equivalently,
\begin{equation}\label{eq:z-part}
(z_0^*)^{2 - \frac{1}{s}}\partial_{zz}U_\eps(x_0,z_0)
	> Ca.
\end{equation}
Combining \eqref{eq:using-visc}, \eqref{eq:x-part}, and \eqref{eq:z-part}, we have
\begin{align*}
a &\geq \mathcal{P}^-(D^2_xU_\eps(x_0,z_0))  + (z_0^*)^{2- \frac{1}{s}}\partial_{zz}U_\eps(x_0,z_0)\\
&\geq [\lambda (C-1) - \Lambda (n-1)]a  + Ca
= [(\lambda+1) C - (\Lambda (n-1)+\lambda)]a,
\end{align*}
which is a contradiction for sufficiently large $C= C(n,\lambda, \Lambda)>0$. Therefore, \eqref{eq:ueps-claim} holds. 

Using \eqref{eq:de}, we find that
\[
h''(z_0^*) \leq h''(z_0) + d_\eps
\]
which together with \eqref{eq:ueps-claim} gives
\begin{align*}
-aD^2\Phi(x_0,z_0) \leq D^2U_\eps(x_0,z_0) &\leq Ca \begin{pmatrix} I &0 \\0 & h''(z_0^*) \end{pmatrix}
	\leq Ca \begin{pmatrix} I &0 \\0 &h''(z_0) + d_\eps \end{pmatrix} 
\end{align*}
for all $(x_0,z_0) \in A_\eps \cap Z$.
Continuing now from \eqref{eq:measure-setup}, we arrive at the desired conclusion
\begin{align*}
\mu_\Phi(B)
	&= \int_{A_\eps \cap Z}\det\left(D^2\Phi(x,z) + \frac{1}{a} D^2U_\eps(x,z)\right) \,dz \, dx\\
	&\leq \int_{A_\eps \cap Z}\det\left( \begin{pmatrix}
	I & 0 \\ 0 & h''(z)  \end{pmatrix} + C  \begin{pmatrix} I &0 \\0 &h''(z) + d_\eps \end{pmatrix}\right) \,dz \, dx \\
	&= (1+C)^n\int_{A_\eps}[(1+C)h''(z) + Cd_\eps] \, dz \, dx\\
	&\leq (1+C)^{n+1} \left(\mu_{\Phi}(A_\eps) + d_\eps |A_\eps|\right).
\end{align*}
\end{proof}

We are now ready to prove Theorem \ref{thm:point-to-measure}. 

\begin{proof}[Proof of Theorem \ref{thm:point-to-measure}]
The proof that $A$ is compact follows exactly as in \cite[Theorem 7.1]{Stinga-Vaughan}. 

Without loss of generality, assume that $Q_R(\tilde{x},\tilde{z})^+  \not= \varnothing$. If $Q_R(\tilde{x},\tilde{z})^+  = \varnothing$, then $Q_R(\tilde{x},\tilde{z})^-  \not= \varnothing$ and the proof is analogous. 

Consider the sets
\[
B_0 := B \cap \left\{(x,z): \abs{h'(z)} \leq \frac{\norm{f^+}_{ L^{\infty}(Q_R(\tilde{x}))}}{a} \right\}, \quad
B_1^+ :=  B^+ \setminus B_0, \quad  B_1^- := B^- \setminus B_0. 
\]
Note that $B_0, B_1^+, B_1^- $ are mutually disjoint and satisfy $B = B_0 \cup B_1^+ \cup B_1^-$. 
We lift paraboloids of opening $a>0$ from below with vertices in $B_0$, $B_1^+$, $B_1^-$ to form the contact sets
$A_0$, $A_1^+$, $A_1^-$, respectively. Note that $A = A_0 \cup A_1^+ \cup A_1^-$, but $A_0$, $A_1^+$, $A_1^-$ are not necessarily disjoint.

It is enough to show that $\mu_{\Phi}(B_1^+) \leq C \mu_{\Phi}(A_1^+)$ for some positive constant $C = C(n,\lambda,\Lambda) >1$. 
Indeed, first note that the proof of $\mu_{\Phi}(B_1^-) \leq C \mu_{\Phi}(A_1^-)$ will be similar.  
Together with the hypothesis on $B_0$, we have
\begin{align*}
\mu_{\Phi}(B)
	&= \mu_{\Phi}(B_0) + \mu_{\Phi}(B_1^+) + \mu_{\Phi}(B_1^-) 
	\leq (1-\varepsilon_0)\mu_{\Phi}(B) +C \mu_{\Phi}(A_1^-) + C \mu_{\Phi}(A_1^+)
\end{align*}
which implies
\[
\mu_{\Phi}(A) \geq \frac{1}{2}(\mu_{\Phi}(A_1)  + \mu_{\Phi}(A_2))  \geq \frac{\varepsilon_0}{2C}\mu_{\Phi}(B). 
\]

We now show $\mu_{\Phi}(B_1^+) \leq C \mu_{\Phi}(A_1^+)$. 
Let $\bar{R}$ and $\bar{z}$ be such that $S_{\bar{R}}(\bar{z}) = S_{R}(\tilde{z})^+$. 
For $0 < \rho < 1$, consider the rectangle $R_\rho := Q_{\rho R}(\tilde{x}) \times S_{\rho \bar{R}}(\bar{z}) \subset \subset Q_R(\tilde{z},\tilde{z})^+$. 
Let $U_\eps$ denote $\inf$-convolution of $U$ in $R_\rho$ given in \eqref{eq:defn-infconv}. 
Since $U$ is a $C_s$-viscosity supersolution to \eqref{eq:U-with-neumann}, we have that $U$ satisfies \eqref{eq:pucci-visco} in $R_\rho$, see Remark \ref{rem:pucci}. 
Fix $0 < r < \rho$. By Lemma \ref{lem:inf-conv-properties},  there is an $\eps_1>0$ such that for all $0 < \eps < \eps_1$, 
the regularized function $U_\eps$ satisfies the viscosity property \eqref{eq:Ue-visc-prop} in $R_r$ with \eqref{eq:de}. 

Define a new vertex set $B_r^+:= B_1^+ \cap \overline{R_r}$. Slide paraboloids of opening $a>0$ and vertices in $B_r^+$ from below until they touch the graph of $U_\eps$ for the first time. 
Let $A_{r,\eps}^+$ be the corresponding set of contact points for $U_\eps$ in $R_r$. 
By Lemma \ref{lem:measure-estimate-Ue},
\[
\mu_\Phi(B_r^+) \leq C\left( \mu_{\Phi}(A_{r,\eps}^+) + d_\eps |A_{r,\eps}^+|\right).
\]
One can check that
\[
\limsup_{k \to \infty} A_{r,1/k}^+ = \bigcap_{m=1}^{\infty} \bigcup_{k=m}^{\infty} A_{r,1/k}^+ \subset A_r^+
\]
where $A_r^+$ is the contact set for $U$ in $B_r^+$. Since $d_{1/k} \to 0$ as $k \to \infty$, it follows that $\mu_\Phi(B_r^+)  \leq C\mu_{\Phi}( A_{r}^+)$. 
Since $A_r^+ \subset A_1^+$, we further have that
\[
\mu_\Phi( B_1^+ \cap \overline{R_r}) 
	= \mu_\Phi(B_r^+) \leq C\mu_{\Phi}(A_{1}^+).
\]
Taking $r \to \rho$ and then $\rho \to 1$, we finally arrive at
\[
\mu_\Phi(B_1^+) \leq  (1+C)^n \mu_\Phi(A_1^+),
\]
which completes the proof. 
\end{proof}

\subsection{Proof of Theorem \ref{thm:F-harnack3}}

With Theorem \ref{thm:point-to-measure} for $C_s$-viscosity solutions in-hand, the proof of Theorem \ref{thm:F-harnack3} then follows along the same lines as in \cite{Stinga-Vaughan} under the additional assumption that  
$\|F\|_{L^\infty(Q_{KR})} \leq a$. 
For this reason, we only sketch the idea next. 

As in \cite[Lemma 8.2]{Stinga-Vaughan}, we construct explicit barriers that are used to prove a detachment lemma, like \cite[Lemma 9.2]{Stinga-Vaughan}, on how the solution $U$ separates from a touching Monge--Amp\`ere paraboloid.  
With this and the point-to-measure estimate (Theorem \ref{thm:point-to-measure}), we prove a localization lemma which morally says if $U$ can be touched from below by a Monge--Amp\`ere  paraboloid of opening $a>0$, then $U$ can be touched nearby by narrower Monge--Amp\`ere  paraboloids of opening $Ca$, for universal $C>1$, in a set of positive measure, see \cite[Lemma 9.4]{Stinga-Vaughan}. 
With these ingredients and a covering lemma \cite[Lemma 10.1]{Stinga-Vaughan}, 
we end by following the proof of \cite[Theorem 5.3]{Stinga-Vaughan}.\qed

\section{Approximation lemma}\label{sec:approx}

In this section, we prove that if the coefficients $a^{ij}(x)$ are close to $\delta^{ij}$ and both $f$ and $F$ are sufficiently small, then any $C_s$-viscosity solution $U$ to the extension problem \eqref{eq:sub} can be approximated by a harmonic function, that is, a solution to \eqref{eq:harmonic-2}.

\begin{lem}\label{lem:approximation}
For any $\varepsilon>0$, there is $\varepsilon_0 = \varepsilon_0(n,\lambda,\Lambda, s,\varepsilon)>0$ such that if $a^{ij} \in C(T_1)$ satisfies \eqref{eq:ellipticity}, $f \in C(T_1) \cap L^{\infty}(T_1)$, $F \in C(S_1 \times S_1^+) \cap L^\infty(S_1 \times S_1^+)$ with
\[
\norm{a^{ij}(\cdot) - \delta^{ij}}_{L^{\infty}(T_1)} + \norm{f}_{L^{\infty}(T_1)}+\norm{F}_{L^{\infty}(S_1 \times S_1^+)} < \varepsilon_0,
\]
and $U\in C(\overline{S_1\times S_1^+})$ is a $C_s$-viscosity solution to 
\begin{equation*}\label{eq:main}
\begin{cases}
a^{ij}(x) \partial_{ij} U+ z^{2 - \frac{1}{s}} \partial_{zz}U = F & \hbox{in}~S_1 \times S_{1}^+ \\
\partial_{z} U = f & \hbox{on}~T_1 \\
\norm{U}_{L^{\infty}(S_1 \times S_{1}^+)} \leq 1
\end{cases}
\end{equation*}
then there is a classical solution $H$ to
\begin{equation}\label{eq:H}
\begin{cases}
\Delta_x H +z^{2 - \frac{1}{s}} \partial_{zz}H = 0 & \hbox{in}~S_{3/4} \times S_{3/4}^+ \\
\partial_{z} H = 0 & \hbox{on}~T_{3/4} \\
\norm{H}_{L^{\infty}(S_{3/4} \times S_{3/4}^+)} \leq 1
\end{cases}
\end{equation}
such that
\[
\norm{U-H}_{L^{\infty}((S_{3/4} \times S_{3/4}^+) \cup T_{3/4})} \leq \varepsilon.
\]
\end{lem}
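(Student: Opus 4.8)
The plan is a compactness-and-contradiction argument in the spirit of Caffarelli's approximation lemma for uniformly elliptic equations, with the interior estimate supplied by the viscosity Harnack/H\"older inequality of Theorem~\ref{thm:F-harnack} and the limiting function identified via the stability Lemma~\ref{lem:stability} together with the classification Proposition~\ref{lem:H-classical}. Suppose the statement fails for some $\varepsilon>0$: then there are continuous $a^{ij}_k$ on $T_1$ satisfying \eqref{eq:ellipticity}, $f_k\in C(T_1)\cap L^\infty(T_1)$, $F_k\in C(S_1\times S_1^+)\cap L^\infty(S_1\times S_1^+)$ with
\[
\norm{a^{ij}_k(\cdot)-\delta^{ij}}_{L^\infty(T_1)}+\norm{f_k}_{L^\infty(T_1)}+\norm{F_k}_{L^\infty(S_1\times S_1^+)}<\tfrac{1}{k},
\]
and $C_s$-viscosity solutions $U_k\in C(\overline{S_1\times S_1^+})$ with $\norm{U_k}_{L^\infty(S_1\times S_1^+)}\le 1$ for which $\norm{U_k-H}_{L^\infty((S_{3/4}\times S_{3/4}^+)\cup T_{3/4})}>\varepsilon$ for every classical solution $H$ to \eqref{eq:H}.

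First I would establish compactness of $\{U_k\}$. Let $\tilde U_k(x,z)=U_k(x,\abs{z})$ be the even reflection across $\{z=0\}$. As in the proof of Proposition~\ref{lem:H-classical}, $\tilde U_k$ is a $C_s$-viscosity solution, symmetric across $\{z=0\}$, to
\[
a^{ij}_k(x)\partial_{ij}\tilde U_k+\abs{z}^{2-\frac{1}{s}}\partial_{zz}\tilde U_k=\tilde F_k\ \text{ in }(S_1\times S_1)\setminus\{z=0\},\qquad \partial_z\tilde U_k(x,0)=f_k\ \text{ on }T_1,
\]
where $\tilde F_k(x,z)=F_k(x,\abs{z})$. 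Since $\norm{\tilde U_k}_{L^\infty}\le 1$ and $\norm{f_k}_{L^\infty},\norm{\tilde F_k}_{L^\infty}\le 1$, Theorem~\ref{thm:F-harnack} applied on sections $S_R\subset\subset S_1\times S_1$ yields, with constants independent of $k$, a H\"older modulus of continuity for $\{\tilde U_k\}$ with respect to $\delta_{\Phi}$ on every compact subset of $S_1\times S_1$. Since $S_R(0)$ is the Euclidean ball of radius $q_sR^s$ and $\delta_h$ is comparable to the Euclidean distance away from $\{z=0\}$ (Lemma~\ref{lem:balls-to-sections-away-from-0}), this translates into an honest Euclidean modulus of continuity; hence $\{U_k\}$ is uniformly bounded and equicontinuous on every compact subset of $(S_1\times S_1^+)\cup T_1$.

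By Arzel\`a--Ascoli and a diagonal argument over an exhaustion by compact sets, after passing to a subsequence $U_k\to U_\infty$ uniformly on compact subsets of $(S_1\times S_1^+)\cup T_1$, with $\norm{U_\infty}_{L^\infty}\le 1$. As $a^{ij}_k\to\delta^{ij}$, $f_k\to 0$, and $F_k\to 0$ uniformly, the stability Lemma~\ref{lem:stability}, applied on the subdomain $S_{7/8}\times S_{7/8}^+$ with Neumann boundary $T_{7/8}$, shows that $U_\infty$ is a $C_s$-viscosity solution of $\Delta_xU_\infty+z^{2-\frac{1}{s}}\partial_{zz}U_\infty=0$ in $S_{7/8}\times S_{7/8}^+$ with $\partial_zU_\infty(x,0)=0$ on $T_{7/8}$; note $U_\infty\in C(\overline{S_{7/8}\times S_{7/8}^+})$ because $\overline{S_{7/8}\times S_{7/8}^+}$ is a compact subset of $(S_1\times S_1^+)\cup T_1$. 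Rescaling $S_{7/8}\times S_{7/8}^+$ onto $S_1\times S_1^+$ by the anisotropic dilation of Lemmas~\ref{lem:scaling sections} and \ref{lem:scaling equation} (which preserves the $C_s$-viscosity property and the zero Neumann datum) and invoking Proposition~\ref{lem:H-classical}, $U_\infty$ is in fact a classical solution of the harmonic equation in $S_{7/8}\times S_{7/8}^+$. Thus $H:=U_\infty|_{S_{3/4}\times S_{3/4}^+}$ is a classical solution to \eqref{eq:H} with $\norm{H}_{L^\infty}\le 1$. Finally, since $\overline{S_{3/4}\times S_{3/4}^+}$ is a compact subset of $(S_1\times S_1^+)\cup T_1$, we get $\norm{U_k-U_\infty}_{L^\infty((S_{3/4}\times S_{3/4}^+)\cup T_{3/4})}\to 0$, contradicting the choice of $U_k$ with $H=U_\infty$. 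This proves the lemma.

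The main obstacle is the compactness step: one must produce a genuine Euclidean modulus of continuity for the family $\{U_k\}$ up to and through the degenerate/singular Neumann boundary $\{z=0\}$, for solutions that are only $C_s$-viscosity solutions. This is exactly where Theorem~\ref{thm:F-harnack} --- the Harnack inequality and H\"older regularity for \emph{viscosity} (rather than classical) solutions, whose proof occupies Section~\ref{sec:harnack} --- is indispensable, the classical a priori estimates of \cite{Stinga-Vaughan} being inapplicable here; one also has to be careful in converting $\delta_{\Phi}$-H\"older continuity into a usable Euclidean estimate near $\{z=0\}$. Once compactness is available, the identification of the limiting harmonic function is routine given Lemma~\ref{lem:stability} and Proposition~\ref{lem:H-classical}.
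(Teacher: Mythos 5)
Your proposal is correct and takes essentially the same compactness-and-contradiction route as the paper: invoke Theorem~\ref{thm:F-harnack} for a uniform $C^{\alpha_1}_\Phi$ bound on $\overline{S_{3/4}\times S_{3/4}^+}$, pass to a uniformly convergent subsequence by Arzel\`a--Ascoli, identify the limit as a $C_s$-viscosity solution of the harmonic problem via Lemma~\ref{lem:stability}, upgrade to a classical solution via Proposition~\ref{lem:H-classical}, and contradict the assumed separation from all classical solutions. The extra steps you include (explicit even reflection before applying Theorem~\ref{thm:F-harnack}, working first on $S_{7/8}$ and then rescaling to invoke Proposition~\ref{lem:H-classical}) are careful elaborations of points the paper treats implicitly, not a genuinely different argument.
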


\begin{proof}
Suppose, by way of contradiction, there is a $\varepsilon>0$ such that for all $k \in \N$, there exist $a^{ij}_k, f_k \in C(T_1)$ satisfying
\[
\|a^{ij}_k(\cdot) - \delta^{ij}\|_{L^{\infty}(T_1)} + \norm{f_k}_{L^{\infty}(T_1)}+ \norm{F_k}_{L^{\infty}(S_1 \times S_1^+)} < \frac{1}{k}
\]
and $C_s$-viscosity solutions $U_k$ to 
\[
\begin{cases}
a^{ij}_k(x) \partial_{ij} U_k+ z^{2 - \frac{1}{s}} \partial_{zz}U_k = F_k & \hbox{in}~S_{1}\times S_1^+ \\
\partial_z U_k = f_k& \hbox{on}~T_1\\
\norm{U_k}_{L^{\infty}(S_1 \times S_{1}^+)} \leq 1,
\end{cases}
\]
but such that every classical solution $H$ to \eqref{eq:H} satisfies
\begin{equation}\label{eq:not-close}
\norm{U_k-H}_{L^{\infty}((S_{3/4} \times S_{3/4}^+) \cup T_{3/4})} > \varepsilon \quad \hbox{for all}~k \in \N.
\end{equation}
As a consequence of Theorem \ref{thm:F-harnack} (and recalling the notation in Section \ref{sec:Holder}), we have 
\[
\|U_k\|_{C_{\Phi}^{\alpha_1}(\overline{S_{3/4} \times S_{3/4}^+})} \leq C(\|U_k\|_{L^{\infty}(S_1 \times S_1^+)} +\norm{f_k}_{L^{\infty}(T_1)}+\norm{F_k}_{L^{\infty}(S_1 \times S_1^+)}) \leq 2C
\]
for $C = C(n,\lambda, \Lambda, s)>0$. 
Therefore, the family $(U_k)_{k \in \N}$ is uniformly bounded and equicontinuous in $\overline{S_{3/4} \times S_{3/4}^+}$. 
By Arzel\`a-Ascoli, there is a subsequence, still denoted by $(U_k)_{k \in \N}$, and a function $U_{\infty} \in C^{\alpha_1}_\Phi(\overline{S_{3/4} \times S_{3/4}^+})$ such that 
\begin{equation}\label{eq:Uk to U}
U_k \to U_{\infty} \quad \hbox{uniformly on compact subsets of $(S_{3/4} \times S_{3/4}^+) \cup T_{3/4}$ as}~k \to \infty.
\end{equation}
By Lemma \ref{lem:stability}, $U_{\infty}$ is a $C_s$-viscosity solution to \eqref{eq:H}. 
Moreover, by Proposition \ref{lem:H-classical}, $U_\infty$ is a classical solution in $(S_{3/4} \times S_{3/4}^+) \cup T_{3/4}$. 
Together with \eqref{eq:Uk to U}, this contradicts \eqref{eq:not-close}. 
\end{proof}

\begin{rem}\label{rem:approximation}
In the same way, we can show that Lemma \ref{lem:approximation} holds with Monge--Amp\`ere cylinders $S_1 \times S_\rho^+$
in place of $S_1\times S_1^+$, for any $0< \rho \leq 1$, with $\eps_0$ independent of $\rho$. 
\end{rem}

\section{Proof of Theorem \ref{thm:schauder-intro}}\label{sec:schauder}

This section is devoted to the proof of Theorem \ref{thm:schauder-intro}. 
With the extension characterization, Theorem \ref{thm:extension}, the main point is to show that $C_s$-viscosity solutions to 
\begin{equation}\label{eq:extension}
\begin{cases}
a^{ij}(x) \partial_{ij}U + z^{2-\frac{1}{s}} \partial_{zz}U = 0 & \hbox{in}~S_1 \times S_1^+\\
\partial_{z}U(x,0) = f(x) & \hbox{on}~T_1\\
\end{cases}
\end{equation}
are $C^{\alpha+2s}_{\Phi}$ at the origin. In particular, we prove the following result. 

\begin{thm}\label{thm:schauder-ext}
Fix $0 < s < 1$.  
Suppose $a^{ij} \in C(T_1) \cap L^{\infty}(T_1)$ satisfy \eqref{eq:ellipticity} 
and $a^{ij}(0) = \delta^{ij}$. 
Suppose also that $f \in L^{\infty}(T_1)$ is such that $f \in C^{\alpha}(0)$ for some $0 < \alpha < 1$ and $f(0) = 0$. 
\begin{enumerate}[start=1,label={(\arabic*)}]
	\item \label{item:schauder1}
	Suppose that $0 < \alpha +2s < 1$. There is $\varepsilon_0 = \varepsilon_0(n,s, \lambda,\Lambda)>0$ and a constant $C_0>0$ such that if
	\[
	\|a^{ij}(\cdot) - \delta^{ij}\|_{L^{\infty}(T_1)} \leq \varepsilon_0
	\]
	and $U \in C(\overline{S_1 \times S_1^+})$ is a $C_s$-viscosity solution \eqref{eq:extension}, 
	then there is a constant $c$ such that
	\[
	\|U - c\|_{L^{\infty}(S_{r^2} \times S_{r^2}^+)} \leq C_1 r^{\alpha+2s} \quad \hbox{for all}~r>0~\hbox{sufficiently small},
	\]
	where $C_1 + |c| \leq C_0(\|U\|_{L^\infty(S_1 \times S_1^+)}+\|f\|_{C^{0,\alpha}(T_1)})$.
	\item \label{item:schauder2}
	Suppose that $1 < \alpha +2s < 2$. There is $\eps_0=\eps_0(n,s, \lambda,\Lambda)>0$ and a constant $C_0>0$ such that if
	\[
	\|a^{ij}(\cdot) - \delta^{ij}\|_{L^{\infty}(T_1)} \leq \varepsilon_0
	\]
	and $U \in C(\overline{S_1 \times S_1^+})$ is a $C_s$-viscosity solution \eqref{eq:extension}, 
	then there is a linear function $\ell(x) = \langle b,x \rangle + c$ such that
	\[
	\|U - \ell\|_{L^{\infty}(S_{r^2} \times S_{r^2}^+)} \leq C_1 r^{\alpha+2s} \quad \hbox{for all}~r>0~\hbox{sufficiently small},
	\]
	where $C_1 + |b| + |c| \leq C_0(\|U\|_{L^\infty(S_1 \times S_1^+)}+\|f\|_{C^{0,\alpha}(T_1)})$.
	\item \label{item:schauder3}
	Suppose that $2 < \alpha +2s < 3$.  There is $\eps_0=\eps_0(n,s, \lambda,\Lambda)>0$ and a constant $C_0>0$ such that if $a^{ij} \in C^{\alpha+2s-2}(0)$ with
	\[
	\|a^{ij}(\cdot) - \delta^{ij}\|_{L^{\infty}(T_1)} \leq \varepsilon_0
	\]
	\[
	|a^{ij}(x) - \delta^{ij}| \leq [a^{ij}]_{C^{\alpha+2s-2}(0)} |x|^{\alpha+2s-2} \quad \hbox{for all}~x \in T_1
	\]
	and $U \in C(\overline{S_1 \times S_1^+})$ is a $C_s$-viscosity solution \eqref{eq:extension}, 
	then there is a Monge--Amp\`ere polynomial 
	\[
	P(x,z) = \frac{1}{2} \langle \A x, x \rangle + \langle b,x \rangle + c + dh(z)\]
	 such that
	\[
	\|U - P\|_{L^{\infty}(S_{r^2}\times S_{r^3}^+)} \leq C_1 r^{\alpha+2s} \quad \hbox{for all}~r>0~\hbox{sufficiently small},
	\]
	where $C_1 +|\A|+ |b| + |c| +|d| \leq C_0(\|U\|_{L^\infty(S_1 \times S_1^+)}+\|f\|_{C^{0,\alpha}(T_1)})$.
\end{enumerate}
\end{thm}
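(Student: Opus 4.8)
The plan is to run Caffarelli's perturbation (compactness) scheme, adapted to the degenerate/singular Monge--Amp\`ere geometry of $\Phi$. Fix one of the three cases, let $m\in\{0,1,2\}$ be the corresponding order of approximating polynomial, and set $\gamma:=(\alpha+2s)/2$. After dividing $U$ by $\|U\|_{L^\infty(S_1\times S_1^+)}+\|f\|_{C^{0,\alpha}(T_1)}$ and absorbing a fixed multiplicative constant, I may assume $\|U\|_{L^\infty}\le1$ and $[f]_{C^\alpha(0)}\le1$. I then pick a small dilation factor $\mu=\mu(n,s,\lambda,\Lambda,\alpha)\in(0,1)$ and, afterwards, the smallness threshold $\varepsilon_0$ supplied by the approximation Lemma~\ref{lem:approximation} (with Remark~\ref{rem:approximation}). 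The heart of the argument is to construct inductively Monge--Amp\`ere polynomials $P_k$ with respect to $\Phi$ of order $m$ --- $P_k\equiv c_k$ in case~\ref{item:schauder1}; $P_k=\langle b_k,x\rangle+c_k$ in case~\ref{item:schauder2}; $P_k=\tfrac12\langle\A_k x,x\rangle+\langle b_k,x\rangle+c_k+d_k h(z)$ with $\operatorname{trace}\A_k+d_k=0$ in case~\ref{item:schauder3} --- each itself a classical (``harmonic'') solution of $\Delta_x P_k+z^{2-\frac1s}\partial_{zz}P_k=0$, such that on cylinders shrinking to the origin,
\[
\|U-P_k\|_{L^\infty(\mathcal C_k)}\le\mu^{k(\alpha+2s)},\qquad \mathcal C_k=S_{\mu^{2k}}\times S_{\mu^{2k}}^+\ \ (\text{cases }\ref{item:schauder1}\text{--}\ref{item:schauder2}),\quad \mathcal C_k=S_{\mu^{2k}}\times S_{\mu^{3k}}^+\ \ (\text{case }\ref{item:schauder3}),
\]
together with the relevant incremental bounds $|c_{k+1}-c_k|\lesssim\mu^{k(\alpha+2s)}$, $|b_{k+1}-b_k|\lesssim\mu^{k(\alpha+2s-1)}$, $|\A_{k+1}-\A_k|+|d_{k+1}-d_k|\lesssim\mu^{k(\alpha+2s-2)}$. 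The claimed polynomial is $P:=\lim_k P_k$; the base case is $P_0=0$.

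\emph{One-step improvement.} Given the bound at level $k$, rescale by $(x,z)\mapsto(\mu^k x,\mu^{2sk}z)$ --- legitimate by Lemma~\ref{lem:scaling sections} and Lemma~\ref{lem:scaling equation} --- so that $\widetilde U_k(x,z):=\mu^{-k(\alpha+2s)}(U-P_k)(\mu^k x,\mu^{2sk}z)$ is a $C_s$-viscosity solution on $S_1\times S_1^+$ (on a thin cylinder $S_1\times S_{\mu^k}^+$ in case~\ref{item:schauder3}) of the extension equation with coefficients $a^{ij}(\mu^k\cdot)$, Neumann datum $\tilde f_k$, and right-hand side $\widetilde F_k$. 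Here $\|a^{ij}(\mu^k\cdot)-\delta^{ij}\|_{L^\infty(S_1)}=\|a^{ij}-\delta^{ij}\|_{L^\infty(S_{\mu^{2k}})}\le\varepsilon_0$ by $a^{ij}(0)=\delta^{ij}$, and $|\tilde f_k(x)|\le[f]_{C^\alpha(0)}|x|^\alpha\le\varepsilon_0$ on $T_1$ by $f(0)=0$; in case~\ref{item:schauder3} the quadratic part of $P_k$ produces $\widetilde F_k\sim(a^{ij}(\mu^k\cdot)-\delta^{ij})(\A_k)_{ij}$, so $\|\widetilde F_k\|_{L^\infty}\lesssim[a^{ij}]_{C^{\alpha+2s-2}(0)}|\A_k|$, which is made $<\varepsilon_0$ after, if necessary, a harmless preliminary dilation (this only replaces $[a^{ij}]_{C^{\alpha+2s-2}(0)}$ by $\rho_0^{\alpha+2s-2}[a^{ij}]_{C^{\alpha+2s-2}(0)}$). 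By Lemma~\ref{lem:approximation}, $\widetilde U_k$ is $\varepsilon$-close in $L^\infty$ to a classical harmonic $H_k$, which by Proposition~\ref{lem:H-classical} obeys \eqref{eq:Hx-estimates} and \eqref{eq:Hz-decay}; applying \eqref{eq:Hz-decay} also to $\Delta_xH_k$ (itself harmonic) and using $\partial_{zz}H_k=-z^{\frac1s-2}\Delta_xH_k$ gives the refined expansion
\[
H_k(x,z)=H_k(x,0)-\Delta_xH_k(0,0)\,h(z)+O\bigl(|x|^{m+1}+|x|\,h(z)+h(z)^2\bigr).
\]
Let $Q_k$ be the order-$m$ Monge--Amp\`ere--Taylor polynomial of $H_k$ at the origin (harmonic, since $\operatorname{trace}D^2_xH_k(0,0)+(-\Delta_xH_k(0,0))=0$); on the rescaled cylinder $\mathcal C_1$ one gets $\|\widetilde U_k-Q_k\|_{L^\infty(\mathcal C_1)}\le\varepsilon+C\mu^{m+1}\le\mu^{\alpha+2s}$ for $\mu$ small (using $\alpha+2s<m+1$ in each case) and then $\varepsilon\le\tfrac12\mu^{\alpha+2s}$, which fixes $\varepsilon_0$. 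Undoing the rescaling, $P_{k+1}:=P_k+\mu^{k(\alpha+2s)}Q_k(\mu^{-k}x,\mu^{-2sk}z)$ closes the induction, and the incremental bounds follow from $\|Q_k\|_{L^\infty(S_1\times S_1^+)}\lesssim\|\widetilde U_k\|_{L^\infty}\lesssim1$ and $h(\mu^{-2sk}z)=\mu^{-2k}h(z)$.

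\emph{Conclusion and deduction of Theorem~\ref{thm:schauder-intro}.} Summing the increments gives $c_k\to c$, $b_k\to b$ (geometric because $\alpha+2s>1$ in cases~\ref{item:schauder2}--\ref{item:schauder3}), and $\A_k\to\A$, $d_k\to d$ (geometric because $\alpha+2s>2$ in case~\ref{item:schauder3}); the constraint $\operatorname{trace}\A+d=0$ passes to the limit and $C_1+|\A|+|b|+|c|+|d|\le C_0$. Interpolating between the dyadic scales $\mu^{2k}$ upgrades $\|U-P_k\|_{L^\infty(\mathcal C_k)}\le\mu^{k(\alpha+2s)}$ to the stated estimate on $S_{r^2}\times S_{r^2}^+$ (cases~\ref{item:schauder1}--\ref{item:schauder2}), resp.\ $S_{r^2}\times S_{r^3}^+$ (case~\ref{item:schauder3}), for all small $r$, the skewed $z$-section being the one compatible with the degenerate scaling when $\tfrac12<s<1$ (cf.\ the remark preceding the statement). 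For Theorem~\ref{thm:schauder-intro}: by Theorem~\ref{thm:extension}, $u(x)=U(x,0)$ for the extension $U$, with $\partial_zU(x,0)=-d_sL^su=-d_sf$; reducing to the model hypotheses by a linear change of variables normalizing $a^{ij}(x_0)$ to the identity, a dilation making $\|a^{ij}-\delta^{ij}\|$ small near $x_0$, and subtracting the affine function $f(x_0)z$ (a classical solution of the homogeneous equation with $\partial_z(\,\cdot\,)(x,0)\equiv1$) to kill the Neumann datum at $x_0$, Theorem~\ref{thm:schauder-ext} applies at every $x_0\in\Omega'$; restricting to $\{z=0\}$, where $S_{r^2}\cap\{z=0\}=B_{\sqrt2\,r}(x_0)$ and $P(\cdot,0)$ is an ordinary degree-$m$ polynomial in $x$, yields $u\in C^{m,\alpha+2s-m}(x_0)$ with uniform control, hence $u\in C^{m,\alpha+2s-m}_{\loc}(\Omega)$ with the stated interior estimate, the $L^\infty$ part coming from Theorem~\ref{thm:F-harnack}.

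\emph{Main obstacle.} The principal difficulty is to make the perturbation scheme compatible with the anisotropic scaling $x\mapsto\mu x$ versus $z\mapsto\mu^{2s}z$ together with the oblique Neumann condition on $\{z=0\}$ realized in the $C_s$-class: one cannot naively Taylor-expand the approximating harmonic function $H_k$ in $z$ but must use the refined expansion above, so that $P$ carries a $d\,h(z)$-term yet \emph{no} $\langle b,x\rangle z$-term (the latter being forbidden by $\partial_zH_k(x,0)=0$); and one must guarantee, at \emph{every} dyadic scale, the smallness of the rescaled coefficient oscillation, Neumann datum, and --- in case~\ref{item:schauder3} --- the right-hand side generated by the coefficient oscillation. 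This last point is precisely what forces the extra $C^{\alpha+2s-2}$ hypothesis on $a^{ij}$ and, in the $z$-variable, the skewed cylinder $S_{r^2}\times S_{r^3}^+$ in case~\ref{item:schauder3}.
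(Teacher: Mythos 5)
Your overall approach matches the paper's: Caffarelli's perturbation scheme with an inductive construction of Monge--Amp\`ere polynomials, relying on the approximation Lemma~\ref{lem:approximation} (with Remark~\ref{rem:approximation}) and the interior estimates of Proposition~\ref{lem:H-classical}. Cases~\ref{item:schauder1} and~\ref{item:schauder2} are essentially the paper's Lemmas~\ref{lem:Inductive step-1}--\ref{lem:sequence-2}.

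In case~\ref{item:schauder3}, however, your choice of shrinking cylinders $\mathcal C_k=S_{\mu^{2k}}\times S_{\mu^{3k}}^+$ creates a genuine gap in the one-step improvement. After the anisotropic rescaling $(x,z)\mapsto(\mu^kx,\mu^{2sk}z)$, the cylinder $\mathcal C_k$ pulls back to $S_1\times S_{\mu^k}^+$ (as you note), whose thinness $\mu^k$ degenerates as $k\to\infty$. The approximation lemma holds with $\varepsilon_0$ uniform in the thinness, but the constants in Proposition~\ref{lem:H-classical}'s estimates \eqref{eq:Hx-estimates}--\eqref{eq:Hz-decay}, applied to a harmonic $H_k$ defined only on $S_{3/4}\times S_{3\mu^k/4}^+$, force the $x$-radius $r$ to satisfy $c_sr\lesssim\mu^k$ so that the paired $z$-section fits inside the domain; the resulting bound on $D^2_xH_k(0,0)$ then grows like $\mu^{-k}$, so the coefficients $\mathcal A_k,d_k$ of your Taylor polynomial $Q_k$ are not bounded uniformly in $k$ and the incremental estimate $|\mathcal A_{k+1}-\mathcal A_k|\lesssim\mu^{k(\alpha+2s-2)}$ fails. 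The paper avoids this (Lemma~\ref{lem:sequence-3}) by propagating the estimate on $\mathcal C_k=S_{\rho^{2k}}\times S_{\rho^{2k+1}}^+$, which rescales to the \emph{fixed} thin cylinder $S_1\times S_\rho^+$, making the constants in Lemma~\ref{lem:Inductive step-3} depend only on the once-and-for-all $\rho$; the statement on $S_{r^2}\times S_{r^3}^+$ then follows a posteriori by restriction, using $r^3<r^{2+1/k}\le\rho^{2k+1}$ for $\rho^{k+1}<r\le\rho^k$. A secondary remark: the skewed $z$-section is forced by the imbalance in the expansion of $H$ (order $|x|^3$ in $x$ against only $h(z)$ in $z$), not, as your closing paragraph suggests, by the smallness of the rescaled right-hand side $\widetilde F_k$.
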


\begin{rem}\label{rem:SchauderAtPoints}
Cases \ref{item:schauder1} and \ref{item:schauder2} of Theorem \ref{thm:schauder-ext}
are Cordes--Nirenberg-type results
for the extension problem and, in particular, for the fractional problem.

In Case \ref{item:schauder3}, if $2 < \alpha+2s<3$ and $0 < \alpha <1$, then it must be that $ \frac{1}{2}<s<1$. This is precisely when \eqref{eq:extensionPDE} is degenerate near $\{z=0\}$, so we need the different scaling $r^3$ to compensate the equation. 

Recalling the definitions in Section \ref{sec:Holder}, 
the conclusion of Theorem \ref{eq:extensionPDE} is equivalent to $U \in C_{\Phi}^{k,\alpha+2s-k}(0,0)$ for $k=0,1,2$ in Cases \ref{item:schauder1},\ref{item:schauder2},\ref{item:schauder3}, respectively. If $\Omega'\subset \subset \Omega$, then by rescaling and translating the equation with respect to the $x$-variable, we can show that solutions $U$ to \eqref{eq:extension-intro} satisfy $U \in C_{\Phi}^{k,\alpha+2s-k}(x_0,0)$ for any $(x_0,0) \in \Omega' \times \{z=0\}$. 

Finally, it is clear that Theorem \ref{thm:schauder-intro} follows from Theorem \ref{thm:extension} and Theorem \ref{thm:schauder-ext}.
\end{rem}

For ease in the proof, we will often reference the following assumptions that describe 
what we mean by a normalized solution $U$.
\begin{enumerate}[start=1,label={(A\arabic*)}]
\item \label{item:A1}
	$a^{ij}(0) = \delta^{ij}$ and $f(0) = 0$,
\item  \label{item:A2}
	there is $\eps_0>0$ such that $\|a^{ij}(\cdot) - \delta^{ij}\|_{L^{\infty}(T_1)},\|f\|_{L^{\infty}(T_1)}< \eps_0$,
\item \label{item:A3}
	 the function $U$ satisfies $\|U\|_{L^{\infty}(S_1 \times S_1^+)} \leq 1$ and is a $C_s$-viscosity solution to 
	\[
\begin{cases}
a^{ij}(x) \partial_{ij}U + z^{2-\frac{1}{s}} \partial_{zz}U = 0 & \hbox{in}~S_1 \times S_1^+\\
\partial_{z}U(x,0) = f(x) & \hbox{on}~T_1, \\
\end{cases}
	\]
\item \label{item:A-new} $f \in C^\alpha(0)$ satisfies $[f]_{C^\alpha(0)}2^{\frac{\alpha}{2}} \leq \eps_0$ for $\eps_0>0$, 
\item \label{item:A4-} if $2 < \alpha+2s < 3$, 
	it holds that $a^{ij} \in C^{\alpha+2s-2}(0)$ with 
\[
|a^{ij}(x) - \delta^{ij}| \leq [a^{ij}]_{C^{\alpha+2s-2}(0)} |x|^{\alpha+2s-2}
\quad \hbox{for all}~x \in T_1
\]
and  $\bar{C}[a^{ij}]_{C^{\alpha+2s-2}(0)} \leq \eps_0$ for some $\bar{C}>0$ and $\eps_0>0$. 
\end{enumerate}
We will also need the following assumptions corresponding to the nonzero right
hand side $F$. For bounded $F = F(x)$, assume that
\begin{enumerate}[start=2,label={(A\arabic*')}]
\item  \label{item:A2-}
	given $0 < \eps_0 \leq 1$, both \ref{item:A2} and $\|F\|_{L^{\infty}(T_1)}< \eps_0$ hold,
\item \label{item:A3-}
	 given $0 < \rho  \leq 1$, the function $U$ satisfies $\|U\|_{L^{\infty}(S_1 \times S_\rho^+)} \leq 1$ and is a
	 $C_s$-viscosity solution to 
	\[
\begin{cases}
a^{ij}(x) \partial_{ij}U + z^{2-\frac{1}{s}} \partial_{zz}U = F & \hbox{in}~S_1\times S_\rho^+\\
\partial_{z}U(x,0) = f(x) & \hbox{on}~T_1.\\
\end{cases}
	\]
\end{enumerate}
To prove Theorem \ref{thm:schauder-ext}, it is enough to consider {normalized} solutions. 
Indeed, for \ref{item:A1}, we may consider an orthogonal change of variables in $x$ to assume $a^{ij}(0) = \delta^{ij}$ and if $f(0) \not=0$, we replace $U$ by $U - f(0)z$. 
We may assume \ref{item:A2-} and \ref{item:A3-}
by rescaling the equation in $x$ and considering
\[
\tilde{U}(x,z) =  \frac{U(x,z)}{\|U\|_{L^{\infty}(S_1 \times S_\rho^+)} + (\|f\|_{L^{\infty}(T_1)} + \|F\|_{L^{\infty}(T_1)})/\eps_0},
\]
and similarly for \ref{item:A2} and \ref{item:A3}. Assumptions \ref{item:A-new} and \ref{item:A4-} are also enough by rescaling. 

We now prove Theorem \ref{thm:schauder-ext} for normalized solutions by considering separately the three cases \ref{item:schauder1}, \ref{item:schauder2} and \ref{item:schauder3}. 
For each case, the desired polynomial arises as the limit of a sequence of approximating polynomials. 
The proofs rely on two main lemmas. 
The first is the inductive step in which we use the approximation lemma in Section \ref{sec:approx} to construct a suitable polynomial that is close to the solution $U$. 
In the second, we use a scaling argument to inductively build a sequence of approximating polynomials. 

\subsection{Proof of Theorem \ref{thm:schauder-ext}\ref{item:schauder1}}

\begin{lem}\label{lem:Inductive step-1}
Given $0 < \alpha +2s < 1$, there exist $0 < \varepsilon_0, \rho <1$ and a constant $c\in\R$ such that if \ref{item:A1} and \ref{item:A2} hold, then for any solution $U$ satisfying \ref{item:A3}, it holds that
\[
\|U - c\|_{L^{\infty}(S_{\rho^2} \times S_{\rho^2}^+)} \leq \rho^{\alpha +2s} \quad \hbox{and} \quad |c| \leq 2.
\]
\end{lem}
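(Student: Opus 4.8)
The plan is a one-step perturbation argument: invoke the approximation lemma (Lemma \ref{lem:approximation}) to replace $U$ by a harmonic function $H$, extract from $H$ the value $c:=H(0,0)$, and then use the regularity estimates for harmonic functions from Proposition \ref{lem:H-classical} to control $\|H-c\|$ on a small section. Combining these two bounds with the triangle inequality and choosing $\rho$ and $\varepsilon_0$ appropriately will give the claim.

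\textbf{Key steps.} First I would fix $\rho\in(0,1)$ small, to be chosen at the very end depending only on $n,s,\lambda,\Lambda,\alpha$; concretely $\rho$ should satisfy something like $C_* \rho^{1} \leq \tfrac12 \rho^{\alpha+2s}$, i.e. $C_*\rho^{1-(\alpha+2s)}\leq\tfrac12$, where $C_*$ is the constant coming from the Hölder/oscillation estimate for $H$ (note $1-(\alpha+2s)>0$ is exactly where the hypothesis $0<\alpha+2s<1$ is used, so the power of $\rho$ on the harmonic side beats the target power). Second, having fixed $\rho$, apply Lemma \ref{lem:approximation} with $\varepsilon := \tfrac12\rho^{\alpha+2s}$ to obtain the corresponding $\varepsilon_0=\varepsilon_0(n,\lambda,\Lambda,s,\rho)>0$; then \ref{item:A2} (with this $\varepsilon_0$, absorbing the smallness of $f$ and noting $F\equiv 0$ here) produces a classical solution $H$ to \eqref{eq:H} with $\|H\|_{L^\infty(S_{3/4}\times S_{3/4}^+)}\leq 1$ and $\|U-H\|_{L^\infty((S_{3/4}\times S_{3/4}^+)\cup T_{3/4})}\leq\tfrac12\rho^{\alpha+2s}$. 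Third, set $c:=H(0,0)$, so $|c|\leq 1<2$. By Proposition \ref{lem:H-classical}, in particular the gradient bound \eqref{eq:Hx-estimates} with $k=1$ together with the decay \eqref{eq:Hz-decay} for $\partial_z H$, $H$ is Lipschitz (indeed smooth) up to $\{z=0\}$ near the origin with $|\nabla_x H|, |H(x,z)-H(x,0)|/z^{1/s}$ controlled by $\underset{S_{1/2}\times(S_{c_s/2}^+\cup\{0\})}{\operatorname{osc}}H\leq 2$; hence $|H(x,z)-c|\leq C_*(|x| + z^{1/s})$ on a fixed neighborhood, and on $S_{\rho^2}\times S_{\rho^2}^+$ (where $|x|\lesssim\rho$ and $z^{1/s}\lesssim\rho^{2}$ by the scaling relation $z\in S_{\rho^2}(0)\iff |z|<q_s\rho^{2s}$, so $z^{1/s}\lesssim\rho^{2}$) this gives $\|H-c\|_{L^\infty(S_{\rho^2}\times S_{\rho^2}^+)}\leq C_*\rho\leq\tfrac12\rho^{\alpha+2s}$ by the choice of $\rho$. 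Finally, the triangle inequality on $S_{\rho^2}\times S_{\rho^2}^+\subset S_{3/4}\times S_{3/4}^+$ yields
\[
\|U-c\|_{L^\infty(S_{\rho^2}\times S_{\rho^2}^+)}\leq \|U-H\|+\|H-c\|\leq \tfrac12\rho^{\alpha+2s}+\tfrac12\rho^{\alpha+2s}=\rho^{\alpha+2s},
\]
and $|c|\leq 2$, completing the proof.

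\textbf{Main obstacle.} The routine parts are the triangle inequality and bookkeeping of constants; the one genuinely delicate point is making sure the order of quantifier choices is consistent — $\rho$ must be chosen \emph{before} $\varepsilon_0$, since $\varepsilon_0$ output by Lemma \ref{lem:approximation} depends on $\varepsilon=\tfrac12\rho^{\alpha+2s}$ and hence on $\rho$, while the constant $C_*$ in the harmonic estimate must depend only on $n,s$ (not on $\rho$ or $\varepsilon_0$) so that the inequality $C_*\rho^{1-(\alpha+2s)}\leq\tfrac12$ can actually be arranged. A second mild subtlety is converting the Euclidean-ball estimates of Proposition \ref{lem:H-classical} into the correct power of $\rho$ on the Monge--Ampère cylinder $S_{\rho^2}\times S_{\rho^2}^+$: one uses $S_{\rho^2}(0)=B_{\sqrt{2}\rho}(0)$ in $x$ and $S_{\rho^2}(0)\cap\{z\geq0\}=[0,q_s\rho^{2s})$ in $z$, and then $z^{1/s}\leq (q_s\rho^{2s})^{1/s}=q_s^{1/s}\rho^2\leq q_s^{1/s}\rho$, so both contributions are $O(\rho)$ with a constant depending only on $s$. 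Everything else follows the classical Caffarelli perturbation scheme verbatim.
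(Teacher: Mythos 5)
Your proposal is correct and follows essentially the same route as the paper's own proof: approximate $U$ by a harmonic $H$ via Lemma \ref{lem:approximation}, set $c=H(0,0)$, bound $|H-c|$ by $C(|x|+z^{1/s})$ using Proposition \ref{lem:H-classical}, and close with the triangle inequality after choosing $\rho$ (using $1-(\alpha+2s)>0$) and then $\varepsilon$, hence $\varepsilon_0$. Your explicit discussion of the quantifier order ($\rho$ before $\varepsilon_0$, with $C_*$ depending only on $n,s$) and the Monge--Amp\`ere-to-Euclidean conversion ($S_{\rho^2}=B_{\sqrt{2}\rho}$ in $x$, $z^{1/s}\lesssim\rho^2$ in $z$) matches the paper's bookkeeping exactly.
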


\begin{proof}
Let $0 < \eps < 1$ to be determined. 
Take $\eps_0>0$ as in Lemma \ref{lem:approximation}, so with \ref{item:A2}, there is classical solution $H$ to \eqref{eq:H} such that 
\[
\|U-H\|_{L^{\infty}((S_{3/4} \times S_{3/4}^+)\cup T_{3/4})} \leq \eps. 
\]
Note that
\[
\|H\|_{L^{\infty}((S_{3/4} \times S_{3/4}^+)\cup T_{3/4})} \leq \|U-H\|_{L^{\infty}((S_{3/4} \times S_{3/4}^+)\cup T_{3/4})} + \|U\|_{L^{\infty}((S_{3/4} \times S_{3/4}^+)\cup T_{3/4})} \leq 2.
\]
Set $c = H(0,0)$, so that $|c| \leq 2$. 
Let $\kappa = 3\min\{1, c_s\}/16$ where $c_s = 1/[2(1-s)]$. 
Recalling \eqref{eq:section-cube}, note that
\[
S_\kappa \times S_\kappa^+ \subset S_{(3/4)/4} \times S_{c_s(3/4)/4}^+ \subset \R^n \times \R^+.
\]
With this, we may apply Proposition \ref{lem:H-classical}, so that, for any $(x,z) \in (S_\kappa \times S_{\kappa}^+) \cup T_{\kappa}$, we have
\begin{align*}
|H(x,z) - c|
	&\leq |H(x,z) - H(x,0)| + |H(x,0) - H(0,0)|\\
	&\leq \|\partial_zH(x,\cdot)\|_{L_z^{\infty}((S_\kappa \times S_{\kappa}^+) \cup T_{\kappa})}z + \|\nabla_x H\|_{L_x^{\infty}((S_\kappa \times S_{\kappa}^+) \cup T_{\kappa})} |x|\\
	&\leq C(z^{\frac{1}{s}} + {|x|}) \\
	&\leq C(z^{\frac{2}{s}} + |x|^2)^{1/2}.
\end{align*}
Since $z^{\frac{1}{s}}$ is bounded in $S_\kappa^+$, we have
\[
|H(x,z) - c| 
	\leq  C(z^{\frac{1}{s}} + |x|^2)^{1/2}
	\leq C[\Phi(x,z)]^{1/2} = C[ \delta_{\Phi}((0,0), (x,z))]^{1/2}.
\]
Consequently, if $0 < \rho^2 < \kappa$, then
\begin{align*}
\|U-c\|_{L^{\infty}(S_{\rho^2} \times S_{\rho^2}^+)}
	&\leq \|U-H\|_{L^{\infty}(S_{\rho^2} \times  S_{\rho^2}^+)} + \|H-c\|_{L^{\infty}(S_{\rho^2} \times  S_{\rho^2}^+)}
	\leq \eps + C\rho
	\leq \rho^{\alpha+2s}
\end{align*}
by first choosing $\rho$ small enough to guarantee that $C \rho \leq \frac{1}{2} \rho^{\alpha+2s}$ and then
letting $\eps>0$ small so that $\eps \leq \frac{1}{2}\rho^{\alpha+2s}$. 
\end{proof}


\begin{lem}\label{lem:sequence-1}
In the setting of Lemma \ref{lem:Inductive step-1}, suppose additionally that \ref{item:A-new} holds.  
Then, there is a sequence of constants $c_k\in\R$ for $k \geq 0$ such that
\[
\|U - c_k\|_{L^{\infty}(S_{\rho^{2k}} \times S_{\rho^{2k}}^+)} \leq \rho^{k(\alpha+2s)} \quad \hbox{and} \quad |c_k - c_{k+1}| \leq 2 \rho^{k(\alpha+2s)}.
\]
\end{lem}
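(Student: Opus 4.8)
The plan is to prove Lemma~\ref{lem:sequence-1} by induction on $k$, using Lemma~\ref{lem:Inductive step-1} at each step after an appropriate rescaling. For $k=0$ take $c_0 = 0$, so that $\|U - c_0\|_{L^\infty(S_1\times S_1^+)} \leq 1 = \rho^{0}$ by \ref{item:A3}; and for $k=1$ take $c_1 = c$ from Lemma~\ref{lem:Inductive step-1}, which gives $\|U - c_1\|_{L^\infty(S_{\rho^2}\times S_{\rho^2}^+)} \leq \rho^{\alpha+2s}$ and $|c_0 - c_1| = |c| \leq 2$. Assuming the statement holds up to some $k \geq 1$, I will produce $c_{k+1}$ by applying the inductive step to a rescaled function.

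The key step is the rescaling. Given $U$ satisfying the bound at scale $\rho^{2k}$, define
\[
V(x,z) := \frac{U(\rho^k x, \rho^{ks} z) - c_k}{\rho^{k(\alpha+2s)}}, \quad (x,z) \in S_1 \times S_1^+.
\]
By Lemma~\ref{lem:scaling sections} (with dilation factor $\rho^k$, noting $\rho^{ks} = (\rho^k)^s$), the point $(\rho^k x, \rho^{ks}z)$ ranges over $S_{\rho^{2k}}\times S_{\rho^{2k}}^+$ as $(x,z)$ ranges over $S_1 \times S_1^+$, so $\|V\|_{L^\infty(S_1\times S_1^+)} \leq 1$ by the inductive hypothesis. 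By Lemma~\ref{lem:scaling equation}, $V$ is a $C_s$-viscosity solution to
\[
\begin{cases}
a^{ij}(\rho^k x)\partial_{ij} V + z^{2-\frac{1}{s}}\partial_{zz}V = 0 & \text{in } S_1 \times S_1^+\\
\partial_z V(x,0) = \tilde f(x) & \text{on } T_1
\end{cases}
\]
where $\tilde f(x) = \rho^{2ks} f(\rho^k x) / \rho^{k(\alpha+2s)} = \rho^{-k\alpha} f(\rho^k x)$. I must check that $V$ satisfies the normalization hypotheses \ref{item:A1}, \ref{item:A2}, \ref{item:A3}: the coefficients $\tilde a^{ij}(x) := a^{ij}(\rho^k x)$ still satisfy $\tilde a^{ij}(0) = \delta^{ij}$ and are uniformly elliptic, and $\|\tilde a^{ij} - \delta^{ij}\|_{L^\infty(T_1)} \leq \|a^{ij}-\delta^{ij}\|_{L^\infty(T_1)} < \eps_0$ since $\rho^k \leq 1$. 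For $\tilde f$: using $f(0) = 0$ and $f \in C^\alpha(0)$ with assumption \ref{item:A-new}, for $|x| \leq \sqrt{2}$ (the radius of $T_1 = B_{\sqrt 2}$) we get $|\tilde f(x)| = \rho^{-k\alpha}|f(\rho^k x) - f(0)| \leq \rho^{-k\alpha}[f]_{C^\alpha(0)}|\rho^k x|^\alpha = [f]_{C^\alpha(0)}|x|^\alpha \leq [f]_{C^\alpha(0)} 2^{\alpha/2} \leq \eps_0$, so $\|\tilde f\|_{L^\infty(T_1)} \leq \eps_0$ as required. Thus Lemma~\ref{lem:Inductive step-1} applies to $V$, producing a constant $\bar c$ with $\|V - \bar c\|_{L^\infty(S_{\rho^2}\times S_{\rho^2}^+)} \leq \rho^{\alpha+2s}$ and $|\bar c| \leq 2$.

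Finally I unwind the scaling. Set $c_{k+1} := c_k + \rho^{k(\alpha+2s)}\bar c$. Then $|c_{k+1} - c_k| = \rho^{k(\alpha+2s)}|\bar c| \leq 2\rho^{k(\alpha+2s)}$, which is the second claimed bound. For the first: if $(x,z) \in S_{\rho^{2(k+1)}}\times S_{\rho^{2(k+1)}}^+$, write $(x,z) = (\rho^k x', \rho^{ks}z')$ with $(x',z') \in S_{\rho^2}\times S_{\rho^2}^+$ (again by Lemma~\ref{lem:scaling sections}); then
\[
|U(x,z) - c_{k+1}| = \rho^{k(\alpha+2s)}\,\big|V(x',z') - \bar c\big| \leq \rho^{k(\alpha+2s)}\cdot \rho^{\alpha+2s} = \rho^{(k+1)(\alpha+2s)},
\]
completing the induction. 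The main obstacle — though it is routine once set up carefully — is verifying that the rescaled data $\tilde a^{ij}, \tilde f$ genuinely land inside the $\eps_0$-smallness hypotheses of Lemma~\ref{lem:Inductive step-1} uniformly in $k$; this is exactly where the $C^\alpha(0)$ decay of $f$ at the origin (hypothesis \ref{item:A-new}) is used, since a merely bounded $f$ would blow up under the factor $\rho^{-k\alpha}$. One should also note that the smallness $\eps_0$ in Lemma~\ref{lem:Inductive step-1} is fixed first (depending only on $n, s, \lambda, \Lambda$ through Lemma~\ref{lem:approximation}), and then \ref{item:A-new} is imposed with that same $\eps_0$, so there is no circularity.
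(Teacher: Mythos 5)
Your proof follows essentially the same inductive-rescaling strategy as the paper's own proof: one pins down $\eps_0$ and $\rho$ from Lemma~\ref{lem:Inductive step-1}, rescales via the parabolic dilation and the anisotropic Monge--Amp\`ere scaling to return to unit scale, checks that the rescaled coefficients and data stay within the smallness assumptions (this is exactly where \ref{item:A-new} is used to keep $\|\tilde f\|_{L^\infty}$ small despite the factor $\rho^{-k\alpha}$), applies the inductive step, and unwinds. The only thing to flag is a notational slip: the vertical scaling for a dilation by $\rho^k$ should be $\rho^{2ks}=(\rho^k)^{2s}$, not $\rho^{ks}=(\rho^k)^s$ (cf.\ Lemma~\ref{lem:scaling sections}: $z\mapsto\rho^{2s}z$ pairs with $R\mapsto\rho^2R$; with $\rho^{ks}$ the image cylinder would be $S_{\rho^{2k}}\times S_{\rho^k}^+$, not $S_{\rho^{2k}}\times S_{\rho^{2k}}^+$). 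Your computation of $\tilde f(x)=\rho^{-k\alpha}f(\rho^kx)$ already uses the correct exponent $\rho^{2ks}$ in the chain rule, so the discrepancy is purely a transcription error and the argument is sound once the exponent in the definition of $V$ is corrected.
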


\begin{proof}
We prove the lemma by induction. 
Setting $c_0 = c_1 = 0$, we see that the result holds for $k=0$ since  $U$ is bounded by 1. 
Now assume that the statement holds for some $k \geq 0$. Consider the rescaled solution
\[
\tilde{U}(x,z)
	:= \frac{1}{\rho^{k(\alpha+2s)}} (U(\rho^kx, \rho^{2sk}z) - c_k), \quad (x,z) \in (S_1 \times S_1^+) \cup T_1. 
\] 
By \eqref{eq:scaling0} with $\rho^k$ in place of $\rho$, 
\begin{equation}\label{eq:scale-k}
(x,z) \in (S_1 \times S_1^+) \cup T_1 \quad \hbox{if and only if} \quad (\rho^kx,\rho^{2sk}z) \in (S_{\rho^{2k}} \times S_{\rho^{2k}}^+) \cup T_{\rho^{2k}},
\end{equation}
so $\tilde{U}$ is well-defined on $(S_1 \times S_1^+) \cup T_1$. 
Set 
\begin{equation}\label{eq:a-f-tilde}
\tilde{a}^{ij}(x) = a^{ij}(\rho^k x) \quad \hbox{and} \quad \tilde{f}(x) = \rho^{-k\alpha} f(\rho^kx).
\end{equation} 
As in Lemma \ref{lem:scaling equation}, for any $(x,z) \in S_1 \times S_1^+$
\begin{align*}
\tilde{a}^{ij}(x) &\partial_{ij}\tilde{U}(x,z) + z^{2 -\frac{1}{s}} \partial_{zz}\tilde{U}(x,z)\\
	&=  \frac{\rho^{2k}}{\rho^{k(\alpha+2s)}} \bigg[{a}^{ij}(\rho^k x) \partial_{ij}{U}(\rho^kx,\rho^{2sk}z) + (\rho^{2sk}z)^{2 -\frac{1}{s}} \partial_{zz}U(\rho^kx,\rho^{2sk}z) \bigg]
	=0
\end{align*}
and for any $(x,z) \in T_1$
\begin{align*}
\partial_z\tilde{U}(x,0)
	&=\frac{ \rho^{2sk}}{\rho^{k(\alpha+2s)}} \partial_z{U}(\rho^kx,0)
	= \frac{1}{\rho^{k\alpha}} f(\rho^kx) = \tilde{f}(x).
\end{align*}
That is, $\tilde{U}$ solves
\begin{equation}\label{eq:tildeU-eqn}
\begin{cases}
\tilde{a}^{ij}(x) \partial_{ij} \tilde{U}(x,z) + z^{2-\frac{1}{s}} \partial_{zz} \tilde{U}(x,z) = 0 & \hbox{in}~S_1 \times S_1^+ \\
\partial_z \tilde{U}(x,0) = \tilde{f}(x) & \hbox{on}~T_1. 
\end{cases}
\end{equation}
 
We now check the assumptions of Lemma \ref{lem:Inductive step-1} for $\tilde{U}$. It is easy to see that $\tilde{f}(0) = 0$ and $\tilde{a}^{ij}(0) = a^{ij}(0) = \delta^{ij}$, so \ref{item:A1} holds. 
Regarding \ref{item:A2}, we first change variables to find
\[
\|\tilde{a}^{ij}(x) - \delta^{ij}\|_{L^{\infty}_x(T_1)}
	= \|{a}^{ij}(\rho^kx) - \delta^{ij}\|_{L^{\infty}_x(T_1)}
	=  \|{a}^{ij}(y) - \delta^{ij}\|_{L^{\infty}_y(T_{\rho^{2k}})} \leq \eps_0.
\]
For $x \in T_1 = B_{\sqrt{2}}$, we use \ref{item:A-new} and the fact that $f(0) = 0$ to estimate 
\begin{align*}
|\tilde{f}(x)|
	= \frac{|{f}(\rho^kx) - f(0)|}{|\rho^{k}x|^\alpha}  |x|^\alpha
	\leq [f]_{C^\alpha(0)} |x|^\alpha 
	\leq [f]_{C^\alpha(0)} 2^{\alpha/2} \leq \eps_0.
\end{align*}
Together, we have
that \ref{item:A2} holds for $\tilde{a}^{ij}$ and $\tilde{f}$. 
Lastly, with a change of variables,  \eqref{eq:scale-k}, and by the inductive hypothesis, 
\begin{align*}
\|\tilde{U}\|_{L^{\infty}(S_1 \times S_1^+)}
	&=  \frac{1}{\rho^{k(\alpha+2s)}} \| U- c_k\|_{L^{\infty}(S_{\rho^{2k}} \times S_{\rho^{2k}}^+)}
	\leq  \frac{1}{\rho^{k(\alpha+2s)}} \rho^{k(\alpha+2s)} = 1,
\end{align*}
so that \ref{item:A3} holds. 

By Lemma \ref{lem:Inductive step-1}, there is a constant $c\in\R$ such that
\begin{equation}\label{eq:tildeUest-1}
\|\tilde{U}-c\|_{L^{\infty}(S_{\rho^2} \times S_{\rho^2}^+)} \leq \rho^{\alpha+2s} \quad \hbox{and} \quad |c| \leq 2. 
\end{equation}
Again by \eqref{eq:scaling0}, we note that
\begin{equation}\label{eq:rho-scaling}
(x,z) \in S_{\rho^2} \times S_{\rho^2}^+ \quad \hbox{if and only if} \quad   (y,w)= (\rho^kx,\rho^{2ks}z) \in S_{\rho^{2(k+1)}}\times S_{\rho^{2(k+1)}}^+,
\end{equation}
and rescale back to find
\begin{align*}
 \|\tilde{U}(x,z)-c\|_{L^{\infty}_{x,z}(S_{\rho^2} \times S_{\rho^2}^+)}
 	&= \|\rho^{-k(\alpha+2s)}(U(\rho^kx,\rho^{2sk}z) - c_k)-c\|_{L^{\infty}_{x,z}(S_{\rho^2} \times S_{\rho^2}^+)}\\
	&=\frac{1}{\rho^{k(\alpha+2s)}} \|U(y,w) - c_k-\rho^{k(\alpha+2s)}c\|_{L^{\infty}_{y,w}(S_{\rho^{2(k+1)}} \times S_{\rho^{2(k+1)}}^+)}.
\end{align*}
Consequently, setting $c_{k+1} := c_k +\rho^{k(\alpha+2s)}c$ and using \eqref{eq:tildeUest-1},
\[
\|U - c_{k+1}\|_{L^{\infty}(S_{\rho^{2(k+1)}} \times S_{\rho^{2(k+1)}}^+)} \leq \rho^{k(\alpha+2s)} \rho^{\alpha+2s} = \rho^{(k+1)(\alpha+2s)}
\]
and also
\[
|c_k - c_{k+1}|
	= \rho^{k(\alpha+2s)}|c| \leq 2 \rho^{k(\alpha+2s)},
\]
which completes the proof. 
\end{proof}


\begin{proof}[Proof of Theorem \ref{thm:schauder-ext}\ref{item:schauder1}]
Let $c_\infty$ be the limit of the Cauchy sequence $c_k$ in Lemma \ref{lem:sequence-1}. 
For any given $k \in \N$, we use  Lemma \ref{lem:sequence-1} to find
\begin{align*}
\|U - c_\infty\|_{L^{\infty}(S_{\rho^{2k}}\times  S_{\rho^{2k}}^+)}
	&\leq \|U - c_k\|_{L^{\infty}(S_{\rho^{2k}}  \times S_{\rho^{2k}}^+)} + \sum_{\ell=k}^{\infty} |c_\ell - c_{\ell+1}|\\
	&\leq \rho^{k(\alpha+2s)}+ 2\sum_{\ell=k}^{\infty} \rho^{\ell(\alpha+2s)} 
	= \(1 + \frac{2}{1-\rho^{\alpha+2s}}\) \rho^{k(\alpha+2s)}.
\end{align*}
Choose $k$ so that $\rho^{k+1} <r \leq \rho^k$. Then,
\begin{align*}
\|U - c_\infty\|_{L^{\infty}(S_{r^2} \times S_{r^2}^+)}
	&\leq \|U - c_\infty\|_{L^{\infty}(S_{\rho^{2k}} \times S_{\rho^{2k}}^+)}\\
	&\leq \(1 + \frac{2}{1-\rho^{\alpha+2s}}\) \rho^{k(\alpha+2s)} \\
	&\leq \(1 + \frac{2}{1-\rho^{\alpha+2s}}\) \rho^{-(\alpha+2s)} r^{\alpha+2s}
	=:C_1r^{\alpha+2s},
\end{align*}
as desired. 
Lastly, since
\[
|c_\infty| \leq \sum_{k=0}^{\infty}|c_k - c_{k+1}| 
	\leq 2 \sum_{k=0}^{\infty} \rho^{k(\alpha+2s)}
	= \frac{2}{1-\rho^{\alpha+2s}},
\]
there is a constant $C_0>0$ such that $C_0  \geq C_1  + |c_\infty|$. 
\end{proof}

\subsection{Proof of Theorem \ref{thm:schauder-ext}\ref{item:schauder2}}

\begin{lem}\label{lem:Inductive step-2}
Given $1 < \alpha +2s < 2$, there exist $0 < \varepsilon_0, \rho <1$, a linear function $\ell(x) = \langle b,x \rangle +c$, and a constant $D>0$ such that if \ref{item:A1} and \ref{item:A2} hold, then for any solution $U$ satisfying \ref{item:A3}, it holds that
\[
\|U - \ell\|_{L^{\infty}(S_{\rho^2} \times S_{\rho^2}^+)} \leq \rho^{\alpha +2s} \quad \hbox{and} \quad |b| + |c| \leq D
\]
and $D$ depends only on $n$ and $s$. 
\end{lem}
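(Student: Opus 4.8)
The plan is to follow the architecture of the proof of Lemma~\ref{lem:Inductive step-1}, but to replace the constant approximation by the first-order Taylor polynomial at the origin of an approximating harmonic function. First I would fix $0 < \eps < 1$ to be chosen later, take $\eps_0 > 0$ as in Lemma~\ref{lem:approximation}, and use \ref{item:A2}--\ref{item:A3} to produce a classical solution $H$ of \eqref{eq:H} with
\[
\norm{U - H}_{L^\infty((S_{3/4}\times S_{3/4}^+)\cup T_{3/4})} \le \eps ;
\]
exactly as in Lemma~\ref{lem:Inductive step-1} this gives $\norm{H}_{L^\infty((S_{3/4}\times S_{3/4}^+)\cup T_{3/4})} \le 2$. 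By Proposition~\ref{lem:H-classical}, $H$ is a classical solution up to $T_{3/4}$ and satisfies the estimates \eqref{eq:Hx-estimates} and \eqref{eq:Hz-decay} on a fixed interior subcylinder (as in the proof of Lemma~\ref{lem:Inductive step-1}); in particular $\partial_z H(x,0) = 0$. I then set
\[
c := H(0,0), \qquad b := \nabla_x H(0,0), \qquad \ell(x) := \la b, x\ra + c ,
\]
which is the first-order Monge--Amp\`ere polynomial of $H$ at $(0,0)$: the Neumann condition forces the $z$-coefficient to vanish, which is why $\ell$ depends on $x$ only. Applying \eqref{eq:Hx-estimates} with $k = 0$ and $k = 1$ yields $\abs{c} \le 2$ and $\abs{b} \le C(n,s)$, hence $\abs{b} + \abs{c} \le D$ with $D = D(n,s)$.

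Next I would bound $\abs{H(x,z) - \ell(x)}$ on a small Monge--Amp\`ere cylinder by splitting
\[
H(x,z) - \ell(x) = \bigl(H(x,z) - H(x,0)\bigr) + \bigl(H(x,0) - H(0,0) - \la \nabla_x H(0,0), x\ra\bigr).
\]
The second term is at most $\tfrac12 \norm{D^2_x H}_{L^\infty}\abs{x}^2 \le C\abs{x}^2$ by \eqref{eq:Hx-estimates} with $k = 2$. For the first term, \eqref{eq:Hz-decay} gives
\[
\abs{H(x,z) - H(x,0)} = \Bigl| \int_0^z \partial_z H(x,t)\,dt \Bigr| \le C\int_0^z t^{\frac1s - 1}\,dt = Cs\, z^{\frac1s},
\]
and, recalling \eqref{eq:phi-h}, $z^{1/s} = \tfrac{1-s}{s^2} h(z)$. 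Hence on $S_{\rho^2}\times S_{\rho^2}^+$, where $\tfrac12\abs{x}^2 < \rho^2$ and $h(z) < \rho^2$, we obtain $\abs{H(x,z) - \ell(x)} \le C_* \rho^2$ for some $C_* = C_*(n,s)$.

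To conclude, I would first choose $\rho \in (0,1)$ small enough that $S_{\rho^2}\times S_{\rho^2}^+$ lies inside the subcylinder on which the above estimates hold and that $C_* \rho^2 \le \tfrac12 \rho^{\alpha + 2s}$; this last requirement is precisely where the hypothesis $\alpha + 2s < 2$ enters, since it reads $\rho^{\,2 - (\alpha+2s)} \le \tfrac{1}{2C_*}$ with a strictly positive exponent. Then choose $\eps \le \tfrac12 \rho^{\alpha+2s}$, so that
\[
\norm{U - \ell}_{L^\infty(S_{\rho^2}\times S_{\rho^2}^+)} \le \norm{U - H}_{L^\infty((S_{3/4}\times S_{3/4}^+)\cup T_{3/4})} + \norm{H - \ell}_{L^\infty(S_{\rho^2}\times S_{\rho^2}^+)} \le \eps + C_*\rho^2 \le \rho^{\alpha+2s},
\]
which is the claim. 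The point requiring the most attention is that the vertical error $z^{1/s} \sim h(z)$ must be of lower order than the target $\rho^{\alpha+2s}$ on this particular cylinder: this is exactly what selects the scaling $S_{\rho^2}\times S_{\rho^2}^+$ and what breaks down when $\alpha + 2s > 2$, necessitating the degenerate scaling $S_{\rho^2}\times S_{\rho^3}^+$ in case~\ref{item:schauder3}. The subsequent iteration producing a sequence of affine functions and the passage to the limit then proceed exactly as in Lemma~\ref{lem:sequence-1} and the proof of Theorem~\ref{thm:schauder-ext}\ref{item:schauder1}.
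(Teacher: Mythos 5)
Your proposal is correct and takes essentially the same approach as the paper: it uses the approximation lemma to produce a nearby harmonic $H$, defines $\ell$ as the first-order Taylor polynomial of $H$ at the origin (with vanishing $z$-coefficient due to the Neumann condition), bounds $H-\ell$ by $C(|x|^2 + z^{1/s})\lesssim \delta_\Phi((0,0),(x,z))$ via Proposition \ref{lem:H-classical}, and then chooses $\rho$ (using $\alpha+2s<2$) and then $\eps$ in that order. Your explicit integration of \eqref{eq:Hz-decay} in $z$ is a slightly more careful rendering of the paper's one-line bound for $|H(x,z)-H(x,0)|$, but the substance is the same.
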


\begin{proof}
Let $0 < \eps < 1$ to be determined. 
Take $\eps_0$ as in Lemma \ref{lem:approximation}, so with \ref{item:A2},
there is a solution $H$ to \eqref{eq:H} such that 
\[
\|U-H\|_{L^{\infty}((S_{3/4} \times S_{3/4}^+)\cup T_{3/4})} \leq \eps. 
\]
Set
\[
\ell(x) := \langle \nabla_x H(0,0), x \rangle + H(0,0). 
\]
By Proposition \ref{lem:H-classical}, there is a constant $D = D(n,s)>0$  such that  $|\nabla_x H(0,0)| + |H(0,0)| \leq D$.
Also, by Proposition \ref{lem:H-classical}, for any $(x,z) \in (S_\kappa \times S_{\kappa}^+) \cup T_{\kappa}$ with $\kappa = \kappa(s)>0$ sufficiently small, we have
\begin{align*}
|H(x,z) - \ell(x)|
	&\leq |H(x,z) - H(x,0)| + |H(x,0) - \langle \nabla_x H(0,0), x \rangle - H(0,0)|\\
	&\leq \|\partial_zH(x,\cdot)\|_{L_z^{\infty}((S_\kappa \times S_{\kappa}^+) \cup T_\kappa)}z + \frac{1}{2}\|D^2_xH\|_{L^{\infty}((S_\kappa \times S_{\kappa}^+) \cup T_\kappa)} |x|^2\\
	&\leq C(z^{\frac{1}{s}} + |x|^2) 
	\leq C\Phi(x,z)= C\delta_{\Phi}((0,0), (x,z)).
\end{align*}
Consequently, if $0 < \rho^2 < \kappa$, then
\begin{align*}
\|U-\ell\|_{L^{\infty}(S_{\rho^2} \times S_{\rho^2}^+)}
	&\leq \|U-H\|_{L^{\infty}(S_{\rho^2} \times S_{\rho^2}^+)} + \|H-\ell\|_{L^{\infty}(S_{\rho^2} \times S_{\rho^2}^+)}\\
	&\leq \eps + C\rho^2
	\leq \rho^{\alpha+2s}
\end{align*}
by first choosing $\rho$ small enough to guarantee that $C \rho^2 \leq \frac{1}{2} \rho^{\alpha+2s}$ and then
selecting $\eps>0$ sufficiently small so that $\eps \leq \frac{1}{2}\rho^{\alpha+2s}$. 
\end{proof}


\begin{lem}\label{lem:sequence-2}
In the setting of Lemma \ref{lem:Inductive step-2}, suppose additionally that \ref{item:A-new} holds. 
Then there is a sequence of linear functions 
\[
\ell_k(x) = \langle b_k, x \rangle + c_k, \quad k \geq 0,
\]
such that
\[
\|U - \ell_k\|_{L^{\infty}(S_{\rho^{2k}} \times S_{\rho^{2k}}^+)} \leq \rho^{k(\alpha+2s)}  
\quad \hbox{and} \quad |c_k - c_{k+1}|,\rho^k |b_k - b_{k+1}| \leq D \rho^{k(\alpha+2s)}.
\]
\end{lem}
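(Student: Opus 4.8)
The plan is to argue by induction on $k$, following the scheme of the proof of Lemma \ref{lem:sequence-1} but now carrying an affine correction rather than a constant. For the base case $k=0$ I would set $\ell_0=\ell_1\equiv0$; since $\|U\|_{L^\infty(S_1\times S_1^+)}\le1$ by \ref{item:A3}, both asserted inequalities hold trivially.

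For the inductive step, assuming the claim at level $k$, I would introduce the rescaled function
\[
\tilde U(x,z):=\frac{1}{\rho^{k(\alpha+2s)}}\bigl(U(\rho^k x,\rho^{2sk}z)-\ell_k(\rho^k x)\bigr),\qquad (x,z)\in(S_1\times S_1^+)\cup T_1,
\]
which is well defined on $(S_1\times S_1^+)\cup T_1$ by \eqref{eq:scaling0} with $\rho^k$ in place of $\rho$. Since $\ell_k$ is affine in $x$ it is annihilated by $\partial_{ij}$ and by $\partial_{zz}$, so the computation of Lemma \ref{lem:scaling equation} shows that $\tilde U$ solves \eqref{eq:tildeU-eqn} with $\tilde a^{ij}(x)=a^{ij}(\rho^k x)$ and $\tilde f(x)=\rho^{-k\alpha}f(\rho^k x)$. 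Exactly as in Lemma \ref{lem:sequence-1}, assumptions \ref{item:A1}--\ref{item:A3} transfer to $(\tilde a^{ij},\tilde f,\tilde U)$: one has $\tilde a^{ij}(0)=\delta^{ij}$, $\tilde f(0)=0$, $\|\tilde a^{ij}-\delta^{ij}\|_{L^\infty(T_1)}=\|a^{ij}-\delta^{ij}\|_{L^\infty(T_{\rho^{2k}})}\le\varepsilon_0$, and, using \ref{item:A-new} together with $f(0)=0$, $|\tilde f(x)|=\rho^{-k\alpha}|f(\rho^k x)-f(0)|\le[f]_{C^\alpha(0)}|x|^\alpha\le[f]_{C^\alpha(0)}2^{\alpha/2}\le\varepsilon_0$; finally $\|\tilde U\|_{L^\infty(S_1\times S_1^+)}=\rho^{-k(\alpha+2s)}\|U-\ell_k\|_{L^\infty(S_{\rho^{2k}}\times S_{\rho^{2k}}^+)}\le1$ is the inductive hypothesis.

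Then I would apply Lemma \ref{lem:Inductive step-2} to $\tilde U$ to obtain an affine function $\tilde\ell(x)=\langle b,x\rangle+c$ with $|b|+|c|\le D$ and $\|\tilde U-\tilde\ell\|_{L^\infty(S_{\rho^2}\times S_{\rho^2}^+)}\le\rho^{\alpha+2s}$. Undoing the scaling via \eqref{eq:rho-scaling} and writing $y=\rho^k x$ (so $\tilde\ell(x)=\rho^{-k}\langle b,y\rangle+c$), one is led to define
\[
b_{k+1}:=b_k+\rho^{k(\alpha+2s-1)}b,\qquad c_{k+1}:=c_k+\rho^{k(\alpha+2s)}c,
\]
so that $\ell_{k+1}(y)=\langle b_{k+1},y\rangle+c_{k+1}$ satisfies $\|U-\ell_{k+1}\|_{L^\infty(S_{\rho^{2(k+1)}}\times S_{\rho^{2(k+1)}}^+)}=\rho^{k(\alpha+2s)}\|\tilde U-\tilde\ell\|_{L^\infty(S_{\rho^2}\times S_{\rho^2}^+)}\le\rho^{(k+1)(\alpha+2s)}$, and $|c_k-c_{k+1}|=\rho^{k(\alpha+2s)}|c|\le D\rho^{k(\alpha+2s)}$, $\rho^k|b_k-b_{k+1}|=\rho^{k(\alpha+2s)}|b|\le D\rho^{k(\alpha+2s)}$, completing the induction.

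The only delicate point — and what I would flag as the main obstacle — is the bookkeeping of the $\rho^k$-weights on the linear part: the $x$-gradient of $\tilde U$ picks up a factor $\rho^{-k}$ relative to that of $U$, which is exactly why the increment $b_{k+1}-b_k$ carries $\rho^{k(\alpha+2s-1)}$ rather than $\rho^{k(\alpha+2s)}$, and hence why the correct weighted increment to control is $\rho^k|b_k-b_{k+1}|$. That $\alpha+2s-1>0$ in Case \ref{item:schauder2} is what will ultimately make these weighted increments summable in the subsequent passage to the limit.
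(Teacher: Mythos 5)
Your proof is correct and follows essentially the same route as the paper: the same base case $\ell_0=\ell_1\equiv 0$, the same rescaled function $\tilde U$, the same verification of \ref{item:A1}--\ref{item:A3}, the same application of Lemma \ref{lem:Inductive step-2}, and the same update $\ell_{k+1}(x)=\ell_k(x)+\rho^{k(\alpha+2s)}\ell(\rho^{-k}x)$ (your explicit formulas for $b_{k+1},c_{k+1}$ are just this unwound). Your closing remark on the $\rho^k$-weighting of the increment in $b_k$ is exactly the bookkeeping that makes the later telescoping sum converge when $\alpha+2s-1>0$.
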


\begin{proof}
We prove the lemma by induction. Set $c_0 = c_1 = 0$ and $b_0 = b_1 = 0$, so that the lemma holds trivially for $k=0$. 
Now assume that the statement holds for some $k \geq 0$. 
Recalling \eqref{eq:scale-k}, consider 
\[
\tilde{U}(x,z)
	= \frac{1}{\rho^{k(\alpha+2s)}} (U(\rho^kx, \rho^{2sk}z) - \ell_k(\rho^kx)), \quad (x,z) \in (S_1 \times S_1^+) \cup T_1. 
\] 
Set $\tilde{a}^{ij}$ and $\tilde{f}$ as in \eqref{eq:a-f-tilde}.  
As in the proof of Lemma \ref{lem:sequence-1}, 
we can readily check that \ref{item:A1} and \ref{item:A2} hold for $\tilde{a}^{ij}$ and $\tilde{f}$ and that $\tilde{U}$ is a solution to \eqref{eq:tildeU-eqn}.
Moreover, with a change of variables, \eqref{eq:scale-k}, and by the inductive hypothesis, 
\begin{align*}
\|\tilde{U}\|_{L^{\infty}(S_1 \times S_1^+)}
	&=  \frac{1}{\rho^{k(\alpha+2s)}} \| U(\rho^kx,\rho^{2sk}z)- \ell_k(\rho^kx)\|_{L^{\infty}_{x,z}(S_1 \times S_{1}^+)}\\
	&=  \frac{1}{\rho^{k(\alpha+2s)}} \| U(y,w)- \ell_k(y)\|_{L^{\infty}_{y,w}(S_{\rho^{2k}} \times S_{\rho^{2k}}^+)}
	\leq 1,
\end{align*}
so we also have \ref{item:A3}.
In particular, the hypotheses of Lemma \ref{lem:Inductive step-2} hold for $\tilde{U}$. 

By Lemma \ref{lem:Inductive step-2}, there is a linear function $\ell(x) = \langle b,x \rangle + c$ and a constant $D$ such that
\begin{equation}\label{eq:tildeUest-2}
\|\tilde{U}-\ell\|_{L^{\infty}(S_{\rho^2} \times S_{\rho^2}^+)} \leq \rho^{\alpha+2s} \quad \hbox{and} \quad |b| + |c| \leq D. 
\end{equation}
Recalling \eqref{eq:rho-scaling}, we rescale back to find
\begin{align*}
\|\tilde{U}(x,z)-\ell(x)\|_{L^{\infty}_{x,z}(S_{\rho^2} \times S_{\rho^2}^+)}
	= \|\rho^{-k(\alpha+2s)}(\tilde{U}(\rho^{k}x, \rho^{2ks}z) - \ell_k(\rho^kx))-\ell(x)\|_{L^{\infty}_{x,z}(S_{\rho^2} \times S_{\rho^2}^+)}&\\
	=\frac{1}{\rho^{k(\alpha+2s)}} \|\tilde{U}(y,w) - \ell_k(y)-\rho^{k(\alpha+2s)}\ell(\rho^{-k}y)\|_{L^{\infty}_{y,w}(S_{\rho^{2(k+1)}}\times S_{\rho^{2(k+1)}}^+)}&.
\end{align*}
Consequently, setting $\ell_{k+1}(x) = \ell_k(x) +\rho^{k(\alpha+2s)}\ell(\rho^{-k}x)$ and using \eqref{eq:tildeUest-2}, 
\[
\|U - \ell_{k+1}\|_{L^{\infty}_{y,w}(S_{\rho^{2(k+1)}}\times S_{\rho^{2(k+1)}}^+)} \leq \rho^{k(\alpha+2s)} \rho^{\alpha+2s} = \rho^{(k+1)(\alpha+2s)}
\]
and also
\begin{align*}
|c_k - c_{k+1}|
	&= \rho^{k(\alpha+2s)}|c| \leq D \rho^{k(\alpha+2s)}\\
\rho^k|b_k - b_{k+1}|
	&= \rho^k \rho^{k(\alpha+2s)} |\rho^{-k}b| 
		\leq D \rho^{k(\alpha+2s)}
\end{align*}
which completes the proof. 
\end{proof}


\begin{proof}[Proof of Theorem \ref{thm:schauder-ext}\ref{item:schauder2}]
Let $\ell_\infty$ be the limit of the sequence $\ell_k$ in Lemma \ref{lem:sequence-2}. In particular, since the sequences $b_k, c_k$ are Cauchy, 
\[
\ell_\infty(x) := \langle b_\infty, x \rangle + c_\infty \quad \hbox{where} \quad
 \lim_{k \to \infty} c_k = c_{\infty}, \quad 
 \lim_{k \to \infty} b_k = b_{\infty}.
\]
For any given $k \in \N$, note that
\begin{equation}\label{eq:x-scaling}
\hbox{if}~x \in T_{\rho^{2k}}~\hbox{then}~|x| \leq \sqrt{2}\rho^{k},
\end{equation}
so, applying Lemma \ref{lem:sequence-2}, we find
\begin{align*}
\|U - \ell_\infty\|_{L^{\infty}(S_{\rho^{2k}}\times S_{\rho^{2k}}^+)}
	&\leq \|U - \ell_k\|_{L^{\infty}(S_{\rho^{2k}} \times S_{\rho^{2k}}^+)} + \|\ell_k-\ell_\infty\|_{L^{\infty}( T_{\rho^{2k}})}\\
	&\leq  \|U - \ell_k\|_{L^{\infty}(S_{\rho^{2k}} \times S_{\rho^{2k}}^+)}+ |b_k - b_{\infty}|\sqrt{2}\rho^{k} + |c_k - c_{\infty}|\\
	&\leq  \|U - \ell_k\|_{L^{\infty}(S_{\rho^{2k}}\times S_{\rho^{2k}}^+)}+ \sqrt{2}\rho^k\sum_{j=k}^{\infty}|b_k - b_{j+1}| + \sum_{j=k}^{\infty}|c_j - c_{j+1}|\\
	&\leq \rho^{k(\alpha+2s)}+ \sqrt{2}\rho^k\sum_{j=k}^{\infty}D \rho^{j(\alpha+2s-1)} + \sum_{j=k}^{\infty}D \rho^{j(\alpha+2s)}\\
	 &\leq \(1+ \frac{(\sqrt{2}+1)D}{1-\rho^{\alpha+2s-1}}\)\rho^{k(\alpha+2s)}.
\end{align*}
Choose $k$ so that $\rho^{k+1} <r \leq \rho^k$. Then,
\begin{align*}
\|U - \ell_\infty\|_{L^{\infty}(S_{r^2} \times S_{r^2}^+)}
	&\leq \|U - \ell_\infty\|_{L^{\infty}(S_{\rho^{2k}} \times S_{\rho^{2k}}^+)}\\
	&\leq \(1+ \frac{(\sqrt{2}+1)D}{1-\rho^{\alpha+2s-1}}\)\rho^{k(\alpha+2s)} \\
	&\leq\(1+ \frac{(\sqrt{2}+1)D}{1-\rho^{\alpha+2s-1}}\) \rho^{-(\alpha+2s)} r^{\alpha+2s}
	=:C_1r^{\alpha+2s}
\end{align*}
as desired. It remains to note that, since 
\begin{align*}
|c_\infty| &\leq \sum_{k=0}^{\infty}|c_k - c_{k+1}| 
	\leq D \sum_{k=0}^{\infty} \rho^{k(\alpha+2s)}
	\leq D \frac{1}{1-\rho^{\alpha+2s}}\\
|b_\infty| 
	&\leq \sum_{k=0}^{\infty} |b_k - b_{k+1}| 
	\leq  \sum_{k=0}^{\infty}  D \rho^{-k}\rho^{k(\alpha+2s)}
	\leq D \frac{1}{1-\rho^{\alpha+2s-1}},
\end{align*}
there is a constant $C_0>0$ such that $C_0  \geq C_1 + |b_\infty| + |c_\infty|$. 
\end{proof}

\subsection{Proof of Theorem \ref{thm:schauder-ext}\ref{item:schauder3}}

\begin{lem}\label{lem:Inductive step-3}
Given $2 < \alpha +2s < 3$, there exist $0 < \varepsilon_0, \rho <1$, a second-order Monge--Amp\`ere polynomial
\[
P(x,z) = \frac{1}{2} \langle \mathcal{A}x, x \rangle +  \langle b,x \rangle +c +dh(z),
\]
and a constant $D>0$ such that 
if \ref{item:A1} and \ref{item:A2-} hold,
 then for any solution $U$ satisfying \ref{item:A3-}, it holds that
\[
\|U - P\|_{L^{\infty}(S_{\rho^2} \times S_{\rho^{3}}^+)} \leq \rho^{\alpha +2s} \quad \hbox{and} \quad  |\mathcal{A}|+ |b| + |c|+|d| \leq D, \quad \A^{ii} +d= 0,
\]
and $D$ depends only on $n$ and $s$.
\end{lem}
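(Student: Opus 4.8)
The plan is to run Caffarelli's perturbation argument exactly as in the proofs of Lemmas \ref{lem:Inductive step-1} and \ref{lem:Inductive step-2}; the one genuinely new point is the construction of a \emph{second-order} Monge--Amp\`ere polynomial whose coefficients obey the compatibility relation $\A^{ii}+d=0$. First I would fix a small parameter $\eps>0$ and apply the approximation lemma in the cylindrical form of Remark \ref{rem:approximation}, whose threshold is independent of the cylinder height: choosing $\eps_0=\eps_0(n,\lambda,\Lambda,s,\eps)$ small, assumptions \ref{item:A1}, \ref{item:A2-}, \ref{item:A3-} yield a classical solution $H$ of the harmonic problem \eqref{eq:harmonic-2} on $S_{3/4}\times S_{3/4}^+$ with $\|H\|_{L^{\infty}}\le 2$ and $\|U-H\|_{L^{\infty}((S_{3/4}\times S_{3/4}^+)\cup T_{3/4})}\le\eps$. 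By Proposition \ref{lem:H-classical} (equivalently Proposition \ref{prop:H regularity}), this $H$ satisfies the estimates \eqref{eq:Hx-estimates} up to $\{z=0\}$ and the decay \eqref{eq:Hz-decay}; integrating the latter in $z$ gives $|H(x,z)-H(x,0)|\le C\int_0^z\xi^{\frac{1}{s}-1}\,d\xi=C'h(z)$ near the origin, which is exactly what controls the degenerate $z$-direction.

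Next I would build $P$ by freezing the Taylor data of $H$ at $(0,0)$: set $c=H(0,0)$, $b=\nabla_xH(0,0)$, $\A=D^2_xH(0,0)$, and then force the compatibility relation by taking $d=-\A^{ii}=-\trace D^2_xH(0,0)$, so that $P(x,z)=\tfrac{1}{2}\langle\A x,x\rangle+\langle b,x\rangle+c+d\,h(z)$ satisfies $\A^{ii}+d=0$ and, by \eqref{eq:Hx-estimates} together with $\|H\|_{L^{\infty}}\le 2$, obeys $|\A|+|b|+|c|+|d|\le D$ with $D=D(n,s)$. It is worth recording — though not needed for the present estimate, only for the subsequent sequence lemma — that this $d$ is in fact dictated by the equation and agrees with the genuine $z$-asymptotics of $H$: since a classical harmonic function has $z^{2-\frac{1}{s}}\partial_{zz}H$ continuous up to $\{z=0\}$, letting $z\to0^+$ in $\Delta_xH+z^{2-\frac{1}{s}}\partial_{zz}H=0$ and using $\partial_zH(x,0)=0$ gives $\trace\A+\lim_{z\to0^+}\tfrac{H(0,z)-H(0,0)}{h(z)}=0$, i.e.\ $d=\lim_{z\to0^+}\tfrac{H(0,z)-H(0,0)}{h(z)}$.

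To conclude I would estimate the remainder on $S_{\rho^2}\times S_{\rho^3}^+$, where \eqref{eq:x-MA-euclidean} gives $|x|<\sqrt{2}\,\rho$ and the definition of $S_{\rho^3}(0)$ gives $h(z)<\rho^3$. Splitting $|H-P|\le|H(x,z)-H(x,0)|+|H(x,0)-\tfrac{1}{2}\langle\A x,x\rangle-\langle b,x\rangle-c|+|d|\,h(z)$ and bounding the first term by $C'h(z)$, the second (an $x$-Taylor remainder) by $C|x|^3$ via the third-derivative bound in \eqref{eq:Hx-estimates}, and the last by $D\,h(z)$, one gets $|H-P|\le C_\ast\rho^3$ with $C_\ast=C_\ast(n,s)$. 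Hence $\|U-P\|_{L^{\infty}(S_{\rho^2}\times S_{\rho^3}^+)}\le\eps+C_\ast\rho^3$; since $\alpha+2s<3$, I would first fix $\rho=\rho(n,s)<1$ so small that $C_\ast\rho^{\,3-(\alpha+2s)}\le\tfrac{1}{2}$, hence $C_\ast\rho^3\le\tfrac{1}{2}\rho^{\alpha+2s}$, and then pick $\eps\le\tfrac{1}{2}\rho^{\alpha+2s}$, which in turn fixes $\eps_0$; this yields $\|U-P\|_{L^{\infty}(S_{\rho^2}\times S_{\rho^3}^+)}\le\rho^{\alpha+2s}$.

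I expect the main difficulty to lie not in any single estimate but in the bookkeeping of constants and domains: $\rho$ must be taken after the universal regularity constant $C_\ast$ of $H$ and small enough that $S_{\rho^2}\times S_{\rho^3}^+$ lies inside the cylinder on which the approximation lemma delivers $H$, while $\eps_0$ is selected last and, crucially, independent of the cylinder height — this is exactly the content of Remark \ref{rem:approximation}. The conceptual point that will matter downstream is that $\A^{ii}+d=0$ plays a double role: it makes $P$ an approximate solution, so that $U-P$ solves the equation with right-hand side $F+(\delta^{ij}-a^{ij}(x))\A^{ij}$ which is small by \ref{item:A4-}, and it is consistent with the true $z$-behaviour of $H$, so that after rescaling in the iteration the new solution stays normalized.
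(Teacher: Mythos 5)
Your argument matches the paper's essentially step for step: the same polynomial $P$, with $\A=D^2_xH(0,0)$, $b=\nabla_xH(0,0)$, $c=H(0,0)$, and $d=-\Delta_xH(0,0)$ chosen precisely so that $\A^{ii}+d=0$; the same decomposition of $|H-P|$ into a $z$-contribution controlled by $h(z)$ (via the decay of $\partial_zH$) and a degree-three $x$-Taylor remainder controlled by $\|D^3_xH\|_{L^\infty}|x|^3$; and the same ordering of choices, $\rho$ first (using $3>\alpha+2s$), then $\eps$, then $\eps_0$. The one slip is the domain of $H$: under \ref{item:A3-} the solution $U$ is only normalized on $S_1\times S_\rho^+$, so Remark \ref{rem:approximation} delivers $H$ on the thin cylinder $S_{3/4}\times S_{3\rho/4}^+$ rather than on $S_{3/4}\times S_{3/4}^+$ as written in your second paragraph — your closing remarks already flag this $\rho$-dependence, so the body of the argument should carry the $\rho$-dependent height throughout.
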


\begin{proof}
Let $0 < \eps, \rho < 1$ to be determined. By Lemma \ref{lem:approximation} with Remark \ref{rem:approximation}, there is a $\varepsilon_0>0$, so with \ref{item:A2-}, there is a solution $H$ to \eqref{eq:H} in $(S_{3/4} \times S_{3\rho/4}^+) \cup T_{3/4}$ such that 
\[
\|U-H\|_{L^{\infty}((S_{3/4} \times S_{3\rho/4}^+) \cup T_{3/4})} \leq \eps. 
\]
Set
\[
P(x,z) = \frac{1}{2} \langle D^2_xH(0,0)x, x \rangle +  \langle \nabla_xH(0,0), x \rangle + H(0,0) - \Delta_xH(0,0) h(z).
\]
By Proposition \ref{lem:H-classical}, there is a constant $D = D(n,s)>0$ such that  
\[
|D^2_x H(0,0)|+|\nabla_x H(0,0)| + |H(0,0)| + |\Delta_xH(0,0)| \leq D.
\]
Also note that
\[
A^{ii} + d 
	= \Delta_xP(x,z) + |z|^{2-\frac{1}{s}}\partial_{zz}P(x,z)
	=  \Delta_xH(0,0) - \Delta_xH(0,0) = 0. 
\]
It remains to estimate $\|U-H\|_{L^{\infty}}$.  
For this, we again apply Proposition \ref{lem:H-classical} so that, for any $(x,z) \in (S_{\kappa^2} \times S_{\kappa^3}^+) \cup T_{\kappa^2}$ with $\kappa$ sufficiently small, we have
\begin{align*}
|H(x,z) - P(x,z)|
	&\leq |H(x,z) - H(x,0)|  +|\Delta_xH(0,0)| h(z)\\
	&\quad +  |H(x,0) - \frac{1}{2} \langle D^2_xH(0,0)x,x \rangle-\langle \nabla_x H(0,0), x \rangle - H(0,0)|\\
	&\leq \|\partial_zH(x,\cdot)\|_{L_z^{\infty}((S_{\kappa^2} \times S_{\kappa^3}^+) \cup T_{\kappa^2})}z  + Cz^{\frac{1}{s}}+ \frac{1}{6}\|D^3_xH\|_{L^{\infty}((S_{\kappa^2} \times S_{\kappa^3}^+) \cup T_{\kappa^2})} |x|^3\\
	&\leq C(z^{\frac{1}{s}} +|x|^3) \\
	&\leq C(\delta_h(0,z) + (\delta_\varphi(0,x))^{\frac{3}{2}}).
\end{align*}
Consequently, if $0 < \rho < \kappa$, then
\begin{align*}
\|U - P\|_{L^{\infty}(S_{\rho^2} \times S_{\rho^3}^+)}
	&\leq \|U-H\|_{L^{\infty}(S_{\rho^2} \times S_{\rho^3}^+)}+ \|H-P\|_{L^{\infty}(S_{\rho^2} \times S_{\rho^3}^+)}\\
	&\leq \eps + C\rho^3
	\leq \rho^{\alpha+2s}
\end{align*}
by first choosing $\rho$ small enough to guarantee that $C \rho^3 \leq \frac{1}{2} \rho^{\alpha+2s}$ and then
letting $\eps>0$ sufficiently small so that $\eps \leq \frac{1}{2}\rho^{\alpha+2s}$. 
\end{proof}


\begin{lem}\label{lem:sequence-3}
In the setting of Lemma \ref{lem:Inductive step-3}, assume additionally that \ref{item:A-new} and \ref{item:A4-} hold. 
Then there is a sequence of second-order Monge--Amp\`ere polynomials
\[
P_k(x,z) = \frac{1}{2} \langle \A_k x, x \rangle + \langle b_k, x \rangle + c_k +d_k h(z), \quad k \geq 0,
\]
such that

\[
\|U - P_k\|_{L^{\infty}(S_{\rho^{2k}} \times S_{\rho^{2k+1}}^+)} \leq \rho^{k(\alpha+2s)}. 
\]
and
\[
|c_k - c_{k+1}|,\rho^k |b_k - b_{k+1}|,\rho^{2k} |\A_k - \A_{k+1}|,\rho^{2k} |d_k - d_{k+1}| \leq D \rho^{k(\alpha+2s)},
\quad
\A^{ii}_k + d_k = 0.
\]
\end{lem}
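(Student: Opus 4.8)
The plan is to argue by induction on $k$, in close parallel with the proofs of Lemma \ref{lem:sequence-1} and Lemma \ref{lem:sequence-2}, the two new features being the inhomogeneous right hand side created by subtracting a second-order Monge--Amp\`ere polynomial and the degenerate $z$-scaling. Set $P_0 = P_1 \equiv 0$; the case $k=0$ is immediate since $\|U\|_{L^{\infty}(S_1 \times S_1^+)} \leq 1$. Assuming the statement for some $k \geq 0$, consider
\[
\tilde{U}(x,z) := \frac{1}{\rho^{k(\alpha+2s)}}\big(U(\rho^k x,\rho^{2sk}z) - P_k(\rho^k x,\rho^{2sk}z)\big), \quad (x,z) \in (S_1 \times S_\rho^+) \cup T_1,
\]
which, by Lemma \ref{lem:scaling sections} applied with $\rho^k$ in place of $\rho$, is well defined there because $S_1 \times S_\rho^+$ corresponds to $S_{\rho^{2k}} \times S_{\rho^{2k+1}}^+$. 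Note that we rescale onto the thin cylinder $S_1 \times S_\rho^+$, not $S_1 \times S_1^+$: this is the degenerate scaling forced by $\tfrac{1}{2}<s<1$, and it accounts for the exponent $2k+1$ on the $z$-section.

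I would next identify the equation satisfied by $\tilde U$. Using $\partial_{ij}P_k = (\A_k)_{ij}$, $\partial_{zz}P_k = d_k h''(z)$ with $z^{2-\frac{1}{s}}h''(z) \equiv 1$, $\partial_z P_k(x,0) = d_k h'(0) = 0$, and the inductive constraint $\A_k^{ii} + d_k = 0$, one gets
\[
a^{ij}(x)\partial_{ij}(U-P_k) + z^{2-\frac{1}{s}}\partial_{zz}(U-P_k) = -a^{ij}(x)(\A_k)_{ij} - d_k = -\big(a^{ij}(x)-\delta^{ij}\big)(\A_k)_{ij},
\]
so $U - P_k$ solves the extension equation with right hand side $-F_k$, where $F_k(x):=(a^{ij}(x)-\delta^{ij})(\A_k)_{ij}$, and Neumann datum $f$. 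Applying Lemma \ref{lem:scaling equation} (extended to a nonzero right hand side) with $\tilde a^{ij}(x)=a^{ij}(\rho^k x)$ shows that $\tilde U$ solves
\[
\begin{cases}
\tilde a^{ij}(x)\partial_{ij}\tilde U + z^{2-\frac{1}{s}}\partial_{zz}\tilde U = \tilde F(x) & \hbox{in}~S_1 \times S_\rho^+\\
\partial_z \tilde U(x,0) = \tilde f(x) & \hbox{on}~T_1,
\end{cases}
\]
with $\tilde f(x) = \rho^{-k\alpha}f(\rho^k x)$ exactly as in the earlier cases and $\tilde F(x) = -\rho^{-k(\alpha+2s-2)}F_k(\rho^k x)$. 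Then I would verify the hypotheses of Lemma \ref{lem:Inductive step-3}: \ref{item:A1} holds since $\tilde a^{ij}(0)=\delta^{ij}$ and $\tilde f(0)=0$; the bounds on $\tilde a^{ij}-\delta^{ij}$ and on $\tilde f$ follow from \ref{item:A2}, \ref{item:A-new} and $f(0)=0$; $\|\tilde U\|_{L^{\infty}(S_1\times S_\rho^+)} \leq 1$ by rescaling the inductive estimate; and telescoping the difference bounds gives $|\A_k| \leq D/(1-\rho^{\alpha+2s-2})$ (using $\alpha+2s-2>0$), so by \ref{item:A4-}
\[
|\tilde F(x)| = \rho^{-k(\alpha+2s-2)}|F_k(\rho^k x)| \leq \frac{n^2 D}{1-\rho^{\alpha+2s-2}}\,\max_{i,j}[a^{ij}]_{C^{\alpha+2s-2}(0)}\,|x|^{\alpha+2s-2} \leq \eps_0
\]
on $T_1 = B_{\sqrt{2}}$, upon fixing the constant $\bar C$ in \ref{item:A4-} suitably; hence \ref{item:A3-} holds with this $\rho$.

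Lemma \ref{lem:Inductive step-3} then produces a second-order Monge--Amp\`ere polynomial $P(x,z) = \tfrac{1}{2}\langle\A x,x\rangle + \langle b,x\rangle + c + dh(z)$ with $|\A|+|b|+|c|+|d|\leq D$ and $\A^{ii}+d=0$ such that $\|\tilde U - P\|_{L^{\infty}(S_{\rho^2}\times S_{\rho^3}^+)} \leq \rho^{\alpha+2s}$. Rescaling back with $(y,w)=(\rho^k x,\rho^{2sk}z)$ — so that $S_{\rho^2}\times S_{\rho^3}^+$ corresponds to $S_{\rho^{2(k+1)}}\times S_{\rho^{2(k+1)+1}}^+$ by Lemma \ref{lem:scaling sections}, and using the homogeneity $h(\rho^{-2sk}w)=\rho^{-2k}h(w)$ — I set
\[
P_{k+1}(y,w) := P_k(y,w) + \rho^{k(\alpha+2s)}P(\rho^{-k}y,\rho^{-2sk}w),
\]
again a second-order Monge--Amp\`ere polynomial, with $\A_{k+1}=\A_k+\rho^{k(\alpha+2s-2)}\A$, $b_{k+1}=b_k+\rho^{k(\alpha+2s-1)}b$, $c_{k+1}=c_k+\rho^{k(\alpha+2s)}c$ and $d_{k+1}=d_k+\rho^{k(\alpha+2s-2)}d$. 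This yields at once $\|U-P_{k+1}\|_{L^{\infty}(S_{\rho^{2(k+1)}}\times S_{\rho^{2(k+1)+1}}^+)} \leq \rho^{(k+1)(\alpha+2s)}$, the four difference estimates (the factors $\rho^{2k},\rho^k,1,\rho^{2k}$ multiply the powers above to give exactly $\rho^{k(\alpha+2s)}$), and $\A_{k+1}^{ii}+d_{k+1}=(\A_k^{ii}+d_k)+\rho^{k(\alpha+2s-2)}(\A^{ii}+d)=0$, which closes the induction. The main obstacle I anticipate is controlling the inhomogeneous term $\tilde F$ uniformly in $k$: subtracting $P_k$ must leave a right hand side whose $L^{\infty}(T_1)$ norm is bounded by $\eps_0$ independently of $k$, and this is exactly where the trace constraint $\A_k^{ii}+d_k=0$ is used (it cancels the constant part of the error, leaving only $(a^{ij}-\delta^{ij})(\A_k)_{ij}$), together with the H\"older hypothesis \ref{item:A4-} on the $a^{ij}$ and the telescoped bound on $|\A_k|$; the degeneracy $\tfrac{1}{2}<s<1$ additionally forces the careful use of the $z$-scaling $z\mapsto\rho^{2sk}z$ on the thin cylinders and of $h(\rho^{-2sk}w)=\rho^{-2k}h(w)$ to keep $P_{k+1}$ within the Monge--Amp\`ere polynomial class.
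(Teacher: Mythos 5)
Your proposal is correct and follows essentially the same route as the paper's own proof: rescale onto the thin cylinder $S_1\times S_\rho^+$ via $z\mapsto\rho^{2sk}z$, subtract $P_k$ to create a right hand side $\tilde F$, use the trace constraint $\A_k^{ii}+d_k=0$ together with the H\"older modulus of $a^{ij}$ at the origin and the telescoped bound on $|\A_k|$ to keep $\|\tilde F\|_{L^\infty(T_1)}\leq\eps_0$ uniformly in $k$, apply Lemma~\ref{lem:Inductive step-3}, and rescale back using $h(\rho^{-2sk}w)=\rho^{-2k}h(w)$ to identify $P_{k+1}$. The only cosmetic difference is that you simplify $\tilde a^{ij}\A_k^{ij}+d_k$ to $(\tilde a^{ij}-\delta^{ij})\A_k^{ij}$ at the point of defining $\tilde F$ rather than when bounding it, which is equivalent.
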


\begin{proof}
We prove the lemma by induction. Set $P_0 = P_1 \equiv 0$, so that the lemma holds trivially for $k=0$. 
Now assume that the statement holds for some $k \geq 0$. 
Recalling \eqref{eq:scale-k}, consider 
\[
\tilde{U}(x,z)
	= \frac{1}{\rho^{k(\alpha+2s)}} (U(\rho^kx, \rho^{2sk}z) - P_k(\rho^kx, \rho^{2sk}z)), \quad (x,z) \in (S_1 \times S_1^+) \cup T_1. 
\] 
As in Lemma \ref{lem:scaling sections}, it is easy to check that
\begin{equation}\label{eq:scale-k-3}
(x,z) \in (S_1 \times S_\rho) \cup T_1 \quad \hbox{if and only if} \quad (\rho^kx,\rho^{2sk}z) \in (S_{\rho^{2k}} \times S_{\rho^{2k+1}})  \cup T_{\rho^{2k}},
\end{equation}
so $\tilde{U}$ is well-defined. 
Set $\tilde{a}^{ij}$ and $\tilde{f}$ as in \eqref{eq:a-f-tilde} and also
\[
\tilde{F}(x) = - \rho^{-k(\alpha+2s-2)} [\tilde{a}^{ij}(x) \mathcal{A}_k^{ij} + d_k].
\] 
Using Lemma \ref{lem:scaling equation}, for any $(x,z) \in S_1 \times S_\rho^+$,
\begin{align*}
\tilde{a}^{ij}(x) \partial_{ij}\tilde{U}(x,z) &+ z^{2-\frac{1}{s}}\partial_{zz} \tilde{U}(x,z)\\
	&= 0-\frac{1}{\rho^{k(\alpha+2s)}}\big[\rho^{2k}{a}^{ij}(\rho^kx) \partial_{ij}P_k(\rho^{k}x,\rho^{2ks}z)+ \rho^{4ks}z^{2-\frac{1}{s}}\partial_{zz} P_k(\rho^{k}x,\rho^{2ks}z)\big]\\ 
	&= -\frac{1}{\rho^{k(\alpha+2s)}}\big[\rho^{2k}{a}^{ij}(\rho^kx) \A_k^{ij}+ \rho^{4ks}z^{2-\frac{1}{s}}d_k (\rho^{2ks} z)^{\frac{1}{s}-2}\big]\\ 
	&= -\frac{1}{\rho^{k(\alpha+2s-2)}}[{a}^{ij}(\rho^kx) \A_k^{ij}+ d_k]
	=\tilde{F}(x)
\end{align*}
and for any $(x,0) \in T_1$,
\begin{align*}
\partial_{z}\tilde{U}(x,0)
	&=\frac{ \rho^{2sk}}{\rho^{k(\alpha+2s)}} \partial_{z}{U}(\rho^kx,0)-0
	= \frac{1}{\rho^{k\alpha}} f(\rho^kx) = \tilde{f}(x).
\end{align*}
That is, $\tilde{U}$ solves
\[
\begin{cases}
\tilde{a}^{ij}(x) \partial_{ij} \tilde{U}(x,z) + |z|^{2-\frac{1}{s}} \partial_{zz} \tilde{U}(x,z) = \tilde{F}(x) & \hbox{in}~S_1\times S_\rho^+ \\
\partial_z \tilde{U}(x,0) = \tilde{f}(x) & \hbox{on}~T_1. 
\end{cases}
\]

We now check the assumptions of Lemma \ref{lem:sequence-3} for $\tilde{U}$. 
As in the proof of Lemma \ref{lem:sequence-1}, 
we can readily check that \ref{item:A1} and \ref{item:A2} hold for $\tilde{a}^{ij}$ and $\tilde{f}$.  
For $x \in T_1$, we use \ref{item:A4-} to estimate
\begin{align*}
\frac{1}{\rho^{k(\alpha+2s-2)}}|a^{ij}(\rho^kx) - \delta^{ij}|
	&= \frac{|a^{ij}(\rho^kx) - \delta^{ij}|}{|\rho^kx|^{\alpha+2s-2}}|x|^{\alpha+2s-2}\\
	&\leq [a^{ij}]_{C^{\alpha+2s-2}(0)}|x|^{\alpha+2s-2} \leq C[a^{ij}]_{C^{\alpha+2s-2}(0)}.
\end{align*}
Next, note that
\begin{align*}
| \A_k| \leq \sum_{j=0}^{k-1}  |\A_j - \A_{j+1}| \leq D \sum_{j=0}^{k-1}  \rho^{j(\alpha+2s-2)} \leq \frac{D}{1-\rho^{\alpha+2s-2}} < \infty
\end{align*}
and
\[
a^{ij}(\rho^kx) \A_k^{ij} +d_k 
	= (a^{ij}(\rho^kx) - \delta^{ij})\A_k^{ij}.
\]
Consequently, we find that
\begin{align*}
\| \tilde{F}\|_{L^{\infty}(T_1)}
	&= \rho^{-k(\alpha+2s-2)} \|(a^{ij}(\rho^kx) - \delta^{ij})\A_k^{ij} \|_{L^{\infty}(T_1)}\\
	&\leq C[a^{ij}]_{C^{\alpha+2s-2}(0)}  |\A_k|\\
	&\leq \bar{C}[a^{ij}]_{C^{\alpha+2s-2}(0)}
	\leq \eps_0,
\end{align*}
so with \ref{item:A2}, we have \ref{item:A2-}. 
Lastly, with a change of variables, \eqref{eq:scale-k-3}, and by the inductive hypothesis, 
\begin{align*}
\|\tilde{U}\|_{L^{\infty}(S_1 \times S_\rho^+)}
&= \frac{1}{\rho^{k(\alpha+2s)}} \|U(\rho^kx,\rho^{2sk}z) - P_k(\rho^kx,\rho^{2sk}z)\|_{L^{\infty}_{x,z}(S_1 \times S_\rho^+)}\\
&= \frac{1}{\rho^{k(\alpha+2s)}} \|U(y,w) - P_k(y,w)\|_{L_{y,w}^{\infty}(S_{\rho^{2k}} \times S_{\rho^{2k+1}}^+)}
 \leq 1,
\end{align*}
so we also have \ref{item:A3-}. In particular, the hypotheses of Lemma \ref{lem:Inductive step-3} hold for $\tilde{U}$. 
 
By Lemma  \ref{lem:Inductive step-3}, there is a second-order Monge--Amp\`ere polynomial $P(x,z) = \frac{1}{2} \langle \A x,x \rangle + \langle b,x \rangle + c +dh(z)$ and a constant $D$ such that
\begin{equation}\label{eq:tildeUest-3}
\|\tilde{U}-P\|_{L^{\infty}(S_{\rho^2}\times S_{\rho^3}^+)} \leq \rho^{\alpha+2s} \quad \hbox{and} \quad |\A| +  |b| + |c| +|d| \leq D. 
\end{equation}
Like in \eqref{eq:scale-k-3}, it is straightforward to check that
\[
(x,z) \in S_{\rho^2} \times S_{\rho^3}^+ \quad \hbox{if and only if} \quad 
(y,w) = (\rho^kx, \rho^{2sk}z) \in S_{\rho^{2(k+1)}}  \times S_{\rho^{2(k+1)+1}}^+. 
\]
With this, we rescale back to write
\begin{align*}
&\|\tilde{U}-P\|_{L_{x,z}^{\infty}(S_{\rho^2}\times S_{\rho^3}^+)} \\
	&\qquad= \|\rho^{-k(\alpha+2s)}({U}(\rho^{k}x, \rho^{2ks}z) - P_k(\rho^kx ,\rho^{2sk}z))-P(x,z)\|_{L_{x,z}^{\infty}(S_{\rho^2}\times S_{\rho^3}^+)} \\
	&\qquad= \frac{1}{\rho^{k(\alpha+2s)}}\|{U}(y, w) - (P_k(y,w) + \rho^{k(\alpha+2s)}P(\rho^{-k}y, \rho^{-2sk}w))\|_{L_{y,w}^{\infty}(S_{\rho^{2k+2}}\times S_{\rho^{2k+3}}^+)}.
\end{align*}
Consequently, setting $P_{k+1}(x,z) = P_k(x,z) +\rho^{k(\alpha+2s)}P(\rho^{-k}x,\rho^{-2ks}z)$ and using \eqref{eq:tildeUest-3},
\[
\|U - P_{k+1}\|_{L^{\infty}(S_{\rho^{2(k+1)}}\times S_{\rho^{2(k+1)+1}}^+)} \leq \rho^{k(\alpha+2s)} \rho^{\alpha+2s} = \rho^{(k+1)(\alpha+2s)}
\]
and also
\begin{align*}
|c_k - c_{k+1}|
	&= \rho^{k(\alpha+2s)}|c| \leq D \rho^{k(\alpha+2s)}\\
\rho^k|b_k - b_{k+1}|
	&= \rho^k \rho^{k(\alpha+2s)} |\rho^{-k}b|  
	\leq D \rho^{k(\alpha+2s)}\\
\rho^{2k}|\A_k - \A_{k+1}|
	&= \rho^{2k} \rho^{k(\alpha+2s)}|\rho^{-2k}\A| 
	\leq D \rho^{k(\alpha+2s)}\\
\rho^{2k}|d_k - d_{k+1}|
	&= \rho^{2k} \rho^{k(\alpha+2s)}|(\rho^{-2sk})^{\frac{1}{s}}d| 
	\leq D \rho^{k(\alpha+2s)}.
\end{align*}
Lastly, we check that
\[
\A_{k+1}^{ii} +d_{k+1}
	= \A_k^{ii} +d_k+ \rho^{k(\alpha+2s)}\A^{ii} + \rho^{k(\alpha+2s)}d= 0.
\]
\end{proof}


\begin{proof}[Proof of Theorem  \ref{thm:schauder-ext}\ref{item:schauder3}]
Let $P_\infty$ be the limit  polynomial of the sequence $P_k$ in Lemma \ref{lem:sequence-3}. In particular, since $\mathcal{A}_k^{ij}, b_k, c_k, d_k$ are Cauchy sequences, 
\[
P_\infty(x,z)= \frac{1}{2} \langle \A_\infty x, x \rangle + \langle b_\infty, x \rangle + c_\infty +d_\infty
\]
where
\[
 \lim_{k \to \infty} c_k = c_{\infty}, \quad 
 \lim_{k \to \infty} b_k = b_{\infty}, \quad
  \lim_{k \to \infty} \A_k = \A_{\infty}, \quad
    \lim_{k \to \infty} d_k = d_{\infty}.
\]
For any given $k \geq 0$, we 
recall \eqref{eq:x-scaling} and apply Lemma \ref{lem:sequence-3} to estimate
\begin{align*}
\|&P_k-P_\infty\|_{L^{\infty}(S_{\rho^{2k}} \times S_{\rho^{2k+1}}^+)}\\
	&\leq \rho^{2k} |\A_k - \A_{\infty}| 
		+ \sqrt{2}\rho^k|b_k - b_{\infty}|
		+ |c_k - c_{\infty}|
		+ \rho^{2k+1}|d_k -d_\infty|  \\
	&\leq \rho^{2k}\sum_{j=k}^{\infty}|\A_k - \A_{j+1}| 
		+ \sqrt{2}\rho^k\sum_{j=k}^{\infty}|b_k - b_{j+1}| 
		+ \sum_{j=k}^{\infty}|c_j - c_{j+1}|
		+ \rho^{2k+1}  \sum_{j=k}^{\infty}|d_j - d_{j+1}|\\
	&\leq  \rho^{2k} \sum_{j=k}^{\infty}D \rho^{j(\alpha+2s-2)} 
		+ \sqrt{2}\rho^k\sum_{j=k}^{\infty}D \rho^{j(\alpha+2s-1)} 
		+ \sum_{j=k}^{\infty}D \rho^{j(\alpha+2s)}
		+\rho^{2k+1} \sum_{j=k}^{\infty}D \rho^{j(\alpha+2s-2)}\\
	 &=  D \rho^{2k}\frac{\rho^{k(\alpha+2s-2)}}{1-\rho^{\alpha+2s-2}} 
	 	+ \sqrt{2}D\rho^k \frac{\rho^{k(\alpha+2s-1)}}{1-\rho^{\alpha+2s-1}} 
		+ D \frac{\rho^{k(\alpha+2s)}}{1-\rho^{\alpha+2s}}
		+ D \rho^{2k+1}\frac{\rho^{k(\alpha+2s-2)}}{1-\rho^{\alpha+2s-2}} \\
	 &\leq \( \frac{(3 +\sqrt{2})D}{1-\rho^{\alpha+2s-2}}\)\rho^{k(\alpha+2s)},
\end{align*}
where we use that $\rho \leq 1$ to estimate $\rho^{2k+1} \leq \rho^{2k}$. 
Therefore, by applying again Lemma \ref{lem:sequence-3},
\begin{align*}
\|U - P_\infty\|_{L^{\infty}(S_{\rho^{2k}} \times S_{\rho^{2k+1}}^+)}
	&\leq \|U - P_k\|_{L^{\infty}(S_{\rho^{2k}} \times S_{\rho^{2k+1}}^+)}
		 + \|P_k-P_\infty\|_{L^{\infty}(S_{\rho^{2k}} \times S_{\rho^{2k+1}}^+)}\\
	 &\leq \(1+ \frac{(3 +\sqrt{2})D}{1-\rho^{\alpha+2s-2}}\)\rho^{k(\alpha+2s)}.
\end{align*}
Choose $k$ so that $\rho^{k+1} <r \leq \rho^k$. 
Since $0 < r < 1$, we have
\[
r^{3} < r^{2+ \frac{1}{k}}  \leq \rho^{2k+1}. 
\]
Therefore, we arrive at the desired estimate
\begin{align*}
\|U - P_\infty\|_{L^{\infty}(S_{r^2} \times S_{r^3}^+)}
	&\leq \|U - P_\infty\|_{L^{\infty}(S_{\rho^{2k}} \times S_{\rho^{2k+1}}^+)}\\
	&\leq \(1+ \frac{(3 +\sqrt{2})D}{1-\rho^{\alpha+2s-2}}\)\rho^{k(\alpha+2s)}\\
	&\leq \(1+ \frac{(3 +\sqrt{2})D}{1-\rho^{\alpha+2s-2}}\)\rho^{-(\alpha+2s)} r^{\alpha+2s}
	=:C_1r^{\alpha+2s}.
\end{align*}
It remains to note that, since
\begin{align*}
|c_\infty| &\leq \sum_{k=0}^{\infty}|c_k - c_{k+1}| 
	\leq D \sum_{k=0}^{\infty} \rho^{k(\alpha+2s)}
	\leq  \frac{D}{1-\rho^{\alpha+2s}}\\
|b_\infty| 
	&\leq \sum_{k=0}^{\infty} |b_k - b_{k+1}| 
	\leq  \sum_{k=0}^{\infty}  D \rho^{-k}\rho^{k(\alpha+2s)}
	\leq  \frac{D}{1-\rho^{\alpha+2s-1}}\\
|\A_\infty|
	&\leq \sum_{k=0}^{\infty} |\A_k - \A_{k+1}| 
	\leq \sum_{k=0}^{\infty}  D \rho^{-2k}\rho^{k(\alpha+2s)}
	\leq   \frac{D}{1-\rho^{\alpha+2s-2}}\\
|d_{\infty}|
	&\leq \sum_{k=0}^{\infty}|d_k - d_{k+1}| \leq \sum_{k=0}^{\infty}D\rho^{k(\alpha+2s-2)} \leq \frac{D}{1-\rho^{\alpha+2s-2}},
\end{align*}
there is a constant $C_0>0$ such that $C_0  \geq C_1 + |b_\infty| + |c_\infty|+|d_\infty|$.
\end{proof}

\section*{Acknowledgements}
The first author was supported by Simons Foundation grants
580911 and MP-TSM-00002709.
The second author was supported by Australian Laureate Fellowship FL190100081 ``Minimal surfaces, free boundaries and partial differential equations.''



\end{document}